\newcommand{\lbm}{lattice Boltzmann}
\newcommand{\strong}[1]{\emph{#1}}
\newcommand{\confer}{\emph{cf.}}
\newcommand{\idEst}{\emph{i.e.}}
\newcommand{\lbmScheme}[2]{$\text{D}_{#1}\text{Q}_{#2}$}
\newcommand{\advectionVelocity}{V}
\newcommand{\solutionCauchyProblem}{u}
\newcommand{\definitionEquality}{:=}
\newcommand{\indexSpace}{j}
\newcommand{\timeVariable}{t}
\newcommand{\spaceVariable}{x}
\newcommand{\initial}{\circ}
\newcommand{\traceDirichlet}{g}
\newcommand{\timeGridPoint}[1]{\timeVariable^{#1}}
\newcommand{\indexTime}{n}
\newcommand{\spaceStep}{\Delta \spaceVariable}
\newcommand{\timeStep}{\Delta \timeVariable}
\newcommand{\relatives}{\mathbb{Z}}
\newcommand{\reals}{\mathbb{R}}
\newcommand{\complex}{\mathbb{C}}
\newcommand{\latticeVelocity}{\lambda}
\newcommand{\naturals}{\mathbb{N}}
\newcommand{\spaceGridPoint}[1]{\spaceVariable_{#1}}
\newcommand{\numberVelocities}{q}
\newcommand{\dimensionlessDiscreteVelocityLetter}{c}
\newcommand{\discrete}[1]{{#1}}
\newcommand{\distributionFunctionLetter}{f}
\newcommand{\momentLetter}{m}
\newcommand{\distributionFunctionDiscrete}{\discrete{\distributionFunctionLetter}}
\newcommand{\momentDiscrete}{\discrete{\momentLetter}}
\newcommand{\linearGroup}[2]{\mathsf{GL}_{#1}(#2)}
\newcommand{\matricial}[1]{\bm{#1}}
\newcommand{\momentMatrix}{\matricial{M}}
\newcommand{\baseField}{\reals}
\newcommand{\transpose}[1]{#1^{\mathsf{T}}}
\newcommand{\vectorial}[1]{\bm{#1}}
\newcommand{\atEquilibrium}{\text{eq}}
\newcommand{\integerInterval}[2]{\llbracket #1, #2 \rrbracket}
\newcommand{\relaxationParameterLetter}{s}
\newcommand{\stencilLeft}{r}
\newcommand{\stencilRight}{p}
\newcommand{\stencilLeftCharacteristic}{\overline{r}}
\newcommand{\stencilRightCharacteristic}{\overline{p}}
\newcommand{\stencilBoundaryCondition}{w}
\newcommand{\indexVelocity}{i}
\newcommand{\collided}{\star}
\newcommand{\diagonalMatrix}{\text{\textbf{diag}}}
\newcommand{\identityMatrix}[1]{\matricial{I}_{#1}}
\newcommand{\fourierShift}{\kappa}
\newcommand{\forwardSpaceShift}{\mathsf{T}}
\newcommand{\indexFreeOne}{\ell}
\newcommand{\indexFreeTwo}{h}
\newcommand{\boundaryCoefficient}{b}
\newcommand{\equilibriumVectorLetter}{\epsilon}
\newcommand{\equilibriumVector}{\vectorial{\equilibriumVectorLetter}}
\newcommand{\orderExtrapolation}{\sigma}
\newcommand{\schemeMatrixBulkByPower}{\matricial{E}}
\newcommand{\schemeMatrixBulkByPowerIncludingShift}{\tilde{\matricial{E}}}
\newcommand{\schemeMatrixBulk}{\matricial{\discrete{E}}}
\newcommand{\schemeMatrixBulkFourier}{\hat{\schemeMatrixBulk}}
\newcommand{\schemeMatrixBoundary}[1]{\matricial{\discrete{B}}_{#1}}
\newcommand{\schemeMatrixBoundaryByPower}[2]{\matricial{\discrete{B}}_{#1, #2}}
\newcommand{\canonicalBasisVector}[1]{\vectorial{e}_{#1}}
\newcommand{\matrixSpace}[2]{\mathcal{M}_{#1}(#2)}
\newcommand{\ring}[2]{#1[#2]}
\newcommand{\determinant}{\textnormal{det}}
\newcommand{\adjugate}{\textbf{\text{adj}}}
\newcommand{\waveNumber}{\vartheta}
\newcommand{\laplaceTransformed}[1]{\tilde{#1}}
\newcommand{\timeShiftOperator}{z}
\DeclareMathOperator*{\degree}{deg}
\newcommand{\companionPolynomial}{\matricial{C}_{\timeShiftOperator}}
\newcommand{\zeroMatrix}[1]{\matricial{0}_{#1}}
\newcommand{\rank}{\text{rank}}
\newcommand{\spectrum}{\textnormal{sp}}
\newcommand{\trace}{\text{tr}}
\newcommand{\coefficientCharEquationInFourier}{d}
\newcommand{\stableSubspace}{\mathbb{E}^{\text{s}}}
\newcommand{\courantNumber}{\mathscr{C}}
\newcommand{\sign}{\textnormal{sgn}}
\newcommand{\targetEigenvalue}{\timeShiftOperator^{\odot}}
\newcommand{\targetFourierShift}{\fourierShift^{\odot}}
\newcommand{\bigO}[1]{O(#1)}
\newcommand{\bigOVectorial}[1]{\vectorial{O}(#1)}
\newcommand{\kernel}{\textnormal{ker}}
\newcommand{\differential}[1]{\text{d}#1}
\newcommand{\boundarySourceTerm}{g}
\newcommand{\boundarySourceTermMoments}{g}
\newcommand{\collisionMatrix}{\matricial{K}}
\newcommand{\cardinality}[1]{\textnormal{card}(#1)}
\newcommand{\matrixPolynomialBulk}[1]{\matricial{L}_{#1}}
\newcommand{\stableRoot}{\fourierShift_{\textnormal{s}}}
\newcommand{\unstableRoot}{\fourierShift_{\textnormal{u}}}
\newcommand{\productRootsDOneQThree}{\Pi}
\newcommand{\coefficientStableSolution}{C_{\textnormal{s}}}
\newcommand{\coefficientZero}{C_{0}}
\newcommand{\eigenvectorLetter}{\varphi}
\newcommand{\residue}[2]{\text{Res}_{#2}\left [#1\right ]}
\newcommand{\groupVelocity}{{V}_{\textnormal{g}}}
\newcommand{\dimension}[1]{\textnormal{dim}(#1)}
\newcommand{\spanSpace}[1]{\textnormal{span}\{ #1 \}}
\newcommand{\finalTime}{T}
\newcommand{\indicatorFunction}[1]{\mathds{1}_{#1}}
\newcommand{\puncturedPlane}{\complex^{*}}
\newcommand{\unitCircle}{\mathbb{S}}
\newcommand{\unitDisk}{\mathbb{D}}
\newcommand{\closedUnitDisk}{\overline{\mathbb{D}}}
\newcommand{\neighborhoodInfinity}{\mathbb{U}}
\newcommand{\closedNeighborhoodInfinity}{\overline{\mathbb{U}}}
\newcommand{\frequency}{\xi}
\newcommand{\kreissLopatinskiiDet}{\Delta_{\textnormal{KL}}}
\newcommand{\kreissLopatinskiiMatrix}{\matricial{B}_{\textnormal{KL}}}
\newcommand{\scalarFactorFromAdjugate}{\Sigma_{\textnormal{bulk}}}
\newcommand{\positiveVelocityIndex}{\pi}
\newcommand{\kreissLopatinskiiScalarProduct}{\langle \textnormal{KL} \rangle}
\newcommand{\threeboxes}[3]{%
  \tikz[baseline]{%
    \def\boxsize{\ht\strutbox} 
    \node (base) at (0, 0.5*\boxsize) {};
    \foreach \i/\letter in {0/#1,1/#2,2/#3} {
      \draw[] (\i*\boxsize, 0) rectangle ++(\boxsize, \boxsize);
      \node at (\i*\boxsize + 0.5*\boxsize, 0.5*\boxsize) {\fontsize{\boxsize}{\boxsize}\selectfont\bfseries \letter};
    }
  }%
}
\newcommand{\notContinuousExtensionMark}{$\times$}
\newcommand{\continuousExtensionMark}{$\square$}
\newcommand{\eigenvalueMark}{$\odot$}
\newcommand{\noEigenvalueMark}{\scalebox{0.75}{$\bigcirc$}}
\newcommand{\zeroKL}{$0$}
\newcommand{\nonZeroKL}{$\star$}
\newcommand{\infiniteKL}{{\tiny $\infty$}}
\theoremstyle{plain}
\newtheorem{proposition}{Proposition}%
\newtheorem{corollary}{Corollary}%
\newtheorem{lemma}{Lemma}%
\newtheorem{theorem}{Theorem}%
\newtheorem*{theorem*}{Theorem}
\theoremstyle{definition}
\newtheorem{definition}{Definition}%
\theoremstyle{remark}
\newtheorem{remark}{Remark}%
\newtheorem{example}{Example}%
\providecommand{\keywords}[1]{\small \textbf{{Keywords: }} {#1}}
\providecommand{\amsCat}[1]{\small \textbf{{MSC: }} #1}
\title{\textsc{Stability of lattice Boltzmann schemes for initial boundary value problems in raw formulation}}
\author{\textsc{Thomas Bellotti}\footnote{Université Paris-Saclay, CNRS, CentraleSupélec, Laboratoire EM2C \& Fédération de Mathématiques de CentraleSupélec, 91190, Gif-sur-Yvette, France}}
\begin{document}


\maketitle



\begin{abstract}
	We study the stability of one-dimensional linear lattice Boltzmann schemes for scalar hyperbolic equations with respect to boundary data.
	Our approach is based on the original raw algorithm on several unknowns, thereby avoiding the need for a transformation into an equivalent scalar formulation---a challenging process in presence of boundaries. 
	To address different behaviors exhibited by the numerical scheme, we introduce appropriate notions of strong stability.
	They account for the potential absence of a continuous extension of the stable vector bundle associated with the bulk scheme on the unit circle for certain components.
	Rather than developing a general theory, complicated by the fact that discrete boundaries in lattice Boltzmann schemes are inherently characteristic, we focus on strong stability--instability for methods whose characteristic equations have stencils of breadth one to the left. In this context, we study three representative schemes. These are endowed with various boundary conditions drawn from the literature, and our theoretical results are supported by numerical simulations.
\end{abstract}

\keywords{strong/GKS-stability, boundary conditions, lattice Boltzmann schemes, scalar hyperbolic equations, Kreiss-Lopatinskii determinant, characteristic boundary}

\amsCat{65M12, 76M28, 65M06, 35L50, 35L65}

\section{Introduction}

The major difficulty in the numerical analysis of \lbm{} schemes lies in the presence of \strong{merely numerical unknowns}, which---despite being used in the implementation---do not approximate the solution of the target PDEs.
A way to circumvent this problem when the domain is infinite or periodic boundary conditions are enforced is to algebraically \strong{eliminate} these numerical unknowns from the \strong{raw} scheme, see \cite{fuvcik2021equivalent, bellotti2022finite, bellotti2023truncation}, keeping only the unknowns of interest---approximating the solution of the target PDEs.
This is also tightly linked with the notion of \strong{observability} \cite{bellotti2024initialisation}, which determines, \strong{inter alia}, what ``modes'' on the numerical unknowns impact the unknowns of interest, and those which do not, \strong{e.g.}, constant or checkerboard modes.

In a previous work \cite{bellotti:hal-04630735}, we have followed the leitmotiv consisting in eliminating numerical unknowns from the scheme to study \strong{boundary conditions for \lbm{} schemes}.
The significant drawback of this approach is its lack of generality. 
Specifically, the elimination does not rely on a general algebraic result---the Cayley-Hamilton theorem---as with infinite or periodic domains. 
Therefore, this study only applies to a simple two-unknowns scheme supplemented with specific boundary conditions. 
It is unlikely that this approach could be pursued to analyze more involved situations. 
Even in the context of this simple two-unknowns scheme, we are not sure that the elimination could be conducted regardless of the boundary conditions at hand.

The present contribution aims at investigating the stability of \lbm{} schemes for initial boundary value problems, adapting the well-known GKS (for Gustafsson, Kreiss, and Sundstr{\"o}m) stability analysis \cite{gustafsson1972stability} for Finite Difference methods, also known as ``\strong{strong stability}''---\strong{without} having to eliminate numerical unknowns.
This procedure on the original raw scheme without transformation is quite attractive, for it can be applied in principle to any \lbm{} scheme.
Needless to say, computations become more and more involved as the complexity of the scheme grows, making the exploration of certified numerical tools to check stability \cite{boutin2024stability, boutin2023stability} very appealing, although not the object of the present paper.

In order to assess stability for \lbm{} schemes in presence of boundary conditions, we see them as one-step algorithms on a vector of unknowns, which do not all fulfill a PDE at leading order. 
One has to think at this rather in terms of a PDAE (Partial Differential Algebraic Equation) problem.
Along the way, two tightly linked difficulties must be taken into account, and prevent---at the current stage---from developing a very general theory as in \cite{coulombel2009stability}.
\begin{enumerate}
    \item The so-called \strong{non-characteristic discrete boundary} assumption---customary in the study of Finite Difference schemes, \strong{e.g.} \cite[Assumption 5.5]{gustafsson1972stability}, \cite[Assumption 1]{coulombel2009stability}, \cite[Assumption 2.1]{coulombel2011semigroup}, and \cite[Assumption 1]{coulombel2011stability}---is not fulfilled by \lbm{} methods.
    The discrete boundary is non-characteristic when the leftmost and rightmost matrices of the numerical scheme are non-singular.
    Since \lbm{} schemes are inherently linked to relaxation systems \cite{graille2014approximation, caetano2024result}, which are often characteristic \cite{boutin2020stiffly}, this feature is not surprising.
    This prevents us---\strong{inter alia}---from directly using a classical result \cite[Lemma 5.2]{gustafsson1972stability}, sometimes called ``Hersh lemma'' \cite{boutin2024stability}, that classifies the roots of the scheme's characteristic equation with respect to the unit circle. Finding the roots inside the unit disk is one of the two fundamental step when checking strong stability for a given boundary condition.
	Roughly speaking, the Hersh lemma for the lattice Boltzmann schemes that we consider looks like the one of a Finite Difference scheme tackling a scalar problem rather than a system of equations.
	
	More importantly, dealing with characteristic boundaries prevents from recasting the scheme, using a block companion matrix, under the form of an explicit recurrence relation.
	This has a serious impact on the \strong{eigenvectors} (more precisely, eigenspaces) of the problem. 
	These eigenspaces may lack (at least in some components) a continuous extension on the unit circle.

    \item Boundary conditions can be stable on the unknown of interest but generate instabilities on numerical unknowns. 
    This can be explained by the fact that the unstable modes developing on the numerical unknowns belong to the \strong{unobservable subspace} \cite{bellotti2024initialisation} relative to the unknown of interest, thus the latter is not influenced by the former.
    This calls for a finer understanding of the way instabilities develop, for simplistically claim that the scheme is (overall) unstable does not do justice to the fact that is remains stable on the unknown we are interested in. 
	This finer understanding comes from the study of the eigenvectors of the problem, which indicate how the instabilities distribute between the unknowns of the system, and whose structure is now more involved than in the non-characteristic setting.
	
	More extremely, there are cases where there exist eigenvalues for the coupled bulk--boundary system and yet, boundary conditions are strongly stable for all unknowns.
	This comes, once again, from the fact that the structure of the eigenvectors must be taken into consideration.
	Specifically, the eigenvectors of the bulk matrix are not necessarily eigenvectors for the boundary matrix, even when sharing an eigenvalue.
	Even in strongly stable cases, this does not conceal the usefulness of knowing the eigenvalues for the coupled bulk--boundary system, since they make up the components of the discrete solution featuring the larger amplitude, and are easily discernible in numerical simulations. 
\end{enumerate}

Although the reader will immediately notice that we aim at adopting---while analyzing lattice Boltzmann schemes---a standpoint as close as possible to the GKS analysis of Finite Differences, let us review existing works concerning stability of lattice Boltzmann schemes in presence of boundaries with no connection with the GKS-theory.
An undeniable virtue of these works lies in their ability to deal with multi-dimensional problems. 
A major approach relies on the so-called ``\strong{stability structure}'' \cite{junk2009weighted, junk2009convergence}, based on the construction of a weighted norm with respect to which both phases of the lattice Boltzmann algorithm, namely relaxation and  transport, are contractions. 
This notion of stability is intrinsically linear (although having been adapted to quadratic non-linearities to yield convergence with bounce-back boundary conditions \cite{junk2009convergence} towards the solution of the incompressible Navier-Stokes system under smoothness assumptions) and does not consider relaxation and transport jointly, thus producing only sufficient stability conditions.
This approach, which is able to handle bounce-back and anti-bounce-back boundary conditions for parabolic problems, is unfit do to so for the hyperbolic equation we are interested in, see the conclusions in \cite{rheinlander2010stability}. Conversely, the approach that we develop in the present contribution encompasses but is not limited to the two previously-mentioned boundary conditions and is specifically aimed at hyperbolic problems.
Finally, let us mention that a more recent way of analyzing stability for boundary conditions built using equilibria, with no link with the GKS-theory, has been developed relying on \strong{monotonicity} \cite{aregba2025equilibrium}, and applies to weak solutions of scalar non-linear equations.

The paper is organized as follows.
In \Cref{sec:problemAndNumerical}, we introduce the continuous problem and the numerical schemes to address it.
In \Cref{sec:analysis}, we clarify different notions of stability for the scheme without boundaries, and introduce the notion of strong stability when boundaries are present.
In a general setting, we also pave the way to \Cref{sec:strongCertainSchemes}, where strong stability is studied for three representative schemes with various boundary conditions.
In this section, numerical simulations to illustrate and support theoretical results are also provided.
\Cref{sec:conclusions} draws general conclusions and illustrates research perspectives. 


\section{Problem and numerical schemes}\label{sec:problemAndNumerical}

\subsection{Continuous problem}

The problem we are concerned with is the \strong{one-dimensional transport equation} at velocity $\advectionVelocity\neq 0$:
\begin{equation}\label{eq:targetEquation}
    \left\lbrace\,
    \begin{array}{@{}l@{\quad}l@{}l@{}}
        \partial_{\timeVariable}\solutionCauchyProblem(\timeVariable, \spaceVariable) + \advectionVelocity \partial_{\spaceVariable}\solutionCauchyProblem(\timeVariable, \spaceVariable) = 0, \qquad &\timeVariable> 0, \quad &\spaceVariable> 0, \\
        \solutionCauchyProblem(0, \spaceVariable) = \solutionCauchyProblem^{\initial}(\spaceVariable), \qquad & &\spaceVariable> 0,\\
        \solutionCauchyProblem(\timeVariable, 0) = \traceDirichlet(\timeVariable), \qquad &\timeVariable> 0,  &\qquad  \qquad \qquad (\text{if}\quad \advectionVelocity>0).
    \end{array}
    \right.
\end{equation}

\subsection{Time-space discretization}

The  spatial domain is discretized with grid-points $\spaceGridPoint{\indexSpace} \definitionEquality \indexSpace \spaceStep$, with $\indexSpace \in \naturals$ and $\spaceStep > 0$.
We include the boundary point $\spaceVariable = 0$ in the computational domain using $\spaceGridPoint{0} = 0$.
Moreover, the definition of $\spaceGridPoint{\indexSpace}$ works for $\indexSpace \in \relatives$, and thus we can define ghost points.

Time discretization employs grid-points $\timeGridPoint{\indexTime} \definitionEquality \indexTime\timeStep$ with $\indexTime \in \naturals$.
The time step $\timeStep$ is linked to the space step $\spaceStep$ by $\timeStep \definitionEquality \spaceStep/\latticeVelocity$ with $\latticeVelocity > 0$ called ``lattice velocity''.
This scaling between space and time discretization, known as ``acoustic scaling'', is relevant in this context where information travels at \strong{finite speed} and \strong{time-explicit} numerical methods are used.

As this is frequently used in what follows, we introduce the Courant number $\courantNumber\definitionEquality\advectionVelocity/\latticeVelocity$, which quantifies the strength of the transport velocity $\advectionVelocity$ with respect to the velocity of propagation of information by the numerical scheme.
In the paper, we always consider $\courantNumber\neq 0$, both to ensure the physical significance of \eqref{eq:targetEquation}, and to avoid \strong{glancing} wave packets, \confer{} \cite{coulombel2015fully}.

\subsection{Numerical schemes}\label{sec:numericalSchemes}

We consider a quite general class of lattice Boltzmann schemes, the so-called ``multiple relaxation-times'' (MRT) schemes.
We see these schemes merely as given algorithms with a precise structure which involves a certain number of pieces detailed below.
Clues on how to select such pieces according to the consistency with \eqref{eq:targetEquation} are eventually given.
We consider:
\begin{itemize}
	\item A set of $\numberVelocities = 2, 3, \dots$ dimensionless discrete velocities $\dimensionlessDiscreteVelocityLetter_1, \dots, \dimensionlessDiscreteVelocityLetter_{\numberVelocities} \in \relatives$, with associated discrete distribution functions $\distributionFunctionDiscrete_1, \dots, \distributionFunctionDiscrete_{\numberVelocities}$.
	\item A moment matrix $\momentMatrix \in \linearGroup{\numberVelocities}{\reals}$.
	This non-singular matrix yields a change of basis between discrete distribution functions and the so-called moments $\momentDiscrete_1, \dots, \momentDiscrete_{\numberVelocities}$, given by $\transpose{(\momentDiscrete_1, \dots, \momentDiscrete_{\numberVelocities})} = \momentMatrix \transpose{(\distributionFunctionDiscrete_1, \dots, \distributionFunctionDiscrete_{\numberVelocities})}$.
	\item The equilibria $\vectorial{\momentLetter}^{\atEquilibrium} : \baseField \to \baseField^{\numberVelocities}$, fulfilling the conservation constraint $\momentLetter^{\atEquilibrium}_{1}(\momentDiscrete_1) = \momentDiscrete_1$. We therefore assume that $\momentDiscrete_1$ approximates $\solutionCauchyProblem$, thus is the unknown we are interested in.
	In all \lbm{} we are aware of, the moment $\momentDiscrete_1$ is the zeroth-order moment of the distribution functions, hence $\momentDiscrete_1 = \sum_{\indexVelocity=1}^{\numberVelocities}\distributionFunctionDiscrete_{\indexVelocity}$.
	\item The relaxation parameters $\relaxationParameterLetter_1, \dots, \relaxationParameterLetter_{\numberVelocities}$, where $\relaxationParameterLetter_1 \in \baseField$ can be chosen freely, whereas $\relaxationParameterLetter_{2}, \dots, \relaxationParameterLetter_{\numberVelocities} \in [0, 2]$.
	This last constraint is justified in \Cref{sec:stabilityCauchy} for stability reasons, \confer{} \Cref{rem:reasonSChoice}.
	Other explanations of the same constraint can be found, for example, in \cite{dubois2008equivalent, dubois2022nonlinear}.
\end{itemize}
We introduce $\stencilLeft \geq 0$ (respectively $\stencilRight \geq 0$), the amplitude of the space-stencil to the left (respectively, to the right) generated by the discrete velocities, given by 
\begin{equation*}
	\stencilLeft \definitionEquality \max_{\indexVelocity \in \integerInterval{1}{\numberVelocities}} \dimensionlessDiscreteVelocityLetter_{\indexVelocity} \qquad\text{and}\qquad \stencilRight \definitionEquality - \min_{\indexVelocity \in \integerInterval{1}{\numberVelocities}} \dimensionlessDiscreteVelocityLetter_{\indexVelocity}.
\end{equation*}

\subsubsection{Bulk scheme}

With these pieces being given, the \lbm{} algorithm is made up of two phases.
\begin{enumerate}
	\item A \strong{relaxation} phase: a linear relaxation on the moments given by 
	\begin{equation}\label{eq:relaxation}
		\vectorial{\momentDiscrete}_{\indexSpace}^{\indexTime\collided} \definitionEquality (\identityMatrix{\numberVelocities} - \diagonalMatrix(\relaxationParameterLetter_1, \dots, \relaxationParameterLetter_{\numberVelocities})) \vectorial{\momentDiscrete}_{\indexSpace}^{\indexTime} + \diagonalMatrix(\relaxationParameterLetter_1, \dots, \relaxationParameterLetter_{\numberVelocities}) \vectorial{\momentLetter}^{\atEquilibrium} (\momentDiscrete_{1, \indexSpace}^{\indexTime}), \qquad \indexSpace \in \naturals.
	\end{equation}
	Notice that, thanks to the fact that the relaxation is local, every point of the discrete lattice performs this operation.
	\item A \strong{transport} phase, which shifts the distribution functions in the upwind direction. 
	This reads, for $\indexVelocity \in \integerInterval{1}{\numberVelocities}$
	\begin{equation}\label{eq:transportBulk} 
		\distributionFunctionDiscrete_{\indexVelocity, \indexSpace}^{\indexTime + 1} = \distributionFunctionDiscrete_{\indexVelocity, \indexSpace - \dimensionlessDiscreteVelocityLetter_{\indexVelocity}}^{\indexTime \collided}, \qquad \indexSpace \geq \max(0, \dimensionlessDiscreteVelocityLetter_{\indexVelocity}).
	\end{equation}
	Notice that at the boundary points indexed by $\indexSpace \in\integerInterval{0}{\stencilLeft-1}$, some distribution functions at time $\timeGridPoint{\indexTime + 1}$ are not defined using \eqref{eq:transportBulk}, because incoming information comes from outside the computational domain.
	Fixing this lack of information is the aim of \strong{boundary conditions}.
\end{enumerate}

As far as \strong{consistency} with the target equation \eqref{eq:targetEquation} is concerned, one can see from \cite{bellotti2023truncation} that a sufficient condition is
\begin{equation}\label{eq:consistencyEquation}
    \latticeVelocity \transpose{\canonicalBasisVector{1}}\momentMatrix \diagonalMatrix(\dimensionlessDiscreteVelocityLetter_1, \dots, \dimensionlessDiscreteVelocityLetter_{\numberVelocities}) \momentMatrix^{-1} \vectorial{\momentLetter}^{\atEquilibrium}(\momentDiscrete_1) = \advectionVelocity \momentDiscrete_1.
\end{equation}
Since the target problem is linear, and strong stability for Finite Difference schemes handles only linear ones, we assume that equilibria are linear functions of the conserved moment. Therefore, there exists $\equilibriumVector \in \baseField^{\numberVelocities}$ such that $\equilibriumVectorLetter_1 = 1$, and $\vectorial{\momentLetter}^{\atEquilibrium}(\momentDiscrete_1) = \equilibriumVector \momentDiscrete_1$.
Then, \eqref{eq:consistencyEquation} becomes $\transpose{\canonicalBasisVector{1}}\momentMatrix \diagonalMatrix(\dimensionlessDiscreteVelocityLetter_1, \dots, \dimensionlessDiscreteVelocityLetter_{\numberVelocities}) \momentMatrix^{-1} \equilibriumVector = \courantNumber$, where $\courantNumber$ is the Courant number.
\begin{remark}[On $\relaxationParameterLetter_2, \dots, \relaxationParameterLetter_{\numberVelocities} = 0$]
	In what follows, we do not allow any of the relaxation parameters $\relaxationParameterLetter_2, \dots, \relaxationParameterLetter_{\numberVelocities}$ be zero.
	This is to prevent possible lacks of consistency with the target equation \eqref{eq:targetEquation}, because despite \eqref{eq:consistencyEquation} being true, the relaxed moments may indeed not depend on $\advectionVelocity$, see \eqref{eq:relaxation}.
	This does not prevent the scheme from being stable when all or some relaxation parameters are zero.
\end{remark}

\subsubsection{Boundary schemes: kinetic boundary conditions}

\newcommand{\squareTikzTwoColors}[2]{%
	\fill[color=RoyalBlue] (#1-0.2,#2-0.2) -- (#1-0.2,#2+0.2) -- (#1,#2+0.2) -- (#1,#2-0.2) -- cycle;
	\fill[color=OrangeRed] (#1,#2-0.2) -- (#1,#2+0.2) -- (#1+0.2,#2+0.2) -- (#1+0.2,#2-0.2) -- cycle;
  	\draw[black] (#1-0.2,#2-0.2) rectangle (#1+0.2,#2+0.2);
}

\newcommand{\squareTikzLeftColors}[2]{%
	\fill[color=RoyalBlue] (#1-0.2,#2-0.2) -- (#1-0.2,#2+0.2) -- (#1,#2+0.2) -- (#1,#2-0.2) -- cycle;
  	\draw[black] (#1-0.2,#2-0.2) rectangle (#1+0.2,#2+0.2);
}

\newcommand{\doubleCurlyArrows}[2]{%
	\draw[thick, ->, color=RoyalBlue] plot [smooth, tension=0.7] coordinates { (#1-0.1,#2+0.3) (#1-0.1, #2+0.6) (#1+0.1, #2+1) (#1-0.1, #2+1.4) (#1-0.1, #2+1.7) };
	\draw[thick, ->, color=OrangeRed] plot [smooth, tension=0.7] coordinates { (#1+0.1,#2+0.3) (#1+0.1, #2+0.6) (#1-0.1, #2+1) (#1+0.1, #2+1.4) (#1+0.1, #2+1.7) };
}

\usetikzlibrary{decorations.pathreplacing} 

\begin{figure}[h]
	\begin{center}
		\begin{tikzpicture}

			\draw[->] (-4,0) -- (5,0) node[right] {\(x\)};
			\draw[->] (-3.8,-0.2) -- (-3.8,6) node[above] {\(t\)};

			\draw(-2*1.4,0) circle (2pt);

			\foreach \x in {-1,...,2}
				\fill[color=black](\x*1.4,0) circle (2pt);

			\node at (-2*1.4, -0.5) {$\spaceGridPoint{-1}$};
			\node at (-1*1.4, -0.5) {$\spaceGridPoint{0} = 0$};
			\node at (0*1.4, -0.5) {$\spaceGridPoint{1}$};
			\node at (1*1.4, -0.5) {$\spaceGridPoint{2}$};
            \node at (2*1.4, -0.5) {$\spaceGridPoint{3}$};
			\node at (3*1.4, -0.5) {$\cdots$};

			\draw (-3.9, 1) node[left] {$\timeGridPoint{\indexTime}$} -- (-3.7, 1);
			\draw (-3.9, 3) node[left] {($\timeGridPoint{\indexTime\collided}$)} -- (-3.7, 3);
			\draw (-3.9, 5) node[left] {$\timeGridPoint{\indexTime + 1}$} -- (-3.7, 5);

			\squareTikzTwoColors{-1.4}{1}
			\doubleCurlyArrows{-1.4}{1}
			
			\squareTikzTwoColors{0.}{1}
			\doubleCurlyArrows{0.}{1}

			\squareTikzTwoColors{1.4}{1}
			\doubleCurlyArrows{1.4}{1}

			\squareTikzTwoColors{2.8}{1}
			\doubleCurlyArrows{2.8}{1}
			\node at (3*1.4, 1) {$\cdots$};

			\squareTikzLeftColors{-2.8}{3}
			\squareTikzTwoColors{-1.4}{3}
			\squareTikzTwoColors{0.}{3}
			\squareTikzTwoColors{1.4}{3}
			\squareTikzTwoColors{2.8}{3}
			\node at (3*1.4, 3) {$\cdots$};
			\foreach \x in {-2,...,1}
				\draw[thick, ->, color=RoyalBlue] (\x*1.4-0.1, 3.3) -- (\x*1.4 + 1.3, 4.7);

			\foreach \x in {-1,...,1}
				\draw[thick, ->, color=OrangeRed] (\x*1.4+1.5, 3.3) -- (\x*1.4 + 0.1, 4.7);

			\squareTikzTwoColors{-1.4}{5}
			\squareTikzTwoColors{0.}{5}
			\squareTikzTwoColors{1.4}{5}
			\squareTikzTwoColors{2.8}{5}
			\node at (3*1.4, 5) {$\cdots$};

			\draw[thick, ->, dash pattern=on 2pt off 2pt, color=black!80] (-1.5, 2.75) to[out=-120,in=-90] node[midway, above] {\tiny $\boundaryCoefficient_{\textcolor{RoyalBlue}{1}, \textcolor{RoyalBlue}{1}, 1, 0}$} (-2.9, 2.75) ;
			\draw[thick, ->, dash pattern=on 2pt off 2pt, color=black!80]  (-0.1, 2.75) to[out=-120,in=-90] node[pos=0.75, below] {\tiny $\boundaryCoefficient_{\textcolor{RoyalBlue}{1}, \textcolor{RoyalBlue}{1}, 1, 1}$} (-2.9, 2.75) ;

			\draw[thick, ->, dash pattern=on 2pt off 2pt, color=black!80]  (-1.3, 3.25) to[out=90,in=60] node[midway, below] {\tiny $\boundaryCoefficient_{\textcolor{RoyalBlue}{1}, \textcolor{OrangeRed}{2}, 1, 0}$} (-2.9, 3.25) ;
			\draw[thick, ->, dash pattern=on 2pt off 2pt, color=black!80]  (0.1, 3.25) to[out=90,in=60] node[pos=0.57, below] {\tiny $\boundaryCoefficient_{\textcolor{RoyalBlue}{1}, \textcolor{OrangeRed}{2}, 1, 1}$} (-2.9, 3.25) ;

			\draw[decorate, decoration={brace, mirror}, thick] (4*1.4-0.5,1.1) -- (4*1.4-0.5,2.9) node[midway, right=2pt] {Relaxation \eqref{eq:relaxation}};
			\draw[decorate, decoration={brace, mirror}, thick] (4*1.4-0.5,3.1) -- (4*1.4-0.5,4.9) node[midway, right=2pt] {Transport \eqref{eq:transportBulk}--\eqref{eq:boundaryConditions}};

		\end{tikzpicture}
	\end{center}\caption{\label{fig:descriptionScheme}Sketch explaining the way schemes work, with $\numberVelocities = 2$ discrete velocities, $\dimensionlessDiscreteVelocityLetter_1 = 1$ (with associated distribution function indicated in blue), and $\dimensionlessDiscreteVelocityLetter_2 = -1$ (with distribution function indicated in red).}
\end{figure}

We analyze \strong{kinetic} linear boundary conditions, namely we adopt the same transport rule as in the bulk of the domain, \confer{} \eqref{eq:transportBulk}.
This boils down to preparing data at the ghost points $\spaceGridPoint{-\stencilLeft}, \dots, \spaceGridPoint{-1}$ using linear combinations of post-relaxation data, and is similar to what is done in \cite{boutin2024stability} for upwind schemes. However, ghost points are not part of the computational domain, for here, knowledge of the solution is only partial, and thus prevents from defining the relaxation phase on these grid-points.
The one we make is not the only possible choice, but it adheres the most to the \lbm{} leitmotiv encompassing traditional boundary conditions (\strong{e.g.}, bounce-back \cite{dubois2015taylor} and anti-bounce-back \cite{dubois2020anti}).
We therefore write, for $\indexVelocity \in \integerInterval{1}{\numberVelocities}$
\begin{equation}\label{eq:boundaryConditions}
	\distributionFunctionDiscrete_{\indexVelocity, \indexSpace}^{\indexTime + 1} = \distributionFunctionDiscrete_{\indexVelocity, \indexSpace - \dimensionlessDiscreteVelocityLetter_{\indexVelocity}}^{\indexTime \collided}, \qquad \indexSpace\in \naturals, \qquad \text{with} \qquad \distributionFunctionDiscrete_{\indexVelocity, -\indexSpace}^{\indexTime \collided} \definitionEquality \sum_{\indexFreeOne = 1}^{\numberVelocities} \sum_{\indexFreeTwo\in\naturals} \boundaryCoefficient_{\indexVelocity, \indexFreeOne, \indexSpace, \indexFreeTwo} \distributionFunctionDiscrete_{\indexFreeOne, \indexFreeTwo}^{\indexTime \collided}  + \boundarySourceTerm_{\indexVelocity, -\indexSpace}^{\indexTime}, \qquad \indexSpace \in \integerInterval{1}{\max(0, \dimensionlessDiscreteVelocityLetter_{\indexVelocity})}.
\end{equation}
The weights $\boundaryCoefficient_{\indexVelocity, \indexFreeOne, \indexSpace, \indexFreeTwo} \in\reals$ are compactly supported with respect to the last index.
A weight $\boundaryCoefficient_{\indexVelocity, \indexFreeOne, \indexSpace, \indexFreeTwo}$ affects the $\indexVelocity$-th distribution function, to be stored at the ghost point $\spaceGridPoint{-\indexSpace}$, using the $\indexFreeOne$-th distribution function located at the inner point $\spaceGridPoint{\indexFreeTwo}$.
In what follows, weights with indices outside significant ranges are understood to be zero.
A sketch of the situation is provided in \Cref{fig:descriptionScheme}.
We can define the (maximal) stencil of the boundary condition to the right inside the computational domain $\stencilBoundaryCondition \definitionEquality \max_{\indexVelocity, \indexFreeOne, \indexSpace, \indexFreeTwo } \{\indexFreeTwo ~:~\boundaryCoefficient_{\indexVelocity, \indexFreeOne, \indexSpace, \indexFreeTwo}\neq 0 \}$.
Finally, we have introduced given boundary source terms/data $\boundarySourceTerm_{\indexVelocity, -\indexSpace}^{\indexTime} \in \reals$.
They can be linked to the boundary datum $\traceDirichlet$ in \eqref{eq:targetEquation} when $\advectionVelocity>0$, even if this is pointless in the present work.

\subsubsection{Initial conditions}
Despite focusing here on schemes with zero initial data $\vectorial{\distributionFunctionDiscrete}^0_{\indexSpace}\equiv \zeroMatrix{\numberVelocities}$ (hence $\vectorial{\momentDiscrete}^0_{\indexSpace}\equiv \zeroMatrix{\numberVelocities}$), one must keep in mind that since $\numberVelocities>1$, there is an infinite number of ways to link $\vectorial{\distributionFunctionDiscrete}^0_{\indexSpace} \in\reals^{\numberVelocities}$ (or $\vectorial{\momentDiscrete}^0_{\indexSpace}\in\reals^{\numberVelocities}$) to $\solutionCauchyProblem^{\initial}(\spaceGridPoint{\indexSpace})\in\reals$, when $\solutionCauchyProblem^{\initial}$ is smooth enough to be evaluated pointwise.

\section{Analysis}\label{sec:analysis}

For future use, we introduce the following notations:
\begin{align*}
	\puncturedPlane\definitionEquality\complex\smallsetminus\{0\}, \qquad \unitCircle&\definitionEquality\{\timeShiftOperator\in\complex\quad\text{s.t.}\quad |\timeShiftOperator|=1\}, \\
	\unitDisk &\definitionEquality \{\timeShiftOperator\in\complex\quad\text{s.t.}\quad |\timeShiftOperator|<1\}, \qquad \closedUnitDisk\definitionEquality \unitDisk\cup\unitCircle, \\
	\neighborhoodInfinity &\definitionEquality \{\timeShiftOperator\in\complex\quad\text{s.t.}\quad |\timeShiftOperator|>1\}, \qquad \closedNeighborhoodInfinity\definitionEquality \neighborhoodInfinity\cup\unitCircle.
\end{align*}

We consider the scheme with \strong{zero initial data} written on the moments, which reads, for $\indexTime\in\naturals$:
\begin{numcases}{}
    \vectorial{\momentDiscrete}_{\indexSpace}^{\indexTime + 1} = \schemeMatrixBulk \vectorial{\momentDiscrete}_{\indexSpace}^{\indexTime}, \qquad &$\indexSpace\geq \stencilLeft,$ \label{eq:bulkScheme}\\
    \vectorial{\momentDiscrete}_{\indexSpace}^{\indexTime + 1} = \schemeMatrixBoundary{\indexSpace} \vectorial{\momentDiscrete}_{\indexSpace}^{\indexTime} + \vectorial{\boundarySourceTermMoments}_{\indexSpace}^{\indexTime}, \qquad &$\indexSpace \in \integerInterval{0}{\stencilLeft - 1}, $\label{eq:boundaryScheme}\\
    \vectorial{\momentDiscrete}_{\indexSpace}^{0} = \vectorial{0}, \qquad &$\indexSpace\in\naturals.$\label{eq:zeroInitialData}
\end{numcases}
Here, the relaxation matrix on the moments reads $\collisionMatrix\definitionEquality\identityMatrix{\numberVelocities}  + \diagonalMatrix(\relaxationParameterLetter_1, \dots, \relaxationParameterLetter_{\numberVelocities}) (\equilibriumVector\transpose{\canonicalBasisVector{1}} - \identityMatrix{\numberVelocities})$, and 
\begin{align}	
	\schemeMatrixBulk &\definitionEquality \momentMatrix \Bigl ( \sum_{\indexVelocity = 1}^{\numberVelocities} \canonicalBasisVector{\indexVelocity}\transpose{\canonicalBasisVector{\indexVelocity}} \forwardSpaceShift^{-\dimensionlessDiscreteVelocityLetter_{\indexVelocity}} \Bigr )\momentMatrix^{-1} \collisionMatrix \in \matrixSpace{\numberVelocities}{\ring{\baseField}{\forwardSpaceShift^{\pm}}},\\
    \schemeMatrixBoundary{\indexSpace} &\definitionEquality \momentMatrix 
	\Bigl ( 
	\underbrace{\sum_{\dimensionlessDiscreteVelocityLetter_{\indexVelocity}\leq \indexSpace} \canonicalBasisVector{\indexVelocity}\transpose{\canonicalBasisVector{\indexVelocity}} \forwardSpaceShift^{-\dimensionlessDiscreteVelocityLetter_{\indexVelocity}}}_{\text{data inside domain}} + 
	\underbrace{\sum_{\dimensionlessDiscreteVelocityLetter_{\indexVelocity}> \indexSpace} \sum_{\indexFreeOne = 1}^{\numberVelocities}  \canonicalBasisVector{\indexVelocity} \transpose{\canonicalBasisVector{\indexFreeOne}} \sum_{\indexFreeTwo = 0}^{\stencilBoundaryCondition}\boundaryCoefficient_{\indexVelocity, \indexFreeOne, \dimensionlessDiscreteVelocityLetter_{\indexVelocity}-\indexSpace, \indexFreeTwo}\forwardSpaceShift^{-\indexSpace+\indexFreeTwo}}_{\text{ghosts filled w. data inside the domain}}
	\Bigr )
	\momentMatrix^{-1} \collisionMatrix \in \matrixSpace{\numberVelocities}{\ring{\baseField}{\forwardSpaceShift^{\pm}}},\\
    \vectorial{\boundarySourceTermMoments}_{\indexSpace}^{\indexTime}&\definitionEquality\momentMatrix  \sum_{\dimensionlessDiscreteVelocityLetter_{\indexVelocity}> \indexSpace} \canonicalBasisVector{\indexVelocity}\boundarySourceTerm_{\indexVelocity, \indexSpace - \dimensionlessDiscreteVelocityLetter_{\indexVelocity}}^{\indexTime}\in\reals^{\numberVelocities},\label{eq:fromKineticToMomentSources}
\end{align}
with $\indexSpace\in\integerInterval{0}{\stencilLeft - 1}$.
We have introduced the forward space shift operator $\forwardSpaceShift$ such that $\forwardSpaceShift \momentDiscrete_{\indexSpace} = \momentDiscrete_{\indexSpace + 1}$.
Furthermore, $\ring{\baseField}{\forwardSpaceShift^{\pm}}$ is the ring of Laurent polynomial on the base field $\baseField$ in the indeterminates $\forwardSpaceShift$ and $\forwardSpaceShift^{-1}$.
The scheme can also be written on the distribution functions as $\vectorial{\distributionFunctionDiscrete}_{\indexSpace}^{\indexTime + 1} = \momentMatrix^{-1} \schemeMatrixBulk \momentMatrix \vectorial{\distributionFunctionDiscrete}_{\indexSpace}^{\indexTime}$ but, since the conserved moment $\momentDiscrete_1$ is  physically significant as an approximation of $\solutionCauchyProblem$, we adopt the moment point-of-view.

We write $\schemeMatrixBulk$ and $\schemeMatrixBoundary{\indexSpace}$ rather as elements of $\ring{(\matrixSpace{\numberVelocities}{\reals})}{\forwardSpaceShift^{\pm}}$ than of $\matrixSpace{\numberVelocities}{\ring{\baseField}{\forwardSpaceShift^{\pm}}}$:
\begin{align}
	\schemeMatrixBulk &= \sum_{\indexSpace = -\stencilLeft}^{\stencilRight} \schemeMatrixBulkByPower_{\indexSpace}\forwardSpaceShift^{\indexSpace}, \qquad \text{where} \qquad \schemeMatrixBulkByPower_{\indexSpace}  =\sum_{\substack{\indexVelocity \in \integerInterval{1}{\numberVelocities} \\ \text{s.t.}~\dimensionlessDiscreteVelocityLetter_{\indexVelocity} = -\indexSpace}} \momentMatrix  \canonicalBasisVector{\indexVelocity}\transpose{\canonicalBasisVector{\indexVelocity}}\momentMatrix^{-1} \collisionMatrix \in\matrixSpace{\numberVelocities}{\baseField},\label{eq:matrixByPowerBulk}\\
	\schemeMatrixBoundary{\indexSpace} &= \sum_{\indexFreeOne = -\indexSpace}^{\stencilBoundaryCondition}\schemeMatrixBoundaryByPower{\indexSpace}{\indexFreeOne}\forwardSpaceShift^{\indexFreeOne}, \qquad \text{where}\qquad \schemeMatrixBoundaryByPower{\indexSpace}{\indexFreeOne}\in\matrixSpace{\numberVelocities}{\baseField}.\label{eq:matrixByPowerBoundary}
\end{align} 
Equation \eqref{eq:matrixByPowerBulk} explicitly shows that in general $\schemeMatrixBulkByPower_{\indexSpace} \not\in \linearGroup{\numberVelocities}{\baseField}$.
More precisely, if the discrete velocities $\dimensionlessDiscreteVelocityLetter_1, \dots, \dimensionlessDiscreteVelocityLetter_{\numberVelocities}$ are all distinct,  $\rank(\schemeMatrixBulkByPower_{\indexSpace}) \leq 1 < \numberVelocities$ for all $\indexSpace\in\integerInterval{-\stencilLeft}{\stencilRight}$ and  means that the \strong{non-characteristic assumption is not fulfilled}.

We introduce the notation $\schemeMatrixBulkFourier(\fourierShift) \definitionEquality \sum_{\indexFreeOne = -\stencilLeft}^{\indexFreeOne = \stencilRight} \schemeMatrixBulkByPower_{\indexFreeOne}\fourierShift^{\indexFreeOne}$, which corresponds to a Fourier transform in space whenever $\fourierShift=e^{i\frequency\spaceStep}\in\unitCircle$ for $\frequency\in[-\pi/\spaceStep, \pi/\spaceStep]$, in which case $\schemeMatrixBulkFourier(\fourierShift) $ is the amplification matrix of the scheme. 
Upon formally multiplying by $\fourierShift^{\stencilLeft}$, we obtain a matrix polynomial (or a $\fourierShift$-matrix) $\fourierShift^{\stencilLeft}\schemeMatrixBulkFourier(\fourierShift)$, see \cite{matrixpoly09}, which can also be seen as a matrix pencil of degree (at most) $\stencilLeft + \stencilRight$. 
Matrix pencils generate so-called polynomial eigenvalue problems, which generalize the standard eigenvalue problem: find $(\fourierShift, \vectorial{\eigenvectorLetter})$ such that $(\fourierShift\identityMatrix{} - \matricial{A} ) \vectorial{\eigenvectorLetter} = \zeroMatrix{}$.
Observe that $\schemeMatrixBulkFourier(1) = \collisionMatrix \neq \identityMatrix{\numberVelocities}$, which is different from \cite[Equation (2.13)]{coulombel00616497}, and indicates that the \lbm{} scheme as a whole is not ``consistent'' with a system of $\numberVelocities$ first-order-in-time PDEs, but rather with a differential-algebraic system \cite{martinson2003index, debrabant2005convergence}.

\subsection{Stability of the schemes for the Cauchy problem}\label{sec:stabilityCauchy}

Before addressing the stability of boundary conditions, we would like to explain the notion of stability for schemes \strong{without boundaries} and clarify the relations with analogous concepts for Finite Difference schemes.

\begin{definition}[Simple \strong{von Neumann} polynomial]
	A polynomial with complex coefficients is said to be a ``simple \strong{von Neumann}'' polynomial if all its roots are in $\closedUnitDisk$ and those on $\unitCircle$ are simple.
\end{definition}

\begin{definition}[\emph{von Neumann} stability]\label{def:vonNeumannStability}
	The \lbm{} scheme on the infinite domain (\idEst{} \eqref{eq:bulkScheme} for $\indexSpace\in\relatives$) is said to be stable according to \emph{von Neumann} if, for all $\waveNumber\in[-\pi, \pi]$ we have that $\spectrum(\schemeMatrixBulkFourier(e^{i\waveNumber}))\subset \closedUnitDisk$.
\end{definition}

We propose the following definition based on \cite[Definition 1]{coulombel00616497}, which applies to one-step schemes for systems.
\begin{definition}[Stability of the \lbm{} scheme]\label{def:stabLBM}
	The \lbm{} scheme on the infinite domain (\idEst{} \eqref{eq:bulkScheme} for $\indexSpace\in\relatives$) with non-zero initial data is said to be $L^2$ stable if there exists a constant $C>0$ such that for all $\spaceStep>0$, for all initial condition $(\vectorial{\momentDiscrete}_{\indexSpace}^0)_{\indexSpace\in\relatives} \in (\ell^2_{\spaceStep})^{\numberVelocities}$, and for all $\indexTime \in \naturals$
	\begin{equation*}
		\sum_{\indexSpace\in\relatives} \spaceStep|\vectorial{\momentDiscrete}_{\indexSpace}^{\indexTime}|^2 \leq C \sum_{\indexSpace\in\relatives} \spaceStep|\vectorial{\momentDiscrete}^0_{\indexSpace}|^2.
	\end{equation*}
    Here, $|\cdot|$ denotes any norm on $\reals^{\numberVelocities}$.
\end{definition}
Notice that requesting a control from the initial data written on the moments is equivalent to a control with data written on the distribution function, since $\momentMatrix$ is invertible.
We recall \cite[Proposition 1]{coulombel00616497}.
\begin{proposition}[Stability of the \lbm{} scheme]\label{prop:stabLBM}
	The \lbm{} scheme on the infinite domain is $L^2$ stable, according to \Cref{def:stabLBM}, if and only if 
		\begin{equation}\label{eq:L2PowerBoundedness}
			\forall \indexTime \in \naturals, \qquad \forall \waveNumber\in[-\pi, \pi], \qquad |\schemeMatrixBulkFourier(e^{i\waveNumber})^{\indexTime}| \leq \sqrt{C},
		\end{equation}
		with $C$ the same constant as in \Cref{def:stabLBM}.
\end{proposition}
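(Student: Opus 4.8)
The plan is to pass to Fourier variables in space, thereby converting the dynamical inequality of \Cref{def:stabLBM} into a pointwise estimate on the amplification matrix. First I would introduce the semi-discrete Fourier transform: for $(\vectorial{\momentDiscrete}_{\indexSpace})_{\indexSpace\in\relatives}\in(\ell^2_{\spaceStep})^{\numberVelocities}$ set $\hat{\vectorial{\momentDiscrete}}(\waveNumber)\definitionEquality\sum_{\indexSpace\in\relatives}\vectorial{\momentDiscrete}_{\indexSpace}e^{-i\indexSpace\waveNumber}$ for $\waveNumber\in[-\pi,\pi]$, which by Plancherel is an isometry in the sense $\sum_{\indexSpace\in\relatives}\spaceStep|\vectorial{\momentDiscrete}_{\indexSpace}|^2=\tfrac{\spaceStep}{2\pi}\int_{-\pi}^{\pi}|\hat{\vectorial{\momentDiscrete}}(\waveNumber)|^2\,\differential{\waveNumber}$, where $|\cdot|$ is the Euclidean norm (which I may take without loss of generality by equivalence of norms on $\reals^{\numberVelocities}$, only the value of the constant being affected). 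Since the forward shift $\forwardSpaceShift$ becomes multiplication by $e^{i\waveNumber}$, the bulk step \eqref{eq:bulkScheme} reads $\hat{\vectorial{\momentDiscrete}}^{\indexTime+1}(\waveNumber)=\schemeMatrixBulkFourier(e^{i\waveNumber})\hat{\vectorial{\momentDiscrete}}^{\indexTime}(\waveNumber)$, whence by iteration $\hat{\vectorial{\momentDiscrete}}^{\indexTime}(\waveNumber)=\schemeMatrixBulkFourier(e^{i\waveNumber})^{\indexTime}\hat{\vectorial{\momentDiscrete}}^{0}(\waveNumber)$.

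Next, applying Plancherel to both sides of the stability inequality and cancelling the common factor $\spaceStep/2\pi$, the estimate in \Cref{def:stabLBM} becomes equivalent to requiring, for every $\indexTime\in\naturals$, that $\int_{-\pi}^{\pi}|\schemeMatrixBulkFourier(e^{i\waveNumber})^{\indexTime}\hat{\vectorial{\momentDiscrete}}^{0}(\waveNumber)|^2\,\differential{\waveNumber}\leq C\int_{-\pi}^{\pi}|\hat{\vectorial{\momentDiscrete}}^{0}(\waveNumber)|^2\,\differential{\waveNumber}$ for all $\hat{\vectorial{\momentDiscrete}}^{0}\in L^2([-\pi,\pi])^{\numberVelocities}$, the Fourier transform being a bijection onto this space. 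It then remains to show that this family of integral inequalities is equivalent to the pointwise bound \eqref{eq:L2PowerBoundedness} on the operator norm induced by $|\cdot|$. The forward implication is immediate: $\sqrt{C}$-boundedness of $\schemeMatrixBulkFourier(e^{i\waveNumber})^{\indexTime}$ yields $|\schemeMatrixBulkFourier(e^{i\waveNumber})^{\indexTime}\hat{\vectorial{\momentDiscrete}}^0(\waveNumber)|\le\sqrt{C}\,|\hat{\vectorial{\momentDiscrete}}^0(\waveNumber)|$ pointwise, and integrating gives the integral inequality.

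The reverse implication is the main obstacle, and I would handle it by a localization argument. Suppose \eqref{eq:L2PowerBoundedness} failed for some $\indexTime$, say $|\schemeMatrixBulkFourier(e^{i\waveNumber_0})^{\indexTime}|>\sqrt{C}$ at some $\waveNumber_0$. Since $\waveNumber\mapsto\schemeMatrixBulkFourier(e^{i\waveNumber})^{\indexTime}$ is continuous (it is a matrix trigonometric polynomial raised to a power) and the operator norm depends continuously on its argument, the strict inequality persists on a neighborhood $I$ of $\waveNumber_0$ of positive measure. Choosing $\hat{\vectorial{\momentDiscrete}}^{0}$ supported in $I$ and equal, near $\waveNumber_0$, to the unit vector realizing the operator norm of $\schemeMatrixBulkFourier(e^{i\waveNumber_0})^{\indexTime}$, one constructs a test function violating the integral inequality, a contradiction. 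This establishes the equivalence, and hence the proposition; observe that the continuity of the amplification matrix is precisely what allows one to upgrade an almost-everywhere statement to the uniform pointwise bound \eqref{eq:L2PowerBoundedness}.
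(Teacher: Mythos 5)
Your argument is correct, and it is the standard Plancherel proof of this equivalence. Note that the paper does not actually prove this proposition: its ``proof'' consists of recalling \cite[Proposition 1]{coulombel00616497}, so there is no in-paper argument to compare against; what you wrote is precisely the classical reasoning underlying that citation (diagonalization of the shift by the semi-discrete Fourier transform, forward direction by pointwise domination, reverse direction by localization around a point where the operator norm would exceed $\sqrt{C}$). Two small points are worth tightening, neither of which is a genuine gap. First, the assertion that the constant in \eqref{eq:L2PowerBoundedness} is \emph{the same} $C$ as in \Cref{def:stabLBM} is only exact when $|\cdot|$ is the Euclidean norm (or any inner-product norm for which the discrete Parseval identity is an isometry); for an arbitrary norm, as \Cref{def:stabLBM} nominally allows, the two constants differ by norm-equivalence factors---you flag this correctly, and the statement is simply a little loose on this point. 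Second, in the reverse implication the test datum $\hat{\vectorial{\momentDiscrete}}^{0}(\waveNumber)=v_0\,\indicatorFunction{I}(\waveNumber)$ generally inverts to a complex-valued sequence, whereas the moments are real; this is repaired by the usual symmetrization $\hat{\vectorial{\momentDiscrete}}^{0}(\waveNumber)=v_0\,\indicatorFunction{I}(\waveNumber)+\overline{v_0}\,\indicatorFunction{-I}(\waveNumber)$ with $I$ chosen disjoint from $-I$, using that $\schemeMatrixBulkFourier$ has real coefficients so that $\schemeMatrixBulkFourier(e^{-i\waveNumber})=\overline{\schemeMatrixBulkFourier(e^{i\waveNumber})}$ and the two halves contribute equally. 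With these remarks your proof is complete and self-contained, which is arguably more informative than the citation the paper gives.
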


\begin{proposition}[Necessary conditions for the stability of the \lbm{} scheme]\label{prop:necConditions}
	The following are necessary conditions for the stability of the \lbm{} scheme according to \Cref{def:stabLBM}, with the second one being stronger.
	\begin{enumerate}
		\item \Cref{def:vonNeumannStability} is fulfilled, \idEst{} the scheme is \strong{von Neumann}--stable.
		\item For all $\waveNumber\in[-\pi, \pi]$, the eigenvalues of $\schemeMatrixBulkFourier(e^{i\waveNumber})$ are in $\closedUnitDisk$ and those on $\unitCircle$ are semi-simple. This is equivalent to the fact that, for all $\waveNumber\in[-\pi, \pi]$, the minimal polynomial of $\schemeMatrixBulkFourier(e^{i\waveNumber})$ is a simple \strong{von Neumann} polynomial.
	\end{enumerate}
\end{proposition}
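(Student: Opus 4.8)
The plan is to reduce everything to \Cref{prop:stabLBM}, which already recasts $L^2$ stability as the uniform power-boundedness estimate \eqref{eq:L2PowerBoundedness}, and then to argue by contraposition: each failure of a spectral condition will be shown to contradict this uniform bound. Throughout I fix a wavenumber $\waveNumber\in[-\pi,\pi]$, abbreviate $\matricial{A}\definitionEquality\schemeMatrixBulkFourier(e^{i\waveNumber})\in\matrixSpace{\numberVelocities}{\complex}$, and use that all norms on this finite-dimensional space are equivalent, so that \eqref{eq:L2PowerBoundedness} furnishes a bound $|\matricial{A}^{\indexTime}|\leq\sqrt{C}$, uniform in $\indexTime\in\naturals$, for the (complexified) operator norm induced by the chosen vector norm. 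The crucial feature is that $C$ does not depend on $\waveNumber$, so a single offending wavenumber already destroys the bound.

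For the first (weaker) condition I would suppose, for contradiction, that $\matricial{A}$ has an eigenvalue $\timeShiftOperator$ with $|\timeShiftOperator|>1$, and choose a corresponding unit eigenvector $\vectorial{\eigenvectorLetter}$. Then $\matricial{A}^{\indexTime}\vectorial{\eigenvectorLetter}=\timeShiftOperator^{\indexTime}\vectorial{\eigenvectorLetter}$ gives $|\matricial{A}^{\indexTime}|\geq|\timeShiftOperator|^{\indexTime}\to\infty$, violating the uniform estimate. Hence $\spectrum(\matricial{A})\subset\closedUnitDisk$ for every $\waveNumber$, which is exactly \Cref{def:vonNeumannStability}.

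For the second condition I would pass to the Jordan decomposition $\matricial{A}=\matricial{P}\matricial{J}\matricial{P}^{-1}$ and recall that power-boundedness of $\matricial{A}$ is equivalent to that of $\matricial{J}$, hence of each of its blocks. A single Jordan block of size $\multiplicityIndex\geq2$ attached to an eigenvalue $\timeShiftOperator$ has an $\indexTime$-th power whose superdiagonal entry equals $\binom{\indexTime}{1}\timeShiftOperator^{\indexTime-1}=\indexTime\,\timeShiftOperator^{\indexTime-1}$; when $|\timeShiftOperator|=1$ the modulus of this entry is exactly $\indexTime$, forcing $|\matricial{J}^{\indexTime}|\to\infty$ and again contradicting the bound. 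Therefore every eigenvalue on $\unitCircle$ must be semi-simple, while the first step has already confined the whole spectrum to $\closedUnitDisk$; this is precisely the second assertion.

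The equivalence with the minimal polynomial is then pure linear algebra: its roots are exactly the eigenvalues of $\matricial{A}$, and the multiplicity of each root equals the size of the largest Jordan block for that eigenvalue. Thus \emph{all eigenvalues in} $\closedUnitDisk$ translates into \emph{all roots in} $\closedUnitDisk$, and \emph{eigenvalues on} $\unitCircle$ \emph{semi-simple} translates into \emph{roots on} $\unitCircle$ \emph{simple}, i.e.\ into the minimal polynomial being a simple \strong{von Neumann} polynomial. Finally, the second condition is stronger because it asserts everything in the first together with the extra semi-simplicity on $\unitCircle$, so $2\Rightarrow1$ is immediate. I do not expect a genuine obstacle here: the only point needing care is that the blow-up arguments contradict a bound uniform in $\indexTime$, which holds since a single $\waveNumber$ suffices and $C$ is $\waveNumber$-independent; everything else is the textbook Jordan-form computation for power-boundedness of one matrix.
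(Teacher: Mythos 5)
Your proof is correct. The underlying mathematics is the same as the paper's—both arguments ultimately rest on the fact that a single matrix is power bounded iff its spectrum lies in $\closedUnitDisk$ with semi-simple eigenvalues on $\unitCircle$, combined with \Cref{prop:stabLBM} and the observation that a single offending $\waveNumber$ suffices because the constant is $\waveNumber$-uniform. The difference is one of packaging: the paper disposes of both conditions by citing \cite[Corollary 1 and Lemma 3]{coulombel00616497} and then invokes \Cref{lemma:semiSimple} for the reformulation in terms of the minimal polynomial, whereas you give the argument self-contained—the unit-eigenvector blow-up for condition 1, the superdiagonal entry $\indexTime\,\timeShiftOperator^{\indexTime-1}$ of a Jordan block of size $\geq 2$ for condition 2, and an inline re-derivation of the content of \Cref{lemma:semiSimple} (multiplicity in the minimal polynomial equals the size of the largest Jordan block) for the equivalence. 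Your version is more elementary and verifiable on the spot; the paper's is shorter and defers the standard computations to the literature. One small point worth keeping in mind (which you handle correctly): the similarity transform $\matricial{A}=\matricial{P}\matricial{J}\matricial{P}^{-1}$ only transfers power-boundedness up to the conditioning of $\matricial{P}$, but since you argue at a single fixed $\waveNumber$ and only need divergence as $\indexTime\to\infty$, this causes no loss.
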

\begin{remark}[On a reason why $\relaxationParameterLetter_{\indexVelocity} \in {[0, 2]}$ for $\indexVelocity\in\integerInterval{2}{\numberVelocities}$]\label{rem:reasonSChoice}
	We have $\spectrum(\schemeMatrixBulkFourier(1)) = \{1, 1-\relaxationParameterLetter_2, \dots, 1-\relaxationParameterLetter_{\numberVelocities}\}$, thus the first necessary condition from \Cref{prop:necConditions} prescribes $\relaxationParameterLetter_{\indexVelocity}\in[0, 2]$ for $\indexVelocity\in\integerInterval{2}{\numberVelocities}$, as previously proposed without justification in \Cref{sec:numericalSchemes}.
	Otherwise, schemes would be unstable even for space-constant initial data.
\end{remark}
We first state the following useful result, whose proof is in \Cref{app:lemma:semiSimple}, before proving \Cref{prop:necConditions}.
\begin{lemma}\label{lemma:semiSimple}
	Let $N>1$, $\matricial{A}\in\matrixSpace{N}{\complex}$, and $\timeShiftOperator\in\complex$ be an eigenvalue of $\matricial{A}$.
	Then 
	\begin{equation*}
		\timeShiftOperator\text{ is semi-simple} \qquad \Longleftrightarrow\qquad \text{the multiplicity of }\timeShiftOperator\text{ as zero of the minimal polynomial of }\matricial{A}\text{ is one.}
	\end{equation*}
\end{lemma}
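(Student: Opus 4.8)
The plan is to prove the equivalence via the Jordan decomposition of $\matricial{A}$, exploiting the relationship between semi-simplicity and the structure of both the minimal polynomial and the Jordan blocks associated with the eigenvalue $\timeShiftOperator$. First I would recall that $\timeShiftOperator$ is \emph{semi-simple} precisely when its algebraic and geometric multiplicities coincide, equivalently, when every Jordan block of $\matricial{A}$ associated with $\timeShiftOperator$ has size one. The core computational fact I would use is that the minimal polynomial of $\matricial{A}$ factors as $\prod_{j}(\timeShiftOperator - \lambda_j)^{m_j}$, where the $\lambda_j$ are the distinct eigenvalues of $\matricial{A}$ and $m_j$ is the size of the \emph{largest} Jordan block associated with $\lambda_j$.

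For the forward implication, I would assume $\timeShiftOperator$ is semi-simple, so all Jordan blocks for $\timeShiftOperator$ have size one. Then the largest such block has size one, which by the characterization above gives that the multiplicity of $\timeShiftOperator$ as a root of the minimal polynomial is exactly one. For the reverse implication, I would assume the multiplicity in the minimal polynomial is one; then the largest Jordan block for $\timeShiftOperator$ has size one, so \emph{all} blocks for $\timeShiftOperator$ have size one (since none can exceed the largest), whence $\timeShiftOperator$ is semi-simple. In both directions the argument is a direct translation through the Jordan form.

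Alternatively, to keep things self-contained and avoid invoking the full machinery of Jordan forms, I would phrase the proof in terms of generalized eigenspaces. Let $E_{\timeShiftOperator} \definitionEquality \kernel((\matricial{A} - \timeShiftOperator\identityMatrix{N}))$ be the eigenspace and $G_{\timeShiftOperator} \definitionEquality \kernel((\matricial{A} - \timeShiftOperator\identityMatrix{N})^N)$ the generalized eigenspace. Semi-simplicity of $\timeShiftOperator$ is equivalent to $E_{\timeShiftOperator} = G_{\timeShiftOperator}$, that is, to $\kernel((\matricial{A}-\timeShiftOperator\identityMatrix{N})) = \kernel((\matricial{A}-\timeShiftOperator\identityMatrix{N})^2)$ (no nontrivial nilpotent part). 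On the other hand, the minimal polynomial restricted to $G_{\timeShiftOperator}$ is $(\timeShiftOperator-\lambda)^{m}$ where $m$ is the nilpotency index of $(\matricial{A}-\timeShiftOperator\identityMatrix{N})$ on $G_{\timeShiftOperator}$, and the multiplicity of $\timeShiftOperator$ in the minimal polynomial equals $m$. Thus $m = 1$ if and only if $(\matricial{A}-\timeShiftOperator\identityMatrix{N})$ vanishes on $G_{\timeShiftOperator}$, which is exactly the condition $E_{\timeShiftOperator} = G_{\timeShiftOperator}$, i.e.\ semi-simplicity.

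The main obstacle is not genuine difficulty but rather the choice of how much standard linear-algebra machinery to assume: the statement is essentially a restatement of a textbook fact about minimal polynomials and diagonalizability over an invariant subspace. The only subtlety worth stating carefully is the identification of the multiplicity of $\timeShiftOperator$ in the minimal polynomial with the size of the largest Jordan block (equivalently, the nilpotency index on the generalized eigenspace); everything else follows routinely. Since this is elementary, I would expect the author's actual proof in \Cref{app:lemma:semiSimple} to be short and to take one of these two equivalent routes.
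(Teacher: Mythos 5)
Your proof is correct and follows essentially the same route as the paper's: both rest on the standard fact that the multiplicity of $\timeShiftOperator$ in the minimal polynomial equals the size of the largest Jordan block associated with $\timeShiftOperator$, and both identify semi-simplicity with all such blocks having size one (the paper phrases this as equality of algebraic and geometric multiplicities and passes through the inequality $m \leq a - g + 1$, but the content is identical). Your alternative argument via generalized eigenspaces is also valid, though the paper does not take that route.
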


\begin{proof}[Proof of \Cref{prop:necConditions}]
	The first condition is obvious and is \cite[Corollary 1]{coulombel00616497}.
	The second condition is \cite[Lemma 3]{coulombel00616497}, which ensures power boundedness frequency-wise. 
	However, this is not sufficient since \eqref{eq:L2PowerBoundedness} must hold with a uniform constant.
	The equivalence to the condition featuring the minimal polynomial follows from \Cref{lemma:semiSimple}.
\end{proof}

Adapting \cite[Definition 2]{coulombel00616497} to multi-step scalar schemes, we have the following.
\begin{definition}[Stability of the corresponding Finite Difference scheme generated by the characteristic polynomial]\label{def:stabFD}
	Let $\determinant(\timeShiftOperator\identityMatrix{\numberVelocities} - \schemeMatrixBulk) = \timeShiftOperator^{\numberVelocities} + \sum_{\indexFreeOne = \numberVelocities-1-s}^{\numberVelocities-1}\gamma_{\indexFreeOne}\timeShiftOperator^{\indexFreeOne}$ where $s\definitionEquality\cardinality{\{\relaxationParameterLetter_{\indexFreeOne}\, : \,\indexFreeOne\in\integerInterval{2}{\numberVelocities}\,\text{and} \, \relaxationParameterLetter_{\indexFreeOne}\neq 1\}}$ counts the number of moments that do not relax on their equilibrium during \eqref{eq:relaxation}.
    Consider the corresponding Finite Difference scheme generated by $\determinant(\timeShiftOperator\identityMatrix{\numberVelocities} - \schemeMatrixBulk)$, under the form 
    \begin{equation}\label{eq:FDScheme}
        \solutionCauchyProblem_{\indexSpace}^{\indexTime+1} = -\sum_{\indexFreeOne = 0}^{s}\gamma_{\numberVelocities - \indexFreeOne - 1}\solutionCauchyProblem_{\indexSpace}^{\indexTime-\indexFreeOne}, \qquad \indexTime\geq s, \quad \indexSpace\in\relatives.
    \end{equation}
    This scheme is said to be $L^2$ stable if there exists a constant $C>0$ such that for all $\spaceStep>0$, for all initial condition $(\solutionCauchyProblem_{\indexSpace}^{0})_{\indexSpace\in\relatives}, \dots, (\solutionCauchyProblem_{\indexSpace}^{s})_{\indexSpace\in\relatives} \in \ell^2_{\spaceStep}$, and for all $\indexTime \in \naturals$
	\begin{equation*}
		\sum_{\indexSpace\in\relatives} \spaceStep(\solutionCauchyProblem_{\indexSpace}^{\indexTime})^2 \leq C \sum_{\indexFreeOne = 0}^{s} \sum_{\indexSpace\in\relatives} \spaceStep (\solutionCauchyProblem^{\indexFreeOne}_{\indexSpace})^2.
	\end{equation*}
\end{definition}
Notice that if $\momentDiscrete_1$ is obtained by the \lbm{} scheme, it also fulfills \eqref{eq:FDScheme} (up to machine-precision).
Merging \cite[Proposition 2]{coulombel00616497} with \cite[Theorem 4.2.1]{strikwerda2004finite}, we have the following.
\begin{proposition}[Stability of the corresponding Finite Difference scheme generated by the characteristic polynomial]\label{prop:stabFD}
    Denoting $\hat{\matricial{C}}(\fourierShift)$ the companion matrix of $\timeShiftOperator^{s - \numberVelocities + 1}\determinant(\timeShiftOperator\identityMatrix{\numberVelocities} - \schemeMatrixBulkFourier(\fourierShift))$, the corresponding Finite Difference scheme generated by the characteristic polynomial is $L^2$ stable according to \Cref{def:stabFD} if and only if there exists $\tilde{C}>0$ such that 
    \begin{equation*}
        \forall \indexTime \in \naturals, \qquad \forall \waveNumber\in[-\pi, \pi], \qquad |\hat{\matricial{C}}(e^{i\waveNumber})^{\indexTime}| \leq \tilde{C}.
    \end{equation*}
	This is also equivalent to the fact that, for all $\waveNumber\in[-\pi, \pi]$, the characteristic polynomial $\determinant(\timeShiftOperator\identityMatrix{\numberVelocities} - \schemeMatrixBulkFourier(e^{i\waveNumber}))$ be a simple von Neumann polynomial.
\end{proposition}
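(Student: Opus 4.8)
The plan is to reduce $L^2$ stability of \eqref{eq:FDScheme} to a uniform power--boundedness statement for the companion matrix $\hat{\matricial{C}}(\fourierShift)$ by a spatial Fourier transform, exactly as in \Cref{prop:stabLBM}, and then to characterize that power--boundedness through the root structure of $\determinant(\timeShiftOperator\identityMatrix{\numberVelocities} - \schemeMatrixBulkFourier(\fourierShift))$.

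First I would apply a discrete Fourier transform in space. Since the coefficients $\gamma_{\indexFreeOne}$ are Laurent polynomials in the shift $\forwardSpaceShift$, the scalar multi-step recurrence \eqref{eq:FDScheme} decouples frequency by frequency into $\hat{\solutionCauchyProblem}^{\indexTime+1}(\waveNumber) = -\sum_{\indexFreeOne=0}^{s}\gamma_{\numberVelocities-\indexFreeOne-1}(e^{i\waveNumber})\hat{\solutionCauchyProblem}^{\indexTime-\indexFreeOne}(\waveNumber)$, a linear recurrence whose characteristic polynomial is the nontrivial degree-$(s+1)$ factor of $\determinant(\timeShiftOperator\identityMatrix{\numberVelocities}-\schemeMatrixBulkFourier(e^{i\waveNumber}))$, the remaining factor being $\timeShiftOperator^{\numberVelocities-1-s}$, which contributes only the harmless root $\timeShiftOperator=0\in\unitDisk$. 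Collecting the consecutive slices $(\hat{\solutionCauchyProblem}^{\indexTime}, \dots, \hat{\solutionCauchyProblem}^{\indexTime-s})$ into an augmented vector turns this recurrence into the one-step iteration governed by $\hat{\matricial{C}}(e^{i\waveNumber})$. By Parseval, the $\ell^2_{\spaceStep}$ norm of the discrete solution equals, up to the usual $2\pi$ factor, the $L^2$ norm in $\waveNumber$ of the augmented vector, and the norm equivalence on $\complex^{s+1}$ lets me bound the data by the initial augmented vector. Hence stability in the sense of \Cref{def:stabFD} holds if and only if $|\hat{\matricial{C}}(e^{i\waveNumber})^{\indexTime}|$ is bounded by a constant $\tilde{C}$ independent of both $\indexTime$ and $\waveNumber$; this is precisely \cite[Proposition 2]{coulombel00616497}, and the mechanism mirrors \Cref{prop:stabLBM}.

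It remains to match uniform power--boundedness with the simple \strong{von Neumann} condition. For fixed $\waveNumber$, the companion matrix $\hat{\matricial{C}}(e^{i\waveNumber})$ is non-derogatory, so its minimal and characteristic polynomials coincide; by \Cref{lemma:semiSimple}, an eigenvalue on $\unitCircle$ is semi-simple if and only if it is a simple root of the characteristic polynomial. If some root lay in $\neighborhoodInfinity$ the spectral radius would exceed one and $|\hat{\matricial{C}}(e^{i\waveNumber})^{\indexTime}|$ would diverge, while a multiple root on $\unitCircle$ would produce a Jordan block of size $\geq 2$ and force linear-in-$\indexTime$ growth; both contradict power--boundedness, giving necessity of the simple \strong{von Neumann} property for every $\waveNumber$. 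Since the stripped factor $\timeShiftOperator^{\numberVelonumberVelocities-1-s}$, more precisely $\timeShiftOperator^{\numberVelocities-1-s}$, only adds interior zero roots, asking this of the reduced factor or of the full polynomial $\determinant(\timeShiftOperator\identityMatrix{\numberVelocities} - \schemeMatrixBulkFourier(e^{i\waveNumber}))$ is the same. The converse direction is the classical root condition of \cite[Theorem 4.2.1]{strikwerda2004finite}.

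The main obstacle will be the uniformity in $\waveNumber$, precisely the point stressed after \Cref{prop:necConditions}: frequency-wise power--boundedness with a $\waveNumber$-dependent constant is insufficient. To produce a single $\tilde{C}$ I would invoke the Kreiss matrix theorem, translating the task into a uniform resolvent estimate $\sup_{|\timeShiftOperator|>1}(|\timeShiftOperator|-1)\,|(\timeShiftOperator\identityMatrix{} - \hat{\matricial{C}}(e^{i\waveNumber}))^{-1}|\leq K$ with $K$ independent of $\waveNumber$. As the coefficients $\gamma_{\indexFreeOne}(e^{i\waveNumber})$ are trigonometric polynomials, the family $\{\hat{\matricial{C}}(e^{i\waveNumber})\}_{\waveNumber\in[-\pi,\pi]}$ is continuous over a compact set; the delicate step is to check that, under the simple \strong{von Neumann} hypothesis, no two roots on $\unitCircle$ coalesce as $\waveNumber$ varies, so that the Kreiss constant stays finite and is uniformly bounded by compactness. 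This continuity-plus-compactness control of the resolvent near $\unitCircle$ is what upgrades the pointwise root condition to the uniform power bound and closes the equivalence.
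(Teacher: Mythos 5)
Your proposal is correct and follows essentially the same route as the paper, which proves this proposition purely by citation: the first equivalence is \cite[Proposition 2]{coulombel00616497} (Fourier transform, companion-matrix reformulation, Parseval) and the second is \cite[Theorem 4.2.1]{strikwerda2004finite} (uniform power-boundedness of the companion matrix versus the simple von Neumann root condition). Your additional discussion of non-derogatory companion matrices and of uniformity in $\waveNumber$ via the Kreiss matrix theorem simply unpacks the content of those two cited results.
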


\begin{proposition}[Stable Finite Difference entails stable \lbm{}]\label{prop:stableFDDoncStableLBM}
	If the corresponding Finite Difference scheme generated by $\determinant(\timeShiftOperator\identityMatrix{\numberVelocities} - \schemeMatrixBulk)$ is $L^2$ stable according to \Cref{def:stabFD}, then the \lbm{} scheme is $L^2$ stable according to \Cref{def:stabLBM}.
	The converse is false.
\end{proposition}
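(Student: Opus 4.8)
The plan is to reduce both stability notions to \emph{uniform power boundedness} of a matrix symbol, and then transfer the bound from the companion matrix to the amplification matrix. By \Cref{prop:stabLBM}, the \lbm{} scheme is $L^2$ stable if and only if $\sup_{\indexTime\in\naturals}\sup_{\waveNumber\in[-\pi,\pi]}|\schemeMatrixBulkFourier(e^{i\waveNumber})^{\indexTime}|<\infty$, whereas by \Cref{prop:stabFD} the $L^2$ stability of the corresponding Finite Difference scheme furnishes a constant $\tilde{C}>0$ with $|\hat{\matricial{C}}(e^{i\waveNumber})^{\indexTime}|\le\tilde{C}$ for all $\indexTime,\waveNumber$, where $\hat{\matricial{C}}(\fourierShift)$ is the companion matrix of the degree-$(s+1)$ polynomial $P(\timeShiftOperator,\fourierShift)\definitionEquality\timeShiftOperator^{s+1}+\gamma_{\numberVelocities-1}\timeShiftOperator^{s}+\dots+\gamma_{\numberVelocities-1-s}$ obtained by stripping the factor $\timeShiftOperator^{\numberVelocities-1-s}$ from $\determinant(\timeShiftOperator\identityMatrix{\numberVelocities}-\schemeMatrixBulkFourier(\fourierShift))=\timeShiftOperator^{\numberVelocities-1-s}P(\timeShiftOperator,\fourierShift)$ (the coefficients $\gamma_{\indexFreeOne}=\gamma_{\indexFreeOne}(\fourierShift)$ being trigonometric polynomials). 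It therefore suffices to bound the powers of $\schemeMatrixBulkFourier$ uniformly in terms of those of $\hat{\matricial{C}}$.

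The core observation is that the matrix powers $(\schemeMatrixBulkFourier(\fourierShift)^{\indexTime})_{\indexTime}$ obey precisely the recurrence encoded by $\hat{\matricial{C}}(\fourierShift)$. Indeed, Cayley--Hamilton applied to $\schemeMatrixBulkFourier(\fourierShift)$ with its characteristic polynomial $\timeShiftOperator^{\numberVelocities-1-s}P(\timeShiftOperator,\fourierShift)$ gives $\schemeMatrixBulkFourier(\fourierShift)^{\numberVelocities-1-s}P(\schemeMatrixBulkFourier(\fourierShift),\fourierShift)=\zeroMatrix{\numberVelocities}$. Multiplying by arbitrary nonnegative powers of $\schemeMatrixBulkFourier(\fourierShift)$ and writing $\schemeMatrixBulkFourier$ for $\schemeMatrixBulkFourier(\fourierShift)$, one gets for every $\indexTime\ge\numberVelocities-1-s$ the scalar-coefficient recurrence $\schemeMatrixBulkFourier^{\indexTime+s+1}+\gamma_{\numberVelocities-1}\schemeMatrixBulkFourier^{\indexTime+s}+\dots+\gamma_{\numberVelocities-1-s}\schemeMatrixBulkFourier^{\indexTime}=\zeroMatrix{\numberVelocities}$, which is exactly the recurrence governed by $\hat{\matricial{C}}(\fourierShift)$. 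Introducing the stacked state $\vectorial{Y}_{\indexTime}\definitionEquality\transpose{(\schemeMatrixBulkFourier^{\indexTime},\dots,\schemeMatrixBulkFourier^{\indexTime+s})}$, we obtain $\vectorial{Y}_{\indexTime}=(\hat{\matricial{C}}(\fourierShift)^{\indexTime-(\numberVelocities-1-s)}\otimes\identityMatrix{\numberVelocities})\vectorial{Y}_{\numberVelocities-1-s}$ for $\indexTime\ge\numberVelocities-1-s$. In particular $\schemeMatrixBulkFourier(\fourierShift)^{\indexTime}$ is a linear combination, with coefficients equal to entries of $\hat{\matricial{C}}(\fourierShift)^{\indexTime-(\numberVelocities-1-s)}$, of the finitely many fixed powers $\schemeMatrixBulkFourier(\fourierShift)^{\numberVelocities-1-s},\dots,\schemeMatrixBulkFourier(\fourierShift)^{\numberVelocities-1}$.

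From this identity there is a constant $C'>0$, depending only on $s$ and $\numberVelocities$, with $|\schemeMatrixBulkFourier(\fourierShift)^{\indexTime}|\le C'|\hat{\matricial{C}}(\fourierShift)^{\indexTime-(\numberVelocities-1-s)}|\max_{\numberVelocities-1-s\le k\le\numberVelocities-1}|\schemeMatrixBulkFourier(\fourierShift)^{k}|$ for $\indexTime\ge\numberVelocities-1-s$ and $\fourierShift=e^{i\waveNumber}\in\unitCircle$. Since $\schemeMatrixBulkFourier$ is a trigonometric polynomial and $\unitCircle$ is compact, the finitely many powers $\schemeMatrixBulkFourier(e^{i\waveNumber})^{k}$ with $0\le k\le\numberVelocities-1$ are uniformly bounded in $\waveNumber$; combined with the uniform bound $\tilde{C}$ on the powers of $\hat{\matricial{C}}$ from \Cref{prop:stabFD}, this controls $|\schemeMatrixBulkFourier(e^{i\waveNumber})^{\indexTime}|$ uniformly for $\indexTime\ge\numberVelocities-1-s$, the remaining finitely many small powers being uniformly bounded by compactness as well. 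Hence $\sup_{\indexTime,\waveNumber}|\schemeMatrixBulkFourier(e^{i\waveNumber})^{\indexTime}|<\infty$, and \Cref{prop:stabLBM} yields the $L^2$ stability of the \lbm{} scheme. For the converse, it is enough to exhibit a von Neumann power-bounded scheme (hence $L^2$ stable by \Cref{prop:stabLBM}) whose amplification matrix admits, at some $\waveNumber$, a \emph{semi-simple} eigenvalue on $\unitCircle$ of algebraic multiplicity $\ge2$: such a matrix remains power bounded, yet $\determinant(\timeShiftOperator\identityMatrix{\numberVelocities}-\schemeMatrixBulkFourier(e^{i\waveNumber}))$ then has a multiple root on $\unitCircle$, so it fails to be a simple von Neumann polynomial and \Cref{prop:stabFD} precludes $L^2$ stability of the Finite Difference scheme. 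The main obstacle is not the forward implication---once the Cayley--Hamilton recurrence is set up it is mere bookkeeping---but rather making the bound genuinely \emph{uniform} in $\waveNumber$; this is exactly where the compactness of $\unitCircle$ together with the fact that the transfer coefficients come from the single companion matrix $\hat{\matricial{C}}$ (rather than from a $\waveNumber$-dependent diagonalization of $\schemeMatrixBulkFourier$, whose spectral projectors may blow up near eigenvalue crossings) does the work.
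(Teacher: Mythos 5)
Your forward implication is correct, and it takes a genuinely different route from the paper. The paper proves that $\schemeMatrixBulk$ is \emph{geometrically regular} in the sense of Coulombel: it uses \Cref{prop:stabFD} to get simplicity of the unit-circle eigenvalues of $\schemeMatrixBulkFourier(\fourierShift)$, invokes holomorphic-perturbation results (Greenbaum \emph{et al.}) to produce analytic eigenvalue/eigenvector branches, and then concludes by a cited structural theorem. You instead observe that, by Cayley--Hamilton, the matrix powers $\schemeMatrixBulkFourier(\fourierShift)^{\indexTime}$ themselves satisfy the scalar recurrence encoded by the companion matrix $\hat{\matricial{C}}(\fourierShift)$, so each $\schemeMatrixBulkFourier(\fourierShift)^{\indexTime}$ is a linear combination of the finitely many powers $\schemeMatrixBulkFourier(\fourierShift)^{\numberVelocities-1-s},\dots,\schemeMatrixBulkFourier(\fourierShift)^{\numberVelocities-1}$ with coefficients that are entries of $\hat{\matricial{C}}(\fourierShift)^{\indexTime-(\numberVelocities-1-s)}$; the uniform bound then follows from \Cref{prop:stabFD} together with compactness of $\unitCircle$ and continuity of $\fourierShift\mapsto\schemeMatrixBulkFourier(\fourierShift)$. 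This is more elementary and self-contained: it avoids perturbation theory entirely and, as you note, sidesteps the issue of spectral projectors degenerating near eigenvalue crossings. It buys a cleaner proof of the implication; what it does \emph{not} buy is the finer spectral information (holomorphic eigenpairs) that the paper's geometric-regularity argument establishes and reuses elsewhere, e.g.\ in the proof of \Cref{prop:stabD1Q3FourthOrder}.

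The converse, however, is only a strategy, not a proof. You correctly identify the mechanism (a semi-simple multiple eigenvalue on $\unitCircle$ kills simple-von-Neumann-ness of the characteristic polynomial, hence FD stability, by \Cref{prop:stabFD}), but you never exhibit a scheme realizing it, and the hard part is precisely the half you assume: showing that such a scheme \emph{is} $L^2$ stable in the sense of \Cref{def:stabLBM}. Pointwise power boundedness of $\schemeMatrixBulkFourier(e^{i\waveNumber})$ at each $\waveNumber$ (which semi-simplicity gives) does not imply the \emph{uniform} bound \eqref{eq:L2PowerBoundedness} required by \Cref{prop:stabLBM} --- the constant can blow up as $\waveNumber$ approaches the degenerate frequency, which is exactly the projector-blow-up phenomenon you flag at the end of your own argument. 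The paper closes this gap with a concrete example (the fourth-order \lbmScheme{1}{3} scheme of \Cref{sec:D1Q3FourthOrder}) and a nontrivial verification of geometric regularity near the degenerate point via analytic perturbation theory for the semi-simple double eigenvalue. Without an explicit example and that uniformity argument, the claim ``the converse is false'' is not established.
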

\begin{proof}
	Preliminarly note that if the corresponding Finite Difference scheme generated by $\determinant(\timeShiftOperator\identityMatrix{\numberVelocities} - \schemeMatrixBulk)$ is $L^2$ stable, then necessarily \Cref{def:vonNeumannStability} holds for the \lbm{} scheme.
	We want to use \cite[Proposition 3]{coulombel00616497} to conclude.
	To this end, we want to show that $\schemeMatrixBulk$ is ``geometrically regular'' according to \cite[Definition 3]{coulombel00616497}.
	Precisely, $\schemeMatrixBulk$ is geometrically regular if for $\targetFourierShift\in\unitCircle$ and $\targetEigenvalue\in\spectrum(\schemeMatrixBulkFourier(\targetFourierShift))\cap \unitCircle$ with algebraic multiplicity $\alpha$, there exist $\timeShiftOperator_1(\fourierShift), \dots, \timeShiftOperator_{\alpha}(\fourierShift)$ holomorphic in a complex neighborhood $\mathscr{W}$ of $\targetFourierShift$ such that $\timeShiftOperator_1(\targetFourierShift) = \dots = \timeShiftOperator_{\alpha}(\targetFourierShift) = \targetEigenvalue$ and $\determinant(\timeShiftOperator\identityMatrix{\numberVelocities} - \schemeMatrixBulkFourier(\targetFourierShift) = \vartheta(\timeShiftOperator, \fourierShift)\prod_{\indexFreeOne = 1}^{\alpha}(\timeShiftOperator - \timeShiftOperator_{\indexFreeOne}(\fourierShift))$ with $(\timeShiftOperator, \fourierShift)\mapsto \vartheta(\timeShiftOperator, \fourierShift)$ holomorphic in a neighborhood of $(\targetEigenvalue, \targetFourierShift)$ and $\vartheta(\targetEigenvalue, \targetFourierShift)\neq 0$, and there exist vectors $\vectorial{\eigenvectorLetter}_1(\fourierShift), \dots, \vectorial{\eigenvectorLetter}_{\alpha}(\fourierShift)\in\complex^{\numberVelocities}$ holomorphic and linearly independent in $\mathscr{W}$ satisfying $\schemeMatrixBulkFourier (\fourierShift)\vectorial{\eigenvectorLetter}_{\indexFreeOne}(\fourierShift) = \timeShiftOperator_{\indexFreeOne}(\fourierShift) \vectorial{\eigenvectorLetter}_{\indexFreeOne}(\fourierShift)$ for $\indexFreeOne\in\integerInterval{1}{\alpha}$ and $\fourierShift\in\mathscr{W}$.

	Let $\targetFourierShift\in\unitCircle$ and $\targetEigenvalue\in\spectrum(\schemeMatrixBulkFourier(\targetFourierShift))\cap \unitCircle$.
	By \Cref{prop:stabFD}, $\targetEigenvalue$ is simple so we take $\alpha = 1$.
	Since $\schemeMatrixBulkFourier(\fourierShift)$ is analytic on $\puncturedPlane$, \cite[Theorem 1]{greenbaum2020first} ensures that it exists $\timeShiftOperator_1(\fourierShift)$ holomorphic in a neighborhood $\mathscr{W}$ of $\targetFourierShift$, such that $\timeShiftOperator_1(\targetFourierShift) = \targetEigenvalue$.
	Moreover, \cite[Theorem 2]{greenbaum2020first} entails that it exists a holomorphic vector-valued function $\vectorial{\eigenvectorLetter}_1(\fourierShift)$ on $\mathscr{W}$ such that $\schemeMatrixBulkFourier(\fourierShift)\vectorial{\eigenvectorLetter}_1(\fourierShift) = \timeShiftOperator_1(\fourierShift)\vectorial{\eigenvectorLetter}_1(\fourierShift)$ on $\mathscr{W}$.
	Now let $\timeShiftOperator_2, \dots, \timeShiftOperator_{\numberVelocities}\in \spectrum(\schemeMatrixBulkFourier(\targetFourierShift))\smallsetminus\{\targetEigenvalue \}$. Again by \cite[Theorem 1]{greenbaum2020first}, there exist $\timeShiftOperator_2(\fourierShift), \dots, \timeShiftOperator_{\numberVelocities}(\fourierShift)$ holomorphic functions in $\mathscr{W}$ (one can shrink $\mathscr{W}$ around $\targetFourierShift$ if needed) such that $\timeShiftOperator_{\indexFreeOne}(\targetFourierShift) = \timeShiftOperator_{\indexFreeOne}$ for $\indexFreeOne\in\integerInterval{2}{\numberVelocities}$.
	We thus set 
	\begin{equation*}
		\vartheta(\timeShiftOperator, \fourierShift)= \prod_{\indexFreeOne = 2}^{\numberVelocities} (\timeShiftOperator - \timeShiftOperator_{\indexFreeOne}(\fourierShift))
	\end{equation*}
	which is holomorphic in a neighborhood of $(\targetEigenvalue, \targetFourierShift)$ and such that $\vartheta(\targetEigenvalue, \targetFourierShift) = \prod_{\indexFreeOne = 2}^{\numberVelocities} (\targetEigenvalue - \timeShiftOperator_{\indexFreeOne})\neq 0$.
	The function $\vartheta(\timeShiftOperator, \fourierShift)$ does the job at establishing the identity $\determinant(\timeShiftOperator\identityMatrix{\numberVelocities} - \schemeMatrixBulkFourier(\fourierShift)) = \vartheta(\timeShiftOperator, \fourierShift) (\timeShiftOperator -\timeShiftOperator_1(\fourierShift))$ since polynomials with complex coefficients split on $\complex$.
	All these facts show that $\schemeMatrixBulk$ is geometrically regular and allow concluding.

	The converse is false, \strong{e.g.} the scheme presented in \cite{bellotti:hal-04358349} and \Cref{sec:D1Q3FourthOrder}.
	Issues are generally due to the fact that semi-simple eigenvalues of $\schemeMatrixBulk$ are not such for the companion matrix of $\determinant(\timeShiftOperator\identityMatrix{\numberVelocities} - \schemeMatrixBulk)$.
\end{proof}

\subsection{Characteristic equation and its roots}\label{sec:charEqAndRoots}

We shall see that a pivotal concept in the study of the boundary value problem is the ``\strong{characteristic equation}''
\begin{equation}\label{eq:characteristicEquation}
	\determinant(\timeShiftOperator\identityMatrix{\numberVelocities} - \schemeMatrixBulkFourier(\fourierShift)) = \sum_{\indexFreeOne = -\stencilLeftCharacteristic}^{\stencilRightCharacteristic} \coefficientCharEquationInFourier_{\indexFreeOne}(\timeShiftOperator) \fourierShift^{\indexFreeOne}= \timeShiftOperator^{\numberVelocities} - \trace(\schemeMatrixBulkFourier(\fourierShift))\timeShiftOperator^{\numberVelocities - 1} + \dots + (-1)^{\numberVelocities}\determinant(\schemeMatrixBulkFourier(\fourierShift)) = 0,
\end{equation}
which can be seen as an equation on $\fourierShift$ with $\timeShiftOperator$ as a parameter.
The coefficients $\coefficientCharEquationInFourier_{\indexFreeOne}(\timeShiftOperator)$ are polynomials in $\timeShiftOperator$ of degree at most $\numberVelocities$ for $\indexFreeOne = 0$ and $\numberVelocities - 1$ otherwise.
Here, $\stencilLeftCharacteristic$ (respectively $\stencilRightCharacteristic$) is the stencil of the characteristic equation to the left (respectively, to the right), and we assume that $\coefficientCharEquationInFourier_{-\stencilLeftCharacteristic} \not \equiv 0$ and $\coefficientCharEquationInFourier_{\stencilRightCharacteristic} \not \equiv 0$.
In general, $\stencilLeftCharacteristic\neq \stencilLeft$ and $\stencilRightCharacteristic\neq \stencilRight$, even if we shall focus on cases where equality holds.
Notwithstanding, $\stencilLeftCharacteristic$ and $\stencilRightCharacteristic$ are difficult to determine abstractly due to the \strong{characteristic} (or \strong{non-monic} from the standpoint of matrix polynomials, \confer{} \cite{matrixpoly09}) nature of the problem.
	\begin{example}\label{ex:D1Q2Char}
		To provide an example of \eqref{eq:characteristicEquation}, consider the \lbmScheme{1}{2} scheme to be introduced in \Cref{sec:D1Q2}, with $\relaxationParameterLetter_2 = \tfrac{3}{2}$ and $\courantNumber=-\tfrac{1}{2}$.
		In this case, we have $\stencilLeftCharacteristic=\stencilRightCharacteristic = 1$, and \eqref{eq:characteristicEquation} reads $\tfrac{\timeShiftOperator}{8}\fourierShift^{-1} - \tfrac{1}{2}+\timeShiftOperator^2 - \tfrac{5\timeShiftOperator}{8}\fourierShift = 0$.
		For $\relaxationParameterLetter_2 = \tfrac{4}{3}$ and $\courantNumber=-\tfrac{1}{2}$, we have $\stencilLeftCharacteristic = 0$ and $\stencilRightCharacteristic = 1$, with  \eqref{eq:characteristicEquation} given by $- \tfrac{1}{3}+\timeShiftOperator^2 - \tfrac{2\timeShiftOperator}{3}\fourierShift = 0$.
	\end{example}

Using the matrix-determinant lemma \cite[Equation (0.8.5.11)]{horn2012matrix} and the formula for the adjugate of a diagonal matrix, we obtain 
\begin{equation}\label{eq:traceAndDeterminant}
	\trace(\schemeMatrixBulkFourier(\fourierShift)) = \sum_{\indexFreeOne = -\stencilLeft}^{\stencilRight} \trace(\schemeMatrixBulkByPower_{\indexFreeOne})\fourierShift^{\indexFreeOne} \qquad\text{and}\qquad 
	\determinant(\schemeMatrixBulkFourier(\fourierShift)) = \Bigl ( \prod_{\indexFreeOne = 2}^{\numberVelocities}(1-\relaxationParameterLetter_{\indexFreeOne})\Bigr ) \Bigl ( \prod_{\indexVelocity = 1}^{\numberVelocities}\fourierShift^{\indexVelocity}\Bigr ).
\end{equation}
This allows to grasp that, except in the pathological cases where $\trace(\schemeMatrixBulkByPower_{-\stencilLeft})$ or $\trace(\schemeMatrixBulkByPower_{\stencilRight})$ vanish, one normally has $\stencilLeftCharacteristic \geq \stencilLeft$ and $\stencilRightCharacteristic\geq \stencilRight$.

\begin{proposition}\label{prop:maximumStencilCharacteristics}
	Let $\numberVelocities = 2, 3, \dots$, $\momentMatrix \in \linearGroup{\numberVelocities}{\baseField}$, $\matricial{K} \in \matrixSpace{\numberVelocities}{\baseField}$, and $\delta_1, \dots, \delta_{\numberVelocities} \in \relatives$.
	Then, $L_{\timeShiftOperator}(\fourierShift)=\determinant(\timeShiftOperator\identityMatrix{\numberVelocities} -\momentMatrix \diagonalMatrix(\fourierShift^{\delta_1}, \dots, \fourierShift^{\delta_{\numberVelocities}})\momentMatrix^{-1}\matricial{K})$ is a Laurent polynomial in $\fourierShift$ with 
	\begin{equation*}
		\min\degree(L_{\timeShiftOperator}) \geq \sum_{\delta_{\indexVelocity}<0}\delta_{\indexVelocity} \qquad \text{and} \qquad \max\degree(L_{\timeShiftOperator}) \leq  \sum_{\delta_{\indexVelocity}>0}\delta_{\indexVelocity}.
	\end{equation*}
\end{proposition}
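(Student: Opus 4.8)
The plan is to strip the $\fourierShift$-dependence out of everything except a single diagonal factor by a similarity transformation, and then to read the admissible $\fourierShift$-exponents directly off a principal-minor expansion of the characteristic polynomial.

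First I would set $\matricial{D}(\fourierShift)\definitionEquality\diagonalMatrix(\fourierShift^{\delta_1},\dots,\fourierShift^{\delta_{\numberVelocities}})$ and conjugate the matrix $\momentMatrix\matricial{D}(\fourierShift)\momentMatrix^{-1}\matricial{K}$ by $\momentMatrix$:
\[
	\momentMatrix^{-1}\bigl(\momentMatrix\matricial{D}(\fourierShift)\momentMatrix^{-1}\matricial{K}\bigr)\momentMatrix = \matricial{D}(\fourierShift)\,\tilde{\matricial{K}}, \qquad \tilde{\matricial{K}}\definitionEquality\momentMatrix^{-1}\matricial{K}\momentMatrix\in\matrixSpace{\numberVelocities}{\baseField}.
\]
Since the characteristic polynomial is a similarity invariant, $L_{\timeShiftOperator}(\fourierShift)=\determinant(\timeShiftOperator\identityMatrix{\numberVelocities}-\matricial{D}(\fourierShift)\tilde{\matricial{K}})$, where $\tilde{\matricial{K}}$ is now constant in $\fourierShift$ and the entire $\fourierShift$-dependence sits in the diagonal factor $\matricial{D}(\fourierShift)$. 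This is the decisive simplification.

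Next I would expand in powers of $\timeShiftOperator$ through the elementary symmetric functions of the eigenvalues, equivalently the sums of principal minors,
\[
	L_{\timeShiftOperator}(\fourierShift)=\sum_{\multiplicityIndex=0}^{\numberVelocities}(-1)^{\multiplicityIndex}\,e_{\multiplicityIndex}\bigl(\matricial{D}(\fourierShift)\tilde{\matricial{K}}\bigr)\,\timeShiftOperator^{\numberVelocities-\multiplicityIndex},
\]
where $e_{\multiplicityIndex}(\cdot)$ is the sum of all $\multiplicityIndex\times\multiplicityIndex$ principal minors. The key point is that for any $S\subseteq\integerInterval{1}{\numberVelocities}$ with $\cardinality{S}=\multiplicityIndex$, the $(\indexVelocity,j)$-entry of the corresponding principal submatrix of $\matricial{D}(\fourierShift)\tilde{\matricial{K}}$ equals $\fourierShift^{\delta_{\indexVelocity}}\tilde{K}_{\indexVelocity j}$ for $\indexVelocity,j\in S$; factoring $\fourierShift^{\delta_{\indexVelocity}}$ out of each row $\indexVelocity\in S$ gives
\[
	\determinant\bigl((\matricial{D}(\fourierShift)\tilde{\matricial{K}})_{S,S}\bigr)=\Bigl(\prod_{\indexVelocity\in S}\fourierShift^{\delta_{\indexVelocity}}\Bigr)\determinant(\tilde{\matricial{K}}_{S,S}),
\]
a constant multiple of the single monomial $\fourierShift^{\sum_{\indexVelocity\in S}\delta_{\indexVelocity}}$. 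In particular $L_{\timeShiftOperator}$ is a finite combination of such monomials, hence a Laurent polynomial in $\fourierShift$ as claimed.

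Finally, every monomial occurring in $L_{\timeShiftOperator}(\fourierShift)$ carries an exponent of the form $\sum_{\indexVelocity\in S}\delta_{\indexVelocity}$, and for any $S$ one has $\sum_{\delta_{\indexVelocity}<0}\delta_{\indexVelocity}\leq\sum_{\indexVelocity\in S}\delta_{\indexVelocity}\leq\sum_{\delta_{\indexVelocity}>0}\delta_{\indexVelocity}$, since including a negative index or dropping a positive one only lowers the sum (and symmetrically for the upper bound), the extremes being realized by $S=\{\indexVelocity:\delta_{\indexVelocity}<0\}$ and $S=\{\indexVelocity:\delta_{\indexVelocity}>0\}$. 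Taking the smallest and largest exponent with nonzero coefficient yields $\min\degree(L_{\timeShiftOperator})\geq\sum_{\delta_{\indexVelocity}<0}\delta_{\indexVelocity}$ and $\max\degree(L_{\timeShiftOperator})\leq\sum_{\delta_{\indexVelocity}>0}\delta_{\indexVelocity}$. I anticipate no genuine obstacle: once the similarity reduction is in place, the rest is combinatorial bookkeeping. The only subtlety worth flagging is that the statement is a pair of inequalities rather than equalities, precisely because the scalar coefficients $\determinant(\tilde{\matricial{K}}_{S,S})$ may vanish and cause cancellations that shrink the actual degree range.
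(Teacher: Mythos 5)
Your proof is correct, and it takes a genuinely different and noticeably lighter route than the paper's. The paper first reduces to the case $\matricial{K} \in \linearGroup{\numberVelocities}{\baseField}$ by a density argument, rewrites $L_{\timeShiftOperator}$ as $(-1)^{\numberVelocities}\determinant(\matricial{K})\determinant(\diagonalMatrix(\fourierShift^{\delta_1},\dots,\fourierShift^{\delta_{\numberVelocities}}) - \timeShiftOperator\momentMatrix^{-1}\matricial{K}^{-1}\momentMatrix)$, and then runs a strong induction on $\sum_{\delta_{\indexVelocity}<0}\delta_{\indexVelocity}$, peeling off one negative exponent at a time with the matrix-determinant lemma and controlling the resulting adjugate term as a minor of size $\numberVelocities-1$. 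You instead conjugate by $\momentMatrix$ to isolate all the $\fourierShift$-dependence in a single diagonal factor and then invoke the principal-minors expansion of the characteristic polynomial, which identifies every $\fourierShift$-monomial with a subset $S\subseteq\integerInterval{1}{\numberVelocities}$ and exponent $\sum_{\indexVelocity\in S}\delta_{\indexVelocity}$; the bounds are then immediate. Your argument buys three things: it needs no invertibility of $\matricial{K}$ and hence no density/limiting step (which the paper itself flags as a source of possible non-sharpness); it replaces the induction by a one-line combinatorial observation; and it actually yields strictly more information, namely the complete list of admissible exponents (one per subset $S$ with $\determinant(\tilde{\matricial{K}}_{S,S})\neq 0$), from which the sharpness example in the paper's remark also follows at a glance. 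The paper's inductive machinery is closer in spirit to the adjugate manipulations used elsewhere in the article, but for this particular statement your approach is the cleaner one. Your closing caveat — that the statement is an inequality because the coefficients $\determinant(\tilde{\matricial{K}}_{S,S})$ may vanish — is exactly the right thing to flag.
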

\begin{remark}[Sharpness of \Cref{prop:maximumStencilCharacteristics}]
	Take $\momentMatrix  = \matricial{K} = \identityMatrix{\numberVelocities}$, so that we obtain 
	\begin{align*}
		L_{\timeShiftOperator}(\fourierShift) &= \Bigl ( \prod_{\delta_{\indexVelocity}<0} (\timeShiftOperator - \fourierShift^{\delta_{\indexVelocity}})\Bigr ) \Bigl ( \prod_{\delta_{\indexVelocity}=0} (\timeShiftOperator - 1)\Bigr ) \Bigl ( \prod_{\delta_{\indexVelocity}>0} (\timeShiftOperator - \fourierShift^{\delta_{\indexVelocity}})\Bigr )\\
		&= (-1)^{\cardinality{ \{ \delta_{\indexVelocity}>0 \} }} \timeShiftOperator^{\cardinality{ \{ \delta_{\indexVelocity}<0 \} }} \Bigl ( \prod_{\delta_{\indexVelocity}=0} (\timeShiftOperator - 1)\Bigr ) \fourierShift^{\sum_{\delta_{\indexVelocity}>0}\delta_{\indexVelocity}} + \text{lower-orders in }\fourierShift.
	\end{align*}
	This shows that the bounds  of \Cref{prop:maximumStencilCharacteristics} are sharp.
\end{remark}

Taking $\delta_{\indexVelocity} = -\dimensionlessDiscreteVelocityLetter_{\indexVelocity}$ for $\indexVelocity \in \integerInterval{1}{\numberVelocities}$ and $\matricial{K} = \identityMatrix{\numberVelocities}  + \diagonalMatrix(\relaxationParameterLetter_1, \dots, \relaxationParameterLetter_{\numberVelocities}) (\equilibriumVector\transpose{\canonicalBasisVector{1}} - \identityMatrix{\numberVelocities})$ as the relaxation matrix in \Cref{prop:maximumStencilCharacteristics} (which is proved in \Cref{app:prop:maximumStencilCharacteristics}) gives the following result.
\begin{corollary}[Degrees of the characteristic equation in $\fourierShift$, \idEst{} in space]\label{cor:stencilFDBounds}
	For the minimal (respectively maximal) degree $\stencilLeftCharacteristic$ (respectively $\stencilRightCharacteristic$) of \eqref{eq:characteristicEquation} in $\fourierShift$, we have the bounds 
	\begin{equation*}
		\stencilLeftCharacteristic \leq \sum_{\dimensionlessDiscreteVelocityLetter_{\indexVelocity}>0} \dimensionlessDiscreteVelocityLetter_{\indexVelocity} \qquad \text{and}\qquad \stencilRightCharacteristic \leq - \sum_{\dimensionlessDiscreteVelocityLetter_{\indexVelocity}<0} \dimensionlessDiscreteVelocityLetter_{\indexVelocity}.
	\end{equation*}
\end{corollary}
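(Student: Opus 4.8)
The plan is to recognize this Corollary as a direct specialization of \Cref{prop:maximumStencilCharacteristics} applied to the amplification matrix, the only work being a careful translation between two competing sign conventions. First I would read off from the definition of $\schemeMatrixBulk$ in \eqref{eq:matrixByPowerBulk}, upon replacing $\forwardSpaceShift$ by $\fourierShift$ and collapsing the sum of rank-one projectors into a diagonal, that
\[
	\schemeMatrixBulkFourier(\fourierShift) = \momentMatrix \, \diagonalMatrix(\fourierShift^{-\dimensionlessDiscreteVelocityLetter_1}, \dots, \fourierShift^{-\dimensionlessDiscreteVelocityLetter_{\numberVelocities}}) \, \momentMatrix^{-1}\collisionMatrix .
\]
This is exactly the matrix featuring in the Proposition once one sets $\delta_{\indexVelocity} = -\dimensionlessDiscreteVelocityLetter_{\indexVelocity}$ and takes the generic matrix there to be the relaxation matrix $\collisionMatrix = \identityMatrix{\numberVelocities} + \diagonalMatrix(\relaxationParameterLetter_1, \dots, \relaxationParameterLetter_{\numberVelocities})(\equilibriumVector\transpose{\canonicalBasisVector{1}} - \identityMatrix{\numberVelocities})$. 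With these choices the Laurent polynomial $L_{\timeShiftOperator}(\fourierShift)$ of the Proposition coincides verbatim with the characteristic polynomial $\determinant(\timeShiftOperator\identityMatrix{\numberVelocities} - \schemeMatrixBulkFourier(\fourierShift))$ of \eqref{eq:characteristicEquation}, so the two degree bounds furnished by \Cref{prop:maximumStencilCharacteristics} transfer immediately.

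Next I would invoke those bounds, namely $\min\degree(L_{\timeShiftOperator}) \geq \sum_{\delta_{\indexVelocity}<0}\delta_{\indexVelocity}$ and $\max\degree(L_{\timeShiftOperator}) \leq \sum_{\delta_{\indexVelocity}>0}\delta_{\indexVelocity}$, and unwind the substitution $\delta_{\indexVelocity} = -\dimensionlessDiscreteVelocityLetter_{\indexVelocity}$. The condition $\delta_{\indexVelocity}<0$ is equivalent to $\dimensionlessDiscreteVelocityLetter_{\indexVelocity}>0$, and $\sum_{\delta_{\indexVelocity}<0}\delta_{\indexVelocity} = -\sum_{\dimensionlessDiscreteVelocityLetter_{\indexVelocity}>0}\dimensionlessDiscreteVelocityLetter_{\indexVelocity}$; symmetrically $\delta_{\indexVelocity}>0 \iff \dimensionlessDiscreteVelocityLetter_{\indexVelocity}<0$ and $\sum_{\delta_{\indexVelocity}>0}\delta_{\indexVelocity} = -\sum_{\dimensionlessDiscreteVelocityLetter_{\indexVelocity}<0}\dimensionlessDiscreteVelocityLetter_{\indexVelocity}$.

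Finally I would reconcile these with the indexing convention of \eqref{eq:characteristicEquation}, where the expansion runs from $-\stencilLeftCharacteristic$ up to $\stencilRightCharacteristic$, so that $\min\degree(L_{\timeShiftOperator}) = -\stencilLeftCharacteristic$ and $\max\degree(L_{\timeShiftOperator}) = \stencilRightCharacteristic$. Substituting, the inequality $-\stencilLeftCharacteristic \geq -\sum_{\dimensionlessDiscreteVelocityLetter_{\indexVelocity}>0}\dimensionlessDiscreteVelocityLetter_{\indexVelocity}$ rearranges to $\stencilLeftCharacteristic \leq \sum_{\dimensionlessDiscreteVelocityLetter_{\indexVelocity}>0}\dimensionlessDiscreteVelocityLetter_{\indexVelocity}$, while $\stencilRightCharacteristic \leq -\sum_{\dimensionlessDiscreteVelocityLetter_{\indexVelocity}<0}\dimensionlessDiscreteVelocityLetter_{\indexVelocity}$ is read off directly, which is the claim. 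I do not expect any genuine obstacle here, since all the mathematical content resides in \Cref{prop:maximumStencilCharacteristics}; the only thing demanding care is not conflating the two minus signs in play—the one relating $\delta_{\indexVelocity}$ to $\dimensionlessDiscreteVelocityLetter_{\indexVelocity}$ and the one hidden in the lower limit $-\stencilLeftCharacteristic$ of the Laurent expansion. As a sanity check on the directions of the inequalities, I would compare against the diagonal special case of the sharpness remark, where the extremal top-degree monomial $\fourierShift^{\sum_{\delta_{\indexVelocity}>0}\delta_{\indexVelocity}}$ is exhibited explicitly and saturates the $\max\degree$ bound.
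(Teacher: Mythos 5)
Your proposal is correct and is exactly the paper's argument: the corollary is obtained by specializing \Cref{prop:maximumStencilCharacteristics} with $\delta_{\indexVelocity} = -\dimensionlessDiscreteVelocityLetter_{\indexVelocity}$ and $\matricial{K} = \collisionMatrix$, which the paper states in the single sentence preceding the corollary. Your sign bookkeeping (identifying $\min\degree = -\stencilLeftCharacteristic$, $\max\degree = \stencilRightCharacteristic$ and flipping the inequalities accordingly) is accurate and merely spells out what the paper leaves implicit.
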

This result proves a fact that, although systematically observed in \cite{bellotti2022finite, bellotti2023truncation, bellotti2024initialisation, bellotti:hal-04358349}, has never been rigorously quantified and proved: once turning \lbm{} scheme into Finite Difference using the characteristic polynomial (\confer{} \Cref{def:stabFD}), the spatial stencil (degree in $\fourierShift$ in \eqref{eq:characteristicEquation}) of the scheme remains ``compact'' despite a potentially large time stencils (degree in $\timeShiftOperator$ in \eqref{eq:characteristicEquation}) $\propto \numberVelocities$, see \Cref{fig:domainOfDependence}.
The domain of dependence is in general not a triangle as misleadingly depicted in \cite{bellotti2022finite} and the space stencil not proportional to $\numberVelocities\times\max_{\indexVelocity\in\integerInterval{1}{\numberVelocities}}|\dimensionlessDiscreteVelocityLetter_{\indexVelocity}|$.
Indeed, notice that trivially $\sum_{\dimensionlessDiscreteVelocityLetter_{\indexVelocity}>0} \dimensionlessDiscreteVelocityLetter_{\indexVelocity}\leq \numberVelocities\times\max_{\indexVelocity\in\integerInterval{1}{\numberVelocities}}|\dimensionlessDiscreteVelocityLetter_{\indexVelocity}|$, and in general this is a strict inequality.
\Cref{cor:stencilFDBounds} would be false---and stencils of the Finite Difference counterpart potentially very large---if the transport phase of the \lbm{} scheme were not diagonal in some basis (the one of the distribution function) independent of $\fourierShift$.

\begin{figure}[h]
	\begin{center}
		\begin{tikzpicture}
			\draw[->] (-4,0) -- (4,0) node[right] {$\spaceVariable$};
			\draw[->] (0,-0.2) -- (0,4) node[above] {$\timeVariable$};

			\draw[dashed] (-4*0.7,-0.25) node[below] {$-\stencilLeftCharacteristic = -4$} -- (-4*0.7,3.5);
			\draw[dashed] ( 4*0.7,-0.25) node[below] {$\stencilRightCharacteristic = 4$}  -- ( 4*0.7,3.5);

			\foreach \x in {-5, ...,5}
			  \draw (\x*0.7,0.1) -- (\x*0.7,-0.1);
	
			\node at (-1*0.7, -0.5) {$\spaceGridPoint{\indexSpace-1}$};
			\node at (0, -0.5) {$\spaceGridPoint{\indexSpace}$};
			\node at ( 1*0.7, -0.5) {$\spaceGridPoint{\indexSpace+1}$};
	
			\foreach \y in {1,...,8}
			  \draw (0.1,\y*0.4) -- (-0.1,\y*0.4);
	
			\foreach \x in {-4,...,4}
				\foreach \y in {2,...,6}
					\fill[color=RoyalBlue](\x*0.7,\y*0.4) circle (2pt);
			\foreach \x in {-2,...,2}
				\fill[color=RoyalBlue](\x*0.7,7*0.4) circle (2pt);
			\fill[color=RoyalBlue](0*0.7,1*0.4) circle (2pt) node[right, color=black] {$\timeGridPoint{\indexTime - 6}$};
			\fill[color=OrangeRed](0*0.7,8*0.4) circle (2pt) node[right, color=black] {$\timeGridPoint{\indexTime + 1}$};
	
			\draw[->] (5*0.7,7*0.4) node[right] {``$\trace(\schemeMatrixBulkFourier(\fourierShift))$'' step, \confer{} \eqref{eq:characteristicEquation}} -- (2.3*0.7,7*0.4) ;
			\draw[->] (5*0.7,1*0.4) node[right] {``$\determinant(\schemeMatrixBulkFourier(\fourierShift))$'' step, \confer{} \eqref{eq:characteristicEquation}} -- (2.3*0.7,1*0.4) ;
	
		  \end{tikzpicture}
	\end{center}\caption{\label{fig:domainOfDependence}Domain of dependence (blue dots) in the characteristic equation for a (science fiction) scheme with $\numberVelocities = 7$ discrete velocities equal to $0, \pm 1, \pm 1$, and $\pm 2$.}
\end{figure}

Note that $\stencilLeftCharacteristic$ and $\stencilRightCharacteristic$ might depend on the values of the relaxation parameters--equilibria, see \Cref{ex:D1Q2Char} and \Cref{sec:D1Q2}.
This illustrates a peculiar feature of \lbm{} schemes: the number of grid points where boundary conditions are needed at kinetic level may not match the width of the stencil in the characteristic equation, because the latter also takes the relaxation phase into account.
Under the assumption that $\coefficientCharEquationInFourier_{-\stencilLeftCharacteristic} \not \equiv 0$ and $\coefficientCharEquationInFourier_{\stencilRightCharacteristic} \not \equiv 0$, we can re-use \cite[Lemma 1]{coulombel2009stability} applied to \eqref{eq:characteristicEquation} with some modifications.
These modifications have to take into account the fact that $\coefficientCharEquationInFourier_{-\stencilLeftCharacteristic}(\timeShiftOperator)$ or $\coefficientCharEquationInFourier_{\stencilRightCharacteristic}(\timeShiftOperator)$  may vanish for some $\timeShiftOperator\in\closedNeighborhoodInfinity$, going against \cite[Assumption 1]{coulombel2009stability} applied to these two coefficients.
\begin{lemma}[Hersh]\label{lemma:Hersh}
	Assume that $\coefficientCharEquationInFourier_{-\stencilLeftCharacteristic} \not \equiv 0$ and $\coefficientCharEquationInFourier_{\stencilRightCharacteristic} \not \equiv 0$.
	Let the \lbm{} scheme be \emph{von Neumann} stable according to \Cref{def:vonNeumannStability}, then no root $\fourierShift (\timeShiftOperator)$ of \eqref{eq:characteristicEquation} is such that $\fourierShift (\timeShiftOperator)\in\unitCircle$ for $\timeShiftOperator\in\neighborhoodInfinity$.
	Moreover, the number of roots $\fourierShift (\timeShiftOperator)$ of \eqref{eq:characteristicEquation} in $\unitDisk$, called ``\strong{stable}'' (respectively in $\neighborhoodInfinity$, called ``\strong{unstable}'') for $\timeShiftOperator\in\neighborhoodInfinity$  is equal to $\stencilLeftCharacteristic$ (respectively $\stencilRightCharacteristic$), except at a finite number of isolated values of $\timeShiftOperator$, where some stable roots can be thought to equal zero and some unstable roots to equal $\infty$, by continuity in $\timeShiftOperator$.

	Furthermore, assuming that $\stencilLeft\geq 1$ (the scheme has at least one strictly right-going discrete velocity) and that we face a symmetric velocity set (for every $\dimensionlessDiscreteVelocityLetter_{\indexVelocity}\neq 0$, there exists $\tilde{\indexVelocity}\in\integerInterval{1}{\numberVelocities}$ such that $\dimensionlessDiscreteVelocityLetter_{\tilde{\indexVelocity}} = - \dimensionlessDiscreteVelocityLetter_{\indexVelocity}$), the number of these isolated (singular) points is at most $\numberVelocities-2$ if $\stencilLeftCharacteristic = \stencilLeft$, and at most $\numberVelocities-3$ if $\stencilLeftCharacteristic > \stencilLeft$.
\end{lemma}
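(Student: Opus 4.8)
The plan is to establish the three assertions in turn, each leaning on \Cref{def:vonNeumannStability} and the structural facts already proven.

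\textbf{First assertion (no roots on $\unitCircle$ over $\neighborhoodInfinity$).} I would argue by contradiction. Suppose $\fourierShift_{\star}\in\unitCircle$ solves \eqref{eq:characteristicEquation} for some $\timeShiftOperator_{\star}\in\neighborhoodInfinity$. Then $\determinant(\timeShiftOperator_{\star}\identityMatrix{\numberVelocities}-\schemeMatrixBulkFourier(\fourierShift_{\star}))=0$ says exactly that $\timeShiftOperator_{\star}\in\spectrum(\schemeMatrixBulkFourier(\fourierShift_{\star}))$. Writing $\fourierShift_{\star}=e^{i\waveNumber}$ and invoking von Neumann stability, $\spectrum(\schemeMatrixBulkFourier(e^{i\waveNumber}))\subset\closedUnitDisk$, which contradicts $|\timeShiftOperator_{\star}|>1$.

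\textbf{Second assertion (the $(\stencilLeftCharacteristic,\stencilRightCharacteristic)$ count).} First I multiply \eqref{eq:characteristicEquation} by $\fourierShift^{\stencilLeftCharacteristic}$ to obtain a genuine polynomial in $\fourierShift$ of degree $\stencilLeftCharacteristic+\stencilRightCharacteristic$ (exactly this degree when $\coefficientCharEquationInFourier_{\stencilRightCharacteristic}(\timeShiftOperator)\neq0$, with nonzero constant term when $\coefficientCharEquationInFourier_{-\stencilLeftCharacteristic}(\timeShiftOperator)\neq0$). Since $\coefficientCharEquationInFourier_0$ alone carries the top term $\timeShiftOperator^{\numberVelocities}$ while every other $\coefficientCharEquationInFourier_{\indexFreeOne}$ has degree $\leq\numberVelocities-1$ in $\timeShiftOperator$, a Newton-polygon/dominant-balance analysis as $|\timeShiftOperator|\to\infty$ forces the $\stencilLeftCharacteristic+\stencilRightCharacteristic$ roots to split into $\stencilLeftCharacteristic$ roots tending to $0$ and $\stencilRightCharacteristic$ roots tending to $\infty$, pinning the count $(\stencilLeftCharacteristic,\stencilRightCharacteristic)$ for $|\timeShiftOperator|$ large. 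By the first assertion no root crosses $\unitCircle$ while $\timeShiftOperator$ varies in the connected set $\neighborhoodInfinity$, and the roots depend continuously on $\timeShiftOperator$; hence the count is locally constant on $\neighborhoodInfinity$ away from the isolated set where a root leaves $\puncturedPlane$, namely where $\coefficientCharEquationInFourier_{-\stencilLeftCharacteristic}(\timeShiftOperator)=0$ (a stable root reaches $0$) or $\coefficientCharEquationInFourier_{\stencilRightCharacteristic}(\timeShiftOperator)=0$ (an unstable root escapes to $\infty$). Both being nonzero polynomials, this set is finite. This reproduces \cite[Lemma 1]{coulombel2009stability}, the one modification being that the extreme coefficients are now allowed to vanish.

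\textbf{Third assertion (bounding the singular points).} The singular points are precisely the zeros of $\coefficientCharEquationInFourier_{-\stencilLeftCharacteristic}$ and $\coefficientCharEquationInFourier_{\stencilRightCharacteristic}$ lying in $\neighborhoodInfinity$, so I would estimate their number by degree considerations. Expanding $\determinant(\timeShiftOperator\identityMatrix{\numberVelocities}-\schemeMatrixBulkFourier(\fourierShift))=\sum_{j=0}^{\numberVelocities}(-1)^{j}\Delta_j(\fourierShift)\timeShiftOperator^{\numberVelocities-j}$, with $\Delta_j(\fourierShift)$ the sum of the principal $j\times j$ minors of $\schemeMatrixBulkFourier(\fourierShift)$, and applying Cauchy--Binet to $\schemeMatrixBulkFourier(\fourierShift)=\momentMatrix\diagonalMatrix(\fourierShift^{-\dimensionlessDiscreteVelocityLetter_1},\dots,\fourierShift^{-\dimensionlessDiscreteVelocityLetter_{\numberVelocities}})\momentMatrix^{-1}\collisionMatrix$ gives $\Delta_j(\fourierShift)=\sum_{|I|=|J|=j}\determinant(\momentMatrix_{IJ})\determinant((\momentMatrix^{-1}\collisionMatrix)_{JI})\fourierShift^{-\sum_{k\in J}\dimensionlessDiscreteVelocityLetter_k}$. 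The extreme power $\fourierShift^{-\stencilLeftCharacteristic}$ occurs only for column sets $J$ maximizing $\sum_{k\in J}\dimensionlessDiscreteVelocityLetter_k=\sum_{\dimensionlessDiscreteVelocityLetter_{\indexVelocity}>0}\dimensionlessDiscreteVelocityLetter_{\indexVelocity}$ (the saturated bound of \Cref{cor:stencilFDBounds}), i.e. $J$ contains all strictly-positive-velocity indices together with any subset of the $n_0:=\cardinality{\{\indexVelocity:\dimensionlessDiscreteVelocityLetter_{\indexVelocity}=0\}}$ zero-velocity indices, so $j\in\integerInterval{j^{+}}{j^{+}+n_0}$ with $j^{+}:=\cardinality{\{\indexVelocity:\dimensionlessDiscreteVelocityLetter_{\indexVelocity}>0\}}$. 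Setting $j^{-}:=\cardinality{\{\indexVelocity:\dimensionlessDiscreteVelocityLetter_{\indexVelocity}<0\}}$, this yields $\coefficientCharEquationInFourier_{-\stencilLeftCharacteristic}(\timeShiftOperator)=\timeShiftOperator^{j^{-}}P_-(\timeShiftOperator)$ and, symmetrically, $\coefficientCharEquationInFourier_{\stencilRightCharacteristic}(\timeShiftOperator)=\timeShiftOperator^{j^{+}}P_+(\timeShiftOperator)$ with $\degree P_{\pm}\leq n_0$. The origin belongs to $\closedUnitDisk$, so only the zeros of $P_{\pm}$ can be singular points in $\neighborhoodInfinity$; the symmetry hypothesis gives $j^{+}=j^{-}$, while $\stencilLeftCharacteristic=\stencilLeft\Leftrightarrow j^{+}=1$ and $\stencilLeftCharacteristic>\stencilLeft\Leftrightarrow j^{+}\geq2$, and feeding these into $\numberVelocities=j^{+}+j^{-}+n_0$ produces the thresholds $\numberVelocities-2$ and $\numberVelocities-3$.

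\textbf{Main obstacle.} The naive degree count only delivers $2n_0$ candidate singular points, which exceeds the claimed $\numberVelocities-2$ (respectively $\numberVelocities-3$); the delicate step is therefore to confine enough zeros of the extreme coefficients $P_-,P_+$ to $\closedUnitDisk$. I expect this to rely on the factorization of $\schemeMatrixBulkFourier$ through the $\fourierShift$-independent relaxation matrix $\collisionMatrix$, whose spectrum $\{1,1-\relaxationParameterLetter_2,\dots,1-\relaxationParameterLetter_{\numberVelocities}\}$ sits in $\closedUnitDisk$ because $\relaxationParameterLetter_{\indexVelocity}\in(0,2]$, together with von Neumann stability: the zeros of $P_{\pm}$ are Schur-complement-type quantities built from $\momentMatrix^{-1}\collisionMatrix\momentMatrix$ and should inherit this localization, leaving only the stated number free to enter $\neighborhoodInfinity$. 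Making this confinement precise, and tracking how it sharpens with the number $j^{+}$ of right-going velocities, is the technical heart of the proof.
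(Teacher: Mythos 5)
Your first two assertions are sound and essentially reconstruct the argument the paper delegates to its reference (Coulombel's Lemma~1): the contradiction with von~Neumann stability for roots on $\unitCircle$, the dominant-balance count as $|\timeShiftOperator|\to\infty$, and its propagation by continuity through the connected set $\neighborhoodInfinity$, with the correct modification that the count may fail on the finite zero set of $\coefficientCharEquationInFourier_{-\stencilLeftCharacteristic}$ and $\coefficientCharEquationInFourier_{\stencilRightCharacteristic}$. The third assertion is where the proposal has genuine gaps, in two places.

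First, your Cauchy--Binet identification of the contributing indices $j\in\integerInterval{j^{+}}{j^{+}+n_0}$ silently assumes that $\stencilLeftCharacteristic$ saturates the upper bound $\sum_{\dimensionlessDiscreteVelocityLetter_{\indexVelocity}>0}\dimensionlessDiscreteVelocityLetter_{\indexVelocity}$ of \Cref{cor:stencilFDBounds}. The lemma makes no such assumption, and the paper stresses that $\stencilLeftCharacteristic$ and $\stencilRightCharacteristic$ depend on relaxation parameters and equilibria, so cancellations can make $\stencilLeftCharacteristic$ strictly smaller. Without saturation, more column sets $J$ (hence more values of $j$) carry the power $\fourierShift^{-\stencilLeftCharacteristic}$, your formula for the range of $j$ undercounts, and the equivalences $\stencilLeftCharacteristic=\stencilLeft\Leftrightarrow j^{+}=1$ and $\stencilLeftCharacteristic>\stencilLeft\Leftrightarrow j^{+}\geq2$ are unjustified: with velocities $0,\pm1,\pm2$ nothing in your argument rules out $\stencilLeftCharacteristic=\stencilLeft=2$ while $j^{+}=2$. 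Second, the ``main obstacle'' you leave open---confining zeros of $P_{\pm}$ to $\closedUnitDisk$ via the spectrum of $\collisionMatrix$---is both misdirected and unachievable. It is misdirected because the bound $\numberVelocities-2$ (resp.\ $\numberVelocities-3$) is established per extreme coefficient (the paper's proof says ``at most $\numberVelocities-3$ roots \dots\ each''), so you are chasing a total count the statement does not require. It is unachievable because singular points do occur in $\neighborhoodInfinity$: for the fourth-order \lbmScheme{1}{3} scheme, the zero of $\coefficientCharEquationInFourier_{-1}$ given in \eqref{eq:critPointD1Q3FourthMinus1} lies in $\neighborhoodInfinity$ for $\courantNumber\geq0$, so no localization argument can push all such zeros into $\closedUnitDisk$. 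As written, your proof of the third assertion is therefore incomplete by your own admission, and the route you sketch to close it cannot work.

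The paper's actual argument is much lighter and sidesteps both issues. By the symmetry of the velocity set, $\determinant(\schemeMatrixBulkFourier(\fourierShift))$ is independent of $\fourierShift$ (cf.\ \eqref{eq:traceAndDeterminant}), so the $\timeShiftOperator^{0}$-term of \eqref{eq:characteristicEquation} contributes to neither extreme coefficient; the monomial $\timeShiftOperator^{\numberVelocities}$ obviously does not either; and the term $\trace(\schemeMatrixBulkFourier(\fourierShift))\timeShiftOperator^{\numberVelocities-1}$ contributes only when $\stencilLeftCharacteristic=\stencilLeft$, since $\min\degree(\trace(\schemeMatrixBulkFourier))\geq-\stencilLeft$. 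Hence $\coefficientCharEquationInFourier_{-\stencilLeftCharacteristic}$ and $\coefficientCharEquationInFourier_{\stencilRightCharacteristic}$ carry only the monomials $\timeShiftOperator,\dots,\timeShiftOperator^{\numberVelocities-1}$ (resp.\ $\timeShiftOperator,\dots,\timeShiftOperator^{\numberVelocities-2}$), and since $\timeShiftOperator=0\notin\closedNeighborhoodInfinity$, each has at most $\numberVelocities-2$ (resp.\ $\numberVelocities-3$) relevant zeros. No minor expansion, no saturation hypothesis, and no localization of roots is needed.
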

\begin{proof}
	Notice that $\coefficientCharEquationInFourier_{\stencilLeftCharacteristic}(\timeShiftOperator)$ and $\coefficientCharEquationInFourier_{\stencilRightCharacteristic}(\timeShiftOperator)$ are polynomials in $\timeShiftOperator$. They thus have a finite number of zeros, whose number equals their degree.
	The first part of this result is \cite[Lemma 1]{coulombel2009stability} applied to a scalar Finite Difference scheme.
	Let us now consider the second part of the claim.
	Thanks to the symmetric velocity set, from \eqref{eq:traceAndDeterminant}, we see that $\determinant(\schemeMatrixBulkFourier(\fourierShift))$ does not depend on $\fourierShift$. Furthermore, we have that $\min\degree(\trace(\schemeMatrixBulkFourier(\fourierShift)))\geq -\stencilLeft$ and $\max\degree(\trace(\schemeMatrixBulkFourier(\fourierShift)))\leq \stencilRight$.
	\begin{itemize}
		\item If $\stencilLeftCharacteristic = \stencilLeft$ and $\stencilRightCharacteristic = \stencilRight$, then inspecting \eqref{eq:characteristicEquation}, we see that $\coefficientCharEquationInFourier_{-\stencilLeftCharacteristic}(\timeShiftOperator)$ and $\coefficientCharEquationInFourier_{\stencilRightCharacteristic}(\timeShiftOperator)$ are polynomials with terms in the range $\timeShiftOperator, \dots, \timeShiftOperator^{\numberVelocities-1}$, which therefore have at most $\numberVelocities - 2$ roots.
		\item If $\stencilLeftCharacteristic > \stencilLeft$ and $\stencilRightCharacteristic > \stencilRight$, the same way of proceeding unveils that $\coefficientCharEquationInFourier_{-\stencilLeftCharacteristic}(\timeShiftOperator)$ and $\coefficientCharEquationInFourier_{\stencilRightCharacteristic}(\timeShiftOperator)$ are polynomials with terms in the range $\timeShiftOperator, \dots, \timeShiftOperator^{\numberVelocities-2}$, which therefore have at most $\numberVelocities - 3$ roots outside the unit disk each.
	\end{itemize}
\end{proof}

\subsection{Strong stability and boundary-value problem as a recurrence relation in space}

\subsubsection{Strong stability}

Well-posedness of numerical schemes is built on three conditions. The first two, namely existence and uniqueness of the solution, are automatically met for the explicit schemes we deal with.
The third condition concerns \strong{continuity with respect to data}, the one we now discuss.
Since in \eqref{eq:bulkScheme}, \eqref{eq:boundaryScheme}, and \eqref{eq:zeroInitialData}, the only datum is the boundary one, we would like the solution to depend continuously on this.

\begin{definition}[Strong stability]\label{def:strongStability}
	We say that the lattice Boltzmann scheme \eqref{eq:bulkScheme}--\eqref{eq:boundaryScheme}--\eqref{eq:zeroInitialData} is \strong{strongly stable} (SS) if there exists a constant $C>0$ such that for all $\alpha>0$, for all $(\boundarySourceTerm_{\indexVelocity, -\indexSpace}^{\indexTime})_{\indexTime\in\naturals}$ with $\indexVelocity$ such that $\dimensionlessDiscreteVelocityLetter_{\indexVelocity}>0$ and $\indexSpace\in\integerInterval{1}{\dimensionlessDiscreteVelocityLetter_{\indexVelocity}}$, and for all $\spaceStep>0$ (recall that $\timeStep = \spaceStep / \latticeVelocity$)
	\begin{equation}\label{eq:strongStability}
		\frac{\alpha}{1+\alpha\timeStep}  \sum_{\indexTime\in\naturals} \sum_{\indexSpace\in\naturals} \timeStep \spaceStep \, e^{-2\alpha \indexTime\timeStep} |\vectorial{\momentDiscrete}_{\indexSpace}^{\indexTime}|^2 \leq C \sum_{\indexTime\in\naturals} \sum_{\dimensionlessDiscreteVelocityLetter_{\indexVelocity}>0}\sum_{\indexSpace = 1}^{\dimensionlessDiscreteVelocityLetter_{\indexVelocity}} \timeStep \, e^{-2\alpha \indexTime\timeStep}(\boundarySourceTerm_{\indexVelocity, -\indexSpace}^{\indexTime})^2.
	\end{equation}
	We say that the scheme is \strong{strongly stable on the observed output} (SSOO) if 
	\begin{equation}\label{eq:strongStabilityObserved}
		\frac{\alpha}{1+\alpha\timeStep}  \sum_{\indexTime\in\naturals} \sum_{\indexSpace\in\naturals}\timeStep\spaceStep \, e^{-2\alpha \indexTime\timeStep} ({\momentDiscrete}_{1, \indexSpace}^{\indexTime})^2 \leq  C \sum_{\indexTime\in\naturals} \sum_{\dimensionlessDiscreteVelocityLetter_{\indexVelocity}>0}\sum_{\indexSpace = 1}^{\dimensionlessDiscreteVelocityLetter_{\indexVelocity}} \timeStep \, e^{-2\alpha \indexTime\timeStep}(\boundarySourceTerm_{\indexVelocity, -\indexSpace}^{\indexTime})^2.
	\end{equation}
\end{definition}

Let us first comment on the name ``\strong{strongly stable on the observed output}''.
As we are interested only in $\momentDiscrete_1$ which approximates the solution of the target PDE, this is the only output that we \strong{observe} from the whole state $\vectorial{\momentDiscrete}$. 
\strong{Strong stability} imposes a control on the whole vector $\vectorial{\momentDiscrete} \in\reals^{\numberVelocities}$ instead of the first component $\momentDiscrete_1$ for the \strong{strong stability on the observed output}: the former trivially implies the latter.
Both \eqref{eq:strongStability} and \eqref{eq:strongStabilityObserved} mean a control of the output of the system (left-hand side) by its input (right-hand side), and the vector space to whom this input belongs is of dimension $\sum_{\dimensionlessDiscreteVelocityLetter_{\indexVelocity}>0} \dimensionlessDiscreteVelocityLetter_{\indexVelocity}$, the upper bound for $\stencilLeftCharacteristic$ by \Cref{cor:stencilFDBounds}.
The reader willing to acquire a more visual and practical understanding of strong stability can jump straight to \Cref{rem:remarkStrongStab}, which however needs a preliminary  assimilation of numerical experiments, \confer{} \Cref{sec:numericalSimD1Q2}.

\subsubsection{$\timeShiftOperator$-transform}

As said in the introduction, we would like to recast the scheme under the form of a recurrence relation in space.
This relation is not explicit, due to the characteristic nature of the boundary.
The final preparatory step consists in seeing time as a complex parameter as already done in \Cref{sec:charEqAndRoots}, which is rigorously achieved by the $\timeShiftOperator$-transform.
We follow \cite[Appendix B]{lebarbenchon:tel-04214887}.
\begin{proposition}[$\timeShiftOperator$-transform, its inverse, and Parseval's identity]\label{prop:ZTransform}
	Let $(\solutionCauchyProblem^{\indexTime})_{\indexTime\in\naturals}\in\ell^2$. We define its $\timeShiftOperator$-transform $\laplaceTransformed{\solutionCauchyProblem}$ by 
	\begin{equation}\label{eq:definitionZTransform}
		\laplaceTransformed{\solutionCauchyProblem}(\timeShiftOperator)\definitionEquality\sum_{\indexTime\in\naturals}\timeShiftOperator^{-\indexTime}\solutionCauchyProblem^{\indexTime}, \qquad \text{for}\quad \timeShiftOperator\in\neighborhoodInfinity.
	\end{equation}
	The inverse transform is given by
	\begin{equation}\label{eq:inverseZTransform}
		\solutionCauchyProblem^{\indexTime}=\frac{1}{2\pi i}\oint_{\mathcal{C}(0, R)}\timeShiftOperator^{\indexTime-1}\laplaceTransformed{\solutionCauchyProblem}(\timeShiftOperator)\differential{\timeShiftOperator} = \frac{1}{2\pi}\int_0^{2\pi} \laplaceTransformed{\solutionCauchyProblem}(R e^{i\vartheta})R^{\indexTime}e^{i\indexTime\vartheta}\differential{\vartheta},
	\end{equation}
	for any $R>1$ and $\indexTime\in\naturals$.
	Finally, the Parseval's identity reads 
	\begin{equation*}
		\sum_{\indexTime\in\naturals} R^{-2\indexTime} |\solutionCauchyProblem^{\indexTime}|^2 = \frac{1}{2\pi} \int_0^{2\pi}|\laplaceTransformed{\solutionCauchyProblem}(R e^{i\vartheta})|^2\differential{\vartheta}.
	\end{equation*}
\end{proposition}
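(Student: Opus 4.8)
The plan is to recognize this as the standard correspondence between Laurent series and Fourier series, and to verify each of the three assertions by elementary complex analysis once the convergence issues are settled. The very first step is to establish that $\laplaceTransformed{\solutionCauchyProblem}$ is well-defined and holomorphic on $\neighborhoodInfinity$. Since $(\solutionCauchyProblem^{\indexTime})_{\indexTime\in\naturals}\in\ell^2$ we have in particular $\solutionCauchyProblem^{\indexTime}\to 0$, hence the sequence is bounded, say $|\solutionCauchyProblem^{\indexTime}|\leq M$. For any fixed $R>1$ and $|\timeShiftOperator|=R$ one has $|\timeShiftOperator^{-\indexTime}\solutionCauchyProblem^{\indexTime}|\leq M R^{-\indexTime}$, so the Weierstrass $M$-test with the convergent geometric majorant $\sum_{\indexTime}MR^{-\indexTime}$ gives uniform convergence of the defining series on every circle $\{|\timeShiftOperator|=R\}$, and more generally locally uniform convergence on $\neighborhoodInfinity$. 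As a locally uniform limit of the holomorphic terms $\timeShiftOperator\mapsto\timeShiftOperator^{-\indexTime}\solutionCauchyProblem^{\indexTime}$, the transform $\laplaceTransformed{\solutionCauchyProblem}$ is holomorphic on $\neighborhoodInfinity$.

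For the inversion formula I would substitute the series into the contour integral and exchange summation and integration, which is licit precisely because of the uniform convergence just obtained on $\{|\timeShiftOperator|=R\}$. This yields
\begin{equation*}
	\frac{1}{2\pi i}\oint_{\mathcal{C}(0, R)}\timeShiftOperator^{\indexTime-1}\laplaceTransformed{\solutionCauchyProblem}(\timeShiftOperator)\differential{\timeShiftOperator}
	= \sum_{\indexFreeOne\in\naturals}\solutionCauchyProblem^{\indexFreeOne}\,\frac{1}{2\pi i}\oint_{\mathcal{C}(0, R)}\timeShiftOperator^{\indexTime-1-\indexFreeOne}\differential{\timeShiftOperator},
\end{equation*}
and the classical evaluation $\frac{1}{2\pi i}\oint_{\mathcal{C}(0, R)}\timeShiftOperator^{\multiplicityIndex}\differential{\timeShiftOperator}=\indicatorFunction{\{\multiplicityIndex=-1\}}$ for $\multiplicityIndex\in\relatives$ collapses the sum to the single surviving term $\solutionCauchyProblem^{\indexTime}$. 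The second expression in the inversion formula is then obtained simply by the parametrization $\timeShiftOperator=Re^{i\vartheta}$, $\differential{\timeShiftOperator}=iRe^{i\vartheta}\differential{\vartheta}$, under which $\timeShiftOperator^{\indexTime-1}\differential{\timeShiftOperator}=iR^{\indexTime}e^{i\indexTime\vartheta}\differential{\vartheta}$ and the factor $i$ cancels against $1/(2\pi i)$; nothing has to be proved here beyond a change of variable.

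For Parseval's identity I would read $\laplaceTransformed{\solutionCauchyProblem}$ on the circle as a Fourier series. Writing $\timeShiftOperator=Re^{i\vartheta}$ gives $\laplaceTransformed{\solutionCauchyProblem}(Re^{i\vartheta})=\sum_{\indexTime\in\naturals}(R^{-\indexTime}\solutionCauchyProblem^{\indexTime})e^{-i\indexTime\vartheta}$, a $2\pi$-periodic Fourier series whose coefficients are $R^{-\indexTime}\solutionCauchyProblem^{\indexTime}$ for $\indexTime\geq 0$ and zero otherwise. Because $R>1$ forces $R^{-2\indexTime}\leq 1$, these coefficients are square-summable, $\sum_{\indexTime}R^{-2\indexTime}|\solutionCauchyProblem^{\indexTime}|^2\leq\sum_{\indexTime}|\solutionCauchyProblem^{\indexTime}|^2<\infty$, so the series converges in $L^2(0,2\pi)$ and the Parseval theorem for Fourier series delivers exactly $\frac{1}{2\pi}\int_0^{2\pi}|\laplaceTransformed{\solutionCauchyProblem}(Re^{i\vartheta})|^2\differential{\vartheta}=\sum_{\indexTime\in\naturals}R^{-2\indexTime}|\solutionCauchyProblem^{\indexTime}|^2$.

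The only genuinely delicate point is the justification of the term-by-term integration in the inversion step; everything else is a direct application of standard contour-integral orthogonality and of Parseval for Fourier coefficients. I expect this interchange to be the main obstacle only in a bookkeeping sense, since the uniform convergence on each circle $\{|\timeShiftOperator|=R\}$ supplied by the $M$-test (using boundedness of $(\solutionCauchyProblem^{\indexTime})$, itself a consequence of the $\ell^2$ hypothesis) resolves it cleanly; the same hypothesis simultaneously supplies the square-summability needed to invoke Parseval, which is why the $\ell^2$ assumption is the natural setting for all three claims at once.
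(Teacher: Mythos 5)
Your proof is correct. The paper itself gives no proof of this proposition---it simply defers to the cited reference (Appendix B of Le Barbenchon's thesis)---and your argument is the standard one: boundedness of the $\ell^2$ sequence gives a geometric majorant, hence locally uniform convergence and holomorphy on $\neighborhoodInfinity$; uniform convergence on each circle $\{|\timeShiftOperator|=R\}$ licenses the term-by-term integration against $\timeShiftOperator^{\indexTime-1}$, and the orthogonality $\frac{1}{2\pi i}\oint\timeShiftOperator^{\multiplicityIndex}\differential{\timeShiftOperator}=\indicatorFunction{\{\multiplicityIndex=-1\}}$ isolates $\solutionCauchyProblem^{\indexTime}$; and reading $\laplaceTransformed{\solutionCauchyProblem}(Re^{i\vartheta})$ as a Fourier series with square-summable coefficients $R^{-\indexTime}\solutionCauchyProblem^{\indexTime}$ gives Parseval. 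Nothing is missing.
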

\begin{remark}
	The definition of $\timeShiftOperator$-transform by \eqref{eq:definitionZTransform} works also for sequences not in $\ell^2$ (for example, exponentially diverging), upon considering $ |\timeShiftOperator|>R\geq 1$, where $R$ is the radius of convergence.
	The other formul\ae{} in \Cref{prop:ZTransform} can be adapted as well, see \cite[Appendix]{ehrhardt2001discrete}.
\end{remark}

The complex parameter $\timeShiftOperator$ has to be thought as a (potential) time-eigenvalue. 
As such,
\begin{itemize}
	\item if $\timeShiftOperator\in\unitDisk$, the mode is damped in time;
	\item if $\timeShiftOperator\in\unitCircle$, the mode is time-steady, hence temporally persists in the numerical solution;
	\item if $\timeShiftOperator\in\neighborhoodInfinity$, the mode blows up geometrically, generating so-called \strong{Godunov-Ryabenkii} instabilities.
\end{itemize}

\subsubsection{Recurrence relation in space}

Taking the $\timeShiftOperator$-transform of \eqref{eq:bulkScheme}--\eqref{eq:boundaryScheme} and using the fact that we consider zero initial data, \confer{} \eqref{eq:zeroInitialData}, we obtain
\begin{numcases}{}
    \timeShiftOperator \laplaceTransformed{\vectorial{\momentDiscrete}}_{\indexSpace}(\timeShiftOperator) = \schemeMatrixBulk \laplaceTransformed{\vectorial{\momentDiscrete}}_{\indexSpace}(\timeShiftOperator), \qquad &$\indexSpace\geq \stencilLeft, $\label{eq:zTransBulk} \\
    \timeShiftOperator\laplaceTransformed{\vectorial{\momentDiscrete}}_{\indexSpace}(\timeShiftOperator) = \schemeMatrixBoundary{\indexSpace} \laplaceTransformed{\vectorial{\momentDiscrete}}_{\indexSpace}(\timeShiftOperator) + \laplaceTransformed{\vectorial{\boundarySourceTermMoments}}_{\indexSpace}(\timeShiftOperator), \qquad &$\indexSpace \in \integerInterval{0}{\stencilLeft - 1}.$\label{eq:zTransBoundary}
\end{numcases}
Using the expressions in \eqref{eq:matrixByPowerBulk} and \eqref{eq:matrixByPowerBoundary} and performing changes in the indices, we have 
\begin{numcases}{}
	\timeShiftOperator \laplaceTransformed{\vectorial{\momentDiscrete}}_{\indexSpace + \stencilLeft}(\timeShiftOperator) - \sum_{\indexFreeOne=0}^{\stencilLeft + \stencilRight}\schemeMatrixBulkByPower_{\indexFreeOne - \stencilLeft}\laplaceTransformed{\vectorial{\momentDiscrete}}_{\indexSpace + \indexFreeOne} (\timeShiftOperator) = \zeroMatrix{\numberVelocities}, \qquad &$\indexSpace\in\naturals, $\label{eq:resolventBulk}\\
	\timeShiftOperator\laplaceTransformed{\vectorial{\momentDiscrete}}_{\indexSpace}(\timeShiftOperator) - \sum_{\indexFreeOne = -\indexSpace}^{\stencilBoundaryCondition} \schemeMatrixBoundaryByPower{\indexSpace}{\indexFreeOne} \laplaceTransformed{\vectorial{\momentDiscrete}}_{\indexSpace + \indexFreeOne}(\timeShiftOperator) = \laplaceTransformed{\vectorial{\boundarySourceTermMoments}}_{\indexSpace}(\timeShiftOperator), \qquad &$\indexSpace \in \integerInterval{0}{\stencilLeft - 1},$\label{eq:resolventBoundary}
\end{numcases}
which is sometimes called ``\strong{resolvent equation}''.
Notice that \eqref{eq:resolventBulk} is a vectorial homogeneous linear difference equation in $\indexSpace$ parametrized by $\timeShiftOperator$. 
On the other hand, \eqref{eq:resolventBoundary} yields the initialization in space for this difference equation and makes the solution inside the domain depend on the boundary data.

We denote the vector space of the solutions of \eqref{eq:resolventBulk} in $\ell^2$ by $\stableSubspace(\timeShiftOperator)$, for $\timeShiftOperator\in\neighborhoodInfinity$ (and possibly extended to $\unitCircle$).
Following \cite[Chapter 8]{matrixpoly09}, the study of \eqref{eq:resolventBulk} is conducted introducing the matrix polynomial
\begin{equation}\label{eq:matrixPolynomialGohberg}
	\matrixPolynomialBulk{\timeShiftOperator}(\fourierShift)\definitionEquality \sum_{\indexFreeOne=0}^{\stencilLeft + \stencilRight}\schemeMatrixBulkByPowerIncludingShift_{\indexFreeOne}(\timeShiftOperator)\fourierShift^{\indexFreeOne}, \qquad \text{where}\qquad 
	\schemeMatrixBulkByPowerIncludingShift_{\indexFreeOne}(\timeShiftOperator) \definitionEquality \timeShiftOperator\identityMatrix{\numberVelocities}\delta_{\indexFreeOne\stencilLeft}-\schemeMatrixBulkByPower_{\indexFreeOne-\stencilLeft}.
\end{equation}
The general solution to \eqref{eq:resolventBulk} and thus $\stableSubspace(\timeShiftOperator)$ are constructed using spectral information regarding $\matrixPolynomialBulk{\timeShiftOperator}(\fourierShift)$.
For the complete spectral theory of matrix polynomials, the interested reader can reach \cite[Chapter 7]{matrixpoly09}.
In particular, the \strong{eigenvalues} of $\matrixPolynomialBulk{\timeShiftOperator}(\fourierShift)$ are the $\fourierShift=\fourierShift(\timeShiftOperator)\in\complex$ such that $\determinant(\matrixPolynomialBulk{\timeShiftOperator}(\fourierShift)) = 0$, which must be thought as eigenvalues in space and classified as follows. 
Considering that solutions are the form $\fourierShift(\timeShiftOperator)^{\indexSpace}$,
\begin{itemize}
	\item if $\fourierShift\in\unitDisk$, the mode is geometrically damped in space, hence localized on the boundary where it forms a boundary layer.
	\item If $\fourierShift\in\unitCircle$, the mode extends inside the domain and, if the corresponding $\timeShiftOperator\in\unitCircle$, can be propagated with a group velocity (see \cite{trefethen1984instability} or \cite{coulombel2015fully}).
	\item If $\fourierShift\in\neighborhoodInfinity$, the mode is geometrically explosive in space and does not participate to $\stableSubspace(\timeShiftOperator)$,
\end{itemize}
as the stable subspace $\stableSubspace(\timeShiftOperator)$ is constructed using eigenvalues $\fourierShift(\timeShiftOperator)\in\closedUnitDisk$. 
Notice however that $\matrixPolynomialBulk{\timeShiftOperator}(\fourierShift)$ is non-monic, for the leading-order matrix $\schemeMatrixBulkByPowerIncludingShift_{\stencilLeft + \stencilRight}(\timeShiftOperator) \notin \linearGroup{\numberVelocities}{\complex}$.
This corresponds to the fact that the problem is characteristic, and has at least two important consequences. 

First, $\degree(\determinant(\matrixPolynomialBulk{\timeShiftOperator}(\fourierShift))) < \numberVelocities (\stencilLeft + \stencilRight)$, hence the number of eigenvalues is smaller than $\numberVelocities (\stencilLeft + \stencilRight)$.
To keep the total number of eigenvalues equal to $\numberVelocities (\stencilLeft + \stencilRight)$ as in the monic case, one says that there are $\numberVelocities (\stencilLeft + \stencilRight) - \degree(\determinant(\matrixPolynomialBulk{\timeShiftOperator}(\fourierShift)))$ \strong{infinite} eigenvalues, which are the zero eigenvalues of the reflected matrix polynomial $\fourierShift^{\stencilLeft+\stencilRight}\matrixPolynomialBulk{\timeShiftOperator}(\fourierShift^{-1})$.
Still, there is no easy way to determine $\degree(\determinant(\matrixPolynomialBulk{\timeShiftOperator}(\fourierShift)))$ \emph{a priori}\footnote{Which boils down to computing the associated invariant polynomials/elementary divisors of $\matrixPolynomialBulk{\timeShiftOperator}(\fourierShift)$, see \cite[Proposition S1.4]{matrixpoly09}.}, which directly comes from the difficulty of computing $\stencilLeftCharacteristic$ and $\stencilRightCharacteristic$ in \eqref{eq:characteristicEquation}.

The second important consequence is that we cannot recast \eqref{eq:resolventBulk} into an explicit recurrence using a block-companion matrix. 
Something which can be done is considering a companion matrix polynomial of degree one in $\fourierShift$, see the following easily-provable result.
\begin{lemma}
	Let $\companionPolynomial(\fourierShift)$ be the companion matrix polynomial to $\matrixPolynomialBulk{\timeShiftOperator}(\fourierShift)$ defined by 
	\begin{equation*}
		\companionPolynomial(\fourierShift) \definitionEquality 
		\fourierShift
		\left [
			\begin{array}{c|c}
				\schemeMatrixBulkByPowerIncludingShift_{\stencilLeft + \stencilRight}(\timeShiftOperator) & \zeroMatrix{\numberVelocities \times \numberVelocities(\stencilLeft+\stencilRight - 1)} \\
				\hline
				\zeroMatrix{\numberVelocities(\stencilLeft+\stencilRight - 1)\times \numberVelocities} & \identityMatrix{\numberVelocities(\stencilLeft+\stencilRight - 1)}
			\end{array}
		\right ]
		-
		\left [
		\begin{array}{c|c|c|c|c}
			-\schemeMatrixBulkByPowerIncludingShift_{\stencilLeft + \stencilRight-1}(\timeShiftOperator) & -\schemeMatrixBulkByPowerIncludingShift_{\stencilLeft + \stencilRight-2}(\timeShiftOperator) & \cdots & -\schemeMatrixBulkByPowerIncludingShift_{1}(\timeShiftOperator)  & -\schemeMatrixBulkByPowerIncludingShift_{0}(\timeShiftOperator) \\
			\hline 
			\identityMatrix{\numberVelocities} & \zeroMatrix{\numberVelocities} & \cdots & \zeroMatrix{\numberVelocities} & \zeroMatrix{\numberVelocities} \\
			\hline 
			\zeroMatrix{\numberVelocities} & \identityMatrix{\numberVelocities} & \cdots & \zeroMatrix{\numberVelocities}  & \zeroMatrix{\numberVelocities} \\
			\hline 
			\vdots & \vdots & \ddots & \vdots & \vdots\\
			\hline
			\zeroMatrix{\numberVelocities}  & \zeroMatrix{\numberVelocities}  & \cdots & \identityMatrix{\numberVelocities} & \zeroMatrix{\numberVelocities}
		\end{array}
		\right ].
	\end{equation*}
	Then, formally (that is for $\fourierShift\in\puncturedPlane$) we have 
	\begin{equation}\label{eq:equalityCharEquations}
		\determinant(\matrixPolynomialBulk{\timeShiftOperator}(\fourierShift)) = \determinant (\companionPolynomial(\fourierShift)) = \fourierShift^{\numberVelocities\stencilLeft} \determinant(\timeShiftOperator \identityMatrix{\numberVelocities}-\schemeMatrixBulkFourier(\fourierShift)).
	\end{equation}
\end{lemma}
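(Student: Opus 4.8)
The plan is to establish the two equalities in \eqref{eq:equalityCharEquations} separately: the right-hand one by a direct computation, and the left-hand one by the standard determinant identity for companion linearizations.

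First I would prove $\determinant(\matrixPolynomialBulk{\timeShiftOperator}(\fourierShift)) = \fourierShift^{\numberVelocities\stencilLeft}\determinant(\timeShiftOperator\identityMatrix{\numberVelocities} - \schemeMatrixBulkFourier(\fourierShift))$ by unwinding the definitions in \eqref{eq:matrixPolynomialGohberg}. Substituting $\schemeMatrixBulkByPowerIncludingShift_{\indexFreeOne}(\timeShiftOperator) = \timeShiftOperator\identityMatrix{\numberVelocities}\delta_{\indexFreeOne\stencilLeft} - \schemeMatrixBulkByPower_{\indexFreeOne-\stencilLeft}$ into $\matrixPolynomialBulk{\timeShiftOperator}(\fourierShift) = \sum_{\indexFreeOne=0}^{\stencilLeft+\stencilRight}\schemeMatrixBulkByPowerIncludingShift_{\indexFreeOne}(\timeShiftOperator)\fourierShift^{\indexFreeOne}$, the Kronecker delta collapses the first contribution to the single term $\timeShiftOperator\identityMatrix{\numberVelocities}\fourierShift^{\stencilLeft}$, while the reindexing $\indexSpace = \indexFreeOne - \stencilLeft$ turns the remaining sum into $\fourierShift^{\stencilLeft}\sum_{\indexSpace=-\stencilLeft}^{\stencilRight}\schemeMatrixBulkByPower_{\indexSpace}\fourierShift^{\indexSpace} = \fourierShift^{\stencilLeft}\schemeMatrixBulkFourier(\fourierShift)$, by the very definition of $\schemeMatrixBulkFourier$ recalled in \eqref{eq:matrixByPowerBulk}. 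Hence $\matrixPolynomialBulk{\timeShiftOperator}(\fourierShift) = \fourierShift^{\stencilLeft}(\timeShiftOperator\identityMatrix{\numberVelocities} - \schemeMatrixBulkFourier(\fourierShift))$, and taking the determinant of this $\numberVelocities\times\numberVelocities$ matrix pulls out the scalar factor as $(\fourierShift^{\stencilLeft})^{\numberVelocities} = \fourierShift^{\numberVelocities\stencilLeft}$, giving the claimed identity.

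For the left-hand equality $\determinant(\companionPolynomial(\fourierShift)) = \determinant(\matrixPolynomialBulk{\timeShiftOperator}(\fourierShift))$, I would reduce $\companionPolynomial(\fourierShift)$ by determinant-preserving block-column operations. Writing $\matricial{A}_{\indexFreeOne} \definitionEquality \schemeMatrixBulkByPowerIncludingShift_{\indexFreeOne}(\timeShiftOperator)$ and $m \definitionEquality \stencilLeft+\stencilRight$, the pencil carries $\fourierShift\matricial{A}_{m} + \matricial{A}_{m-1}$ in its top-left block, the coefficients $\matricial{A}_{m-2}, \dots, \matricial{A}_0$ along the rest of the first block row, $\fourierShift\identityMatrix{\numberVelocities}$ on the block diagonal below the first row, and $-\identityMatrix{\numberVelocities}$ on the block subdiagonal. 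Adding $\fourierShift$ times the $\multiplicityIndex$-th block column to the $(\multiplicityIndex+1)$-th block column, successively for $\multiplicityIndex = 1, \dots, m-1$, folds the coefficients into the top-right block, which telescopes precisely into $\matrixPolynomialBulk{\timeShiftOperator}(\fourierShift) = \sum_{\indexFreeOne=0}^{m}\matricial{A}_{\indexFreeOne}\fourierShift^{\indexFreeOne}$, annihilates the rest of the last block column (since $\fourierShift\identityMatrix{\numberVelocities} + \fourierShift(-\identityMatrix{\numberVelocities}) = \zeroMatrix{\numberVelocities}$ at each step), and preserves the $-\identityMatrix{\numberVelocities}$ block subdiagonal. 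A cyclic block-column permutation bringing the last block column to the front then exhibits a block upper-triangular matrix with diagonal blocks $\matrixPolynomialBulk{\timeShiftOperator}(\fourierShift)$ and $m-1$ copies of $-\identityMatrix{\numberVelocities}$.

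The only delicate point—and the main thing to verify carefully—is the sign bookkeeping that guarantees an \emph{exact} equality rather than one up to a unit, together with the check that the folding leaves the subdiagonal intact (which holds because the entries of each block column below its $-\identityMatrix{\numberVelocities}$ stay zero throughout the reduction). The $m-1$ factors $\determinant(-\identityMatrix{\numberVelocities}) = (-1)^{\numberVelocities}$ and the sign $(-1)^{(m-1)\numberVelocities^2}$ of the cyclic permutation of block columns combine into $(-1)^{(m-1)\numberVelocities(\numberVelocities+1)}$, which equals $+1$ since $\numberVelocities(\numberVelocities+1)$ is even. Alternatively, this whole equality may simply be quoted as the determinant property of the first companion form of a matrix polynomial, \confer{} \cite{matrixpoly09}. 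Chaining the two identities then yields \eqref{eq:equalityCharEquations}.
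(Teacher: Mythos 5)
Your proof is correct: the factorization $\matrixPolynomialBulk{\timeShiftOperator}(\fourierShift) = \fourierShift^{\stencilLeft}(\timeShiftOperator\identityMatrix{\numberVelocities} - \schemeMatrixBulkFourier(\fourierShift))$ follows immediately from \eqref{eq:matrixPolynomialGohberg}, the block-column reduction of the companion pencil is the standard linearization argument (\confer{} \cite{matrixpoly09}), and your sign count $(-1)^{(m-1)\numberVelocities(\numberVelocities+1)} = 1$ is right. The paper states this lemma as ``easily-provable'' and omits the proof entirely, so your write-up simply supplies the standard details the author leaves implicit.
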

The fact that the non-zero finite eigenvalues of $\matrixPolynomialBulk{\timeShiftOperator}(\fourierShift)$ are the roots of $\determinant(\timeShiftOperator \identityMatrix{\numberVelocities}-\schemeMatrixBulkFourier(\fourierShift))$ now explains why we have discussed the characteristic equation \eqref{eq:characteristicEquation} in depth in \Cref{sec:charEqAndRoots}.
This fosters the explicit construction of $\stableSubspace(\timeShiftOperator)$.
	\begin{example}
		Coming back to the setting of \Cref{ex:D1Q2Char} with $\relaxationParameterLetter_2 = \tfrac{3}{2}$ and $\courantNumber=-\tfrac{1}{2}$, we have 
		\begin{equation*}
			\matrixPolynomialBulk{\timeShiftOperator}(\fourierShift) = 
			\begin{pmatrix}
				-\tfrac{7}{8}\fourierShift^2 + \timeShiftOperator\fourierShift-\tfrac{1}{8} & -\tfrac{1}{4}\fourierShift^2 + \tfrac{1}{4}\\
				\tfrac{7}{8}\fourierShift^2 - \tfrac{1}{8} & \tfrac{1}{4}\fourierShift^2 + \timeShiftOperator\fourierShift+\tfrac{1}{4}
			\end{pmatrix}
			\quad \text{and}\quad
			\companionPolynomial(\fourierShift) = \fourierShift
			\begin{pmatrix}
				-\tfrac{7}{8} & -\tfrac{1}{4} & 0 & 0\\
				\tfrac{7}{8} & \tfrac{1}{4} & 0 & 0 \\
				0 & 0 & 1 & 0 \\
				0 & 0 & 0 & 1
			\end{pmatrix} - 
			\begin{pmatrix}
				-\timeShiftOperator & 0 & \tfrac{1}{8} & -\tfrac{1}{4}\\ 
				0 & -\timeShiftOperator & \tfrac{1}{8} & -\tfrac{1}{4}\\
				1 & 0 & 0 & 0\\
				0 & 1 & 0 & 0
			\end{pmatrix}.
		\end{equation*}
	\end{example}

\subsubsection{Schemes with characteristic equations with stencil breadth to the left equal to one}\label{sec:recurrenceSpace}

The assumptions of the following proposition apply (outside exceptional cases) to all the examples that we consider in \Cref{sec:strongCertainSchemes}, namely schemes with symmetric velocity sets and two or three discrete velocities.
\begin{proposition}\label{prop:noDiracBoundary}
	Assume that the scheme is stable according to \strong{von Neumann}, \confer{} \Cref{def:vonNeumannStability}, discrete velocities are distinct, and $\stencilLeft = \stencilLeftCharacteristic = 1$, so that $\coefficientCharEquationInFourier_{-1}(\timeShiftOperator)\not\equiv 0$.
	Denote the stable root of \eqref{eq:characteristicEquation}, whose existence comes from \Cref{lemma:Hersh}, by $\stableRoot(\timeShiftOperator)$ and by $\vectorial{\eigenvectorLetter}_{\textnormal{s}}(\timeShiftOperator)\in\kernel(\matrixPolynomialBulk{\timeShiftOperator}(\stableRoot(\timeShiftOperator)))$ a corresponding eigenvector.

	Then, $\fourierShift\equiv 0$ is an eigenvalue of $\matrixPolynomialBulk{\timeShiftOperator}(\fourierShift)$ with algebraic and geometric ($\dimension{\kernel(\matrixPolynomialBulk{\timeShiftOperator}(0))}$) multiplicities both equal to $\numberVelocities - 1$.
	Thus, denoting $\kernel(\matrixPolynomialBulk{\timeShiftOperator}(0))=\spanSpace{\vectorial{\eigenvectorLetter}_0^1, \dots,\vectorial{\eigenvectorLetter}_0^{\numberVelocities-1}}$ where $\vectorial{\eigenvectorLetter}_0^1, \dots,\vectorial{\eigenvectorLetter}_0^{\numberVelocities-1}$ can be chosen independent of $\timeShiftOperator$, the solution of \eqref{eq:resolventBulk} $\laplaceTransformed{\vectorial{\momentDiscrete}}_{\indexSpace}(\timeShiftOperator)\in\stableSubspace(\timeShiftOperator)$ for $\timeShiftOperator\in\neighborhoodInfinity$ reads
	\begin{equation}\label{eq:generalStableSolutionAllSchemes}
		\laplaceTransformed{\vectorial{\momentDiscrete}}_{\indexSpace}(\timeShiftOperator) = \coefficientStableSolution(\timeShiftOperator) \vectorial{\eigenvectorLetter}_{\textnormal{s}}(\timeShiftOperator) \stableRoot(\timeShiftOperator)^{\indexSpace} + \sum_{\indexFreeOne = 1}^{\numberVelocities - 1}\coefficientZero^{\indexFreeOne}(\timeShiftOperator)\vectorial{\eigenvectorLetter}_0^{\indexFreeOne}\delta_{0\indexSpace}, \qquad \indexSpace\in\naturals,
	\end{equation}
	where, in order to also fulfill \eqref{eq:resolventBoundary}, the coefficients $\coefficientStableSolution(\timeShiftOperator), \coefficientZero^{1}(\timeShiftOperator), \dots, \coefficientZero^{\numberVelocities-1}(\timeShiftOperator)$ satisfy 
	\begin{multline}\label{eq:lopatiskiiSystem}
		\kreissLopatinskiiMatrix(\timeShiftOperator) \transpose{(\coefficientStableSolution(\timeShiftOperator), \coefficientZero^{1}(\timeShiftOperator), \dots, \coefficientZero^{\numberVelocities-1}(\timeShiftOperator))} = \laplaceTransformed{\vectorial{\boundarySourceTermMoments}}_0(\timeShiftOperator)\\
		\text{with}\quad  
		\kreissLopatinskiiMatrix(\timeShiftOperator)\definitionEquality 
		\begin{bmatrix}
			(\timeShiftOperator\identityMatrix{\numberVelocities} - \sum_{\indexFreeOne = 0}^{\stencilBoundaryCondition}\schemeMatrixBoundaryByPower{0}{\indexFreeOne}\stableRoot(\timeShiftOperator)^{\indexFreeOne})\vectorial{\eigenvectorLetter}_{\textnormal{s}}(\timeShiftOperator) \, | \,(\timeShiftOperator\identityMatrix{\numberVelocities} - \schemeMatrixBoundaryByPower{0}{0})  \vectorial{\eigenvectorLetter}_{0}^1 \, | \, \cdots \, | \, (\timeShiftOperator\identityMatrix{\numberVelocities} - \schemeMatrixBoundaryByPower{0}{0})  \vectorial{\eigenvectorLetter}_{0}^{\numberVelocities - 1}
		\end{bmatrix}.
	\end{multline}
	Furthermore, denoting $\kreissLopatinskiiDet(\timeShiftOperator)\definitionEquality\determinant(\kreissLopatinskiiMatrix(\timeShiftOperator))$ the so-called ``\strong{Kreiss-Lopatinskii determinant}'', and $\positiveVelocityIndex\in\integerInterval{1}{\numberVelocities}$ the unique index such that $\dimensionlessDiscreteVelocityLetter_{\positiveVelocityIndex} = 1$, the coefficients $\coefficientStableSolution(\timeShiftOperator), \coefficientZero^{1}(\timeShiftOperator), \dots, \coefficientZero^{\numberVelocities-1}(\timeShiftOperator)$ satisfy 
	\begin{equation}\label{eq:coefficientWithKreissLopDet}
		\kreissLopatinskiiDet(\timeShiftOperator) \coefficientStableSolution(\timeShiftOperator) = \scalarFactorFromAdjugate(\timeShiftOperator)\laplaceTransformed{\boundarySourceTerm}_{\positiveVelocityIndex, -1}(\timeShiftOperator) \qquad \text{and} \qquad \kreissLopatinskiiDet(\timeShiftOperator) \coefficientZero^{\indexFreeOne}(\timeShiftOperator) = 0, \quad \indexFreeOne \in\integerInterval{1}{\numberVelocities - 1},
	\end{equation}
	where the scalar function $\scalarFactorFromAdjugate(\timeShiftOperator)$ reads 
	\begin{equation}\label{eq:scalarFactorFromAdjugate}
		\scalarFactorFromAdjugate(\timeShiftOperator) \definitionEquality 
		\determinant
		\begin{bmatrix}
			\momentMatrix\canonicalBasisVector{\positiveVelocityIndex}\, | \,(\timeShiftOperator\identityMatrix{\numberVelocities} - \momentMatrix \sum_{\dimensionlessDiscreteVelocityLetter_{\indexVelocity}= 0 }\canonicalBasisVector{\indexVelocity}\transpose{\canonicalBasisVector{\indexVelocity}} \momentMatrix^{-1}\collisionMatrix )  \vectorial{\eigenvectorLetter}_{0}^1 \, | \, \cdots \, | \, (\timeShiftOperator\identityMatrix{\numberVelocities} - \momentMatrix \sum_{\dimensionlessDiscreteVelocityLetter_{\indexVelocity}= 0 }\canonicalBasisVector{\indexVelocity}\transpose{\canonicalBasisVector{\indexVelocity}} \momentMatrix^{-1}\collisionMatrix )  \vectorial{\eigenvectorLetter}_{0}^{\numberVelocities - 1}
		\end{bmatrix} 
		= c \times  \coefficientCharEquationInFourier_{-1}(\timeShiftOperator),
	\end{equation}
	is a polynomial function in $\timeShiftOperator$, and depends only on the bulk scheme, with $c=c(\vectorial{\eigenvectorLetter}_0^1, \dots, \vectorial{\eigenvectorLetter}_0^{\numberVelocities-1})\neq 0$.
	Moreover, 
	\begin{equation}\label{eq:decompositionKreissLopatinskiiDet}
		\kreissLopatinskiiDet(\timeShiftOperator) 
		=\Bigl (  \transpose{\canonicalBasisVector{\positiveVelocityIndex}}\stableRoot(\timeShiftOperator)^{-1} -\underbrace{\sum_{\indexFreeOne = 0}^{\stencilBoundaryCondition}\sum_{\indexFreeTwo = 1}^{\numberVelocities}\transpose{\canonicalBasisVector{\indexFreeTwo}}\boundaryCoefficient_{\positiveVelocityIndex, \indexFreeTwo, 1, \indexFreeOne}\stableRoot(\timeShiftOperator)^{\indexFreeOne}}_{\text{boundary dependent part}} \Bigr ) \underbrace{\momentMatrix^{-1}\collisionMatrix\vectorial{\eigenvectorLetter}_{\textnormal{s}}(\timeShiftOperator)\scalarFactorFromAdjugate(\timeShiftOperator)}_{\text{bulk dependent part}}.
	\end{equation}
	Finally, setting $\kreissLopatinskiiScalarProduct (\timeShiftOperator)\definitionEquality (  \transpose{\canonicalBasisVector{\positiveVelocityIndex}}\stableRoot(\timeShiftOperator)^{-1} -\sum_{\indexFreeOne = 0}^{\stencilBoundaryCondition}\sum_{\indexFreeTwo = 1}^{\numberVelocities}\transpose{\canonicalBasisVector{\indexFreeTwo}}\boundaryCoefficient_{\positiveVelocityIndex, \indexFreeTwo, 1, \indexFreeOne}\stableRoot(\timeShiftOperator)^{\indexFreeOne}  )\momentMatrix^{-1}\collisionMatrix\vectorial{\eigenvectorLetter}_{\textnormal{s}}(\timeShiftOperator)$, \eqref{eq:coefficientWithKreissLopDet} formally (in the sense, by dividing both sides by the polynomial $\coefficientCharEquationInFourier_{-1}(\timeShiftOperator)$) becomes 
	\begin{equation}\label{eq:coefficientWithKreissLopDetFinal}
		\kreissLopatinskiiScalarProduct(\timeShiftOperator) \coefficientStableSolution(\timeShiftOperator) = \laplaceTransformed{\boundarySourceTerm}_{\positiveVelocityIndex, -1}(\timeShiftOperator) \qquad \text{and} \qquad \kreissLopatinskiiScalarProduct(\timeShiftOperator) \coefficientZero^{\indexFreeOne}(\timeShiftOperator) = 0, \quad \indexFreeOne \in\integerInterval{1}{\numberVelocities - 1}.
	\end{equation}
\end{proposition}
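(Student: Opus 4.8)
The plan is to realize $\stableSubspace(\timeShiftOperator)$ explicitly from the spectral data of the matrix polynomial $\matrixPolynomialBulk{\timeShiftOperator}$, inject the general solution into the single boundary relation, and then exploit the rank-one structure induced by $\stencilLeft=1$ to factor all the determinants that appear. First I would analyze the eigenvalue $\fourierShift=0$. With $\stencilLeft=1$, only the velocity $\dimensionlessDiscreteVelocityLetter_{\positiveVelocityIndex}=1$ contributes to $\schemeMatrixBulkByPower_{-1}$ in \eqref{eq:matrixByPowerBulk}, so $\matrixPolynomialBulk{\timeShiftOperator}(0)=-\schemeMatrixBulkByPower_{-1}=-\momentMatrix\canonicalBasisVector{\positiveVelocityIndex}\transpose{\canonicalBasisVector{\positiveVelocityIndex}}\momentMatrix^{-1}\collisionMatrix$ has rank one, whence $\dimension{\kernel(\matrixPolynomialBulk{\timeShiftOperator}(0))}=\numberVelocities-1$ (the geometric multiplicity). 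For the algebraic multiplicity, \eqref{eq:equalityCharEquations} gives $\determinant(\matrixPolynomialBulk{\timeShiftOperator}(\fourierShift))=\fourierShift^{\numberVelocities}\determinant(\timeShiftOperator\identityMatrix{\numberVelocities}-\schemeMatrixBulkFourier(\fourierShift))=\coefficientCharEquationInFourier_{-1}(\timeShiftOperator)\fourierShift^{\numberVelocities-1}+\bigO{\fourierShift^{\numberVelocities}}$ since $\stencilLeftCharacteristic=1$, so the order of vanishing at $\fourierShift=0$ is exactly $\numberVelocities-1$ because $\coefficientCharEquationInFourier_{-1}\not\equiv0$. The two multiplicities coincide, hence $\fourierShift=0$ is semisimple and its only difference-equation solutions are the $\numberVelocities-1$ functions $\vectorial{\eigenvectorLetter}_0^{\indexFreeOne}\delta_{0\indexSpace}$, which solve \eqref{eq:resolventBulk} trivially for $\indexSpace\geq1$ and by $\matrixPolynomialBulk{\timeShiftOperator}(0)\vectorial{\eigenvectorLetter}_0^{\indexFreeOne}=\zeroMatrix{\numberVelocities}$ at $\indexSpace=0$. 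Adjoining the unique $\ell^2$ mode $\stableRoot(\timeShiftOperator)^{\indexSpace}\vectorial{\eigenvectorLetter}_{\textnormal{s}}(\timeShiftOperator)$ furnished by \Cref{lemma:Hersh} (one stable root since $\stencilLeftCharacteristic=1$, and no root on $\unitCircle$ for $\timeShiftOperator\in\neighborhoodInfinity$), the matrix-polynomial theory of \cite[Chapter 8]{matrixpoly09} yields a $\numberVelocities$-dimensional $\stableSubspace(\timeShiftOperator)$ spanned exactly by these, giving \eqref{eq:generalStableSolutionAllSchemes}.

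Next I would inject \eqref{eq:generalStableSolutionAllSchemes} into the boundary relation \eqref{eq:resolventBoundary}, which for $\stencilLeft=1$ holds only at $\indexSpace=0$. Using $\laplaceTransformed{\vectorial{\momentDiscrete}}_0=\coefficientStableSolution\vectorial{\eigenvectorLetter}_{\textnormal{s}}+\sum_{\indexFreeOne}\coefficientZero^{\indexFreeOne}\vectorial{\eigenvectorLetter}_0^{\indexFreeOne}$ and $\laplaceTransformed{\vectorial{\momentDiscrete}}_{\indexFreeOne}=\coefficientStableSolution\stableRoot^{\indexFreeOne}\vectorial{\eigenvectorLetter}_{\textnormal{s}}$ for $\indexFreeOne\geq1$, and regrouping term by term, produces precisely the $\numberVelocities\times\numberVelocities$ Kreiss-Lopatinskii system \eqref{eq:lopatiskiiSystem}. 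From \eqref{eq:fromKineticToMomentSources}, only $\dimensionlessDiscreteVelocityLetter_{\positiveVelocityIndex}=1$ survives at $\indexSpace=0$, so $\laplaceTransformed{\vectorial{\boundarySourceTermMoments}}_0=\momentMatrix\canonicalBasisVector{\positiveVelocityIndex}\laplaceTransformed{\boundarySourceTerm}_{\positiveVelocityIndex,-1}$, i.e.\ the right-hand side is a scalar multiple of the fixed direction $\momentMatrix\canonicalBasisVector{\positiveVelocityIndex}$. I would then solve \eqref{eq:lopatiskiiSystem} by Cramer's rule, which immediately reduces everything to computing determinants of $\kreissLopatinskiiMatrix$ with one column replaced by $\momentMatrix\canonicalBasisVector{\positiveVelocityIndex}$.

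The crux is the first column of $\kreissLopatinskiiMatrix$. Since $\matrixPolynomialBulk{\timeShiftOperator}(\fourierShift)=\fourierShift^{\stencilLeft}(\timeShiftOperator\identityMatrix{\numberVelocities}-\schemeMatrixBulkFourier(\fourierShift))$ and $\stableRoot\neq0$, the eigenvector satisfies $(\timeShiftOperator\identityMatrix{\numberVelocities}-\schemeMatrixBulkFourier(\stableRoot))\vectorial{\eigenvectorLetter}_{\textnormal{s}}=\zeroMatrix{\numberVelocities}$; moreover the boundary symbol $\sum_{\indexFreeOne}\schemeMatrixBoundaryByPower{0}{\indexFreeOne}\stableRoot^{\indexFreeOne}$ differs from $\schemeMatrixBulkFourier(\stableRoot)$ only through the ghost-filling of the single incoming velocity $\positiveVelocityIndex$, i.e.\ by a rank-one matrix with column space $\momentMatrix\canonicalBasisVector{\positiveVelocityIndex}$. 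Subtracting, the first column collapses to $\kreissLopatinskiiScalarProduct(\timeShiftOperator)\,\momentMatrix\canonicalBasisVector{\positiveVelocityIndex}$, with $\kreissLopatinskiiScalarProduct$ exactly the scalar of the statement. Factoring this scalar out of $\kreissLopatinskiiDet$ gives the decomposition \eqref{eq:decompositionKreissLopatinskiiDet}, where the remaining determinant is $\scalarFactorFromAdjugate$; in that determinant, replacing $\schemeMatrixBoundaryByPower{0}{0}$ by its bulk part $\momentMatrix\sum_{\dimensionlessDiscreteVelocityLetter_{\indexVelocity}=0}\canonicalBasisVector{\indexVelocity}\transpose{\canonicalBasisVector{\indexVelocity}}\momentMatrix^{-1}\collisionMatrix$ alters each column only by a multiple of the first column $\momentMatrix\canonicalBasisVector{\positiveVelocityIndex}$, hence leaves the determinant unchanged, which is why $\scalarFactorFromAdjugate$ depends solely on the bulk and equals the Cramer determinant for $\coefficientStableSolution$. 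This proves $\kreissLopatinskiiDet\coefficientStableSolution=\scalarFactorFromAdjugate\laplaceTransformed{\boundarySourceTerm}_{\positiveVelocityIndex,-1}$; for each $\coefficientZero^{\indexFreeOne}$ the Cramer matrix carries the replaced column $\momentMatrix\canonicalBasisVector{\positiveVelocityIndex}$ alongside the first column $\kreissLopatinskiiScalarProduct\,\momentMatrix\canonicalBasisVector{\positiveVelocityIndex}$, two collinear columns, so $\kreissLopatinskiiDet\coefficientZero^{\indexFreeOne}=0$, establishing \eqref{eq:coefficientWithKreissLopDet}.

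It remains to prove $\scalarFactorFromAdjugate=c\,\coefficientCharEquationInFourier_{-1}$ in \eqref{eq:scalarFactorFromAdjugate}, which I expect to be the main obstacle. Writing $\matrixPolynomialBulk{\timeShiftOperator}(0)=\vectorial{a}\transpose{\vectorial{b}}$ with $\vectorial{a}=-\momentMatrix\canonicalBasisVector{\positiveVelocityIndex}$ and setting $\matricial{A}_1\definitionEquality\schemeMatrixBulkByPowerIncludingShift_1(\timeShiftOperator)$, the matrix-determinant lemma identifies $\coefficientCharEquationInFourier_{-1}$, the coefficient of $\fourierShift^{\numberVelocities-1}$ in $\determinant(\matrixPolynomialBulk{\timeShiftOperator}(\fourierShift))$, with $\transpose{\vectorial{b}}\adjugate(\matricial{A}_1)\vectorial{a}$. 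On the other hand $\scalarFactorFromAdjugate$ is, up to the sign from cycling its first column to last position, the linear functional $\vectorial{y}\mapsto\determinant[\matricial{A}_1\vectorial{\eigenvectorLetter}_0^1\,|\,\cdots\,|\,\matricial{A}_1\vectorial{\eigenvectorLetter}_0^{\numberVelocities-1}\,|\,\vectorial{y}]$ evaluated at $\vectorial{y}=-\vectorial{a}$; since the $\vectorial{\eigenvectorLetter}_0^{\indexFreeOne}$ span $\kernel(\vectorial{a}\transpose{\vectorial{b}})=\vectorial{b}^{\perp}$, this functional annihilates $\matricial{A}_1\vectorial{b}^{\perp}$ and is therefore proportional to $\transpose{\vectorial{b}}\matricial{A}_1^{-1}\cdot$, the constant of proportionality being pinned down (and shown $\timeShiftOperator$-independent) by testing against $\matricial{A}_1\vectorial{u}$ for any $\vectorial{u}\notin\vectorial{b}^{\perp}$. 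Multiplying through $\determinant(\matricial{A}_1)$ recovers $\scalarFactorFromAdjugate=c\,\transpose{\vectorial{b}}\adjugate(\matricial{A}_1)\vectorial{a}=c\,\coefficientCharEquationInFourier_{-1}$ with $c\neq0$ depending only on the chosen $\vectorial{\eigenvectorLetter}_0^{\indexFreeOne}$, and polynomiality in $\timeShiftOperator$ is inherited from $\coefficientCharEquationInFourier_{-1}$. Dividing \eqref{eq:coefficientWithKreissLopDet} by the polynomial $\scalarFactorFromAdjugate=c\,\coefficientCharEquationInFourier_{-1}$ away from its isolated zeros then yields \eqref{eq:coefficientWithKreissLopDetFinal}, and the delicate points I anticipate are verifying that $c$ is genuinely $\timeShiftOperator$-independent and controlling the isolated values of $\timeShiftOperator$ where $\stableRoot$, $\coefficientCharEquationInFourier_{-1}$, or $\matricial{A}_1$ degenerate, which is precisely why the final identity is only \emph{formal}.
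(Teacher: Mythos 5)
Your proposal is correct and follows essentially the same route as the paper's own proof: the rank-one structure of $\matrixPolynomialBulk{\timeShiftOperator}(0)$ for the multiplicities, Cramer's rule on the Kreiss--Lopatinskii system with right-hand side $\momentMatrix\canonicalBasisVector{\positiveVelocityIndex}\laplaceTransformed{\boundarySourceTerm}_{\positiveVelocityIndex,-1}$, collinearity of the first column with $\momentMatrix\canonicalBasisVector{\positiveVelocityIndex}$ to kill the $\coefficientZero^{\indexFreeOne}$ and factor $\kreissLopatinskiiDet=\kreissLopatinskiiScalarProduct\scalarFactorFromAdjugate$, and the identification $\scalarFactorFromAdjugate=c\,\coefficientCharEquationInFourier_{-1}$ via the matrix-determinant lemma and a linear form vanishing on $\matricial{A}_1\vectorial{b}^{\perp}$. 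Your explicit pinning-down of the $\timeShiftOperator$-independence of $c$ by testing against $\matricial{A}_1\vectorial{u}$ with $\vectorial{u}\notin\vectorial{b}^{\perp}$ is in fact slightly more careful than the paper, which merely asserts that $c$ depends only on the chosen eigenvectors.
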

A first take-home message of this result, whose proof---an exercise of linear algebra---is in \Cref{app:prop:noDiracBoundary}, is that the boundary-stitched term proportional to $\delta_{0\indexSpace}$ in \eqref{eq:generalStableSolutionAllSchemes} is zero whenever $\kreissLopatinskiiDet(\timeShiftOperator)\neq 0$.
This indicates that the eigenvalue $\fourierShift\equiv 0$ introduced in the transition from \eqref{eq:zTransBulk} to \eqref{eq:resolventBulk} plays no role.
Moreover, $\coefficientStableSolution(\timeShiftOperator)$ depends on the boundary condition only through the Kreiss-Lopatinskii determinant $\kreissLopatinskiiDet(\timeShiftOperator)$ or the Kreiss-Lopatinskii ``scalar product'' $\kreissLopatinskiiScalarProduct(\timeShiftOperator)$.
This latter is the scalar product of $\transpose{\collisionMatrix}\momentMatrix^{-\mathsf{T}}(  {\canonicalBasisVector{\positiveVelocityIndex}}\stableRoot(\timeShiftOperator)^{-1} -\sum_{\indexFreeOne = 0}^{\stencilBoundaryCondition}\sum_{\indexFreeTwo = 1}^{\numberVelocities}{\canonicalBasisVector{\indexFreeTwo}}\boundaryCoefficient_{\positiveVelocityIndex, \indexFreeTwo, 1, \indexFreeOne}\stableRoot(\timeShiftOperator)^{\indexFreeOne} )$ and $\vectorial{\eigenvectorLetter}_{\textnormal{s}}(\timeShiftOperator)$.
A vanishing Kreiss-Lopatinskii determinant can be therefore interpreted as coming from the orthogonality between these two vectors.
Moreover, $\kreissLopatinskiiScalarProduct(\timeShiftOperator)$ (and so the solution of \eqref{eq:coefficientWithKreissLopDetFinal}) is independent of the choice of eigenvectors $\vectorial{\eigenvectorLetter}_0^1, \dots, \vectorial{\eigenvectorLetter}_0^{\numberVelocities - 1}$.
According to \eqref{eq:decompositionKreissLopatinskiiDet}, considering that $\stableRoot(\timeShiftOperator)$ can be continuously extended to $\unitCircle$, the only obstruction to continuously extend $\kreissLopatinskiiDet(\timeShiftOperator)$--$\kreissLopatinskiiScalarProduct(\timeShiftOperator)$ to $\unitCircle$ comes from  a possible lack of this property as far as  $\vectorial{\eigenvectorLetter}_{\textnormal{s}}(\timeShiftOperator)$ is concerned.
For a given moment (unknown) of the numerical scheme, \Cref{def:strongStability} prescribes that the solution depend continuously on boundary data, which is obtained (\confer{} \eqref{eq:generalStableSolutionAllSchemes}) upon $\coefficientStableSolution(\timeShiftOperator)$ depending continuously--uniformly in $\neighborhoodInfinity$ (thus up to $\unitCircle$) on the boundary datum, as well as from a continuous extension of the relevant component of $\vectorial{\eigenvectorLetter}_{\textnormal{s}}(\timeShiftOperator)$.
The first condition, whenever violated due to zeros of $\kreissLopatinskiiScalarProduct$, yields instabilities on all components of the scheme; whereas the second one, depending only on the bulk scheme, generates instabilities only on some components.
	\begin{remark}[On the assumption of stencil breadth one to the left]
		If the assumption $\stencilLeft = 1$ and/or $\stencilLeftCharacteristic = 1$ in \Cref{prop:noDiracBoundary} is not fulfilled due to higher-breadth stencils and/or repeated velocities, we face several stable roots of the characteristic equation, see \Cref{cor:stencilFDBounds} and \Cref{lemma:Hersh}.
    	It would thus be difficult---if not impossible---to solve this equation analytically, which is also needed to compute the eigenvectors. 
		Additionally, as remarked by \cite{boutin2024stability}, issues linked to the multiplicity of the roots, which might coincide or cross for certain values of $\timeShiftOperator$, must be carefully handled in this setting.
    	Finally, note that the large number of free parameters---whence difficulties in the theoretical understanding of the scheme---and computational cost that comes from 1D schemes with more velocities than a \lbmScheme{1}{3} are actually unnecessary to approximate the solution of a scalar hyperbolic equation  like \eqref{eq:targetEquation} satisfactorily.
	\end{remark}

Let the assumptions of \Cref{prop:noDiracBoundary} be met.
Instabilities arise as ``\strong{resonances}'' between boundary and bulk schemes, hence a necessary condition for them to happen (rigorously proved  in the classification at the end of the section) is that these two schemes share an eigenvalue $(\timeShiftOperator, \fourierShift)$.
To identify them, we insert the ansatz $\laplaceTransformed{\vectorial{\momentDiscrete}}_{\indexSpace} = \vectorial{\varphi}\fourierShift^{\indexSpace}$, where $\vectorial{\varphi}=\vectorial{\varphi}(\timeShiftOperator)\in\complex^{\numberVelocities}$ plays the role of an eigenvector, and $\fourierShift=\fourierShift(\timeShiftOperator)\in\complex$ the role of an eigenvalue in space, into the resolvent equation \eqref{eq:resolventBulk}-\eqref{eq:resolventBoundary} without boundary source term.
This yields two systems
\begin{equation*}
	\matrixPolynomialBulk{\timeShiftOperator}(\fourierShift)\vectorial{\varphi}_{\text{bulk}}(\timeShiftOperator) = \zeroMatrix{\numberVelocities}\qquad \text{and} \qquad \Bigl ( \timeShiftOperator\identityMatrix{\numberVelocities} - \sum_{\indexFreeOne= 0}^{\stencilBoundaryCondition}\schemeMatrixBoundaryByPower{0}{\indexFreeOne}\fourierShift^{\indexFreeOne}\Bigr )\vectorial{\varphi}_{\text{boundary}}(\timeShiftOperator) = \zeroMatrix{\numberVelocities},
\end{equation*}
and potential issues arise from non-trivial eigenvectors $\vectorial{\varphi}_{\text{bulk}}$ and $\vectorial{\varphi}_{\text{boundary}}$.
In general, the subspaces spanned by $\vectorial{\varphi}_{\text{bulk}}$ and $\vectorial{\varphi}_{\text{boundary}}$ are different. 
The worst-case scenario is the one where they overlap.
Whatever case we are in, which shall be analyzed in what follows, this commands the study of the two-unknowns--two-equations system 
\begin{equation}\label{eq:toSolveBoundaryBulkD1Q2}
	\determinant(\matrixPolynomialBulk{\timeShiftOperator}(\fourierShift)) = 0 \qquad \text{and} \qquad \determinant\Bigl ( \timeShiftOperator\identityMatrix{\numberVelocities} - \sum_{\indexFreeOne = 0}^{\stencilBoundaryCondition}\schemeMatrixBoundaryByPower{0}{\indexFreeOne}\fourierShift^{\indexFreeOne}\Bigr ) = 0.
\end{equation}

\begin{definition}[Shared eigenvalue between bulk and boundary scheme]\label{def:eigenvalue}
	Let $(\targetEigenvalue, \targetFourierShift)\in\neighborhoodInfinity\times \unitDisk$ be a solution of \eqref{eq:toSolveBoundaryBulkD1Q2} or $\targetEigenvalue\in\unitCircle$ such that $\lim_{\timeShiftOperator\to\targetEigenvalue}\stableRoot(\timeShiftOperator) = \stableRoot(\targetEigenvalue)=\targetFourierShift$ and $(\targetEigenvalue, \targetFourierShift)\in\unitCircle\times \closedUnitDisk$ be a solution \eqref{eq:toSolveBoundaryBulkD1Q2}.
	We then say that $(\targetEigenvalue, \targetFourierShift)$ is a ``shared eigenvalue between bulk and boundary scheme'' (sometimes just ``shared eigenvalue'' or ``eigenvalue'').
\end{definition}


In general, one is mainly concerned with the analysis of $\targetEigenvalue\in\unitCircle$.
Consider the case where we can divide both sides of \eqref{eq:coefficientWithKreissLopDet} by the polynomial $\scalarFactorFromAdjugate(\timeShiftOperator)$, so that we focus on $\kreissLopatinskiiScalarProduct(\timeShiftOperator)$ instead of $\kreissLopatinskiiDet(\timeShiftOperator)$.
A computation similar to the ones in the proof of \Cref{prop:noDiracBoundary} yields
			\begin{equation*}
				\determinant\Bigl ( \timeShiftOperator\identityMatrix{\numberVelocities} - \sum_{\indexFreeOne = 0}^{\stencilBoundaryCondition}\schemeMatrixBoundaryByPower{0}{\indexFreeOne}\stableRoot(\timeShiftOperator)^{\indexFreeOne}\Bigr ) 
				= \Bigl (  \transpose{\canonicalBasisVector{\positiveVelocityIndex}}\stableRoot(\timeShiftOperator)^{-1} -\sum_{\indexFreeOne = 0}^{\stencilBoundaryCondition}\sum_{\indexFreeTwo = 1}^{\numberVelocities}\transpose{\canonicalBasisVector{\indexFreeTwo}}\boundaryCoefficient_{\positiveVelocityIndex, \indexFreeTwo, 1, \indexFreeOne}\stableRoot(\timeShiftOperator)^{\indexFreeOne} \Bigr ) \momentMatrix^{-1}\collisionMatrix\adjugate(\timeShiftOperator\identityMatrix{\numberVelocities}-\schemeMatrixBulkFourier(\stableRoot(\timeShiftOperator)))\momentMatrix\canonicalBasisVector{\positiveVelocityIndex}.
			\end{equation*}
As $\stableRoot(\timeShiftOperator)$ for $\timeShiftOperator\in\neighborhoodInfinity$ is simple, we deduce that $\rank(\timeShiftOperator\identityMatrix{\numberVelocities}-\schemeMatrixBulkFourier(\stableRoot(\timeShiftOperator))) = \numberVelocities - 1$ for $\timeShiftOperator\in\neighborhoodInfinity$. This entails that
\begin{equation}\label{eq:adjugateAsRankOneMatrix}
	\adjugate(\timeShiftOperator\identityMatrix{\numberVelocities}-\schemeMatrixBulkFourier(\stableRoot(\timeShiftOperator))) = \vectorial{\eigenvectorLetter}_{\textnormal{s}}(\timeShiftOperator)\transpose{\tilde{\vectorial{\eigenvectorLetter}}_{\textnormal{s}}(\timeShiftOperator)},\qquad \timeShiftOperator\in\neighborhoodInfinity,
\end{equation}
where $\kernel(\timeShiftOperator\identityMatrix{\numberVelocities}-\schemeMatrixBulkFourier(\stableRoot(\timeShiftOperator))) = \spanSpace{\vectorial{\eigenvectorLetter}_{\textnormal{s}}(\timeShiftOperator)}$, with the normalization of $\vectorial{\eigenvectorLetter}_{\textnormal{s}}(\timeShiftOperator)$ that has been fixed (we consistently take $\eigenvectorLetter_{\textnormal{s}, 1}(\timeShiftOperator)\equiv 1$ in the paper), and $\kernel(\timeShiftOperator\identityMatrix{\numberVelocities}-\transpose{\schemeMatrixBulkFourier(\stableRoot(\timeShiftOperator))}) = \spanSpace{\tilde{\vectorial{\eigenvectorLetter}}_{\textnormal{s}}(\timeShiftOperator)}$.
Notice that \eqref{eq:adjugateAsRankOneMatrix} imposes some kind of normalization on $\tilde{\vectorial{\eigenvectorLetter}}_{\textnormal{s}}(\timeShiftOperator)$ coming from the fact that the adjugate is a polynomial function of its argument, and the argument on the right-hand side of \eqref{eq:adjugateAsRankOneMatrix} can be continuously extended to $\targetEigenvalue$.
This ensures that, on the one hand, if $\vectorial{\eigenvectorLetter}_{\textnormal{s}}(\timeShiftOperator)$ can be continuously extended to $\targetEigenvalue$ (\idEst{}, no component has a pole), so does $\tilde{\vectorial{\eigenvectorLetter}}_{\textnormal{s}}(\timeShiftOperator)$.
On the other hand, if $\vectorial{\eigenvectorLetter}_{\textnormal{s}}(\timeShiftOperator)$ cannot be continuously extended to $\targetEigenvalue$ (\idEst{}, some component has a pole, whose maximal order we denote $\sigma$), every component of $\tilde{\vectorial{\eigenvectorLetter}}_{\textnormal{s}}(\timeShiftOperator)$ has zeros of order at least $\sigma$. When all zeros are of order strictly larger than $\sigma$, this entails that $\adjugate(\targetEigenvalue\identityMatrix{\numberVelocities}-\schemeMatrixBulkFourier(\stableRoot(\targetEigenvalue)))=\zeroMatrix{\numberVelocities\times\numberVelocities}$, so $\rank(\targetEigenvalue\identityMatrix{\numberVelocities}-\schemeMatrixBulkFourier(\stableRoot(\targetEigenvalue))) < \numberVelocities - 1$, and $\targetEigenvalue$ is a multiple eigenvalue.

\begin{remark}[On the normalization of $\vectorial{\eigenvectorLetter}_{\textnormal{s}}(\timeShiftOperator)$]
	Eigenvectors are defined up to non-zero scale factors.
	For instance, we could argue that the chosen $\vectorial{\eigenvectorLetter}_{\textnormal{s}}(\timeShiftOperator)$ can be renormalized \strong{via} a pre-multiplication by $\alpha(\timeShiftOperator)$.
	The multilinearity of the determinant entails that $\kreissLopatinskiiDet(\timeShiftOperator)$ and $\kreissLopatinskiiScalarProduct(\timeShiftOperator)$ would also be multiplied by $\alpha(\timeShiftOperator)$.
	For this reason, $\coefficientStableSolution(\timeShiftOperator)$ would be multiplied by $\alpha(\timeShiftOperator)^{-1}$ which simplifies with $\alpha(\timeShiftOperator)$ in $\alpha(\timeShiftOperator) \vectorial{\eigenvectorLetter}_{\textnormal{s}}(\timeShiftOperator)$, \confer{} \eqref{eq:generalStableSolutionAllSchemes}. 
	This proves that renormalizations, although they can (linearly) change $\kreissLopatinskiiDet(\timeShiftOperator)$ and $\kreissLopatinskiiScalarProduct(\timeShiftOperator)$, do not modify the outcome on the $\timeShiftOperator$-transformed numerical solution, and thus our eventual conclusions. 
\end{remark}

From \eqref{eq:adjugateAsRankOneMatrix}, we obtain that
\begin{equation}\label{eq:factorKL}
	\determinant\Bigl ( \timeShiftOperator\identityMatrix{\numberVelocities} - \sum_{\indexFreeOne = 0}^{\stencilBoundaryCondition}\schemeMatrixBoundaryByPower{0}{\indexFreeOne}\stableRoot(\timeShiftOperator)^{\indexFreeOne}\Bigr ) 
	= \bigl (\transpose{\tilde{\vectorial{\eigenvectorLetter}}_{\textnormal{s}}(\timeShiftOperator)}\momentMatrix\canonicalBasisVector{\positiveVelocityIndex} \bigr )\kreissLopatinskiiScalarProduct(\timeShiftOperator),\qquad \timeShiftOperator\in\neighborhoodInfinity.
\end{equation}

As instabilities arise either from zeros in the Kreiss-Lopatinskii determinant or from poles in the eigenvector, the following lemma tells us that we have to study shared eigenvalues between bulk and boundary scheme in order to understand the former kind of instability.
\begin{lemma}
	Let $\lim_{\timeShiftOperator\to\targetEigenvalue}\kreissLopatinskiiScalarProduct(\timeShiftOperator) = 0$ (we often write $\kreissLopatinskiiScalarProduct(\targetEigenvalue) = 0$), then $(\targetEigenvalue, \stableRoot(\targetEigenvalue))$ is a shared eigenvalue between bulk and boundary scheme.
\end{lemma}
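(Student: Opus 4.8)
The plan is to combine the factorization \eqref{eq:factorKL} with the defining property of the stable root, reducing everything to passing to the limit $\timeShiftOperator\to\targetEigenvalue$. First I would record that, by construction, $\stableRoot(\timeShiftOperator)$ is a root of the characteristic equation \eqref{eq:characteristicEquation}, namely $\determinant(\timeShiftOperator\identityMatrix{\numberVelocities} - \schemeMatrixBulkFourier(\stableRoot(\timeShiftOperator))) = 0$ for every $\timeShiftOperator\in\neighborhoodInfinity$. Combined with \eqref{eq:equalityCharEquations} (here $\stencilLeft = 1$), this yields $\determinant(\matrixPolynomialBulk{\timeShiftOperator}(\stableRoot(\timeShiftOperator))) = \stableRoot(\timeShiftOperator)^{\numberVelocities}\determinant(\timeShiftOperator\identityMatrix{\numberVelocities} - \schemeMatrixBulkFourier(\stableRoot(\timeShiftOperator))) = 0$, so the first (bulk) equation of the coupled system \eqref{eq:toSolveBoundaryBulkD1Q2} is automatically satisfied along the curve $(\timeShiftOperator, \stableRoot(\timeShiftOperator))$, and it persists in the limit $\timeShiftOperator\to\targetEigenvalue$ by continuity of $\stableRoot$.

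The crux is then to establish the second (boundary) equation of \eqref{eq:toSolveBoundaryBulkD1Q2} at $(\targetEigenvalue, \stableRoot(\targetEigenvalue))$. I would pass to the limit $\timeShiftOperator\to\targetEigenvalue$ in \eqref{eq:factorKL}: its right-hand side is the product of the scalar $\transpose{\tilde{\vectorial{\eigenvectorLetter}}_{\textnormal{s}}(\timeShiftOperator)}\momentMatrix\canonicalBasisVector{\positiveVelocityIndex}$ and the Kreiss-Lopatinskii scalar product $\kreissLopatinskiiScalarProduct(\timeShiftOperator)$, the latter tending to $0$ by hypothesis. Provided the scalar factor stays bounded, the product tends to $0$; hence so does the left-hand side $\determinant(\timeShiftOperator\identityMatrix{\numberVelocities} - \sum_{\indexFreeOne}\schemeMatrixBoundaryByPower{0}{\indexFreeOne}\stableRoot(\timeShiftOperator)^{\indexFreeOne})$, and by continuity of this determinant in its arguments together with continuity of $\stableRoot$, its limit equals $\determinant(\targetEigenvalue\identityMatrix{\numberVelocities} - \sum_{\indexFreeOne}\schemeMatrixBoundaryByPower{0}{\indexFreeOne}\stableRoot(\targetEigenvalue)^{\indexFreeOne}) = 0$, which is exactly the boundary equation.

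The main obstacle---indeed the only non-routine point---is to control the scalar factor $\transpose{\tilde{\vectorial{\eigenvectorLetter}}_{\textnormal{s}}(\timeShiftOperator)}\momentMatrix\canonicalBasisVector{\positiveVelocityIndex}$ as $\timeShiftOperator\to\targetEigenvalue$, since $\vectorial{\eigenvectorLetter}_{\textnormal{s}}$ itself may fail to extend continuously to $\targetEigenvalue$ (some component may carry a pole, as emphasized in the discussion following \eqref{eq:adjugateAsRankOneMatrix}). Here I would exploit the fact that $\tilde{\vectorial{\eigenvectorLetter}}_{\textnormal{s}}$ is not an arbitrary dual eigenvector but the one normalized through \eqref{eq:adjugateAsRankOneMatrix}: since $\adjugate(\timeShiftOperator\identityMatrix{\numberVelocities} - \schemeMatrixBulkFourier(\stableRoot(\timeShiftOperator))) = \vectorial{\eigenvectorLetter}_{\textnormal{s}}(\timeShiftOperator)\transpose{\tilde{\vectorial{\eigenvectorLetter}}_{\textnormal{s}}(\timeShiftOperator)}$ and $\vectorial{\eigenvectorLetter}_{\textnormal{s}}$ carries the normalization $\eigenvectorLetter_{\textnormal{s}, 1}\equiv 1$, the row vector $\transpose{\tilde{\vectorial{\eigenvectorLetter}}_{\textnormal{s}}}$ is exactly the first row of that adjugate. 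The adjugate is a polynomial function of the entries of $\timeShiftOperator\identityMatrix{\numberVelocities} - \schemeMatrixBulkFourier(\stableRoot(\timeShiftOperator))$, and these entries extend continuously to $\targetEigenvalue$ because $\stableRoot$ does; therefore $\transpose{\tilde{\vectorial{\eigenvectorLetter}}_{\textnormal{s}}(\timeShiftOperator)}$---and \emph{a fortiori} $\transpose{\tilde{\vectorial{\eigenvectorLetter}}_{\textnormal{s}}(\timeShiftOperator)}\momentMatrix\canonicalBasisVector{\positiveVelocityIndex}$---admits a finite limit, even in the polar regime for $\vectorial{\eigenvectorLetter}_{\textnormal{s}}$.

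Finally, I would verify that $(\targetEigenvalue, \stableRoot(\targetEigenvalue))$ lands in the correct alternative of \Cref{def:eigenvalue}: for $\targetEigenvalue\in\neighborhoodInfinity$ the stable root lies in $\unitDisk$, while for $\targetEigenvalue\in\unitCircle$ it lies in $\closedUnitDisk$ by the continuous extension of $\stableRoot$, matching respectively the two clauses of the definition. In both situations $(\targetEigenvalue, \stableRoot(\targetEigenvalue))$ solves both equations of \eqref{eq:toSolveBoundaryBulkD1Q2}, so it is a shared eigenvalue between bulk and boundary scheme, which concludes.
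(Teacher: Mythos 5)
Your proposal is correct and follows essentially the same route as the paper: pass to the limit in the factorization \eqref{eq:factorKL}, using the fact that $\transpose{\tilde{\vectorial{\eigenvectorLetter}}_{\textnormal{s}}(\timeShiftOperator)}\momentMatrix\canonicalBasisVector{\positiveVelocityIndex}$ stays bounded because $\tilde{\vectorial{\eigenvectorLetter}}_{\textnormal{s}}$ inherits the continuity of the adjugate in \eqref{eq:adjugateAsRankOneMatrix}. Your identification of $\transpose{\tilde{\vectorial{\eigenvectorLetter}}_{\textnormal{s}}}$ with the first row of the adjugate (via the normalization $\eigenvectorLetter_{\textnormal{s},1}\equiv 1$) is a clean, explicit way of justifying the pole-freeness that the paper delegates to its preceding discussion.
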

\begin{proof}
	As $\momentMatrix$ is non-singular, we have that $\momentMatrix\canonicalBasisVector{\positiveVelocityIndex}\neq \zeroMatrix{\numberVelocities}$. Thus $\transpose{\tilde{\vectorial{\eigenvectorLetter}}_{\textnormal{s}}(\timeShiftOperator)} \momentMatrix\canonicalBasisVector{\positiveVelocityIndex}$ cannot have a pole, as $\tilde{\vectorial{\eigenvectorLetter}}_{\textnormal{s}}(\timeShiftOperator)$ does not have poles by the previous discussion. Equation \eqref{eq:factorKL} gives 
	\begin{equation}\label{eq:isEigenvalue}
		\lim_{\timeShiftOperator\to\targetEigenvalue} \determinant\Bigl ( \timeShiftOperator\identityMatrix{\numberVelocities} - \sum_{\indexFreeOne = 0}^{\stencilBoundaryCondition}\schemeMatrixBoundaryByPower{0}{\indexFreeOne}\stableRoot(\timeShiftOperator)^{\indexFreeOne}\Bigr ) = \determinant\Bigl ( \targetEigenvalue\identityMatrix{\numberVelocities} - \sum_{\indexFreeOne = 0}^{\stencilBoundaryCondition}\schemeMatrixBoundaryByPower{0}{\indexFreeOne}\stableRoot(\targetEigenvalue)^{\indexFreeOne}\Bigr ) = 0, 
	\end{equation}
	hence the claim.
\end{proof}

We now classify expected behaviors for $\kreissLopatinskiiScalarProduct(\timeShiftOperator)$ and $\vectorial{\eigenvectorLetter}_{\textnormal{s}}(\timeShiftOperator)$ as $\timeShiftOperator\to\targetEigenvalue$, meanwhile showing that the latter kind of instability---due to poles in the eigenvector---manifests only for shared eigenvalues.
\begin{itemize}
	\item Case \threeboxes{\notContinuousExtensionMark}{}{}. $\stableSubspace(\timeShiftOperator)$ (\idEst{} $\vectorial{\eigenvectorLetter}_{\textnormal{s}}(\timeShiftOperator)$) cannot be continuously extended as $\timeShiftOperator\to\targetEigenvalue$.
	Still, at least one component of $\vectorial{\eigenvectorLetter}_{\textnormal{s}}(\timeShiftOperator)$ can be chosen independent of $\timeShiftOperator$ (hence continuous).
	\begin{itemize}
		\item Case \threeboxes{\notContinuousExtensionMark}{\eigenvalueMark}{}. $\targetEigenvalue$ is a shared eigenvalue between bulk and boundary scheme, \idEst{} \eqref{eq:isEigenvalue} holds.
		\begin{itemize}
			\item Case \threeboxes{\notContinuousExtensionMark}{\eigenvalueMark}{\zeroKL}. $\kreissLopatinskiiScalarProduct(\targetEigenvalue) = 0$. This means that  bulk and boundary scheme also share the eigenvector.
			This entails instability on every component of the system, with some worse than others due to principal parts in some components of $\vectorial{\eigenvectorLetter}_{\textnormal{s}}(\timeShiftOperator)$.
			\item Case \threeboxes{\notContinuousExtensionMark}{\eigenvalueMark}{\nonZeroKL}. $\kreissLopatinskiiScalarProduct(\targetEigenvalue) \neq 0$ and finite.
			This means that bulk and boundary scheme do not share the eigenvector.
			The scheme can be strongly stable on certain components (hopefully, $\momentDiscrete_1$) and is not strongly stable on others, due to principal parts in $\vectorial{\eigenvectorLetter}_{\textnormal{s}}(\timeShiftOperator)$.
			In particular $\coefficientStableSolution(\timeShiftOperator) = (\kreissLopatinskiiScalarProduct(\targetEigenvalue)^{-1} + \bigO{\timeShiftOperator-\targetEigenvalue})\laplaceTransformed{\boundarySourceTerm}_{\positiveVelocityIndex, -1}(\timeShiftOperator)$ depends continuously on the data, so if $\eigenvectorLetter_{\textnormal{s}, \indexVelocity}(\timeShiftOperator)$ can be continuously extended, $\laplaceTransformed{\momentDiscrete}_{\indexVelocity}(\timeShiftOperator)$ depends continuously on the data.
			\item Case \threeboxes{\notContinuousExtensionMark}{\eigenvalueMark}{\infiniteKL}. $\lim_{\timeShiftOperator\to\targetEigenvalue}\kreissLopatinskiiScalarProduct(\timeShiftOperator)=\infty$.
			When the maximum order of the pole in $\vectorial{\eigenvectorLetter}_{\textnormal{s}}(\timeShiftOperator)$ is $\sigma = 1$, the scheme can be strongly stable on every component without further verification on this mode. 
			Indeed, the order of the pole in $\kreissLopatinskiiScalarProduct(\timeShiftOperator)$ is less or equal than $\sigma$, hence exactly $1$.
			This entails by \eqref{eq:coefficientWithKreissLopDetFinal} that $\coefficientStableSolution(\timeShiftOperator) = \bigO{\timeShiftOperator-\targetEigenvalue} \times \laplaceTransformed{\boundarySourceTerm}_{\positiveVelocityIndex, -1}(\timeShiftOperator)$, which back into \eqref{eq:generalStableSolutionAllSchemes} compensates the poles of order $1$ of $\vectorial{\eigenvectorLetter}_{\textnormal{s}}(\timeShiftOperator)$ to yield a continuous dependence of any component of the solution on $\laplaceTransformed{\boundarySourceTerm}_{\positiveVelocityIndex, -1}(\timeShiftOperator)$.
		\end{itemize}
		\item Case \threeboxes{\notContinuousExtensionMark}{\noEigenvalueMark}{}. $\targetEigenvalue$ is not a shared eigenvalue between bulk and boundary scheme:
		\begin{equation}\label{eq:tmpNew}
			\lim_{\timeShiftOperator\to\targetEigenvalue}\determinant \Bigl ( \timeShiftOperator\identityMatrix{\numberVelocities} - \sum_{\indexFreeOne = 0}^{\stencilBoundaryCondition}\schemeMatrixBoundaryByPower{0}{\indexFreeOne}\stableRoot(\timeShiftOperator)^{\indexFreeOne} \Bigr ) = \determinant \Bigl ( \targetEigenvalue\identityMatrix{\numberVelocities} - \sum_{\indexFreeOne = 0}^{\stencilBoundaryCondition}\schemeMatrixBoundaryByPower{0}{\indexFreeOne}\stableRoot(\targetEigenvalue)^{\indexFreeOne} \Bigr )\neq 0.
		\end{equation}
		By the previous discussion, $\transpose{\tilde{\vectorial{\eigenvectorLetter}}_{\textnormal{s}}(\timeShiftOperator)}\momentMatrix\canonicalBasisVector{\positiveVelocityIndex}$ has a zero.
		Therefore, we deduce from \eqref{eq:factorKL} that $\kreissLopatinskiiScalarProduct(\timeShiftOperator)$ must have a pole to ensure \eqref{eq:tmpNew}.
		\begin{itemize}
			\item Case \threeboxes{\notContinuousExtensionMark}{\noEigenvalueMark}{\infiniteKL}. $\lim_{\timeShiftOperator\to\targetEigenvalue}\kreissLopatinskiiScalarProduct(\timeShiftOperator)=\infty$.
			The scheme can be strongly stable on every component without further verification on this mode. 
			Indeed, $\transpose{\tilde{\vectorial{\eigenvectorLetter}}_{\textnormal{s}}(\timeShiftOperator)}\momentMatrix\canonicalBasisVector{\positiveVelocityIndex}$ has a zero of order $\tilde{\sigma}\geq \sigma$.
			As the limit by \eqref{eq:tmpNew} exists and is different from zero, \eqref{eq:factorKL} gives that $\kreissLopatinskiiScalarProduct(\timeShiftOperator)$ has a pole of order $\tilde{\sigma}$.
			This entails that $\kreissLopatinskiiScalarProduct(\timeShiftOperator)^{-1}$ has a zero of order $\tilde{\sigma}\geq \sigma$, so by \eqref{eq:coefficientWithKreissLopDetFinal} that $\coefficientStableSolution(\timeShiftOperator) = \bigO{(\timeShiftOperator-\targetEigenvalue)^{\tilde{\sigma}}} \times \laplaceTransformed{\boundarySourceTerm}_{\positiveVelocityIndex, -1}(\timeShiftOperator)$, which back into \eqref{eq:generalStableSolutionAllSchemes} compensates the poles of order at most $\sigma$ of $\vectorial{\eigenvectorLetter}_{\textnormal{s}}(\timeShiftOperator)$ to yield a continuous dependence of any component of the solution on $\laplaceTransformed{\boundarySourceTerm}_{\positiveVelocityIndex, -1}(\timeShiftOperator)$.
		\end{itemize}
	\end{itemize}
	\item Case \threeboxes{\continuousExtensionMark}{}{}. $\stableSubspace(\timeShiftOperator)$ (\idEst{} $\vectorial{\eigenvectorLetter}_{\textnormal{s}}(\timeShiftOperator)$) can be continuously extended as $\timeShiftOperator\to\targetEigenvalue$ (so also $\kreissLopatinskiiScalarProduct(\timeShiftOperator)$).

	\begin{itemize}
		\item Case \threeboxes{\continuousExtensionMark}{\eigenvalueMark}{}. $\targetEigenvalue$ is a shared eigenvalue between bulk and boundary scheme, \idEst{} \eqref{eq:isEigenvalue} holds.
		From \eqref{eq:factorKL}, we deduce that either $\kreissLopatinskiiScalarProduct(\targetEigenvalue) = 0$ or $\transpose{\tilde{\vectorial{\eigenvectorLetter}}_{\textnormal{s}}(\targetEigenvalue)}\momentMatrix\canonicalBasisVector{\positiveVelocityIndex} = 0$.
		\begin{itemize}
			\item Case \threeboxes{\continuousExtensionMark}{\eigenvalueMark}{\zeroKL}. $\kreissLopatinskiiScalarProduct(\targetEigenvalue) = 0$. This means that  bulk and boundary scheme also share the eigenvector.
			This entails instability on every component of the system, all with the same degree of severity as far as this mode is concerned. 
			\item Case \threeboxes{\continuousExtensionMark}{\eigenvalueMark}{\nonZeroKL}. $\kreissLopatinskiiScalarProduct(\targetEigenvalue) \neq 0$ and finite. 
			We could neither find an example of this situation, nor prove that it cannot happen. 
			In this case, $\transpose{\tilde{\vectorial{\eigenvectorLetter}}_{\textnormal{s}}(\targetEigenvalue)}\momentMatrix\canonicalBasisVector{\positiveVelocityIndex} = 0$, so $\tilde{\vectorial{\eigenvectorLetter}}_{\textnormal{s}}(\targetEigenvalue)\in\kernel(\targetEigenvalue\identityMatrix{\numberVelocities} - \sum_{\indexFreeOne = 0}^{\stencilRight}\transpose{\schemeMatrixBulkByPower_{\indexFreeOne}}\stableRoot(\targetEigenvalue)^{\indexFreeOne})$ in addition to $\tilde{\vectorial{\eigenvectorLetter}}_{\textnormal{s}}(\targetEigenvalue)\in\kernel(\targetEigenvalue\identityMatrix{\numberVelocities} - \sum_{\indexFreeOne = -1}^{\stencilRight}\transpose{\schemeMatrixBulkByPower_{\indexFreeOne}}\stableRoot(\targetEigenvalue)^{\indexFreeOne})$, which is very unlike to happen.
			Still, these modes are harmless since they do not include any source of instability, either through a vanishing Kreiss-Lopatinskii determinant or poles in the eigenvector.
		\end{itemize}
		\item Case \threeboxes{\continuousExtensionMark}{\noEigenvalueMark}{}. $\targetEigenvalue$ is not a shared eigenvalue between bulk and boundary scheme.
		Both $\kreissLopatinskiiScalarProduct(\targetEigenvalue) \neq 0$ and $\transpose{\tilde{\vectorial{\eigenvectorLetter}}_{\textnormal{s}}(\targetEigenvalue)}\momentMatrix\canonicalBasisVector{\positiveVelocityIndex} \neq 0$ and finite.
		\begin{itemize}
			\item Case \threeboxes{\continuousExtensionMark}{\noEigenvalueMark}{\nonZeroKL}. $\kreissLopatinskiiScalarProduct(\targetEigenvalue) \neq 0$ and finite.
			These modes are harmless as they do not trigger one of the two sources of instability. 
			We do not encounter them in the future as they do not appear while searching shared eigenvalues or looking for poles in the eigenvector.
		\end{itemize}
	\end{itemize}
\end{itemize}
This discussion makes it clear that we have to \strong{concentrate only on modes which are eigenvalues}.
Examples of these situations, recalled with the \threeboxes{}{}{} notation, are in \Cref{sec:strongCertainSchemes}.

\section{Strong stability of some schemes and description of the instabilities}\label{sec:strongCertainSchemes}

We now consider a quite complete set of examples of schemes for \eqref{eq:targetEquation}.
We start by introducing a notation: in all examples, we have (outside exceptional cases) $\stencilLeftCharacteristic = \stencilRightCharacteristic = 1$, hence \eqref{eq:characteristicEquation} is a quadratic equation in $\fourierShift$, which reads (for $\fourierShift\in\puncturedPlane$) 
\begin{equation*}
	\coefficientCharEquationInFourier_1(\timeShiftOperator)\fourierShift^2 + \coefficientCharEquationInFourier_0(\timeShiftOperator)\fourierShift + \coefficientCharEquationInFourier_{-1}(\timeShiftOperator) = 0.
\end{equation*}
The product of the two roots equals $\productRootsDOneQThree(\timeShiftOperator)\definitionEquality \frac{\coefficientCharEquationInFourier_{-1}(\timeShiftOperator)}{\coefficientCharEquationInFourier_{1}(\timeShiftOperator)}$.
As we need actual parametrizations of these roots as functions of $\timeShiftOperator$, we introduce the notation 
\begin{equation}\label{eq:rootSecondOrder}
	\fourierShift_{\pm}(\timeShiftOperator)\definitionEquality \frac{- \coefficientCharEquationInFourier_0(\timeShiftOperator)\pm\sqrt{\coefficientCharEquationInFourier_0(\timeShiftOperator)^2 - 4\coefficientCharEquationInFourier_1(\timeShiftOperator)\coefficientCharEquationInFourier_{-1}(\timeShiftOperator)} }{2 \coefficientCharEquationInFourier_1(\timeShiftOperator)},
\end{equation}
where $\sqrt{\cdot}$ is the principal square root.
In what follows, many computations are cumbersome: we therefore take advantage of the computer algebra system \texttt{SageMath 10.0} to perform them.
For the sake of reproducibility, codes are available at \href{https://github.com/thomasbellotti/GKS-LBM}{https://github.com/thomasbellotti/GKS-LBM}.

\subsection{\lbmScheme{1}{2} scheme}\label{sec:D1Q2}

We consider the scheme by \cite{graille2014approximation} with the following components:
\begin{equation*}
	\numberVelocities = 2, \qquad \dimensionlessDiscreteVelocityLetter_1 = -\dimensionlessDiscreteVelocityLetter_2 = 1, \qquad \momentMatrix = 
	\begin{pmatrix}
		1 & 1 \\
		1 & - 1
	\end{pmatrix}, \qquad 
	\equilibriumVectorLetter_2 = \courantNumber,
\end{equation*}
where we recall that $\courantNumber$ is the Courant number.
One can show that for $\relaxationParameterLetter_2\in(0, 2)$, the scheme is first-order accurate, whereas if $\relaxationParameterLetter_2 = 2$, it is second-order accurate.
We do not consider $\relaxationParameterLetter_2 = 0$, as in this case the scheme is not consistent with \eqref{eq:targetEquation}.
As far as stability is concerned, we have the following.
\begin{proposition}[Stability of the \lbmScheme{1}{2} scheme for the Cauchy problem]\label{prop:stabD1Q2}
	Assume that $\relaxationParameterLetter_2\in(0, 2]$.
	The \lbmScheme{1}{2} scheme is stable according to \emph{von Neumann}, \confer{} \Cref{def:vonNeumannStability}, if and only if
	\begin{equation}\label{eq:D1Q2VonNeumann}
		|\courantNumber|\leq 1.
	\end{equation}
	Moreover, stability---both for \Cref{def:stabLBM} and \ref{def:stabFD}---holds if and only if 
	\begin{align}
		\text{when}\quad\relaxationParameterLetter_2\in (0, 2), \qquad \text{then} \qquad &|\courantNumber|\leq 1,\label{eq:sCondD1Q2-1} \\
		\text{or when}\quad \relaxationParameterLetter_2=2, \qquad \text{then} \qquad &|\courantNumber|< 1.\label{eq:sCondD1Q2-2}
	\end{align}
\end{proposition}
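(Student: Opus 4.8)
The plan is to reduce both claims to the explicit spectral analysis of the $2\times2$ amplification matrix and the quadratic it generates. First I would write the relaxation matrix $\collisionMatrix = \identityMatrix{2} + \diagonalMatrix(\relaxationParameterLetter_1,\relaxationParameterLetter_2)(\equilibriumVector\transpose{\canonicalBasisVector{1}} - \identityMatrix{2})$ for $\equilibriumVector = \transpose{(1,\courantNumber)}$; since the first row of $\equilibriumVector\transpose{\canonicalBasisVector{1}} - \identityMatrix{2}$ vanishes, $\relaxationParameterLetter_1$ drops out and $\collisionMatrix = \left(\begin{smallmatrix} 1 & 0 \\ \relaxationParameterLetter_2\courantNumber & 1-\relaxationParameterLetter_2 \end{smallmatrix}\right)$. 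Then, using $\momentMatrix^{-1} = \tfrac12\momentMatrix$ and $\dimensionlessDiscreteVelocityLetter_1 = -\dimensionlessDiscreteVelocityLetter_2 = 1$, I would evaluate $\schemeMatrixBulkFourier(\fourierShift) = \momentMatrix\,\diagonalMatrix(\fourierShift^{-1},\fourierShift)\,\momentMatrix^{-1}\collisionMatrix$ and read off, at $\fourierShift = e^{i\waveNumber}$, the invariants $\determinant(\schemeMatrixBulkFourier(e^{i\waveNumber})) = 1-\relaxationParameterLetter_2$ (consistently with \eqref{eq:traceAndDeterminant}) and $\trace(\schemeMatrixBulkFourier(e^{i\waveNumber})) = (2-\relaxationParameterLetter_2)\cos\waveNumber - i\,\relaxationParameterLetter_2\courantNumber\sin\waveNumber$. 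The two eigenvalues $\timeShiftOperator_{\pm}(\waveNumber)$ are thus the roots of $\timeShiftOperator^2 - T\timeShiftOperator + D = 0$ with $D\definitionEquality 1-\relaxationParameterLetter_2\in\reals$ and $T\definitionEquality\trace(\schemeMatrixBulkFourier(e^{i\waveNumber}))$.

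For \Cref{def:vonNeumannStability} (the first claim) I would note that $\timeShiftOperator_{+}\timeShiftOperator_{-} = D$ forces $|D|\le 1$, i.e. $\relaxationParameterLetter_2\le 2$, as a necessary condition; with the standing assumption $\relaxationParameterLetter_2>0$ this pins the range $\relaxationParameterLetter_2\in(0,2]$. For $\relaxationParameterLetter_2\in(0,2)$ (so $|D|<1$) I would locate the roots through the Joukowski-type map $\timeShiftOperator\mapsto\timeShiftOperator + D\timeShiftOperator^{-1}$, which sends $\unitCircle$ onto the ellipse $\{(2-\relaxationParameterLetter_2)\cos\phi + i\relaxationParameterLetter_2\sin\phi : \phi\in[-\pi,\pi]\}$: a root lies on $\unitCircle$ exactly when $T$ lies on this ellipse, and both roots lie in $\closedUnitDisk$ exactly when $T$ lies in its closed interior (equivalently, by the Schur--Cohn criterion for a quadratic with complex coefficients). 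Substituting $\mathrm{Re}\,T$ and $\mathrm{Im}\,T$ turns ``$T$ inside the ellipse'' into $\cos^2\waveNumber + \courantNumber^2\sin^2\waveNumber\le 1$, i.e. $(\courantNumber^2-1)\sin^2\waveNumber\le 0$, which holds for all $\waveNumber$ if and only if $|\courantNumber|\le 1$. The degenerate case $\relaxationParameterLetter_2 = 2$, where $D=-1$ and the ellipse collapses to the segment $i[-2,2]$, I would treat directly: $T = -2i\courantNumber\sin\waveNumber$ is purely imaginary, both roots have modulus one as soon as $|\courantNumber\sin\waveNumber|\le 1$, and this holds for all $\waveNumber$ iff $|\courantNumber|\le 1$. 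Together these prove \eqref{eq:D1Q2VonNeumann}.

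For the $L^2$ statements I would lean on the implications already available: by \Cref{prop:stabFD}, \Cref{def:stabFD} holds iff $\determinant(\timeShiftOperator\identityMatrix{2}-\schemeMatrixBulkFourier(e^{i\waveNumber}))$ is a \emph{simple} von Neumann polynomial for every $\waveNumber$; by \Cref{prop:stableFDDoncStableLBM} this entails \Cref{def:stabLBM}; and by \Cref{prop:necConditions}, \Cref{def:stabLBM} in turn forces von Neumann stability together with semi-simplicity of the unimodular eigenvalues. It therefore suffices, granting $|\courantNumber|\le 1$, to decide when the unimodular roots are simple (equivalently, when $\schemeMatrixBulkFourier(e^{i\waveNumber})$ is non-defective on $\unitCircle$). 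For $\relaxationParameterLetter_2\in(0,2)$ a short discriminant computation shows that a double eigenvalue can occur only where $\cos\waveNumber = 0$, its value $T/2$ then having modulus $\relaxationParameterLetter_2|\courantNumber|/2<1$; hence no double eigenvalue ever reaches $\unitCircle$, the unimodular roots are always simple, and \eqref{eq:sCondD1Q2-1} yields a simple von Neumann polynomial. For $\relaxationParameterLetter_2=2$ the discriminant is $T^2-4D = 4(1-\courantNumber^2\sin^2\waveNumber)$, which stays strictly positive when $|\courantNumber|<1$, so the two unimodular roots never collide and \eqref{eq:sCondD1Q2-2} again gives a simple von Neumann polynomial. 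Via the two implications, both \Cref{def:stabLBM} and \Cref{def:stabFD} then hold under \eqref{eq:sCondD1Q2-1}--\eqref{eq:sCondD1Q2-2}.

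The main obstacle---and the reason \eqref{eq:sCondD1Q2-1} and \eqref{eq:sCondD1Q2-2} differ precisely at the endpoint $|\courantNumber|=1$---is the borderline $\relaxationParameterLetter_2=2$, $|\courantNumber|=1$. There von Neumann stability \emph{does} hold (both eigenvalues remain on $\unitCircle$), yet the discriminant $4(1-\courantNumber^2\sin^2\waveNumber)$ vanishes at $\waveNumber=\pi/2$, where the two unimodular eigenvalues coalesce. I would settle this by evaluating $\schemeMatrixBulkFourier(e^{i\pi/2})$ explicitly and checking that the double eigenvalue $-i\,\sign(\courantNumber)$ is \emph{defective} (its eigenspace is one-dimensional, the matrix being non-diagonalisable there); this violates condition~2 of \Cref{prop:necConditions}, so \Cref{def:stabLBM}---and \emph{a fortiori} \Cref{def:stabFD}---fails, which is exactly why $|\courantNumber|=1$ must be excluded when $\relaxationParameterLetter_2=2$. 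The remaining region $|\courantNumber|>1$ is discarded immediately, since the first part already rules out von Neumann stability there and hence, by \Cref{prop:necConditions}, $L^2$ stability. Collecting these cases gives the stated equivalence for both notions.
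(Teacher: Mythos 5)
Your proof is correct, and it reaches the same conclusions through a more self-contained route than the paper. The paper's own proof delegates the characterization of von Neumann stability and of stability in the sense of \Cref{def:stabFD} to a citation of earlier work, invokes \Cref{prop:stableFDDoncStableLBM} to pass to \Cref{def:stabLBM}, and then only verifies by hand the single borderline case $\relaxationParameterLetter_2 = 2$, $|\courantNumber| = 1$. You instead rederive everything from the explicit $2\times 2$ amplification matrix: the invariants $T = (2-\relaxationParameterLetter_2)\cos\waveNumber - i\relaxationParameterLetter_2\courantNumber\sin\waveNumber$ and $D = 1-\relaxationParameterLetter_2$, the Joukowski/ellipse localization of the roots of $\timeShiftOperator^2 - T\timeShiftOperator + D$ for the von Neumann condition, and the discriminant analysis showing that for $\relaxationParameterLetter_2\in(0,2)$ any double root sits strictly inside $\unitDisk$ (modulus $\relaxationParameterLetter_2|\courantNumber|/2<1$) while for $\relaxationParameterLetter_2=2$, $|\courantNumber|<1$ no collision occurs at all, so the characteristic polynomial is simple von Neumann and \Cref{prop:stabFD} applies. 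The decisive step --- showing that at $\relaxationParameterLetter_2=2$, $|\courantNumber|=1$ the double eigenvalue $-i\,\sign(\courantNumber)$ of $\schemeMatrixBulkFourier(i)$ is defective, so that condition~2 of \Cref{prop:necConditions} fails and \Cref{def:stabLBM} is lost --- is exactly the computation the paper performs. What your version buys is a proof readable without the external reference; what the paper's buys is brevity. The only cosmetic point is that in the necessity direction you could state explicitly that $\relaxationParameterLetter_2>2$ is excluded for the $L^2$ notions because von Neumann stability is necessary for both (via \Cref{prop:necConditions} and the fact that a simple von Neumann polynomial is in particular von Neumann), but this is already implicit in your first part.
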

\begin{proof}[Proof of \Cref{prop:stabD1Q2}]
	\cite{bellotti2024initialisation} gives necessary and sufficient conditions for \Cref{def:vonNeumannStability} and \ref{def:stabFD} to be met.
	The latter entails that \Cref{def:stabLBM} holds as well thanks to \Cref{prop:stableFDDoncStableLBM}.
	We are left to check the necessary character of \eqref{eq:sCondD1Q2-2} to have \Cref{def:stabLBM}, which by virtue of \Cref{prop:necConditions} has to be checked only when $\relaxationParameterLetter_2 = 2$ and $|\courantNumber|=1$.
	In these cases, we have that for $\fourierShift=\pm i$, then $\timeShiftOperator = \mp \sign(\courantNumber) i $ is a double eigenvalue of $\schemeMatrixBulkFourier(\pm 1)$.
	This is unsurprising since when $\relaxationParameterLetter_2 = 2$, the characteristic equation \eqref{eq:characteristicEquation} coincides with the one of a leap-frog scheme for the transport equation, which features the same issue (\confer{}, \cite[Chapter 4]{strikwerda2004finite}).
	Moreover, $\dimension{\kernel(\mp \sign(\courantNumber) i \identityMatrix{2} -\schemeMatrixBulkFourier(\pm i))} = 1$, hence the scheme is not stable according to \Cref{def:stabLBM}.
\end{proof}

We consider the following boundary conditions set on $\distributionFunctionDiscrete_{1, -1}^{\indexTime\collided}$ under the form by \eqref{eq:boundaryConditions}:
\begin{align}
	\distributionFunctionDiscrete_{1, -1}^{\indexTime\collided} = \distributionFunctionDiscrete_{2, 0}^{\indexTime\collided} + \boundarySourceTerm_{1, -1}^{\indexTime}&\qquad \text{(bounce-back)}, \label{eq:BB}\\
	\distributionFunctionDiscrete_{1, -1}^{\indexTime\collided} = -\distributionFunctionDiscrete_{2, 0}^{\indexTime\collided} + \boundarySourceTerm_{1, -1}^{\indexTime}&\qquad \text{(anti-bounce-back)}, \label{eq:ABB}\\
	\distributionFunctionDiscrete_{1, -1}^{\indexTime\collided} = -\distributionFunctionDiscrete_{2, 1}^{\indexTime\collided} + \boundarySourceTerm_{1, -1}^{\indexTime}&\qquad \text{(two-steps anti-bounce-back)}, \label{eq:TwoABB}\\
	\distributionFunctionDiscrete_{1, -1}^{\indexTime\collided} = \sum_{\indexSpace=0}^{\orderExtrapolation-1}(-1)^{\indexSpace}\binom{\orderExtrapolation}{\indexSpace + 1} \distributionFunctionDiscrete_{1, \indexSpace}^{\indexTime\collided} + \boundarySourceTerm_{1, -1}^{\indexTime}&\qquad \text{(extrapolation of order }\orderExtrapolation\geq 1\text{)}, \label{eq:extrapolation} \\
	\distributionFunctionDiscrete_{1, -1}^{\indexTime\collided} = \boundarySourceTerm_{1, -1}^{\indexTime}&\qquad \text{(kinetic Dirichlet)}, \label{eq:kineticDirichlet}\\
	\distributionFunctionDiscrete_{1, -1}^{\indexTime\collided} = \tfrac{1}{2}(1+\courantNumber)\sum_{\indexSpace=0}^{\orderExtrapolation-1}(-1)^{\indexSpace}\binom{\orderExtrapolation}{\indexSpace + 1} (\distributionFunctionDiscrete_{1, \indexSpace}^{\indexTime\collided}+\distributionFunctionDiscrete_{2, \indexSpace}^{\indexTime\collided}) + \boundarySourceTerm_{1, -1}^{\indexTime}&\qquad \text{(extrapolated equil. of order }\orderExtrapolation\geq 1\text{)}, \label{eq:extrapolatedEquilibrium}\\
	\distributionFunctionDiscrete_{1, -1}^{\indexTime\collided} = \tfrac{1}{2}(1+\courantNumber)(\distributionFunctionDiscrete_{1, 0}^{\indexTime\collided}+\distributionFunctionDiscrete_{2, 2}^{\indexTime\collided}) + \boundarySourceTerm_{1, -1}^{\indexTime} &\qquad\text{(future)},\label{eq:kinrod}\\
	\distributionFunctionDiscrete_{1, -1}^{\indexTime\collided} = \distributionFunctionDiscrete_{1, 0}^{\indexTime\collided} + 2\distributionFunctionDiscrete_{2, 0}^{\indexTime\collided} + \boundarySourceTerm_{1, -1}^{\indexTime} &\qquad\text{(invented)}.\label{eq:godunovRyabenkii}
\end{align}
These conditions need further discussion in order to clarify their origin and role.
Condition \eqref{eq:BB} is generally employed to enforce no-slip boundary conditions when dealing with systems, and not scalar equations.
Equation \eqref{eq:ABB} is classically used to enforce inflow boundary conditions.
Condition \eqref{eq:TwoABB} has been introduced in \cite[Chapter 6]{helie2023schema} to enforce inflow boundary conditions and comes from the fact that, in the homogeneous case, $\momentDiscrete_{1, 0}^{\indexTime+1} = \distributionFunctionDiscrete_{1, 0}^{\indexTime + 1} + \distributionFunctionDiscrete_{2, 0}^{\indexTime + 1} = \distributionFunctionDiscrete_{1, -1}^{\indexTime\collided} + \distributionFunctionDiscrete_{2, 1}^{\indexTime \collided} = 0$, taking into account the transport phase.
Condition \eqref{eq:extrapolation} extrapolates the missing distribution function with the formul\ae{} by \cite{goldberg1977boundary}, and can be employed with outflows \cite{bellotti:hal-04630735}.
Equation \eqref{eq:kineticDirichlet} boils down to setting the missing information to a value independent of the solution inside the computational domain, and has been analyzed in \cite{aregba2025equilibrium} both in the inflow and outflow setting.
Condition \eqref{eq:extrapolatedEquilibrium}, also discussed in \cite{aregba2025equilibrium} for outflows, boils down to extrapolating the value of $\momentDiscrete_{1, -1}^{\indexTime\collided}$ using the formul\ae{} by \cite{goldberg1977boundary}, and then taking the corresponding equilibrium $\distributionFunctionLetter_1^{\atEquilibrium}(\momentLetter_1) = \tfrac{1}{2}(1+\courantNumber)\momentLetter_1$.
Equation \eqref{eq:kinrod} has been tested in \cite{bellotti:hal-04630735} for outflows and comes from setting $\distributionFunctionDiscrete_{1, -1}^{\indexTime\collided} = \distributionFunctionLetter_1^{\atEquilibrium}(\momentDiscrete_{1, 1}^{\indexTime+1})$ and take the transport into account.
Finally, \eqref{eq:godunovRyabenkii} is considered for pure mathematical investigation to generate serious instabilities.

To foster the recapitulation of the results to come, additionally to SS (\eqref{eq:strongStability} holds) and SSOO (\eqref{eq:strongStabilityObserved} but not \eqref{eq:strongStability} holds), we use the following shorthands.
\begin{itemize}
	\item MU-L/E: mildly-unstable (critical eigenvalue $\timeShiftOperator\in\unitCircle$), L for instability localized in space (critical eigenvalue $\fourierShift\in\unitDisk$); E  for instability extended in space (critical eigenvalue $\fourierShift\in\unitCircle$). Neither \eqref{eq:strongStability} nor \eqref{eq:strongStabilityObserved} hold. 
	\item GR-L: Godunov-Ryabenkii instability (critical eigenvalue $\timeShiftOperator\in\neighborhoodInfinity$), instability localized in space (critical eigenvalue $\fourierShift\in\unitDisk$). Neither \eqref{eq:strongStability} nor \eqref{eq:strongStabilityObserved} hold. 
\end{itemize}

\begin{theorem}[Strong stability--instability of the boundary-value \lbmScheme{1}{2} scheme]\label{thm:stabilityBoundaryD1Q2}
	Under the stability conditions \eqref{eq:sCondD1Q2-1}-\eqref{eq:sCondD1Q2-2}, the stability of the boundary-value \lbmScheme{1}{2} scheme according to \Cref{def:strongStability} is as follows.
	\begin{center}\renewcommand{\arraystretch}{1.3}
		\begin{tabular}{|c||c|c|c|}
			\cline{2-4}
			\multicolumn{1}{c||}{} & \multicolumn{2}{c|}{$\courantNumber<0$ (outflow)} & $\courantNumber>0$ (inflow)\\
			\cline{2-4}
			\multicolumn{1}{c||}{} & $\relaxationParameterLetter_2\in(0, 2)$ & $\relaxationParameterLetter_2 = 2$ & $\relaxationParameterLetter_2\in(0, 2]$ \\
			\hline
			\hline
			Bounce-back \eqref{eq:BB} & MU-L & MU-E & SS \\
			Anti-bounce-back \eqref{eq:ABB} & MU-L & MU-E & SS \\
			Two-steps anti-bounce-back \eqref{eq:TwoABB} & SS & SSOO & SS \\
			Extrapolation $\orderExtrapolation = 1$ \eqref{eq:extrapolation} & SS & SSOO & MU-E \\
			Extrapolation $2\leq \orderExtrapolation \leq 4$ \eqref{eq:extrapolation} & SS & MU-E & MU-E \\
			Kinetic Dirichlet \eqref{eq:kineticDirichlet} & SS & SS & SS \\
			\hline
		\end{tabular}
	\end{center}
	For boundary conditions which we  study only for specific values of the parameters $\relaxationParameterLetter_2$ and $\courantNumber$, we have:
	\begin{center}\renewcommand{\arraystretch}{1.3}
		\begin{tabular}{|c||c|c|c|}
			\cline{2-4}
			\multicolumn{1}{c||}{} & \multicolumn{2}{c|}{$\courantNumber=-\tfrac{1}{2}$ (outflow)} & $\courantNumber=\tfrac{1}{2}$ (inflow)\\
			\cline{2-4}
			\multicolumn{1}{c||}{} & $\relaxationParameterLetter_2=\tfrac{3}{2}$ & $\relaxationParameterLetter_2 = 2$ & $\relaxationParameterLetter_2=\tfrac{3}{2}$ or $\relaxationParameterLetter_2=2$ \\
			\hline
			\hline
			Extrapolated equilibrium $\orderExtrapolation = 1$ \eqref{eq:extrapolatedEquilibrium} & SS & SS & MU-E \\
			Extrapolated equilibrium $\orderExtrapolation = 3$ \eqref{eq:extrapolatedEquilibrium} & SS & GR-L & GR-L \\
			Future \eqref{eq:kinrod} & SS& SS & MU-E \\
			Invented \eqref{eq:godunovRyabenkii} & GR-L & GR-L & GR-L \\
			\hline
		\end{tabular}
	\end{center}
\end{theorem}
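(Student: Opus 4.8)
The plan is to instantiate \Cref{prop:noDiracBoundary}, whose hypotheses hold here: $\numberVelocities = 2$, the velocity set $\dimensionlessDiscreteVelocityLetter_1 = -\dimensionlessDiscreteVelocityLetter_2 = 1$ is symmetric with $\stencilLeft = 1$, von Neumann stability is granted by \Cref{prop:stabD1Q2} under \eqref{eq:sCondD1Q2-1}--\eqref{eq:sCondD1Q2-2}, and $\stencilLeftCharacteristic = 1$ away from the exceptional relaxation values annihilating $\coefficientCharEquationInFourier_{-1}$. First I would compute, once for all boundary conditions, the bulk ingredients. With $\momentMatrix^{-1} = \tfrac{1}{2}\momentMatrix$ and $\collisionMatrix = \identityMatrix{2} + \diagonalMatrix(\relaxationParameterLetter_1, \relaxationParameterLetter_2)(\equilibriumVector\transpose{\canonicalBasisVector{1}} - \identityMatrix{2})$, forming $\schemeMatrixBulkFourier(\fourierShift) = \momentMatrix\,\diagonalMatrix(\fourierShift^{-1}, \fourierShift)\,\momentMatrix^{-1}\collisionMatrix$ and taking its trace and determinant yields the characteristic equation $\coefficientCharEquationInFourier_1\fourierShift^2 + \coefficientCharEquationInFourier_0\fourierShift + \coefficientCharEquationInFourier_{-1} = 0$ with $\coefficientCharEquationInFourier_{\pm 1}(\timeShiftOperator) = -\bigl(2 - \relaxationParameterLetter_2(1\pm\courantNumber)\bigr)\timeShiftOperator$ and $\coefficientCharEquationInFourier_0(\timeShiftOperator) = 2(\timeShiftOperator^2 + 1 - \relaxationParameterLetter_2)$. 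The product of the two roots $\productRootsDOneQThree = \coefficientCharEquationInFourier_{-1}/\coefficientCharEquationInFourier_1$ is independent of $\timeShiftOperator$ (as predicted by \eqref{eq:traceAndDeterminant}) and equals $-1$ exactly when $\relaxationParameterLetter_2 = 2$. I then select the stable root $\stableRoot(\timeShiftOperator)$ via \Cref{lemma:Hersh}, which for this symmetric two-velocity family also guarantees no interior singular points, their number being bounded by $\numberVelocities - 2 = 0$; I record the normalized eigenvector $\vectorial{\eigenvectorLetter}_{\textnormal{s}} = \transpose{(1, \eigenvectorLetter_{\textnormal{s}, 2})}$ read off the first row of $\timeShiftOperator\identityMatrix{2} - \schemeMatrixBulkFourier(\stableRoot)$, together with $\scalarFactorFromAdjugate = c\,\coefficientCharEquationInFourier_{-1}$ from \eqref{eq:scalarFactorFromAdjugate}.

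Second, I would settle the continuous-extension status of $\vectorial{\eigenvectorLetter}_{\textnormal{s}}$ on $\unitCircle$, which drives the \threeboxes{\notContinuousExtensionMark}{}{}/\threeboxes{\continuousExtensionMark}{}{} alternative. Because $\eigenvectorLetter_{\textnormal{s}, 2}$ carries a factor $(\stableRoot^{-1} - \stableRoot)^{-1}$, it can only blow up where $\stableRoot(\targetEigenvalue) = \pm 1$. Inserting $\fourierShift = \pm 1$ into the characteristic equation factors it as $(\timeShiftOperator \mp 1)\bigl(\timeShiftOperator \mp (1 - \relaxationParameterLetter_2)\bigr) = 0$, so the stable root could attain $\pm 1$ on $\unitCircle$ only at $\targetEigenvalue = \pm 1$ for every $\relaxationParameterLetter_2$, the second factor $\targetEigenvalue = \pm(1-\relaxationParameterLetter_2)$ landing on $\unitCircle$ only when $\relaxationParameterLetter_2 = 2$. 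For $\relaxationParameterLetter_2 \in (0, 2)$, a group-velocity argument deciding which of $\{\pm 1, \pm\productRootsDOneQThree\}$ is interior as $\timeShiftOperator \to \targetEigenvalue$ from $\neighborhoodInfinity$ shows that the resulting (possibly $0/0$) limit of $\eigenvectorLetter_{\textnormal{s},2}$ is finite, so $\vectorial{\eigenvectorLetter}_{\textnormal{s}}$ extends continuously everywhere. For $\relaxationParameterLetter_2 = 2$, where $\productRootsDOneQThree = -1$ forces $\stableRoot\unstableRoot = -1$, a local square-root expansion of $\stableRoot$ reveals that at exactly one of $\targetEigenvalue = \pm 1$ the stable root tends to $\mp 1$, producing a simple pole of $\eigenvectorLetter_{\textnormal{s}, 2}$ while $\eigenvectorLetter_{\textnormal{s}, 1} \equiv 1$ stays regular: the defect sits solely in the non-observed moment $\momentDiscrete_2$.

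Third, I would run one routine per boundary condition. Read off the coefficients $\boundaryCoefficient_{1, \indexFreeTwo, 1, \indexFreeOne}$ from its defining relation among \eqref{eq:BB}--\eqref{eq:godunovRyabenkii}, assemble the Kreiss--Lopatinskii scalar product $\kreissLopatinskiiScalarProduct(\timeShiftOperator) = \bigl(\transpose{\canonicalBasisVector{1}}\stableRoot^{-1} - \sum_{\indexFreeOne, \indexFreeTwo}\transpose{\canonicalBasisVector{\indexFreeTwo}}\boundaryCoefficient_{1, \indexFreeTwo, 1, \indexFreeOne}\stableRoot^{\indexFreeOne}\bigr)\momentMatrix^{-1}\collisionMatrix\vectorial{\eigenvectorLetter}_{\textnormal{s}}$ of \eqref{eq:coefficientWithKreissLopDetFinal}, and simplify it with the computer algebra system cited in the text. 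Then (i) locate the zeros of $\kreissLopatinskiiScalarProduct$ in $\closedNeighborhoodInfinity$ and the shared eigenvalues via \eqref{eq:toSolveBoundaryBulkD1Q2} and \Cref{def:eigenvalue}; (ii) classify with the box scheme: a zero in $\neighborhoodInfinity$ gives a Godunov--Ryabenkii mode (GR), a zero on $\unitCircle$ gives a mild instability (MU), localized (L) if the attached $\stableRoot \in \unitDisk$ and extended (E) if $\stableRoot \in \unitCircle$; in the absence of zeros, a pole of $\eigenvectorLetter_{\textnormal{s}, 2}$ at a shared eigenvalue with $\kreissLopatinskiiScalarProduct$ finite and non-zero (case \threeboxes{\notContinuousExtensionMark}{\eigenvalueMark}{\nonZeroKL}) yields SSOO, since $\momentDiscrete_1$ is controlled through $\coefficientStableSolution$ by \eqref{eq:generalStableSolutionAllSchemes} while $\momentDiscrete_2$ is not, and the clean case yields SS. This produces the table entries: for instance bounce-back and anti-bounce-back at outflow give a zero of $\kreissLopatinskiiScalarProduct$ on $\unitCircle$ with $\stableRoot \in \unitDisk$ (MU-L), upgraded to MU-E at $\relaxationParameterLetter_2 = 2$ where the critical $\stableRoot$ reaches $\unitCircle$, whereas inflow removes the offending zero (SS).

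The main obstacle is the cluster of phenomena at $\relaxationParameterLetter_2 = 2$. There the loss of continuous extension of $\vectorial{\eigenvectorLetter}_{\textnormal{s}}$ must be quantified precisely: the order of the pole of $\eigenvectorLetter_{\textnormal{s},2}$, obtained from the square-root local expansion of $\stableRoot$, decides whether a shared eigenvalue is harmless (case \threeboxes{\notContinuousExtensionMark}{\eigenvalueMark}{\infiniteKL} with pole order one, absorbed by the zero of $\coefficientStableSolution$ coming from \eqref{eq:coefficientWithKreissLopDetFinal}) or genuinely downgrades SS to SSOO or to MU-E, and whether the surviving defect touches $\momentDiscrete_1$. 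This is exactly what separates two-steps anti-bounce-back (SS $\to$ SSOO) from bounce-back (MU-L $\to$ MU-E) and from the extrapolation families as $\orderExtrapolation$ grows. The second delicate point, for the ``future'', ``invented'', and order-three extrapolated-equilibrium conditions, is certifying a genuine root of $\kreissLopatinskiiScalarProduct$ strictly inside $\neighborhoodInfinity$ (hence a true Godunov--Ryabenkii instability) together with $\stableRoot \in \unitDisk$ for the ``L'' label: this amounts to a rigorous root localization for a polynomial/rational function of $\timeShiftOperator$, which I would carry out with the assisted symbolic computations and an argument-principle or resultant check rather than by hand, and which is why these conditions are examined only at the representative values $\courantNumber = \pm\tfrac12$ and $\relaxationParameterLetter_2 \in \{\tfrac32, 2\}$.
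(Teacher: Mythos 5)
Your plan is, in outline, the paper's own proof: the same characteristic-equation coefficients (up to an overall factor of two), the same use of \Cref{lemma:Hersh} to select $\stableRoot$, the same computer-algebra search for solutions of \eqref{eq:toSolveBoundaryBulkD1Q2}, the same localization of the poles of $\eigenvectorLetter_{\textnormal{s},2}$ at $\timeShiftOperator=\pm 1$ when $\relaxationParameterLetter_2=2$ with compensation by the numerator at inflow, and the same box classification producing each table entry. One point you compress too much on the positive side: ``the clean case yields SS'' hides the quantitative step that actually establishes \eqref{eq:strongStability}, namely the bound $\sum_{\indexSpace}|\stableRoot(\timeShiftOperator)|^{2\indexSpace}=(1-|\stableRoot(\timeShiftOperator)|^{2})^{-1}\leq \tilde{K}/(|\timeShiftOperator|-1)$ (Strikwerda's resolvent lemma, valid under \eqref{eq:sCondD1Q2-1}--\eqref{eq:sCondD1Q2-2} but not under von Neumann stability alone) combined with Parseval; the paper carries this out explicitly around \eqref{eq:tmp7}.

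The genuine gap is the degenerate parameter value $\relaxationParameterLetter_2=\tfrac{2}{1-\courantNumber}$, which lies in $(0,2)$ precisely in the outflow regime $\courantNumber<0$ covered by the first table (e.g.\ $\relaxationParameterLetter_2=\tfrac{4}{3}$ for $\courantNumber=-\tfrac12$). There $\coefficientCharEquationInFourier_{-1}(\timeShiftOperator)\equiv 0$, so the hypothesis $\stencilLeftCharacteristic=1$ of \Cref{prop:noDiracBoundary} fails, $\stableRoot\equiv 0$, and the eigenvalue $\fourierShift\equiv 0$ of $\matrixPolynomialBulk{\timeShiftOperator}$ acquires algebraic multiplicity two with geometric multiplicity one; the stable subspace must then be rebuilt from a Jordan chain $(\vectorial{\eigenvectorLetter}_{0},\tilde{\vectorial{\eigenvectorLetter}}_{0}(\timeShiftOperator))$ and the Kreiss--Lopatinskii system re-derived accordingly, which is what the paper does in \eqref{eq:generalSolutionStable}--\eqref{eq:kreissLopatinskiDependenceBoundaryDatumD1Q2Zero}. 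You acknowledge these exceptional values in your first step (``away from the exceptional relaxation values annihilating $\coefficientCharEquationInFourier_{-1}$'') but never return to them, yet the outflow column of the theorem asserts conclusions for \emph{every} $\relaxationParameterLetter_2\in(0,2)$, and this is exactly the case in which, for instance, the bounce-back instability degenerates to a purely boundary-localized mode since $\productRootsDOneQThree=0$. Without this supplementary construction your argument does not cover the full parameter range of the statement.
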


Let us comment on the results of \Cref{thm:stabilityBoundaryD1Q2} starting from \eqref{eq:extrapolation} with $\orderExtrapolation = 1$ when $\relaxationParameterLetter_2 = 2$ and $\courantNumber<0$, which is an essential reason why this paper has been written down.
On the one hand, in \cite{bellotti:hal-04630735}, we proved that this boundary condition is strongly stable for the scheme recast only on $\momentDiscrete_1$.
On the other hand, in \cite[Chapter 12]{bellotti2023numerical}, instabilities on $\momentDiscrete_2$ where showcased for $\courantNumber=-\tfrac{1}{2}$.
Therefore, the claim of \Cref{thm:stabilityBoundaryD1Q2} is coherent with these findings.
Secondly, we claim that for \eqref{eq:kineticDirichlet}, we are not able to recast the scheme only on  $\momentDiscrete_1$, so that the strategy by \cite{bellotti:hal-04630735} cannot be easily applied to this setting, and makes the interest of the present ``raw'' approach.
This boundary condition is also very peculiar since it yields a sort of \lbm{} analogue of the \strong{Goldberg-Tadmor} lemma \cite{goldberg1981scheme, coulombel2011semigroup}, \idEst{} a boundary condition which remains stable regardless of the fact that it is used at an inflow or at an outflow.
Finally, \Cref{thm:stabilityBoundaryD1Q2} shows why---forgetting about the question of consistency, which is not addressed in the present paper---certain boundary conditions are used only to enforce inflows and others for outflows.

Finally, we have not considered \eqref{eq:extrapolatedEquilibrium}, \eqref{eq:kinrod}, and \eqref{eq:godunovRyabenkii} for generic values of $\relaxationParameterLetter_2$ and $\courantNumber$ due to  difficulties in finding solutions to \eqref{eq:toSolveBoundaryBulkD1Q2} as functions of these two parameters.
We instead study stability for representative values of $\relaxationParameterLetter_2$ and $\courantNumber$, also used in numerical simulations, with the help of \texttt{SageMath}, \confer{} details in \Cref{app:moreBCD1Q2}.

\subsubsection{Numerical simulations}\label{sec:numericalSimD1Q2}

We simulate using a domain with 100 points and at the right endpoint, we employ a homogeneous kinetic Dirichlet boundary condition on $\distributionFunctionDiscrete_2$.
We show the solution at a time equal to one, with zero initial data and boundary source term $\boundarySourceTerm_{1, -1}^{\indexTime} = \delta_{0\indexTime}$, so that $\laplaceTransformed{\boundarySourceTerm}_{1, -1}(\timeShiftOperator) = 1$.

\begin{figure}
	\begin{center}
		\includegraphics[width=1\textwidth]{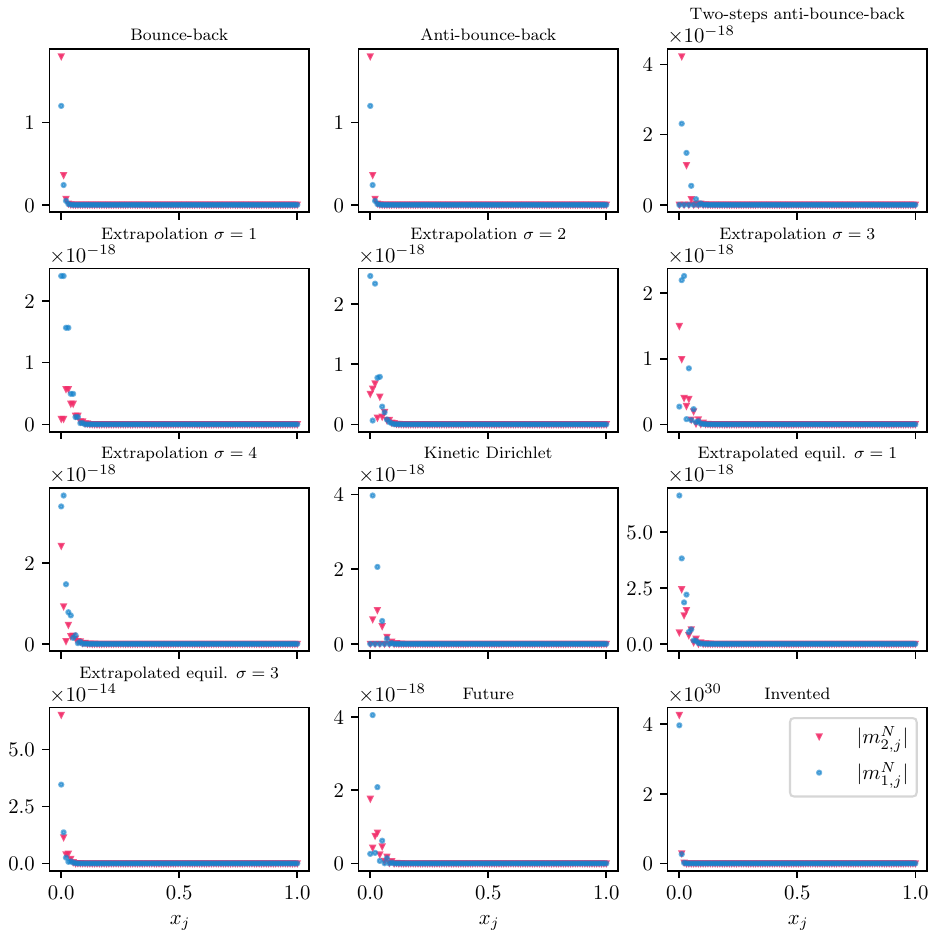}
	\end{center}\caption{\label{fig:D1Q2-s-3_2-C--1_2-revision}Solution at final time for the \lbmScheme{1}{2} scheme under $\relaxationParameterLetter_2 = \tfrac{3}{2}$ and $\courantNumber = -\tfrac{1}{2}$.}
\end{figure}

\begin{figure}
	\begin{center}
		\includegraphics[width=1\textwidth]{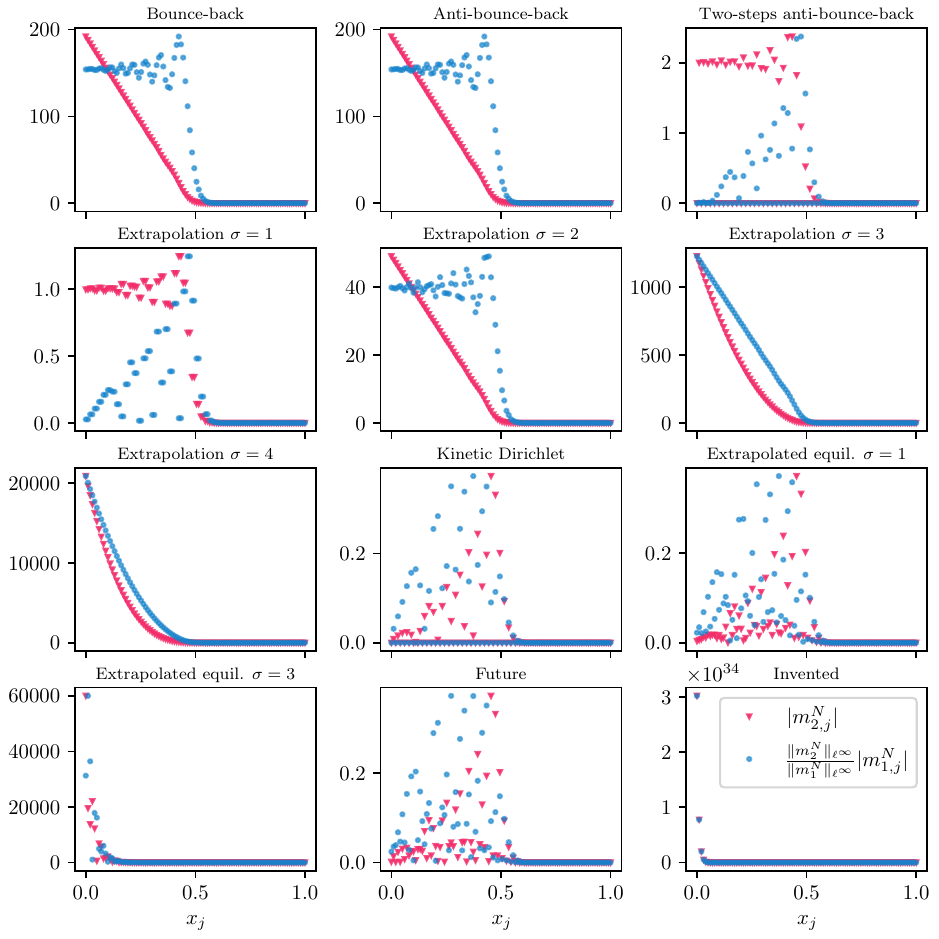}
	\end{center}\caption{\label{fig:D1Q2-s-2-C--1_2-revision}Solution at final time for the \lbmScheme{1}{2} scheme under $\relaxationParameterLetter_2 = 2$ and $\courantNumber = -\tfrac{1}{2}$.}
\end{figure}

\begin{figure}
	\begin{center}
		\includegraphics[width=1\textwidth]{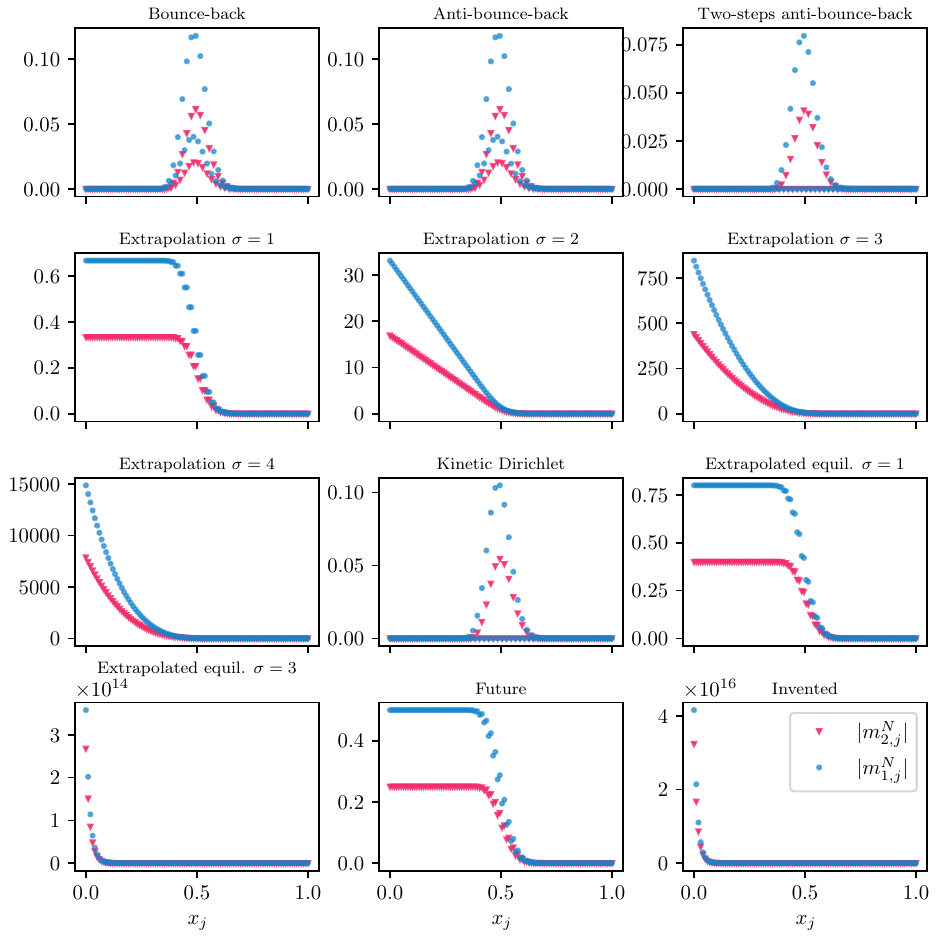}
	\end{center}\caption{\label{fig:D1Q2-s-3_2-C-1_2-revision}Solution at final time for the \lbmScheme{1}{2} scheme under $\relaxationParameterLetter_2 = \tfrac{3}{2}$ and $\courantNumber = \tfrac{1}{2}$.}
\end{figure}

\begin{figure}
	\begin{center}
		\includegraphics[width=1\textwidth]{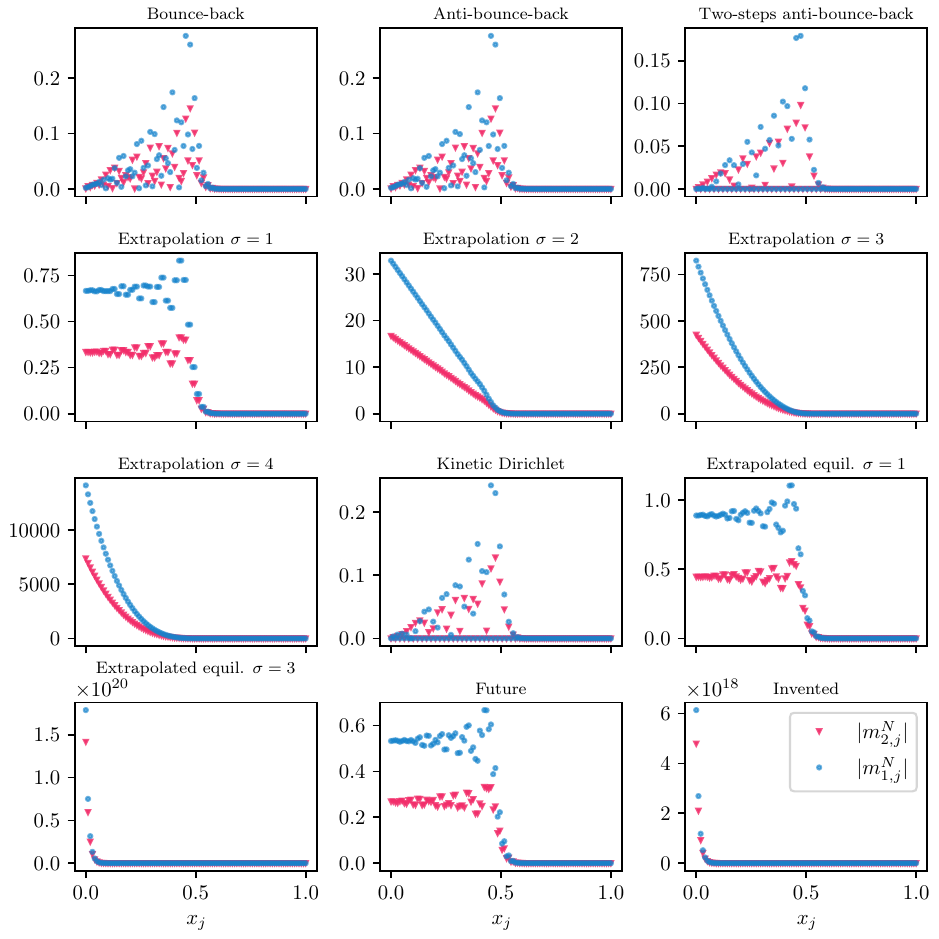}
	\end{center}\caption{\label{fig:D1Q2-s-2-C-1_2-revision}Solution at final time for the \lbmScheme{1}{2} scheme under $\relaxationParameterLetter_2 = 2$ and $\courantNumber = \tfrac{1}{2}$.}
\end{figure}

\begin{itemize}
	\item Let us start considering $\relaxationParameterLetter_2 = \tfrac{3}{2}$ and $\courantNumber = -\tfrac{1}{2}$, whence the boundary is an outflow and the scheme is dissipative.
	The results are in \Cref{fig:D1Q2-s-3_2-C--1_2-revision}.
	\item Now take $\relaxationParameterLetter_2 = 2$ and $\courantNumber = -\tfrac{1}{2}$, so that the boundary is an outflow and the scheme is non-dissipative.
	In the very same setting as the previous case, we observe the results of \Cref{fig:D1Q2-s-2-C--1_2-revision}, where the moment $\momentLetter_1$ has been renormalized by $\momentLetter_2$ to display its spatial structure, despite quite different orders-of-magnitude explained in \Cref{sec:proofStabD1Q2}.
	\item Then, we take $\relaxationParameterLetter_2 = \tfrac{3}{2}$ and $\courantNumber = \tfrac{1}{2}$, so that the boundary is an inflow and the scheme is dissipative.
	Results are shown in \Cref{fig:D1Q2-s-3_2-C-1_2-revision}.
	\item Finally, take $\relaxationParameterLetter_2 = 2$ and $\courantNumber = \tfrac{1}{2}$, so that the boundary is an inflow and the scheme is non-dissipative.
	Results are in \Cref{fig:D1Q2-s-2-C-1_2-revision}.
\end{itemize}
All these results are perfectly coherent with \Cref{thm:stabilityBoundaryD1Q2}, in a way that we detail on the way in \Cref{sec:proofStabD1Q2}, \confer{} \Cref{rem:remarkStrongStab}.

\subsubsection{Proof of \Cref{thm:stabilityBoundaryD1Q2} and comments on the numerical simulations}\label{sec:proofStabD1Q2}

\paragraph{Description of the roots of the characteristic equation}

The characteristic equation \eqref{eq:characteristicEquation} features $\stencilLeftCharacteristic = \stencilRightCharacteristic = 1$ (except in cases detailed below) with 
\begin{equation}
	\coefficientCharEquationInFourier_{-1}(\timeShiftOperator) = -\tfrac{1}{2}(2 + (\courantNumber-1)\relaxationParameterLetter_2) \timeShiftOperator, \qquad \coefficientCharEquationInFourier_{0}(\timeShiftOperator) = \timeShiftOperator^2 + 1-\relaxationParameterLetter_2, \qquad \coefficientCharEquationInFourier_{1}(\timeShiftOperator) = -\tfrac{1}{2}(2 - (\courantNumber+1)\relaxationParameterLetter_2) \timeShiftOperator.
\end{equation}
By \Cref{lemma:Hersh}, except when either $2 + (\courantNumber-1)\relaxationParameterLetter_2 = 0$ or $2 - (\courantNumber+1)\relaxationParameterLetter_2 = 0$, \eqref{eq:characteristicEquation} has one stable root $\stableRoot(\timeShiftOperator)$ and one unstable root $\unstableRoot(\timeShiftOperator)$ for $\timeShiftOperator\in\neighborhoodInfinity$.
They can be extended to $\timeShiftOperator\in\unitCircle$.
We can say that when $2 + (\courantNumber-1)\relaxationParameterLetter_2 = 0$, $\stableRoot(\timeShiftOperator)\equiv 0$ and only $\unstableRoot(\timeShiftOperator)$ is non-trivial.
On the other hand, when $2 - (\courantNumber+1)\relaxationParameterLetter_2 = 0$, we have $\unstableRoot(\timeShiftOperator) \equiv \infty$ and only $\stableRoot(\timeShiftOperator)$ is non-trivial.
The proof of the following result is provided in \cite{bellotti:hal-04630735}.
\begin{proposition}\label{prop:hershD1Q2}
	Under \eqref{eq:D1Q2VonNeumann}, for $\timeShiftOperator\in\neighborhoodInfinity$, the characteristic equation \eqref{eq:characteristicEquation} for the \lbmScheme{1}{2} scheme has two roots, one stable root $\stableRoot(\timeShiftOperator)\in\unitDisk$ (identically equal to $0$ when $2 + (\courantNumber-1)\relaxationParameterLetter_2 = 0$) and one unstable root $\unstableRoot(\timeShiftOperator)\in\neighborhoodInfinity$ (identically equal to $\infty$ when $2 - (\courantNumber+1)\relaxationParameterLetter_2 = 0$). 
	They can be extended to the unit circle, \idEst{} to $\timeShiftOperator\in\unitCircle$, and this extension is still denoted $\stableRoot(\timeShiftOperator)$ and $\unstableRoot(\timeShiftOperator)$.
	These extensions satisfy:
	\begin{equation*}
		\stableRoot(\pm 1) =
		\begin{cases}
			\pm \productRootsDOneQThree(\relaxationParameterLetter_2,\courantNumber), \qquad &\text{if}\quad \courantNumber<0, \\
			\pm 1, \qquad &\text{if}\quad \courantNumber>0.
		\end{cases}\qquad \qquad
		\unstableRoot(\pm 1) =
		\begin{cases}
			\pm 1, \qquad &\text{if}\quad \courantNumber<0, \\
			\pm \productRootsDOneQThree(\relaxationParameterLetter_2,\courantNumber), \qquad &\text{if}\quad \courantNumber>0,
		\end{cases}
	\end{equation*}
	where $\productRootsDOneQThree = \productRootsDOneQThree(\relaxationParameterLetter_2,\courantNumber) = \frac{2 + (\courantNumber-1)\relaxationParameterLetter_2}{2 - (\courantNumber+1)\relaxationParameterLetter_2 }$.
\end{proposition}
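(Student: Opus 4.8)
The plan is to work directly with the quadratic characteristic equation $\coefficientCharEquationInFourier_1(\timeShiftOperator)\fourierShift^2+\coefficientCharEquationInFourier_0(\timeShiftOperator)\fourierShift+\coefficientCharEquationInFourier_{-1}(\timeShiftOperator)=0$, splitting the analysis into the generic case $\stencilLeftCharacteristic=\stencilRightCharacteristic=1$ (i.e. $\coefficientCharEquationInFourier_{\pm1}\not\equiv0$) and the two degenerate ones. In the generic case, \Cref{lemma:Hersh}, which applies thanks to the \emph{von Neumann} stability \eqref{eq:D1Q2VonNeumann}, directly gives that for every $\timeShiftOperator\in\neighborhoodInfinity$ the equation has exactly one root $\stableRoot(\timeShiftOperator)\in\unitDisk$ and one root $\unstableRoot(\timeShiftOperator)\in\neighborhoodInfinity$, with none on $\unitCircle$. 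The structural fact that organizes the whole proof is that the product of the two roots, $\coefficientCharEquationInFourier_{-1}(\timeShiftOperator)/\coefficientCharEquationInFourier_1(\timeShiftOperator)=\frac{2+(\courantNumber-1)\relaxationParameterLetter_2}{2-(\courantNumber+1)\relaxationParameterLetter_2}=\productRootsDOneQThree$, is a constant independent of $\timeShiftOperator$, so that one root determines the other. The two degenerate situations are then immediate: if $2+(\courantNumber-1)\relaxationParameterLetter_2=0$ then $\coefficientCharEquationInFourier_{-1}\equiv0$ and the equation factors as $\fourierShift\,(\coefficientCharEquationInFourier_1\fourierShift+\coefficientCharEquationInFourier_0)=0$, so the stable root is $\stableRoot\equiv0$; symmetrically, if $2-(\courantNumber+1)\relaxationParameterLetter_2=0$ then $\coefficientCharEquationInFourier_1\equiv0$, the equation is affine with a single finite root, and one sets $\unstableRoot\equiv\infty$.

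For the continuous extension to $\unitCircle$, I would use the explicit parametrization $\fourierShift_{\pm}(\timeShiftOperator)$ built from the principal square root of the discriminant $\coefficientCharEquationInFourier_0(\timeShiftOperator)^2-4\coefficientCharEquationInFourier_1(\timeShiftOperator)\coefficientCharEquationInFourier_{-1}(\timeShiftOperator)$, which is a polynomial in $\timeShiftOperator$ (in fact in $\timeShiftOperator^2$). As roots of a quadratic with polynomial coefficients, the two values depend continuously on $\timeShiftOperator$ as an unordered pair, and by \Cref{lemma:Hersh} they are separated by $\unitCircle$ throughout $\neighborhoodInfinity$; being trapped in the bounded set $\unitDisk$, the stable root therefore admits a limit in $\closedUnitDisk$ as $\timeShiftOperator\to\unitCircle$, while the unstable one admits a limit in $\closedNeighborhoodInfinity$ (finite outside the degenerate case $\coefficientCharEquationInFourier_1\equiv0$). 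These limits are the sought extensions. The only delicate point is that the two roots may coincide on $\unitCircle$, which by the constant-product identity forces the common value $\fourierShift_\ast$ to satisfy $\fourierShift_\ast^2=\productRootsDOneQThree$ with $|\fourierShift_\ast|=1$, hence $|\productRootsDOneQThree|=1$; this happens exactly at $\relaxationParameterLetter_2=2$ (where $\productRootsDOneQThree=-1$ and $\fourierShift_\ast=\pm i$), but such a coincidence does not break continuity, both branches simply taking that boundary value.

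To obtain the boundary values, substitute $\fourierShift=1$ into the equation at $\timeShiftOperator=1$: a one-line computation gives $\coefficientCharEquationInFourier_1(1)+\coefficientCharEquationInFourier_0(1)+\coefficientCharEquationInFourier_{-1}(1)=0$, so $\fourierShift=1$ is a root there, and by the constant product the other root is $\productRootsDOneQThree$; likewise $\fourierShift=-1$ solves the equation at $\timeShiftOperator=-1$, with companion root $-\productRootsDOneQThree$. It remains to assign the labels, for which I compare moduli through the factorization $(2-(\courantNumber+1)\relaxationParameterLetter_2)^2-(2+(\courantNumber-1)\relaxationParameterLetter_2)^2=-4\courantNumber\relaxationParameterLetter_2(2-\relaxationParameterLetter_2)$. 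This is positive for $\courantNumber<0$ and negative for $\courantNumber>0$ (with $\relaxationParameterLetter_2<2$), so $|\productRootsDOneQThree|<1$ when $\courantNumber<0$ and $|\productRootsDOneQThree|>1$ when $\courantNumber>0$, while $\relaxationParameterLetter_2=2$ yields $|\productRootsDOneQThree|=1$. Hence for $\courantNumber<0$ the stable root at $\timeShiftOperator=\pm1$ is $\pm\productRootsDOneQThree$ and the unstable one is $\pm1$, and for $\courantNumber>0$ the assignment is reversed, which is precisely the claimed table.

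The step I expect to be the main obstacle is the continuous extension across $\unitCircle$: one must guarantee that the stable and unstable roots admit \emph{unique} limits there and that their labels are not scrambled by the branch cut of the principal square root. The argument hinges on combining the separation furnished by \Cref{lemma:Hersh} on $\neighborhoodInfinity$ with the constant-product identity, which pins any root coincidence on $\unitCircle$ to the single parameter value $\relaxationParameterLetter_2=2$ and shows it to be harmless; the remaining ingredients (existence via Hersh, the product computation, and the $\timeShiftOperator=\pm1$ substitutions together with the sign analysis of $-4\courantNumber\relaxationParameterLetter_2(2-\relaxationParameterLetter_2)$) are routine.
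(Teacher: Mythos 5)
The paper does not prove this proposition in-text (it defers to an external reference), so there is no internal proof to compare against; your route---Hersh's lemma for existence and separation, the $\timeShiftOperator$-independence of the product of the roots, the observation that $\fourierShift=\pm1$ solves the equation at $\timeShiftOperator=\pm1$, and a modulus comparison to assign labels---is the natural one and matches what the paper does for the analogous D1Q3 propositions. The degenerate cases $\coefficientCharEquationInFourier_{\pm1}\equiv0$ and the computations ($\coefficientCharEquationInFourier_1(1)+\coefficientCharEquationInFourier_0(1)+\coefficientCharEquationInFourier_{-1}(1)=0$, the factorization $(2-(\courantNumber+1)\relaxationParameterLetter_2)^2-(2+(\courantNumber-1)\relaxationParameterLetter_2)^2=-4\courantNumber\relaxationParameterLetter_2(2-\relaxationParameterLetter_2)$) are all correct.

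There is, however, a genuine gap precisely in the case $\relaxationParameterLetter_2=2$, which the von Neumann condition \eqref{eq:D1Q2VonNeumann} allows and which is the most important case in the rest of the paper. There $\productRootsDOneQThree=-1$, so at $\timeShiftOperator=\pm1$ both candidate roots ($\pm1$ and $\mp1$) lie on $\unitCircle$, and your only labelling criterion---``the limit of $\stableRoot$ has modulus $\le 1$, the limit of $\unstableRoot$ has modulus $\ge 1$''---is inconclusive: it cannot decide which of the two unimodular values is $\stableRoot(\pm1)$ and which is $\unstableRoot(\pm1)$. (Your discussion of root \emph{coincidence} at $\relaxationParameterLetter_2=2$ concerns the points $\fourierShift=\pm i$ where the discriminant vanishes; it does not address this distinct issue at $\timeShiftOperator=\pm1$, where the roots are distinct but both unimodular.) The standard fix is a first-order perturbation: writing $F(\timeShiftOperator,\fourierShift)=\coefficientCharEquationInFourier_1\fourierShift^2+\coefficientCharEquationInFourier_0\fourierShift+\coefficientCharEquationInFourier_{-1}$, one computes $\partial_{\fourierShift}F(1,1)=\courantNumber\relaxationParameterLetter_2$ and $\partial_{\timeShiftOperator}F(1,1)=\relaxationParameterLetter_2$, so that $\timeShiftOperator=1+\delta\timeShiftOperator$ with $\delta\timeShiftOperator>0$ forces $\delta\fourierShift=-\delta\timeShiftOperator/\courantNumber$, which pushes the root at $\fourierShift=1$ into $\unitDisk$ when $\courantNumber>0$ and into $\neighborhoodInfinity$ when $\courantNumber<0$; the companion root then gets the other label by Hersh's count. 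This argument is uniform in $\relaxationParameterLetter_2\in(0,2]$ and closes the gap (an analogous computation handles $\timeShiftOperator=-1$). Relatedly, your existence-of-limits argument only yields limit points of $\stableRoot(\timeShiftOperator)$ a priori; to get an actual continuous extension you should say explicitly that away from discriminant zeros the two roots are locally given by holomorphic branches, so that $\stableRoot$ coincides with one such branch near each point of $\unitCircle$ and therefore converges.
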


\paragraph{Shared eigenvalues}

Computations on a computer algebra system give the following solutions of \eqref{eq:toSolveBoundaryBulkD1Q2}, where we do not list $(\timeShiftOperator, \fourierShift)$ with $\timeShiftOperator = 0$, harmless.
\begin{align*}
	\text{For }\eqref{eq:BB}\qquad &(\timeShiftOperator, \fourierShift) =  (1, \productRootsDOneQThree(\relaxationParameterLetter_2,\courantNumber)), \quad (\timeShiftOperator, \fourierShift) = (\relaxationParameterLetter_2-1, -1), \quad (\timeShiftOperator, \fourierShift) = (\tfrac{1}{2}(1-\courantNumber)\relaxationParameterLetter_2, 0), \\
	\text{For }\eqref{eq:ABB}\qquad &(\timeShiftOperator, \fourierShift) =  (-1, -\productRootsDOneQThree(\relaxationParameterLetter_2,\courantNumber)), \quad (\timeShiftOperator, \fourierShift) = (1-\relaxationParameterLetter_2, 1),  \quad (\timeShiftOperator, \fourierShift) = (\tfrac{1}{2}(\courantNumber-1)\relaxationParameterLetter_2, 0), \\
	\text{For }\eqref{eq:TwoABB}\qquad &(\timeShiftOperator, \fourierShift) =  (\relaxationParameterLetter_2-1, -1), \quad (\timeShiftOperator, \fourierShift) = (1-\relaxationParameterLetter_2, 1),\\
	\text{For }\eqref{eq:extrapolation}\text{, }\orderExtrapolation = 1, 2, 3, 4\qquad &(\timeShiftOperator, \fourierShift) =  (1, 1), \quad (\timeShiftOperator, \fourierShift) = (1-\relaxationParameterLetter_2, 1), \quad (\timeShiftOperator, \fourierShift) = (\sigma (\tfrac{1}{2}(\courantNumber-1)\relaxationParameterLetter_2 + 1), 0), \\
	\text{For }\eqref{eq:kineticDirichlet}\qquad &\text{no }(\timeShiftOperator, \fourierShift).
\end{align*}
\begin{example}[On the help from symbolic computations]
	We used the help of \texttt{SageMath} to solve \eqref{eq:toSolveBoundaryBulkD1Q2}.
	However, this is convenient but not strictly needed---and computations can be done by hand. 
	As example, consider \eqref{eq:toSolveBoundaryBulkD1Q2} for \eqref{eq:TwoABB}:
	\begin{equation*}
		\begin{cases}
			\frac{1}{2}  {\left({\courantNumber} {\fourierShift}^{2} {\relaxationParameterLetter_2} {\timeShiftOperator} + {\fourierShift}^{2} {\relaxationParameterLetter_2} {\timeShiftOperator} - 2  {\fourierShift}^{2} {\timeShiftOperator} - {\courantNumber} {\relaxationParameterLetter_2} {\timeShiftOperator} + 2  {\fourierShift} {\timeShiftOperator}^{2} - 2  {\fourierShift} {\relaxationParameterLetter_2} + {\relaxationParameterLetter_2} {\timeShiftOperator} + 2  {\fourierShift} - 2  {\timeShiftOperator}\right)} {\fourierShift} = 0,\\
			{\left({\fourierShift} {\relaxationParameterLetter_2} - {\fourierShift} + {\timeShiftOperator}\right)} {\timeShiftOperator} = 0.
		\end{cases}
	\end{equation*}
	We have observed that $\timeShiftOperator = 0$ can be discarded. 
	We thus divide the second equation by $\timeShiftOperator$ and solve for $\timeShiftOperator$: $\timeShiftOperator = (1-\relaxationParameterLetter_2)\fourierShift$. 
	In the sequel, we can assume $\relaxationParameterLetter_2\neq 1$, as otherwise $\timeShiftOperator=0$, going against the assumption that $\timeShiftOperator\neq 0$.
	Plugging $\timeShiftOperator$  into the first equation gives $\relaxationParameterLetter_2(1-\relaxationParameterLetter_2)(1-\courantNumber)(\fourierShift+1)(\fourierShift-1)\fourierShift^2 = 0$.
	\begin{itemize}
		\item The case $\relaxationParameterLetter_2 = 0$ does not apply, as we assumed $\relaxationParameterLetter_2\in (0, 2]$.
		\item Consider $\courantNumber = 1$, hence bulk stability requests $\relaxationParameterLetter_2\in(0, 2)$. As $\relaxationParameterLetter_2 \neq 1$, any $(\timeShiftOperator, \fourierShift)$ such that $\timeShiftOperator = (1-\relaxationParameterLetter_2)\fourierShift$ does the job. 
		However, as we consider $\timeShiftOperator\in\closedNeighborhoodInfinity$, since $\relaxationParameterLetter_2\in(0, 2)$, we obtain $\fourierShift\in\neighborhoodInfinity$, thus corresponds to $\unstableRoot$.
		\item Consider $\fourierShift = 0$. This gives $\timeShiftOperator=0$, a contradiction against the assumptions.
	\end{itemize}
	Finally, we have $\fourierShift=\pm 1$, hence $\timeShiftOperator = \pm(1-\relaxationParameterLetter_2)$: the only non-trivial roots.
\end{example}
Notice that problematic $\timeShiftOperator$ are those such that $\timeShiftOperator\in\closedNeighborhoodInfinity$.
Even if the description of the roots of the characteristic equation does not apply to $\timeShiftOperator\in\unitDisk$, these values are harmless since they produce poles associated with geometrically damped modes in $\indexTime$ when processed by the inverse $\timeShiftOperator$-transform, \confer{} \eqref{eq:inverseZTransform}. 
For $\timeShiftOperator\in\closedNeighborhoodInfinity$, one has to understand whether the corresponding $\fourierShift$ fulfills \Cref{def:eigenvalue}, namely is stable (equal to $\stableRoot$, hence contributing to $\stableSubspace$) or unstable (equal to $\unstableRoot$).
Let us discuss this condition-by-condition with the help of \Cref{prop:hershD1Q2}.
\begin{itemize}
	\item Condition \eqref{eq:BB}. The first listed couple is a shared eigenvalue when $\courantNumber<0$, since $\fourierShift=\stableRoot$. Moreover, since in this framework $|\productRootsDOneQThree(\relaxationParameterLetter_2,\courantNumber)|<1$ for $\relaxationParameterLetter_2 < 2$ and $|\productRootsDOneQThree(\relaxationParameterLetter_2,\courantNumber)|=1$ for $\relaxationParameterLetter_2 = 2$, the potential instability from this mode is geometrically damped in space in the former case and spatially extended in the latter.
	The second couple is an eigenvalue when $\relaxationParameterLetter_2 = 2$ and $\courantNumber<0$, and yields spatially extended modes.
	The last couple is harmless since $\timeShiftOperator \in\unitDisk$ under stability conditions.

	\item Condition \eqref{eq:ABB}. The first couple is an eigenvalue when $\courantNumber<0$, with geometric damping in space for $\relaxationParameterLetter_2<2$ and extended spatial pattern for $\relaxationParameterLetter_2 = 2$. 
	The second couple is an eigenvalue when $\relaxationParameterLetter_2 = 2$ and $\courantNumber<0$.
	The last couple is harmless since $\fourierShift \in\unitDisk$ under stability conditions.

	\item Condition \eqref{eq:TwoABB}. The two couples are eigenvalues when $\relaxationParameterLetter_2 = 2$ and $\courantNumber<0$. 

	\item Condition \eqref{eq:extrapolation}. The first couple is an eigenvalue when $\courantNumber > 0$.
	The second couple is such when $\relaxationParameterLetter_2 = 2$ and $\courantNumber<0$.
	The last couple may yield Godunov-Ryabenkii instabilities at first sight for $\orderExtrapolation$ large enough, since $\timeShiftOperator\in\neighborhoodInfinity$. 
	Clearly, $\fourierShift$ does not equal $\stableRoot$ except when $\tfrac{1}{2}(\courantNumber-1)\relaxationParameterLetter_2 + 1 = 0$, which is harmless.
	This value is totally fictitious and coming from the admissibility of $\fourierShift = 0$ as eigenvalue of $\matrixPolynomialBulk{\timeShiftOperator}(\fourierShift)$.
\end{itemize}

\Cref{tab:modesD1Q2} recapitulates eigenvalues shared between bulk and boundary scheme.
When none is found, we can conclude that the scheme is strongly stable, see the classification at the end of \Cref{sec:recurrenceSpace}.

\begin{table}\caption{\label{tab:modesD1Q2}Unstable candidate eigenvalues (shared between bulk and boundary scheme) for the \lbmScheme{1}{2} scheme.}
	\begin{center}
		\begin{tabular}{|c||c|c|c|}
			\cline{2-4}
			\multicolumn{1}{c||}{} & \multicolumn{2}{c|}{$\courantNumber<0$ (outflow)} & $\courantNumber>0$ (inflow)\\
			\cline{2-4}
			\multicolumn{1}{c||}{} & $\relaxationParameterLetter_2\in(0, 2)$ & $\relaxationParameterLetter_2 = 2$ & $\relaxationParameterLetter_2\in(0, 2]$ \\
			\hline
			\hline
			Bounce-back \eqref{eq:BB} & $(1, \productRootsDOneQThree)$ & $(1, -1)$ and $(-1, 1)$ & none \\
			Anti-bounce-back \eqref{eq:ABB} & $(-1, -\productRootsDOneQThree)$ & $(-1, 1)$ & none \\
			Two-steps anti-bounce-back \eqref{eq:TwoABB} & none & $(1, -1)$ and $(-1, 1)$ & none \\
			Extrapolation $\orderExtrapolation \in\integerInterval{1}{4}$ \eqref{eq:extrapolation} & none & $(-1, 1)$ & $(1, 1)$ \\
			Kinetic Dirichlet \eqref{eq:kineticDirichlet} & none & none & none \\
			\hline
		\end{tabular}
	\end{center}
\end{table}

\paragraph{Eigenvectors: conclusion of the proof and description of the instabilities}

We want to understand to what extent stability can hold for boundary conditions where eigenvalues shared by bulk and boundary schemes have been detected, \confer{} \Cref{tab:modesD1Q2}.
For the boundary conditions which are unstable, we give a quantitative description of these instabilities seen in numerical experiments, \confer{} \Cref{sec:numericalSimD1Q2}.

Except when $2+\relaxationParameterLetter_2 (\courantNumber-1) = 0$, that is when $\coefficientCharEquationInFourier_{-1}(\timeShiftOperator)\equiv 0$, $\stableSubspace(\timeShiftOperator)$  can be constructed as in \Cref{prop:noDiracBoundary}.
In this setting, one can take $\vectorial{\eigenvectorLetter}_{0}^1 = \vectorial{\eigenvectorLetter}_{0} = \transpose{(\relaxationParameterLetter_2-1, 1+\courantNumber\relaxationParameterLetter_2)}$.

In the case where  $2+\relaxationParameterLetter_2 (\courantNumber-1) = 0$, that is $\relaxationParameterLetter_2 = \tfrac{2}{1-\courantNumber}$, the algebraic multiplicity of $\fourierShift \equiv 0$ as zero of \eqref{eq:equalityCharEquations} equals two, and we thus have to investigate the presence of a Jordan chain of length two.
This is the problem of finding the first generalized eigenvector $\tilde{\vectorial{\eigenvectorLetter}}_{0}(\timeShiftOperator)$ solution of $\matrixPolynomialBulk{\timeShiftOperator}'(0) \vectorial{\eigenvectorLetter}_{0} + \matrixPolynomialBulk{\timeShiftOperator}(0)\tilde{\vectorial{\eigenvectorLetter}}_{0}(\timeShiftOperator) = \zeroMatrix{2}$, which gives $\tilde{\vectorial{\eigenvectorLetter}}_{0}(\timeShiftOperator) = 
\transpose{(1, 1-2\timeShiftOperator)}$.
In this setting, the solution of \eqref{eq:resolventBulk} $\laplaceTransformed{\vectorial{\momentDiscrete}}_{\indexSpace}(\timeShiftOperator)\in\stableSubspace(\timeShiftOperator)$ for $\timeShiftOperator\in\neighborhoodInfinity$ thus reads
\begin{equation}\label{eq:generalSolutionStable}
	\laplaceTransformed{\vectorial{\momentDiscrete}}_{\indexSpace}(\timeShiftOperator) = 
		\coefficientZero(\timeShiftOperator) \vectorial{\eigenvectorLetter}_{0}\delta_{0\indexSpace} + \tilde{\coefficientZero}(\timeShiftOperator)(\tilde{\vectorial{\eigenvectorLetter}}_{0}(\timeShiftOperator)\delta_{0\indexSpace}+\vectorial{\eigenvectorLetter}_{0}\delta_{1\indexSpace}),
\end{equation}
for $\indexSpace\in\naturals$, where $\coefficientZero(\timeShiftOperator),  \tilde{\coefficientZero}(\timeShiftOperator)\in\complex$ are to be determined by the boundary condition \eqref{eq:resolventBoundary}: 
\begin{equation}\label{eq:lopatiskiiSystemZero}
	\kreissLopatinskiiMatrix(\timeShiftOperator)\transpose{(\coefficientZero(\timeShiftOperator), \tilde{\coefficientZero}(\timeShiftOperator))} = \laplaceTransformed{\vectorial{\boundarySourceTermMoments}}_0(\timeShiftOperator), \quad
	\text{with}\quad \kreissLopatinskiiMatrix(\timeShiftOperator)= 
	\begin{bmatrix}
		 (\timeShiftOperator\identityMatrix{2} - \schemeMatrixBoundaryByPower{0}{0})  \vectorial{\eigenvectorLetter}_{0}\, | \,(\timeShiftOperator\identityMatrix{2} - \schemeMatrixBoundaryByPower{0}{0}) \tilde{\vectorial{\eigenvectorLetter}}_{0}(\timeShiftOperator) - \schemeMatrixBoundaryByPower{0}{1} \vectorial{\eigenvectorLetter}_{0} 
	\end{bmatrix}.
\end{equation}
We set $\kreissLopatinskiiDet(\timeShiftOperator)\definitionEquality\determinant(\kreissLopatinskiiMatrix(\timeShiftOperator))$ and have the following corollary of \Cref{prop:noDiracBoundary}.
\begin{corollary}
	Under \eqref{eq:D1Q2VonNeumann}, if $\relaxationParameterLetter_2 \neq \tfrac{2}{1-\courantNumber}$ (respectively, if $\relaxationParameterLetter_2 = \tfrac{2}{1-\courantNumber}$), the solution of \eqref{eq:resolventBulk} in $\stableSubspace(\timeShiftOperator)$ is given by \eqref{eq:generalStableSolutionAllSchemes} (respectively, \eqref{eq:generalSolutionStable}).
	Enforcing the boundary condition \eqref{eq:resolventBoundary}, the coefficients are given by \eqref{eq:coefficientWithKreissLopDetFinal} (respectively, \eqref{eq:lopatiskiiSystemZero}), which also read
	\begin{align}
		\begin{cases}
			\kreissLopatinskiiScalarProduct(\timeShiftOperator) \coefficientStableSolution(\timeShiftOperator) =  \laplaceTransformed{\boundarySourceTerm}_{1, -1}(\timeShiftOperator), \\
			\kreissLopatinskiiScalarProduct(\timeShiftOperator) \coefficientZero^1(\timeShiftOperator) = 0, 
		\end{cases}
		\qquad &\text{if}\quad \relaxationParameterLetter_2\neq \tfrac{2}{1-\courantNumber}.\label{eq:kreissLopatinskiDependenceBoundaryDatumD1Q2}\\
		\begin{cases}
			\kreissLopatinskiiDet(\timeShiftOperator) \coefficientZero(\timeShiftOperator) =2(\timeShiftOperator^2 + \tfrac{\courantNumber+1}{\courantNumber-1}) \laplaceTransformed{\boundarySourceTerm}_{1, -1}(\timeShiftOperator), \\
			\kreissLopatinskiiDet(\timeShiftOperator) \tilde{\coefficientZero}(\timeShiftOperator) = 0, 
		\end{cases}
		\qquad &\text{if}\quad \relaxationParameterLetter_2= \tfrac{2}{1-\courantNumber}.\label{eq:kreissLopatinskiDependenceBoundaryDatumD1Q2Zero}
	\end{align}
\end{corollary}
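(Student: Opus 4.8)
The plan is to split the statement into its generic branch ($\relaxationParameterLetter_2\neq\tfrac{2}{1-\courantNumber}$), which follows by invoking \Cref{prop:noDiracBoundary} off the shelf, and its degenerate branch ($\relaxationParameterLetter_2=\tfrac{2}{1-\courantNumber}$), where the stable subspace must be rebuilt by hand because the hypothesis $\coefficientCharEquationInFourier_{-1}\not\equiv 0$ of \Cref{prop:noDiracBoundary} fails. For the generic branch I would first verify the three hypotheses of \Cref{prop:noDiracBoundary}: \emph{von Neumann} stability is exactly \eqref{eq:D1Q2VonNeumann}, the velocities $\dimensionlessDiscreteVelocityLetter_1=1$ and $\dimensionlessDiscreteVelocityLetter_2=-1$ are distinct, and $\stencilLeft=1$ with $\coefficientCharEquationInFourier_{-1}(\timeShiftOperator)=-\tfrac12(2+(\courantNumber-1)\relaxationParameterLetter_2)\timeShiftOperator\not\equiv 0$ precisely when $\relaxationParameterLetter_2\neq\tfrac{2}{1-\courantNumber}$, which forces $\stencilLeftCharacteristic=\stencilLeft=1$. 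Since $\numberVelocities=2$ there is a single zero-eigenvector, which one checks spans $\kernel(\matrixPolynomialBulk{\timeShiftOperator}(0))=\kernel(\schemeMatrixBulkByPower_{-1})$ and may be taken as $\vectorial{\eigenvectorLetter}_0=\transpose{(\relaxationParameterLetter_2-1,\,1+\courantNumber\relaxationParameterLetter_2)}$. With $\positiveVelocityIndex=1$, formula \eqref{eq:generalStableSolutionAllSchemes} and the coefficient relations \eqref{eq:coefficientWithKreissLopDetFinal} then specialize verbatim to \eqref{eq:kreissLopatinskiDependenceBoundaryDatumD1Q2}; this branch is pure bookkeeping.

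For the degenerate branch I would use \eqref{eq:equalityCharEquations} to write $\determinant(\matrixPolynomialBulk{\timeShiftOperator}(\fourierShift))=\fourierShift^2\bigl(\coefficientCharEquationInFourier_1(\timeShiftOperator)\fourierShift+\coefficientCharEquationInFourier_0(\timeShiftOperator)\bigr)$, so that $\fourierShift=0$ is an eigenvalue of $\matrixPolynomialBulk{\timeShiftOperator}$ of algebraic multiplicity two (the remaining factor being the unstable root, which does not enter $\stableSubspace(\timeShiftOperator)$). Because $\rank(\schemeMatrixBulkByPower_{-1})=1$ its geometric multiplicity is one, so $\stableSubspace(\timeShiftOperator)$ is generated by a Jordan chain of length two at $\fourierShift=0$. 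Solving $\matrixPolynomialBulk{\timeShiftOperator}(0)\vectorial{\eigenvectorLetter}_0=\zeroMatrix{2}$ together with $\matrixPolynomialBulk{\timeShiftOperator}'(0)\vectorial{\eigenvectorLetter}_0+\matrixPolynomialBulk{\timeShiftOperator}(0)\tilde{\vectorial{\eigenvectorLetter}}_0=\zeroMatrix{2}$ gives $\tilde{\vectorial{\eigenvectorLetter}}_0(\timeShiftOperator)=\transpose{(1,\,1-2\timeShiftOperator)}$, and substituting the two resulting $\ell^2$ solutions $\vectorial{\eigenvectorLetter}_0\delta_{0\indexSpace}$ and $\tilde{\vectorial{\eigenvectorLetter}}_0\delta_{0\indexSpace}+\vectorial{\eigenvectorLetter}_0\delta_{1\indexSpace}$ into \eqref{eq:resolventBulk} confirms that \eqref{eq:generalSolutionStable} spans $\stableSubspace(\timeShiftOperator)$, following \cite[Chapter 8]{matrixpoly09}. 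Imposing the single boundary relation \eqref{eq:resolventBoundary} at $\indexSpace=0$, with right-hand side $\laplaceTransformed{\vectorial{\boundarySourceTermMoments}}_0=\momentMatrix\canonicalBasisVector{1}\laplaceTransformed{\boundarySourceTerm}_{1,-1}=\transpose{(1,1)}\laplaceTransformed{\boundarySourceTerm}_{1,-1}$, yields the $2\times2$ system \eqref{eq:lopatiskiiSystemZero}.

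The final step is to solve \eqref{eq:lopatiskiiSystemZero} by Cramer's rule, and the key structural observation is that, for $\relaxationParameterLetter_2=\tfrac{2}{1-\courantNumber}$, one has $\relaxationParameterLetter_2-1=1+\courantNumber\relaxationParameterLetter_2=\tfrac{1+\courantNumber}{1-\courantNumber}$, so $\vectorial{\eigenvectorLetter}_0=\tfrac{1+\courantNumber}{1-\courantNumber}\,\momentMatrix\canonicalBasisVector{1}$ is colinear with $\momentMatrix\canonicalBasisVector{1}=\transpose{(1,1)}$. Moreover, since only the outgoing velocity $\dimensionlessDiscreteVelocityLetter_{\positiveVelocityIndex}=1$ is reconstructed at the boundary, the matrix $\schemeMatrixBoundaryByPower{0}{0}$ has range contained in $\spanSpace{\momentMatrix\canonicalBasisVector{1}}$, while $\schemeMatrixBoundaryByPower{0}{1}$ equals the bulk term $\momentMatrix\canonicalBasisVector{2}\transpose{\canonicalBasisVector{2}}\momentMatrix^{-1}\collisionMatrix$ up to a rank-one correction with the same range. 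Consequently the first column $(\timeShiftOperator\identityMatrix{2}-\schemeMatrixBoundaryByPower{0}{0})\vectorial{\eigenvectorLetter}_0$ of $\kreissLopatinskiiMatrix$ is parallel to the right-hand side $\momentMatrix\canonicalBasisVector{1}$, which forces $\kreissLopatinskiiDet(\timeShiftOperator)\tilde{\coefficientZero}(\timeShiftOperator)=0$; and in the Cramer numerator for $\coefficientZero$ every contribution parallel to $\momentMatrix\canonicalBasisVector{1}$ drops, leaving the boundary-independent determinant $\determinant[\,\momentMatrix\canonicalBasisVector{1}\mid \timeShiftOperator\tilde{\vectorial{\eigenvectorLetter}}_0-\momentMatrix\canonicalBasisVector{2}\transpose{\canonicalBasisVector{2}}\momentMatrix^{-1}\collisionMatrix\vectorial{\eigenvectorLetter}_0\,]$, the exact analogue of the bulk factor $\scalarFactorFromAdjugate$ of \Cref{prop:noDiracBoundary}. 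Evaluating this $2\times2$ determinant (conveniently with the computer algebra system already used in the paper) returns $2(\timeShiftOperator^2+\tfrac{\courantNumber+1}{\courantNumber-1})$, which establishes \eqref{eq:kreissLopatinskiDependenceBoundaryDatumD1Q2Zero}.

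The routine half is the generic branch, which merely re-reads \Cref{prop:noDiracBoundary}; the genuine content, and the main obstacle, lies in the degenerate branch. Two points demand care: first, certifying that the two Jordan-chain solutions \emph{exhaust} $\stableSubspace(\timeShiftOperator)$ rather than simply belonging to it, which rests on the multiplicity accounting for the non-monic polynomial $\matrixPolynomialBulk{\timeShiftOperator}$ and on tracking that the unstable root contributes the remaining spectral mass; second, exhibiting the right-hand side factor $2(\timeShiftOperator^2+\tfrac{\courantNumber+1}{\courantNumber-1})$ as boundary-independent, which hinges on the colinearity $\vectorial{\eigenvectorLetter}_0\parallel\momentMatrix\canonicalBasisVector{1}$ and on the rank-one range structure of the boundary correction. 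Once these two facts are secured, the remaining determinant evaluation is entirely mechanical.
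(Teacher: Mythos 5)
Your proposal is correct and follows essentially the same route as the paper: the generic branch is the direct specialization of \Cref{prop:noDiracBoundary} (the paper just adds the explicit check that $\scalarFactorFromAdjugate(\timeShiftOperator)=(2+(\courantNumber-1)\relaxationParameterLetter_2)\timeShiftOperator$ never vanishes on $\closedNeighborhoodInfinity$, so the division yielding \eqref{eq:kreissLopatinskiDependenceBoundaryDatumD1Q2} is legitimate and not merely formal), while the degenerate branch is handled by Cramer's rule on \eqref{eq:lopatiskiiSystemZero} with the same multilinearity reductions. The only cosmetic difference is your justification that the numerator of $\tilde{\coefficientZero}$ vanishes—via the colinearity $\vectorial{\eigenvectorLetter}_0\parallel\momentMatrix\canonicalBasisVector{1}$ specific to $\relaxationParameterLetter_2=\tfrac{2}{1-\courantNumber}$—whereas the paper identifies that same determinant with $-\scalarFactorFromAdjugate(\timeShiftOperator)\equiv 0$; the two arguments are equivalent.
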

\begin{proof}
	The case $\relaxationParameterLetter_2\neq\tfrac{2}{1-\courantNumber}$ fits into the framework of \Cref{prop:noDiracBoundary}. 
	By an explicit computation $\scalarFactorFromAdjugate(\timeShiftOperator) = (2+(\courantNumber-1)\relaxationParameterLetter_2)\timeShiftOperator$, using \eqref{eq:scalarFactorFromAdjugate}. 
	This quantity never vanish for $\timeShiftOperator\in\closedNeighborhoodInfinity$, hence can be safely simplified to give \eqref{eq:coefficientWithKreissLopDetFinal}, hence \eqref{eq:kreissLopatinskiDependenceBoundaryDatumD1Q2}.

	For $\relaxationParameterLetter_2=\tfrac{2}{1-\courantNumber}$, like in the proof of \Cref{prop:noDiracBoundary}, we have $\kreissLopatinskiiDet(\timeShiftOperator) {\coefficientZero}(\timeShiftOperator) = \determinant[ \momentMatrix\canonicalBasisVector{1}  \, | \,   (\timeShiftOperator\identityMatrix{2} - \schemeMatrixBoundaryByPower{0}{0}) \tilde{\vectorial{\eigenvectorLetter}}_{0}(\timeShiftOperator) - \schemeMatrixBoundaryByPower{0}{1} \vectorial{\eigenvectorLetter}_{0} ] \laplaceTransformed{\boundarySourceTerm}_{1, -1}(\timeShiftOperator)$ and $\kreissLopatinskiiDet(\timeShiftOperator) \tilde{\coefficientZero}(\timeShiftOperator) = \determinant[(\timeShiftOperator\identityMatrix{2} - \schemeMatrixBoundaryByPower{0}{0})\vectorial{\eigenvectorLetter}_0 \, | \,     \momentMatrix\canonicalBasisVector{1}] \laplaceTransformed{\boundarySourceTerm}_{1, -1}(\timeShiftOperator)$.
	The second equation in \eqref{eq:kreissLopatinskiDependenceBoundaryDatumD1Q2Zero} comes from observing that $\determinant[(\timeShiftOperator\identityMatrix{2} - \schemeMatrixBoundaryByPower{0}{0})\vectorial{\eigenvectorLetter}_0 \, | \,     \momentMatrix\canonicalBasisVector{1}] = -\scalarFactorFromAdjugate(\timeShiftOperator)\equiv 0$ in the present setting.
	Explicit computations, where terms linked to specific boundary conditions on the right-hand side are collinear to $ \momentMatrix\canonicalBasisVector{1}$, thus vanish thanks to the multilinearity of the determinant, yield \eqref{eq:kreissLopatinskiDependenceBoundaryDatumD1Q2Zero}.
\end{proof}


\subparagraph{Continuity of the extension of $\stableSubspace(\timeShiftOperator)$ to $\unitCircle$}

We study the continuity (or lack thereof) of the extension of $\vectorial{\eigenvectorLetter}_{\textnormal{s}}(\timeShiftOperator)$ to $\unitCircle$.
\begin{itemize}
	\item $\courantNumber<0$. We compute $\transpose{(1, {\eigenvectorLetter}_{\pm, 2}(\timeShiftOperator))} \in\kernel(\matrixPolynomialBulk{\timeShiftOperator}(\fourierShift_{\pm}(\timeShiftOperator)))$, using both $\fourierShift_+(\timeShiftOperator)$ and $\fourierShift_-(\timeShiftOperator)$ (notice that only one produces $\vectorial{\eigenvectorLetter}_{\textnormal{s}}(\timeShiftOperator)$ at a given $\timeShiftOperator$).
	Here, ${\eigenvectorLetter}_{\pm, 2}(\timeShiftOperator) = -\transpose{\canonicalBasisVector{1}}\matrixPolynomialBulk{\timeShiftOperator}(\fourierShift_{\pm}(\timeShiftOperator))\canonicalBasisVector{1}/(\transpose{\canonicalBasisVector{1}}\matrixPolynomialBulk{\timeShiftOperator}(\fourierShift_{\pm}(\timeShiftOperator))\canonicalBasisVector{2})$ is a fraction. Studying the possibility of having a vanishing denominator, we obtain that the only possible values are $\timeShiftOperator = \pm 1$ when $\relaxationParameterLetter_2 = 2$, otherwise this never happens.
	In this case, the stable root $\stableRoot$ is parametrized by $\fourierShift_+$, and the numerator of ${\eigenvectorLetter}_{\textnormal{s}, 2}(\timeShiftOperator)$ in $\vectorial{\eigenvectorLetter}_{\textnormal{s}} = \transpose{(1, {\eigenvectorLetter}_{\textnormal{s}, 2})}$ equals $8\courantNumber^2$ when $\timeShiftOperator = \pm 1$, so the zero in the denominator is not compensated.
	We obtain
	\begin{equation}\label{eq:eigenvectorD1Q2CloseMinusOne}
		{\eigenvectorLetter}_{\textnormal{s}, 2}(\timeShiftOperator) =
		\begin{cases}
			\frac{\courantNumber\relaxationParameterLetter_2}{\relaxationParameterLetter_2 - 2} \pm \frac{{\left(\courantNumber^{2} - 1\right)} {\relaxationParameterLetter_2}^{2} + 4  {\relaxationParameterLetter_2} - 4}{\courantNumber {\relaxationParameterLetter_2}^{3} - 4  \courantNumber {\relaxationParameterLetter_2}^{2} + 4  \courantNumber {\relaxationParameterLetter_2}} (\timeShiftOperator \mp 1) + \bigO{(\timeShiftOperator\mp 1)^2}, \qquad &\text{if}\quad\relaxationParameterLetter_2 \in (0, 2),\\
			\mp \frac{2\courantNumber}{\timeShiftOperator\mp 1} \mp \frac{\timeShiftOperator\mp 1}{2\courantNumber} + \bigO{(\timeShiftOperator\mp 1)^2}, \qquad &\text{if}\quad\relaxationParameterLetter_2 = 2.
		\end{cases}
	\end{equation}
	When $\relaxationParameterLetter_2 = 2$, $\stableSubspace(\timeShiftOperator)$ cannot be continuously extended to $\unitCircle$ close to $\timeShiftOperator = \pm 1$, and the behavior of $\momentDiscrete_1$ and $\momentDiscrete_2$ shall therefore be quite different.
	The instabilities caused by $\timeShiftOperator = \pm 1$ are ``\strong{one order worse}'' for $\momentDiscrete_2$ than for $\momentDiscrete_1$.
	\begin{remark}
		In this case and in what follows, poles in $\vectorial{\eigenvectorLetter}_{\textnormal{s}}(\timeShiftOperator)$ are at most of order one. 
		Therefore, modes of kind \threeboxes{\notContinuousExtensionMark}{\eigenvalueMark}{\infiniteKL} and \threeboxes{\notContinuousExtensionMark}{\noEigenvalueMark}{\infiniteKL} are harmless as far as strong stability for all components of the system is concerned, \confer{} end of \Cref{sec:recurrenceSpace}.
		For \threeboxes{\notContinuousExtensionMark}{\noEigenvalueMark}{\infiniteKL} (\idEst{} the mode is not in \Cref{tab:modesD1Q2}), we do not even have to show the presence of a pole in $\kreissLopatinskiiScalarProduct(\timeShiftOperator)$, since this is the only possible case for \threeboxes{\notContinuousExtensionMark}{\noEigenvalueMark}{}.
	\end{remark}

	Notice that in the setting of \Cref{fig:D1Q2-s-3_2-C--1_2-revision}, we have $|\frac{\courantNumber\relaxationParameterLetter_2}{\relaxationParameterLetter_2 - 2}|=\tfrac{3}{2}$: it is therefore not surprising that in the plot relative to  \eqref{eq:BB} and \eqref{eq:ABB}, the values of $|\momentDiscrete_2|$ are $\tfrac{3}{2}$-times those of $|\momentDiscrete_1|$.


	\item $\courantNumber>0$. An eigenvector $\vectorial{\eigenvectorLetter}_{\textnormal{s}}(\timeShiftOperator)  = 
	\transpose{(1, {\eigenvectorLetter}_{\textnormal{s}, 2}(\timeShiftOperator))}$ features ${\eigenvectorLetter}_{\textnormal{s}, 2}(\timeShiftOperator)$ with vanishing denominator only for $\timeShiftOperator = \pm 1$ when $\relaxationParameterLetter_2 = 2$.
	In these vicinities, the stable root $\stableRoot$ is parametrized by $\fourierShift_+$, and the numerator of ${\eigenvectorLetter}_{\textnormal{s}, 2}(\timeShiftOperator)$ vanishes as well, compensating the pole stemming from the denominator.
	For instance
	\begin{equation}\label{eq:eigenvectorD1Q2CloseOneOne}
		{\eigenvectorLetter}_{\textnormal{s}, 2}(\timeShiftOperator) = \courantNumber \pm \frac{1-\courantNumber^2}{\courantNumber\relaxationParameterLetter_2} (\timeShiftOperator \mp 1)+\bigO{(\timeShiftOperator \mp 1)^2},
	\end{equation}
	regardless of the value of $\relaxationParameterLetter_2\in(0, 2]$, which shows that $\stableSubspace(\timeShiftOperator)$ continuously extends to $\unitCircle$ in this framework.
	In the setting of \Cref{fig:D1Q2-s-3_2-C-1_2-revision} and \ref{fig:D1Q2-s-2-C-1_2-revision}, we have that $|\courantNumber|=\tfrac{1}{2}$, so the values of $|\momentDiscrete_2|$ are half those of $|\momentDiscrete_1|$ in the plots relative to mildly-unstable boundary conditions.
	
	\begin{remark}
		Thanks to this continuous extension, for boundary conditions without critical eigenvalues listed in \Cref{tab:modesD1Q2} (all modes are \threeboxes{\continuousExtensionMark}{\noEigenvalueMark}{\nonZeroKL}), we can conclude that they are strongly stable without further discussions, as $\coefficientStableSolution(\timeShiftOperator)$ can be uniformly bounded from the datum as $\timeShiftOperator$ approaches $\unitCircle$, see \eqref{eq:kreissLopatinskiDependenceBoundaryDatumD1Q2}. 
	\end{remark}
	
\end{itemize}

Let us now conclude using Kreiss-Lopatinskii determinants bound to each specific boundary condition.

\begin{figure}[h]
	\begin{center}
		\includegraphics[width=1\textwidth]{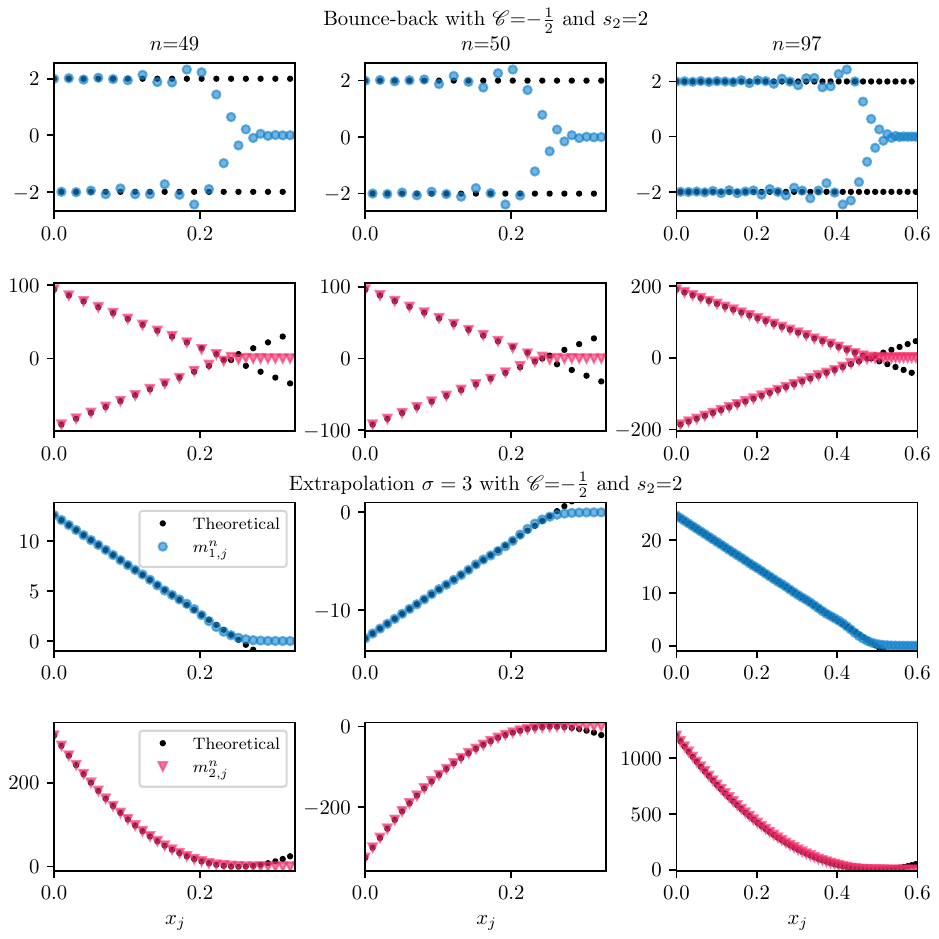}
	\end{center}\caption{\label{fig:D1Q2-theory-vs-pract}Solution at different times for the \lbmScheme{1}{2} scheme and two boundary conditions (colored) against the theoretical predictions (black) from \eqref{eq:theoreticalSolutionBB} and \eqref{eq:theoreticalSolutionSigma3}.}
\end{figure}

\begin{figure}[h]
	\begin{center}
		\includegraphics[width=1\textwidth]{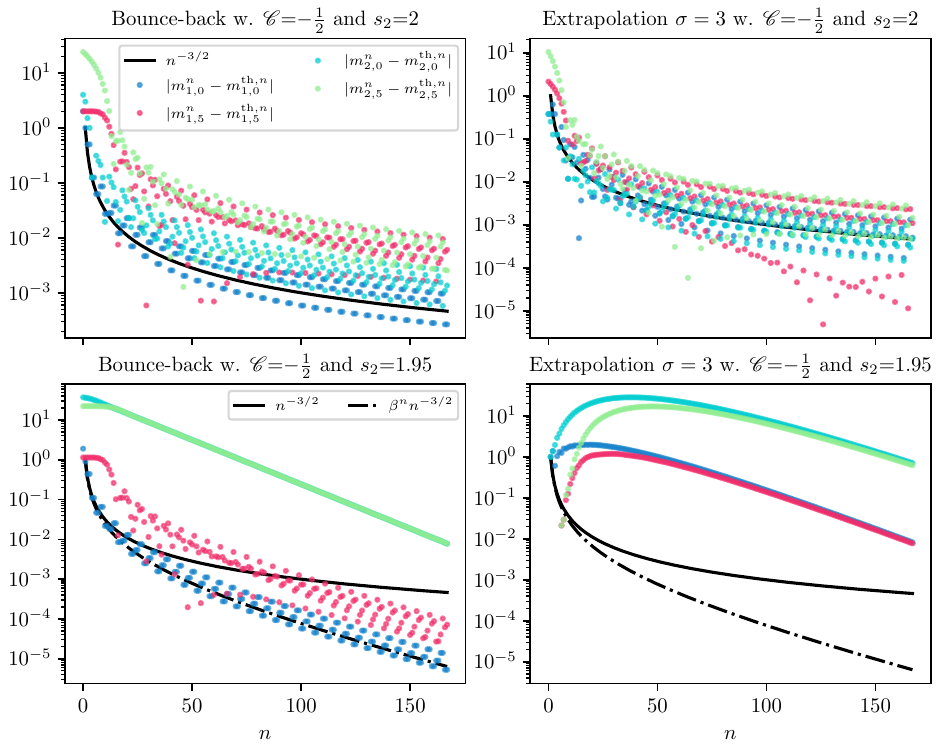}
	\end{center}\caption{\label{fig:D1Q2-theory-vs-pract-difference}For the \lbmScheme{1}{2} scheme endowed with two boundary conditions, difference at the first and sixth grid-point between numerical solution and theoretical prediction in the long-time limit given by the right-hand sides of \eqref{eq:theoreticalSolutionBB} and \eqref{eq:theoreticalSolutionSigma3}. On the second row, $\beta\approx 0.9747$.}
\end{figure}

\subparagraph{Kreiss-Lopatinskii determinants in the outflow case $\courantNumber<0$}

Consider $\courantNumber<0$.

	\begin{itemize}
		\item For \eqref{eq:BB}, the mode $(-1, -\productRootsDOneQThree)$ must be considered only for $\relaxationParameterLetter_2 = 2$.
		We have $\kreissLopatinskiiScalarProduct(\timeShiftOperator) = -2\courantNumber(\timeShiftOperator + 1)^{-1} + \bigO{1}$, and this mode is of kind \threeboxes{\notContinuousExtensionMark}{\eigenvalueMark}{\infiniteKL}, so harmless.
		For the mode $(1, \productRootsDOneQThree)$ is an eigenvalue for any value of $\relaxationParameterLetter_2$, we obtain that $\kreissLopatinskiiScalarProduct(\timeShiftOperator) = \frac{(\courantNumber+1)\relaxationParameterLetter_2 - 2}{2\courantNumber\relaxationParameterLetter_2}(\timeShiftOperator - 1) + \bigO{(\timeShiftOperator - 1)^2}$, hence \threeboxes{\continuousExtensionMark}{\eigenvalueMark}{\zeroKL} when $\relaxationParameterLetter_2\in(0, 2)$ and \threeboxes{\notContinuousExtensionMark}{\eigenvalueMark}{\zeroKL} when $\relaxationParameterLetter_2 = 2$.
		This shows that the scheme is neither SS nor SSOO, since $\coefficientStableSolution(\timeShiftOperator)$ has a first-order pole at $\timeShiftOperator = 1$ regardless of $\laplaceTransformed{\boundarySourceTerm}_{1, -1}(\timeShiftOperator)$.

        \begin{figure}[h]
            \begin{center}
                \includegraphics[width=1\textwidth]{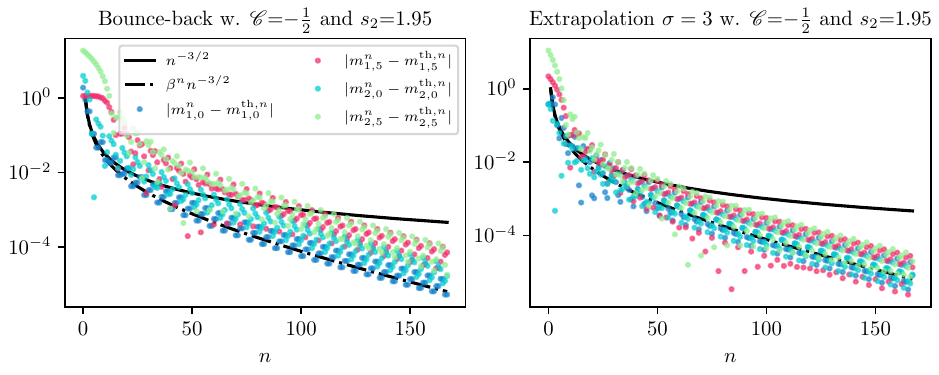}
            \end{center}\caption{\label{fig:D1Q2-theory-vs-pract-difference-more-terms}For the \lbmScheme{1}{2} scheme endowed with two boundary conditions, difference at the first and sixth grid-point between numerical solution and theoretical prediction in the long-time limit given by the right-hand sides of \eqref{eq:theoreticalSolutionBBWithSmallPole} and \eqref{eq:theoreticalSolutionSigma3MoreTerms}. We have $\beta\approx 0.9747$.}
        \end{figure}

		We now utilize previous information to quantitatively understand what is observed on \Cref{fig:D1Q2-s-3_2-C--1_2-revision} and \ref{fig:D1Q2-s-2-C--1_2-revision} for \eqref{eq:BB}.
		We take the inverse $\timeShiftOperator$-transform through \eqref{eq:inverseZTransform}, with a radius $R$ large enough (\idEst{}, which allows enclosing all the singularities of $\timeShiftOperator^{\indexTime-1}\laplaceTransformed{{\momentDiscrete}}_{\indexVelocity, \indexSpace}(\timeShiftOperator)$), $\indexVelocity\in\integerInterval{1}{\numberVelocities}$, and $\indexSpace\in\naturals$.
		We have 
		\begin{equation}\label{eq:inverseZPractical}
			{{\momentDiscrete}}_{\indexVelocity, \indexSpace}^{\indexTime} = \frac{1}{2\pi i}\oint_{\mathcal{C}(0, R)}\coefficientStableSolution(\timeShiftOperator) {\eigenvectorLetter}_{\textnormal{s}, \indexVelocity}(\timeShiftOperator) \timeShiftOperator^{\indexTime - 1}\stableRoot(\timeShiftOperator)^{\indexSpace}\differential{\timeShiftOperator}.
		\end{equation}
		We now have to propose a practical way of computing this contour integral, which very often passes through the \strong{residue theorem}.
		Still, except when $\relaxationParameterLetter_2 = \tfrac{2}{1-\courantNumber}$, the stable root $\stableRoot(\timeShiftOperator)$ (and thus $\coefficientStableSolution(\timeShiftOperator)$ and ${\eigenvectorLetter}_{\textnormal{s}, 2}(\timeShiftOperator)$) are not meromorphic due to the presence of \strong{branch points} generated by the square root in \eqref{eq:rootSecondOrder}.
		These branch points---the four roots of $\coefficientCharEquationInFourier_{0}(\timeShiftOperator)^2 - 4\coefficientCharEquationInFourier_{-1}(\timeShiftOperator)\coefficientCharEquationInFourier_{1}(\timeShiftOperator)$, which are either real or pairwise complex conjugate---are in $\unitDisk$ when $\relaxationParameterLetter_2 \in (0, 2)$ and on $\unitCircle$ for $\relaxationParameterLetter_2 =2$  for the considered setting with $\courantNumber\neq 0$, see \cite{bellotti2025perfectly}.
		We thus proceed by contour deformation on $\mathcal{C}(0, R)$, which is replaced by a dog-bone--like contour $\mathcal{D}$ that turns around suitable branch cuts.
		For each branch cut $\mathcal{B}$, let us denote the corresponding part of $\mathcal{D}$ above (respectively, below) $\mathcal{B}$ by $\mathcal{D}_{\mathcal{B}}^{\textnormal{ab}}$ (respectively, $\mathcal{D}_{\mathcal{B}}^{\textnormal{bl}}$).
		We obtain 
		\begin{multline*}
			\oint_{\mathcal{C}(0, R)}\coefficientStableSolution(\timeShiftOperator) {\eigenvectorLetter}_{\textnormal{s}, \indexVelocity}(\timeShiftOperator) \timeShiftOperator^{\indexTime - 1}\stableRoot(\timeShiftOperator)^{\indexSpace}\differential{\timeShiftOperator} = \oint_{\mathcal{D}}\coefficientStableSolution(\timeShiftOperator) {\eigenvectorLetter}_{\textnormal{s}, \indexVelocity}(\timeShiftOperator) \timeShiftOperator^{\indexTime - 1}\stableRoot(\timeShiftOperator)^{\indexSpace}\differential{\timeShiftOperator} \\
			- \sum_{\mathcal{B}\in\text{branch ct.}}\int_{\mathcal{D}_{\mathcal{B}}^{\textnormal{ab}}\cup \mathcal{D}_{\mathcal{B}}^{\textnormal{bl}}} \coefficientStableSolution(\timeShiftOperator) {\eigenvectorLetter}_{\textnormal{s}, \indexVelocity}(\timeShiftOperator) \timeShiftOperator^{\indexTime - 1}\stableRoot(\timeShiftOperator)^{\indexSpace}\differential{\timeShiftOperator}.
		\end{multline*}
		As the integrand is now \strong{assumed meromorphic} inside $\mathcal{D}$ (for instance because the $\laplaceTransformed{{\boundarySourceTerm}}_{\positiveVelocityIndex, -1}(\timeShiftOperator)$ at hand does not have branch points or essential singularities, thus is meromorphic), we have 
		\begin{align}
			\oint_{\mathcal{D}}\coefficientStableSolution(\timeShiftOperator) {\eigenvectorLetter}_{\textnormal{s}, \indexVelocity}(\timeShiftOperator) \timeShiftOperator^{\indexTime - 1}\stableRoot(\timeShiftOperator)^{\indexSpace}\differential{\timeShiftOperator} &=2\pi i  \sum_{\substack{\targetEigenvalue\in\complex\,\text{pole of}\\\coefficientStableSolution {\eigenvectorLetter}_{\textnormal{s}, \indexVelocity} \timeShiftOperator^{\indexTime - 1}\stableRoot^{\indexSpace}}}\residue{\coefficientStableSolution(\timeShiftOperator) {\eigenvectorLetter}_{\textnormal{s}, \indexVelocity}(\timeShiftOperator) \timeShiftOperator^{\indexTime - 1}\stableRoot(\timeShiftOperator)^{\indexSpace}}{\targetEigenvalue}\nonumber\\
			&\approx 2\pi i \sum_{\substack{\targetEigenvalue\in\closedNeighborhoodInfinity\,\text{pole of}\\\coefficientStableSolution {\eigenvectorLetter}_{\textnormal{s}, \indexVelocity} \timeShiftOperator^{\indexTime - 1}\stableRoot^{\indexSpace}}}\residue{\coefficientStableSolution(\timeShiftOperator) {\eigenvectorLetter}_{\textnormal{s}, \indexVelocity}(\timeShiftOperator) \timeShiftOperator^{\indexTime - 1}\stableRoot(\timeShiftOperator)^{\indexSpace}}{\targetEigenvalue}\label{eq:forgotAboutInnerPoles}
		\end{align}
		by the residue theorem and considering that contributions from poles in $\unitDisk$ decay geometrically (times polynomial terms in $\indexTime$ for poles of order larger than one).
		This last approximation is done under the assumption, considered in what follows, that $\indexTime$ is large enough\footnote{Notice that this ``large enough'' can also depend---for instance---on the chosen value of $\indexSpace$.}.
		For the term linked to branch points 
		\begin{equation}\label{eq:decreaseBranchPoint}
			\sum_{\mathcal{B}\in\text{branch ct.}}\int_{\mathcal{D}_{\mathcal{B}}^{\textnormal{ab}}\cup \mathcal{D}_{\mathcal{B}}^{\textnormal{bl}}} \coefficientStableSolution(\timeShiftOperator) {\eigenvectorLetter}_{\textnormal{s}, \indexVelocity}(\timeShiftOperator) \timeShiftOperator^{\indexTime - 1}\stableRoot(\timeShiftOperator)^{\indexSpace}\differential{\timeShiftOperator}= \bigO{\indexTime^{-3/2} \beta^{\indexTime}}\xrightarrow[]{\indexTime\to+\infty} 0,
		\end{equation}
		where $\beta\in[0, 1]$ is the largest modulus of the branch points, \confer{} \cite{bellotti2025perfectly}---which hugely relies on \cite{flajolet2009analytic}---or \cite{jury1961note}.
		As already pointed out, we have $\beta\in [0, 1)$ for $\relaxationParameterLetter_2\in(0, 2)$ and $\beta = 1$ if $\relaxationParameterLetter_2=2$.
		This means that this term---that we shall neglect---goes to zero (at least) geometrically in $\indexTime$ times a modulation by $\indexTime^{-3/2}$ when $\relaxationParameterLetter_2\in (0, 2)$ and goes to zero algebraically as $\indexTime^{-3/2}$ when $\relaxationParameterLetter_2 = 2$.
		We finally obtain, for large $\indexTime$, the approximation
		\begin{equation}\label{eq:residualApprox}
			{{\momentDiscrete}}_{\indexVelocity, \indexSpace}^{\indexTime} \approx \sum_{\substack{\targetEigenvalue\in\closedNeighborhoodInfinity\,\text{pole of}\\\coefficientStableSolution {\eigenvectorLetter}_{\textnormal{s}, \indexVelocity} \timeShiftOperator^{\indexTime - 1}\stableRoot^{\indexSpace}}}\residue{\coefficientStableSolution(\timeShiftOperator) {\eigenvectorLetter}_{\textnormal{s}, \indexVelocity}(\timeShiftOperator) \timeShiftOperator^{\indexTime - 1}\stableRoot(\timeShiftOperator)^{\indexSpace}}{\targetEigenvalue} = 
            \sum_{\substack{\targetEigenvalue\in\closedNeighborhoodInfinity\,\text{pole of}\\\boundarySourceTerm_{\positiveVelocityIndex, -1}/\kreissLopatinskiiScalarProduct {\eigenvectorLetter}_{\textnormal{s}, \indexVelocity} \timeShiftOperator^{\indexTime - 1}\stableRoot^{\indexSpace}}}\hspace{-1.5em}\residue{\frac{\laplaceTransformed{\boundarySourceTerm}_{\positiveVelocityIndex, -1}(\timeShiftOperator) {\eigenvectorLetter}_{\textnormal{s}, \indexVelocity}(\timeShiftOperator) \timeShiftOperator^{\indexTime - 1}\stableRoot(\timeShiftOperator)^{\indexSpace}}{\kreissLopatinskiiScalarProduct(\timeShiftOperator)}}{\targetEigenvalue}.
		\end{equation}
        We stress the fact that the previous computation is done by fixing the space-coordinate $\indexSpace$ and looking at the solution evolving at this point, in a sort of Eulerian fashion.
		This standpoint is not aimed at describing and following the propagation of sorts of ``discrete shocks'', in a Lagrangian-like point of view. This fact shall rather be investigated by group velocity.
        Equation \eqref{eq:residualApprox} gives a non-trivial generalization of the final value theorem for the $\timeShiftOperator$-transform \cite{jury1964theory} (in which the only considered residue is at $\targetEigenvalue=1$ when it is a first-order pole), encompassing various long-time behaviors such as oscillations (\strong{e.g.}, $\targetEigenvalue=-1$ being a first-order pole), polynomial growth (\strong{e.g.}, $\targetEigenvalue\in\unitCircle$ pole of order higher than one), geometric explosion (\strong{e.g.}, $\targetEigenvalue\in\neighborhoodInfinity$ a pole), \strong{etc}.
		The specific source term $\laplaceTransformed{{\boundarySourceTerm}}_{1, -1}(\timeShiftOperator) = 1$ used in \Cref{sec:numericalSimD1Q2} yields
		\begin{equation}\label{eq:theoreticalSolutionBB}
			\vectorial{\momentDiscrete}_{\indexSpace}^{\indexTime} \approx 
			\begin{cases}
				\transpose{(\frac{2   {\courantNumber} {\relaxationParameterLetter_2} \productRootsDOneQThree^{j}}{{\left({\courantNumber} + 1\right)} {\relaxationParameterLetter_2} - 2}, \frac{2  {\courantNumber}^{2} {\relaxationParameterLetter_2}^{2} \productRootsDOneQThree^{j}}{{\left({\courantNumber} + 1\right)} {\relaxationParameterLetter_2}^{2} - 2  {\left({\courantNumber} + 2\right)} {\relaxationParameterLetter_2} + 4})}, \qquad &\text{if}\quad \relaxationParameterLetter_2 \in (0, 2), \\
				\transpose{(2  \left(-1\right)^{\indexSpace}, -2  {\left(2  {\courantNumber} n - 2  {\courantNumber} + 2  j + 1\right)} \left(-1\right)^{\indexSpace})}, \qquad &\text{if}\quad \relaxationParameterLetter_2 = 2,
			\end{cases}
		\end{equation}
		where only the pole at $\targetEigenvalue = 1$ has been necessary in \eqref{eq:residualApprox}.
		This result is in perfect agreement with \Cref{fig:D1Q2-s-3_2-C--1_2-revision} and \ref{fig:D1Q2-s-2-C--1_2-revision}---see for instance \Cref{fig:D1Q2-theory-vs-pract} and \ref{fig:D1Q2-theory-vs-pract-difference}---and confirms that the instability is one order worse on the second moment when $\relaxationParameterLetter_2 = 2$.
	    Through \Cref{fig:D1Q2-theory-vs-pract-difference}, we see that the leading neglected terms come from branch points and behave as stipulated by \eqref{eq:decreaseBranchPoint}.
        There is only one snag to this concerning $\momentDiscrete_2$ when $\relaxationParameterLetter_2 = 1.95$, since in this regime and for the relatively short times that we considered in this empirical demonstration, we cannot claim that the neglected terms from the branch points dominate those coming from the neglected pole (\confer{} \eqref{eq:forgotAboutInnerPoles}) at $\targetEigenvalue = \relaxationParameterLetter_2 - 1\in\unitDisk$, which comes from the lack of continuous extension of $\eigenvectorLetter_{\textnormal{s}, 2}(\timeShiftOperator)$ at $\timeShiftOperator = \relaxationParameterLetter_2 - 1$. Considering also this residue, we gain 
        \begin{equation}\label{eq:theoreticalSolutionBBWithSmallPole}
			{\momentDiscrete}_{2, \indexSpace}^{\indexTime} \approx \frac{2  {\courantNumber}^{2} {\relaxationParameterLetter_2}^{2} \productRootsDOneQThree^{j}}{{\left({\courantNumber} + 1\right)} {\relaxationParameterLetter_2}^{2} - 2  {\left({\courantNumber} + 2\right)} {\relaxationParameterLetter_2} + 4} \underbrace{- \frac{2  \left(-1\right)^{\indexSpace} {\courantNumber} {\left({\relaxationParameterLetter_2} - 1\right)}^{\indexTime} {\relaxationParameterLetter_2}}{{\relaxationParameterLetter_2}^{2} - 3  {\relaxationParameterLetter_2} + 2}}_{\text{additional term \strong{vs.} \eqref{eq:theoreticalSolutionBB}}}, \qquad \text{if}\quad \relaxationParameterLetter_2 \in (0, 2),
        \end{equation}
        with which we can now fully claim that we have only neglected, and thereby isolate, the dynamics stemming from the branch points, see \Cref{fig:D1Q2-theory-vs-pract-difference-more-terms} to be compared to the lower row of \Cref{fig:D1Q2-theory-vs-pract-difference}.
        Moreover, when $\relaxationParameterLetter\in(0, 2)$, the instability remains localized on the boundary as $\productRootsDOneQThree\in\unitDisk$.\footnote{Even more extremely, if $\relaxationParameterLetter_2=\tfrac{2}{1-\courantNumber}$, we have $\productRootsDOneQThree=0$, hence $\productRootsDOneQThree^{\indexSpace} = \delta_{0\indexSpace}$.}
		From the form of $\momentDiscrete_{2, \indexSpace}^{\indexTime}$, in particular the coefficients of the terms $\indexTime$ and $\indexSpace$, we see that the instability propagates inside domain at velocity $-\advectionVelocity$ ($-\courantNumber$ is dimensionless units) when $\relaxationParameterLetter_2 = 2$.
		This can be also obtained by the theory of group velocity introduced by \cite{trefethen1984instability} (see also \cite{coulombel2015fully}): since most of the numerical solution stems from a pole at $\targetEigenvalue = 1$, such that $\targetEigenvalue, \stableRoot(\targetEigenvalue)\in\unitCircle$ when $\relaxationParameterLetter_2=2$, we conclude that in this setting the instability is transported at group velocity $\groupVelocity(\targetEigenvalue)$ given by \cite[Equation (3.18)]{trefethen1984instability}
		\begin{equation}\label{eq:groupVelocity}
		\groupVelocity(\targetEigenvalue) = -\latticeVelocity \frac{\stableRoot(\targetEigenvalue)}{\targetEigenvalue}\Bigl ( \frac{\differential{\stableRoot}}{\differential{\timeShiftOperator}}(\targetEigenvalue)\Bigr )^{-1}.
		\end{equation}
		For instance, we obtain $\groupVelocity(1) = -\latticeVelocity\courantNumber = -\advectionVelocity>0$.

		\item Consider \eqref{eq:ABB}. Consider the mode $(-1, -\productRootsDOneQThree)$.
		We have $\kreissLopatinskiiScalarProduct(\timeShiftOperator) = \frac{(\courantNumber+1)\relaxationParameterLetter_2 + 2}{2\courantNumber\relaxationParameterLetter_2}(\timeShiftOperator + 1) + \bigO{(\timeShiftOperator+1)^2}$ (and $\kreissLopatinskiiDet(\timeShiftOperator) = \bigO{\timeShiftOperator+1}$ when \eqref{eq:kreissLopatinskiDependenceBoundaryDatumD1Q2Zero} applies).
		Hence \threeboxes{\continuousExtensionMark}{\eigenvalueMark}{\zeroKL} for $\relaxationParameterLetter_2\in (0, 2)$ and \threeboxes{\notContinuousExtensionMark}{\eigenvalueMark}{\zeroKL} with $\relaxationParameterLetter_2 = 2$, which shows that the boundary condition is neither SS, nor SSOO.
		As, in the case $\relaxationParameterLetter_2 = 2$, $\timeShiftOperator = 1$ is not an eigenvalue, we fall under the harmless case \threeboxes{\notContinuousExtensionMark}{\noEigenvalueMark}{\infiniteKL}, and this argument shall not be repeated in the pages to come.

		Concerning \Cref{fig:D1Q2-s-3_2-C--1_2-revision} and \ref{fig:D1Q2-s-2-C--1_2-revision} for \eqref{eq:ABB}, using \eqref{eq:residualApprox}, we obtain 
		\begin{equation*}
	\vectorial{\momentDiscrete}_{\indexSpace}^{\indexTime} \approx 
	\begin{cases}
		\transpose{(-\frac{2  \left(-1\right)^{n} {\courantNumber} {\relaxationParameterLetter_2} (-\productRootsDOneQThree)^{j}}{{\left({\courantNumber} + 1\right)} {\relaxationParameterLetter_2} - 2}, -\frac{2  \left(-1\right)^{n} {\courantNumber}^{2} {\relaxationParameterLetter_2}^{2} (-\productRootsDOneQThree)^{j}}{{\left({\courantNumber} + 1\right)} {\relaxationParameterLetter_2}^{2} - 2  {\left({\courantNumber} + 2\right)} {\relaxationParameterLetter_2} + 4})}, \qquad &\text{if}\quad \relaxationParameterLetter_2 \in (0, 2), \\
		\transpose{(-2  \left(-1\right)^{n}, 2  {\left(2  {\courantNumber} n - 2  {\courantNumber} + 2  j + 1\right)} \left(-1\right)^{n})}, \qquad &\text{if}\quad \relaxationParameterLetter_2 = 2.
	\end{cases}
\end{equation*}

	\item 	We now consider both \eqref{eq:TwoABB} and \eqref{eq:extrapolation} with $\orderExtrapolation = 1$ when $\relaxationParameterLetter_2 = 2$.
	For both boundary conditions, $\kreissLopatinskiiScalarProduct(-1) = -1 \neq 0$ (hence \threeboxes{\notContinuousExtensionMark}{\eigenvalueMark}{\nonZeroKL}).
	This ensures that this mode is stable as far as $\momentDiscrete_1$ is concerned, but is not enough to compensate the pole on $\eigenvectorLetter_{\textnormal{s}, 2}(\timeShiftOperator)$ to gain a control on $\momentDiscrete_2$ from the boundary data.
	These boundary conditions are not SS when $\relaxationParameterLetter_2 = 2$.
	The mode $\timeShiftOperator = 1$ must be considered for \eqref{eq:TwoABB}: $\kreissLopatinskiiScalarProduct(1) = -1 \neq 0$, thus \threeboxes{\notContinuousExtensionMark}{\eigenvalueMark}{\nonZeroKL}.
	
	This study shows, for \eqref{eq:TwoABB} and \eqref{eq:extrapolation} with $\orderExtrapolation = 1$ with $\relaxationParameterLetter_2 = 2$, that there exists a constant $K>0$ such that 
\begin{equation*}
	|\coefficientStableSolution(\timeShiftOperator)|\leq K |\laplaceTransformed{{\boundarySourceTerm}}_{1, -1}(\timeShiftOperator)|, \qquad \timeShiftOperator\in\neighborhoodInfinity.
\end{equation*}
This control is uniform in $\timeShiftOperator\in\neighborhoodInfinity$ (particularly when $\timeShiftOperator$ approaches $\unitCircle$).
One must not forget that this allows, by looking at the eigenvector, \confer{} \eqref{eq:eigenvectorD1Q2CloseMinusOne}, only a control on $\momentDiscrete_1$ from the boundary data.
As a result 
\begin{equation}
	\sum_{\indexSpace\in\naturals}|\laplaceTransformed{\momentDiscrete}_{1, \indexSpace}(\timeShiftOperator)|^2 \leq K^2 |\laplaceTransformed{{\boundarySourceTerm}}_{1, -1}(\timeShiftOperator)|^2 \sum_{\indexSpace\in\naturals} |\stableRoot(\timeShiftOperator)|^{2\indexSpace} = K^2 |\laplaceTransformed{{\boundarySourceTerm}}_{1, -1}(\timeShiftOperator)|^2 \frac{1}{1-|\stableRoot(\timeShiftOperator)|^{2}}.\label{eq:tmp7}
\end{equation}
Under \eqref{eq:sCondD1Q2-1}--\eqref{eq:sCondD1Q2-2} (\eqref{eq:D1Q2VonNeumann} is not enough) we can use \cite[Lemma 11.3.2]{strikwerda2004finite} to obtain that there exists $\tilde{K}>0$ such that
\begin{equation*}
	\frac{1}{1-|\stableRoot(\timeShiftOperator)|^{2}}\leq \frac{1}{1-|\stableRoot(\timeShiftOperator)|}\leq \frac{\tilde{K}}{|\timeShiftOperator|-1}.
\end{equation*}
This yields 
\begin{equation*}
	\frac{|\timeShiftOperator|-1}{|\timeShiftOperator|} \sum_{\indexSpace\in\naturals}|\laplaceTransformed{\momentDiscrete}_{1, \indexSpace}(\timeShiftOperator)|^2 \leq (|\timeShiftOperator|-1) \sum_{\indexSpace\in\naturals}|\laplaceTransformed{\momentDiscrete}_{1, \indexSpace}(\timeShiftOperator)|^2\leq K^2\tilde{K} |\laplaceTransformed{{\boundarySourceTerm}}_{1, -1}(\timeShiftOperator)|^2.
\end{equation*}
Following the proof of \cite[Proposition 3.2]{lebarbenchon:tel-04214887}, calling $C = \latticeVelocity K^2\tilde{K}$, using the Parseval identity and the inequality by \cite[Lemma 9]{coulombel00616497}, the previous outermost inequality is equivalent to: for all $\alpha >0$
\begin{equation*}
	\frac{\alpha}{1+\alpha\timeStep}  \sum_{\indexSpace\in\naturals}\sum_{\indexTime\in\naturals} \spaceStep\timeStep \, e^{-2\alpha \indexTime\timeStep} |\momentDiscrete_{1, \indexSpace}^{\indexTime}|^2 \leq  C \sum_{\indexTime\in\naturals} \timeStep \, e^{-2\alpha \indexTime\timeStep}|{{\boundarySourceTerm}}_{1, -1}^{\indexTime}|^2.
\end{equation*}

Again using \eqref{eq:residualApprox}, we obtain that in the context of \Cref{fig:D1Q2-s-2-C--1_2-revision} with  \eqref{eq:TwoABB}: $\momentDiscrete_{1, \indexSpace}^{\indexTime}\approx 0$ and $\momentDiscrete_{2, \indexSpace}^{\indexTime}\approx 2\courantNumber ((-1)^{\indexTime}-(-1)^{\indexSpace})$ in the long-time limit.
For \eqref{eq:extrapolation} for $\orderExtrapolation = 1$: $\momentDiscrete_{1, \indexSpace}^{\indexTime}\approx 0$ and $\momentDiscrete_{2, \indexSpace}^{\indexTime}\approx 2\courantNumber (-1)^{\indexTime}$.

	\item For \eqref{eq:extrapolation} with $\orderExtrapolation = 2, 3, 4$, we have to consider the case $\relaxationParameterLetter_2 = 2$.
	In particular $\kreissLopatinskiiScalarProduct(-1) = -\courantNumber^{-\orderExtrapolation}(\timeShiftOperator + 1)^{\orderExtrapolation-1}+\bigO{(\timeShiftOperator+1)^{\orderExtrapolation}}$: \threeboxes{\notContinuousExtensionMark}{\eigenvalueMark}{\zeroKL} and no hope of having SS or even SSOO.


In the context of \Cref{fig:D1Q2-s-2-C--1_2-revision}, this instability can be described by $\momentDiscrete_{1, \indexSpace}^{\indexTime} \approx \courantNumber(-1)^{\indexTime}$ and $\momentDiscrete_{2, \indexSpace}^{\indexTime} \approx -\courantNumber(2\courantNumber\indexTime - 2\courantNumber + 2\indexSpace - 1)(-1)^{\indexTime}$ for $\orderExtrapolation = 2$ and by 
\begin{multline}\label{eq:theoreticalSolutionSigma3}
	\momentDiscrete_{1, \indexSpace}^{\indexTime} \approx  -\tfrac{1}{2}  {\left(2  \courantNumber \indexTime - \courantNumber + 2  \indexSpace - 2\right)} \left(-1\right)^{\indexTime} \courantNumber, \\
	\momentDiscrete_{2, \indexSpace}^{\indexTime} \approx \tfrac{1}{2}  {\left(2  \courantNumber^{2} \indexTime^{2} - 4  \courantNumber^{2} \indexTime + 4  \courantNumber \indexSpace \indexTime - 4  \courantNumber \indexSpace + 2  \indexSpace^{2} - 4  \courantNumber \indexTime + 5  \courantNumber - 4  \indexSpace + 4\right)} \left(-1\right)^{\indexTime} \courantNumber,
\end{multline}
for $\orderExtrapolation = 3$.
For the sake of illustration, we compare \eqref{eq:theoreticalSolutionSigma3} against the actual solution in \Cref{fig:D1Q2-theory-vs-pract} and \ref{fig:D1Q2-theory-vs-pract-difference}, showing excellent agreement and validating the expected trend of the neglected terms pertaining to branch points, see \eqref{eq:decreaseBranchPoint}, except for $\relaxationParameterLetter_2 = 1.95$.
In this case, the approximation of both $\momentDiscrete_{1, \indexSpace}^{\indexTime} \approx 0$ and $\momentDiscrete_{2, \indexSpace}^{\indexTime} \approx 0$ (the boundary condition is SS) is quite crude for relatively short times, as the observable reminder is not only made up of neglected branch points. Indeed, adding the residue of the pole $\targetEigenvalue = 1-\relaxationParameterLetter_2$, we obtain
\begin{equation}\label{eq:theoreticalSolutionSigma3MoreTerms}
    {\momentDiscrete}_{1, \indexSpace}^{\indexTime} \approx -\frac{{\left(2  {\courantNumber}^{2} \indexTime {\relaxationParameterLetter_2} - 4  {\courantNumber}^{2} {\relaxationParameterLetter_2} + 2  {\courantNumber} \indexSpace {\relaxationParameterLetter_2} + 6  {\courantNumber}^{2} - {\courantNumber} {\relaxationParameterLetter_2} - 2  {\courantNumber} + 2  {\relaxationParameterLetter_2} - 4\right)} {\left(1-{\relaxationParameterLetter_2}\right)}^{\indexTime}}{{\relaxationParameterLetter_2}^{2}}, \qquad \text{if}\quad \relaxationParameterLetter_2 \in (0, 2),
\end{equation}
and an analogous but more involved expression for ${\momentDiscrete}_{2, \indexSpace}^{\indexTime}$, where a second-order polynomial in $\indexTime$ and $\indexSpace$ replaces the first-order one of \eqref{eq:theoreticalSolutionSigma3MoreTerms}, owing to the first-order pole of $\eigenvectorLetter_{\textnormal{s}, 2}(\timeShiftOperator)$ at $\timeShiftOperator=1-\relaxationParameterLetter_2$.
\Cref{fig:D1Q2-theory-vs-pract-difference-more-terms} shows the improvement from \eqref{eq:theoreticalSolutionSigma3MoreTerms} compared to the bottom row of \Cref{fig:D1Q2-theory-vs-pract-difference}.
Back to $\relaxationParameterLetter_2 = 2$, an even better way of understanding the propagation of the instability is to compute the (approximate) value of $\indexSpace$ of the vertex of the parabola for $\momentDiscrete_{2, \indexSpace}^{\indexTime}$, which is located at $\indexSpace\approx \courantNumber+1-\courantNumber\indexTime\approx -\courantNumber\indexTime$.
We do not explicitly give the long expressions that we obtained for $\orderExtrapolation = 4$ using symbolic computations: one can be easily persuaded that they give a second-order polynomial in $\indexTime$ and $\indexSpace$ for $\momentDiscrete_{1, \indexSpace}^{\indexTime}$ and a third-order polynomial in $\indexTime$ and $\indexSpace$ for $\momentDiscrete_{2, \indexSpace}^{\indexTime}$.

	\item We finish on \eqref{eq:kineticDirichlet}, which is SS through a control like \eqref{eq:tmp7} both for $\momentDiscrete_1$ and $\momentLetter_2$, as it does not admit any eigenvalue.

	\end{itemize}

\subparagraph{Kreiss-Lopatinskii determinants in the inflow case $\courantNumber>0$}

The only boundary condition to check is \eqref{eq:extrapolation} for $\orderExtrapolation = 1, 2, 3, 4$, as it features shared eigenvalues.
For all these boundary conditions, $\kreissLopatinskiiScalarProduct(\timeShiftOperator) =\frac{\courantNumber+1}{2\courantNumber^{\orderExtrapolation}}(\timeShiftOperator-1)^{\orderExtrapolation} + \bigO{(\timeShiftOperator-1)^{\orderExtrapolation + 1}}$  (\idEst{} \threeboxes{\continuousExtensionMark}{\eigenvalueMark}{\zeroKL}), which proves that they are not strongly stable, with instabilities of the same order on $\momentDiscrete_1$ and $\momentDiscrete_2$.
The description of the instabilities on \Cref{fig:D1Q2-s-3_2-C-1_2-revision} and \ref{fig:D1Q2-s-2-C-1_2-revision} from \eqref{eq:residualApprox} reads (we give only $\momentDiscrete_1$ since $\momentDiscrete_2$ is roughly $\courantNumber\times\momentDiscrete_1$) 
\begin{align}
	&\text{For }\eqref{eq:extrapolation}\text{ with }\orderExtrapolation = 1, \qquad \momentDiscrete_{1, \indexSpace}^{\indexTime} \approx \frac{2\courantNumber}{  {\left(\courantNumber + 1\right)}}.\label{eq:estimationFirstMomentCposSigma1}\\
	&\text{For }\eqref{eq:extrapolation}\text{ with }\orderExtrapolation = 2, \qquad \momentDiscrete_{1, \indexSpace}^{\indexTime} \approx\frac{2  {\left(\courantNumber^{2} \indexTime {\relaxationParameterLetter_2} - \courantNumber \indexSpace {\relaxationParameterLetter_2} - \courantNumber^{2} + \courantNumber {\relaxationParameterLetter_2} - \courantNumber - {\relaxationParameterLetter_2} + 2\right)}}{{\left(\courantNumber + 1\right)} {\relaxationParameterLetter_2}}.\nonumber
\end{align}
\begin{multline*}
	\text{For }\eqref{eq:extrapolation}\text{ with }\orderExtrapolation = 3, \qquad
	\momentDiscrete_{1, \indexSpace}^{\indexTime} \approx \frac{1}{{\left(\courantNumber + 1\right)} {\relaxationParameterLetter_2}^{2}} \Bigl ( \courantNumber^{3} \indexTime^{2} {\relaxationParameterLetter_2}^{2} + \courantNumber^{3} \indexTime {\relaxationParameterLetter_2}^{2} - 2  \courantNumber^{2} \indexSpace \indexTime {\relaxationParameterLetter_2}^{2} \\
	- 4  \courantNumber^{3} \indexTime {\relaxationParameterLetter_2} + \courantNumber \indexSpace^{2} {\relaxationParameterLetter_2}^{2} + 3  \courantNumber^{2} \indexTime {\relaxationParameterLetter_2}^{2} - 4  \courantNumber^{3} {\relaxationParameterLetter_2} + 2  \courantNumber^{2} \indexSpace {\relaxationParameterLetter_2} - 2  \courantNumber^{2} \indexTime {\relaxationParameterLetter_2} \\
	- 3  \courantNumber \indexSpace {\relaxationParameterLetter_2}^{2} - 3  \courantNumber \indexTime {\relaxationParameterLetter_2}^{2} + 6  \courantNumber^{3} - 5  \courantNumber^{2} {\relaxationParameterLetter_2} 
	+ 2  \courantNumber \indexSpace {\relaxationParameterLetter_2} + 6  \courantNumber \indexTime {\relaxationParameterLetter_2} + 3  \courantNumber {\relaxationParameterLetter_2}^{2} + 2  \indexSpace {\relaxationParameterLetter_2}^{2} \\
	+ 4  \courantNumber^{2} + \courantNumber {\relaxationParameterLetter_2} - 4  \indexSpace {\relaxationParameterLetter_2} - 3  {\relaxationParameterLetter_2}^{2} - 6  \courantNumber + 8  {\relaxationParameterLetter_2} - 4 \Bigr ).
\end{multline*}
Expressions for $\orderExtrapolation = 4$ are long but analogous.
It is now interesting to compare again \Cref{fig:D1Q2-s-3_2-C-1_2-revision} (\idEst{}, $\relaxationParameterLetter_2=\tfrac{3}{2}$) and \ref{fig:D1Q2-s-2-C-1_2-revision} (\idEst{}, $\relaxationParameterLetter_2=2$) concerning \eqref{eq:extrapolation} with $\orderExtrapolation = 1$ (we could do the same for \eqref{eq:extrapolatedEquilibrium} with $\orderExtrapolation = 1$), keeping \eqref{eq:estimationFirstMomentCposSigma1} in mind.
Indeed, the estimate from \eqref{eq:estimationFirstMomentCposSigma1} does not depend on the value of $\relaxationParameterLetter_2$, and the numerical results on \Cref{fig:D1Q2-s-3_2-C-1_2-revision} and \ref{fig:D1Q2-s-2-C-1_2-revision} differ only by the presence of wiggles for the latter, which are indeed the signature of branch points on $\unitCircle$ that we have neglected in the actual computation of the right-hand side of \eqref{eq:inverseZPractical}, \confer{} \eqref{eq:decreaseBranchPoint}. At fixed $\indexSpace$, the neglected terms when $\relaxationParameterLetter_2=2$ decrease like $\indexTime^{-3/2}$, whereas they trend like $\beta^{\indexTime}\indexTime^{-3/2}$ with $\beta\approx 0.7071$ when $\relaxationParameterLetter_2 = \tfrac{3}{2}$, explaining the (rapid) absence of oscillations in this latter case.
Analogous wiggles---also linked to branch points on $\unitCircle$---are those explaining the shape of the numerical solution on the first row of \Cref{fig:D1Q2-s-2-C-1_2-revision}, where boundary conditions are indeed strongly stable.

So far, we have not considered \eqref{eq:extrapolatedEquilibrium}, \eqref{eq:kinrod}, and \eqref{eq:godunovRyabenkii}, since correct symbolic computations of the solutions to \eqref{eq:toSolveBoundaryBulkD1Q2} are hard to carry without specifying the values of $\relaxationParameterLetter_2$ and $\courantNumber$. 
We consider them in the \Cref{app:moreBCD1Q2} for the values that we have utilized in \Cref{sec:numericalSimD1Q2}.

\begin{remark}[On the meaning of strong stability]\label{rem:remarkStrongStab}
	We now consider empirical observations to understand the meaning of \eqref{eq:strongStability}--\eqref{eq:strongStabilityObserved}.
	\begin{itemize}
		\item Let $\courantNumber<0$, $\relaxationParameterLetter_2 = \tfrac{2}{1-\courantNumber}$ and employ the bounce-back condition \eqref{eq:BB}, which is neither SS nor SSOO.
		We know that the solution is localized on the first cell of the domain only.
		As strong stability must hold for \strong{any} boundary datum, we take a specific one with a pole at $\timeShiftOperator = 1$, which thus excites the instability. 
		We neglect multiplicative non-zero constants and consider $\boundarySourceTerm_{1, -1}^{\indexTime} \sim 1$ for all $\indexTime\in\naturals$. 
		We hence obtain $\momentDiscrete_{1, \indexSpace}^{\indexTime} \sim \indexTime \delta_{0\indexSpace}$.
		Inspired by \eqref{eq:strongStabilityObserved}, we construct, for $\alpha>0$ and $\spaceStep>0$
		\begin{align*}
			\textnormal{output} &\definitionEquality \frac{\alpha}{1+\alpha\timeStep}  \sum_{\indexTime\in\naturals} \sum_{\indexSpace\in\naturals}\timeStep\spaceStep \, e^{-2\alpha \indexTime\timeStep} |{\momentDiscrete}_{1, \indexSpace}^{\indexTime}|^2 \sim \latticeVelocity\frac{\alpha\timeStep^2 e^{2\alpha \timeStep} (e^{2\alpha\timeStep}+1)}{(1+\alpha\timeStep)(e^{2\alpha\timeStep}-1)^3}, \\
			\textnormal{input} &\definitionEquality \sum_{\indexTime\in\naturals} \timeStep \, e^{-2\alpha \indexTime\timeStep}|{{\boundarySourceTerm}}_{1, -1}^{\indexTime}|^2\sim \frac{\timeStep e^{2\alpha\timeStep}}{e^{2\alpha\timeStep}-1}.
		\end{align*}
		The fact of being able to find a (universal) constant in \eqref{eq:strongStabilityObserved} boils down to be able to find a uniform control on the ratio between output and input:
		\begin{equation*}
			\frac{\textnormal{output}}{\textnormal{input}} \sim \latticeVelocity\frac{\alpha\timeStep (e^{2\alpha\timeStep} + 1)}{(1+\alpha\timeStep)(e^{2\alpha\timeStep}-1)^2} \xrightarrow[]{\alpha\timeStep\to 0^+} +\infty, 
		\end{equation*}
		hence we cannot find such a universal constant.
		\item Let $\courantNumber<0$, $\relaxationParameterLetter_2 = 2$ and use the extrapolation condition \eqref{eq:extrapolation} with $\orderExtrapolation = 1$, which is SSOO.
		Let us excite the critical eigenvalue: $\boundarySourceTerm_{1, -1}^{\indexTime} \sim (-1)^{\indexTime}$, from which we obtain a travelling solution $\momentDiscrete_{1, \indexSpace}^{\indexTime} \sim (-1)^{\indexTime} \indicatorFunction{\indexSpace-\courantNumber\indexTime\leq 0}$.
		This provides
		\begin{equation*}
			\textnormal{output} \sim \latticeVelocity\courantNumber\frac{\alpha\timeStep^2 e^{2\alpha \timeStep}}{(1+\alpha\timeStep)(e^{2\alpha\timeStep}-1)^2}, \qquad
			\textnormal{input} \sim \frac{\timeStep e^{2\alpha \timeStep}}{e^{2\alpha\timeStep}-1},
		\end{equation*}
		thus
		\begin{equation*}
			\frac{\textnormal{output}}{\textnormal{input}} \sim \latticeVelocity\courantNumber \frac{\alpha\timeStep }{(1+\alpha\timeStep)(e^{2\alpha\timeStep}-1)},
		\end{equation*}
		where the right-hand side is a uniformly bounded function of $\alpha\timeStep\in(0, +\infty)$, in particular when $\alpha\timeStep\to 0^+$.
	\end{itemize}
\end{remark}

\subsection{\lbmScheme{1}{3} scheme generalizing the Lax-Wendroff method}

We now consider the scheme introduced in \cite[Chapter 7]{bellotti2023numerical}:
\begin{equation}\label{eq:descriptionD1Q3LW}
	\numberVelocities = 3, \qquad \dimensionlessDiscreteVelocityLetter_1 = 0, \quad \dimensionlessDiscreteVelocityLetter_2 = -\dimensionlessDiscreteVelocityLetter_3 = 1, \qquad 
	\momentMatrix = 
	\begin{pmatrix}
		1 & 1 & 1\\
		0 & 1 & -1\\
		0 & 1 & 1
	\end{pmatrix}, \qquad 
	\equilibriumVectorLetter_2 = \courantNumber, \quad \equilibriumVectorLetter_3 = \courantNumber^2.
\end{equation}
This scheme is a generalization of the Lax-Wendroff scheme, in the sense that whenever $\relaxationParameterLetter_2 = \relaxationParameterLetter_3 = 1$, we recover this very scheme on the conserved moment $\momentDiscrete_1$.
This scheme is second-order accurate.
In what follows, we always consider $\courantNumber\in (-1, 0)\cup (0, 1)$ and $\relaxationParameterLetter_2, \relaxationParameterLetter_3 \in (0, 2)$, in order to avoid dealing with special cases on the boundary of this domain, which are neither particularly enlightening, nor useful in practice. 

\begin{figure}
	\begin{center}
		\includegraphics{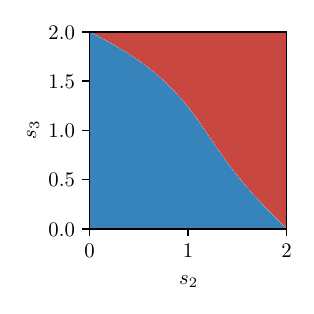}
	\end{center}
	\caption{\label{fig:D1Q3LWStabilityArea}Stability area (in blue) for the \lbmScheme{1}{3} \strong{à la} Lax-Wendroff according to \eqref{eq:stabilityD1Q3LW}.}
\end{figure}

\begin{proposition}[Stability of the \lbmScheme{1}{3} \emph{à la} Lax-Wendroff scheme for the Cauchy problem]\label{prop:vonNeumannStabilityD1Q3LW}
	Assume that $\courantNumber\in (-1, 0)\cup (0, 1)$ and $\relaxationParameterLetter_2, \relaxationParameterLetter_3 \in (0, 2)$.
	The \lbmScheme{1}{3} \emph{à la} Lax-Wendroff is stable according to \Cref{def:vonNeumannStability}, \ref{def:stabLBM}, and \ref{def:stabFD} if and only if 
	\begin{equation}\label{eq:stabilityD1Q3LW}
		\relaxationParameterLetter_3\leq \relaxationParameterLetter_3^{\star}(\relaxationParameterLetter_2), \qquad \text{with}\qquad 
		\relaxationParameterLetter_3^{\star}(\relaxationParameterLetter)\definitionEquality 
		\begin{cases}
			1+\frac{\sqrt{(\relaxationParameterLetter^2 - 8\relaxationParameterLetter + 8)^2 + 4\relaxationParameterLetter^2(2-\relaxationParameterLetter)^2}-2\relaxationParameterLetter (2-\relaxationParameterLetter)}{\relaxationParameterLetter^2 - 8\relaxationParameterLetter + 8}, \qquad &\text{if}\quad \relaxationParameterLetter \neq 3-2\sqrt{2}, \\
			1, \qquad &\text{if}\quad \relaxationParameterLetter = 3-2\sqrt{2}.
		\end{cases}
	\end{equation}
\end{proposition}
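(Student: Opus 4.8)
The plan is to sandwich the three notions of stability. By \Cref{prop:stabFD}, \Cref{def:stabFD}-stability holds if and only if the characteristic cubic $\determinant(\timeShiftOperator\identityMatrix{3}-\schemeMatrixBulkFourier(e^{i\waveNumber}))$ is a simple von Neumann polynomial for every $\waveNumber\in[-\pi,\pi]$; this is the strongest notion, implying \Cref{def:stabLBM}-stability by \Cref{prop:stableFDDoncStableLBM} and \Cref{def:vonNeumannStability}-stability trivially. Conversely, \Cref{def:vonNeumannStability}-stability is the weakest and is necessary for the other two by \Cref{prop:necConditions}. It therefore suffices to prove two implications: that \Cref{def:stabFD}-stability holds on $\{\relaxationParameterLetter_3\le\relaxationParameterLetter_3^{\star}(\relaxationParameterLetter_2)\}$, and that \Cref{def:vonNeumannStability}-stability fails on $\{\relaxationParameterLetter_3>\relaxationParameterLetter_3^{\star}(\relaxationParameterLetter_2)\}$; the three notions are then squeezed onto the same set.

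Next I would make the cubic explicit. The symmetric velocity set together with \eqref{eq:traceAndDeterminant} gives $\determinant(\schemeMatrixBulkFourier(\fourierShift)) = (1-\relaxationParameterLetter_2)(1-\relaxationParameterLetter_3)$, independent of $\fourierShift$, while $\trace(\schemeMatrixBulkFourier(e^{i\waveNumber}))$ and the sum of principal $2\times2$ minors are real and even in $\waveNumber$, hence polynomials in $x\definitionEquality\cos\waveNumber\in[-1,1]$. Thus \eqref{eq:characteristicEquation} reads $\timeShiftOperator^3 - p_2(x)\timeShiftOperator^2 + p_1(x)\timeShiftOperator - p_0 = 0$ with $p_0 = (1-\relaxationParameterLetter_2)(1-\relaxationParameterLetter_3)$, and I would compute $p_1,p_2$ with the computer algebra system already used in the paper. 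Crucially $|p_0|<1$ automatically, since $\relaxationParameterLetter_2,\relaxationParameterLetter_3\in(0,2)$ force $|1-\relaxationParameterLetter_2|,|1-\relaxationParameterLetter_3|<1$; hence the first step of the Schur--Cohn / Strikwerda reduction for simple von Neumann polynomials (\cite[Theorem 4.2.1]{strikwerda2004finite}) never obstructs and lowers the degree to a quadratic $q_x(\timeShiftOperator)$ with coefficients rational in $x$ and the parameters.

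The core of the argument is that the cubic is simple von Neumann for all $\waveNumber$ if and only if $q_x$ is simple von Neumann for all $x\in[-1,1]$. The quadratic test (both roots in $\closedUnitDisk$, those on $\unitCircle$ simple) is explicit, giving two inequalities in $x$ which I would optimize over $x\in[-1,1]$ to locate the binding frequency. The expected outcome, to be confirmed symbolically, is that the active constraint corresponds to a conjugate pair of roots crossing $\unitCircle$ (a vanishing Schur determinant), and that at the extremal frequency the dependence on $\courantNumber$ cancels, leaving the $\courantNumber$-independent equality
\begin{equation*}
	(\relaxationParameterLetter_2^2 - 8\relaxationParameterLetter_2 + 8)\,\relaxationParameterLetter_3(\relaxationParameterLetter_3 - 2) + 4\relaxationParameterLetter_2(2-\relaxationParameterLetter_2)(\relaxationParameterLetter_3-1) = 0.
\end{equation*}
Solving this quadratic in $\relaxationParameterLetter_3$ and retaining the branch lying in $(0,2]$ yields exactly $\relaxationParameterLetter_3^{\star}(\relaxationParameterLetter_2)$ of \eqref{eq:stabilityD1Q3LW}; rationalizing the surd shows the quotient is a removable $0/0$ precisely where $\relaxationParameterLetter_2^2-8\relaxationParameterLetter_2+8$ vanishes, with continuous value $1$, which is the second line of the piecewise definition of $\relaxationParameterLetter_3^{\star}$.

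Finally I would close both directions. For $\relaxationParameterLetter_3>\relaxationParameterLetter_3^{\star}(\relaxationParameterLetter_2)$, the extremal frequency produces a root of modulus strictly larger than one, so \Cref{def:vonNeumannStability}-stability fails and, by \Cref{prop:necConditions}, so do \Cref{def:stabLBM} and \Cref{def:stabFD}. For $\relaxationParameterLetter_3\le\relaxationParameterLetter_3^{\star}(\relaxationParameterLetter_2)$, I would verify that every root sits in $\closedUnitDisk$ and that each root reaching $\unitCircle$ (only at isolated frequencies, and in the equality case only at the extremal one, where it is a simple conjugate pair) is simple, equivalently that the corresponding eigenvalue of $\schemeMatrixBulkFourier$ is semi-simple; then the cubic is simple von Neumann and all three notions hold at once. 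The main obstacle is this last step together with the optimization over $\waveNumber$: the Schur--Cohn inequalities are cumbersome rational expressions in $x$, so isolating the binding frequency and proving the clean cancellation of $\courantNumber$ requires careful symbolic work, and the boundary-frequency analysis must exclude a defective (Jordan) coincidence of roots on $\unitCircle$ --- the analogue of the leap-frog degeneracy met for the \lbmScheme{1}{2} scheme at $\relaxationParameterLetter_2=2$ --- which is exactly what would otherwise separate \Cref{def:vonNeumannStability}-stability from \Cref{def:stabLBM}-stability.
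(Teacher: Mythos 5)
The paper does not actually prove this proposition in the text: the proof reads ``See \cite{bcdg25}'', so there is no in-paper argument to compare yours against. Your logical skeleton --- squeeze the three notions by showing \Cref{def:stabFD}-stability on $\{\relaxationParameterLetter_3\le\relaxationParameterLetter_3^{\star}(\relaxationParameterLetter_2)\}$ via \Cref{prop:stabFD} and failure of \Cref{def:vonNeumannStability} on the complement, using \Cref{prop:stableFDDoncStableLBM} and \Cref{prop:necConditions} to transfer --- is sound, and your observation that $\determinant(\schemeMatrixBulkFourier(\fourierShift))=(1-\relaxationParameterLetter_2)(1-\relaxationParameterLetter_3)$ has modulus strictly less than one, so the first Schur--Cohn reduction never obstructs, is correct.

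There is, however, a concrete error in the setup. You claim that $\trace(\schemeMatrixBulkFourier(e^{i\waveNumber}))$ and the second symmetric function are real and even in $\waveNumber$, so that the characteristic cubic has coefficients polynomial in $x=\cos\waveNumber$. This is false for $\courantNumber\neq 0$. Reading off the $\timeShiftOperator^2$-coefficients of $\coefficientCharEquationInFourier_{\pm 1}(\timeShiftOperator)$ listed in the paper for this scheme, the coefficient of $\fourierShift^{-1}$ in the trace is $\tfrac{1}{2}\bigl((\courantNumber-1)\relaxationParameterLetter_2+(\courantNumber^2-1)\relaxationParameterLetter_3+2\bigr)$ while that of $\fourierShift$ is $\tfrac{1}{2}\bigl(-(\courantNumber+1)\relaxationParameterLetter_2+(\courantNumber^2-1)\relaxationParameterLetter_3+2\bigr)$; these coincide only when $\courantNumber\relaxationParameterLetter_2=0$. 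Hence on $\unitCircle$ the cubic has genuinely complex coefficients with imaginary parts proportional to $\courantNumber\sin\waveNumber$, your reduction to a real polynomial in $x\in[-1,1]$ collapses, and the ``optimization over $x$'' is not the right object. The Schur--Cohn/von Neumann test of \cite[Theorem 4.3.1]{strikwerda2004finite} does apply to complex-coefficient polynomials, so the strategy can be repaired by carrying the full $\waveNumber$-dependence (using $\waveNumber\mapsto-\waveNumber$ together with complex conjugation to restrict to $\waveNumber\in[0,\pi]$), but the resulting inequalities mix $\cos\waveNumber$ and $\sin\waveNumber$ and the asserted cancellation of $\courantNumber$ at the binding frequency --- which is the entire content of the $\courantNumber$-independence of \eqref{eq:stabilityD1Q3LW} --- is precisely what remains to be established rather than something that can be expected from parity considerations. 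As written, the proposal therefore has a genuine gap at its central computational step.
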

\Cref{prop:vonNeumannStabilityD1Q3LW} is proved in \cite{bcdg25} and the stability area given by \eqref{eq:stabilityD1Q3LW} depicted in \Cref{fig:D1Q3LWStabilityArea}. 
We consider the following boundary conditions
\begin{align}
	\distributionFunctionDiscrete_{2, -1}^{\indexTime\collided} = \distributionFunctionDiscrete_{3, 0}^{\indexTime\collided} + \boundarySourceTerm_{2, -1}^{\indexTime}&\qquad \text{(bounce-back)}, \label{eq:BBD1Q3LW}\\
	\distributionFunctionDiscrete_{2, -1}^{\indexTime\collided} = -\distributionFunctionDiscrete_{3, 0}^{\indexTime\collided} + \boundarySourceTerm_{2, -1}^{\indexTime}&\qquad \text{(anti-bounce-back)}, \label{eq:ABBD1Q3LW}\\
	\distributionFunctionDiscrete_{2, -1}^{\indexTime\collided} = -\distributionFunctionDiscrete_{1, 0}^{\indexTime\collided} -\distributionFunctionDiscrete_{3, 1}^{\indexTime\collided} + \boundarySourceTerm_{2, -1}^{\indexTime}&\qquad \text{(two-steps anti-bounce-back)}, \label{eq:TwoABBD1Q3LW}\\
	\distributionFunctionDiscrete_{2, -1}^{\indexTime\collided} = \sum_{\indexSpace=0}^{\orderExtrapolation-1}(-1)^{\indexSpace}\binom{\orderExtrapolation}{\indexSpace + 1} \distributionFunctionDiscrete_{2, \indexSpace}^{\indexTime\collided} + \boundarySourceTerm_{2, -1}^{\indexTime}&\qquad \text{(extrapolation of order }\orderExtrapolation\geq 1\text{)}, \label{eq:extrapolationD1Q3LW} \\
	\distributionFunctionDiscrete_{2, -1}^{\indexTime\collided} = \boundarySourceTerm_{2, -1}^{\indexTime}&\qquad \text{(kinetic Dirichlet)}. \label{eq:kineticDirichletD1Q3LW}
\end{align}
Compared to the conditions introduced in \Cref{sec:D1Q2}, only \eqref{eq:TwoABBD1Q3LW} deserves further discussions.
In the homogeneous case, $\momentDiscrete_{1, 0}^{\indexTime+1} = \distributionFunctionDiscrete_{1, 0}^{\indexTime + 1} + \distributionFunctionDiscrete_{2, 0}^{\indexTime + 1} + \distributionFunctionDiscrete_{3, 0}^{\indexTime + 1} = \distributionFunctionDiscrete_{1, 0}^{\indexTime\collided} + \distributionFunctionDiscrete_{2, -1}^{\indexTime\collided} + \distributionFunctionDiscrete_{3, 1}^{\indexTime \collided} = 0$.
From this identity, we obtain \eqref{eq:TwoABBD1Q3LW} up to adding a source term.

\begin{theorem}[Strong stability--instability of the boundary-value \lbmScheme{1}{3} scheme \emph{à la} Lax-Wendroff]\label{thm:stabD1Q3BoundaryConditions}
	Under the stability conditions given in \Cref{prop:vonNeumannStabilityD1Q3LW}, the stability of the boundary-value \lbmScheme{1}{3} \emph{à la} Lax-Wendroff is as follows.
	\begin{center}
		\begin{tabular}{|c||c|c|}
			\cline{2-3}
			\multicolumn{1}{c||}{} & $\courantNumber<0$ (outflow) & $\courantNumber>0$ (inflow)\\
			\cline{2-3}
			\multicolumn{1}{c||}{} & $\relaxationParameterLetter_2, \relaxationParameterLetter_3\in(0, 2)$ & $\relaxationParameterLetter_2, \relaxationParameterLetter_3\in(0, 2)$ \\
			\hline
			\hline
			Bounce-back \eqref{eq:BBD1Q3LW} & MU-L & SS\\
			Anti-bounce-back \eqref{eq:ABBD1Q3LW} &SS & SS\\
			Two-steps anti-bounce-back \eqref{eq:TwoABBD1Q3LW} & SS & SS \\
			Extrapolation $1\leq \orderExtrapolation \leq 4$ \eqref{eq:extrapolationD1Q3LW} & SS & MU-E\\
			Kinetic Dirichlet \eqref{eq:kineticDirichletD1Q3LW} & SS & SS\\
			\hline
		\end{tabular}
	\end{center}
\end{theorem}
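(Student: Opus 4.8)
The plan is to replicate, for this three-velocity scheme, the step-by-step argument carried out for the \lbmScheme{1}{2} scheme in the proof of \Cref{thm:stabilityBoundaryD1Q2}. First I would verify that the hypotheses of \Cref{prop:noDiracBoundary} are met: the velocity set $\dimensionlessDiscreteVelocityLetter_1 = 0$, $\dimensionlessDiscreteVelocityLetter_2 = 1$, $\dimensionlessDiscreteVelocityLetter_3 = -1$ is symmetric with $\stencilLeft = \stencilRight = 1$, the velocities are distinct, and by \Cref{cor:stencilFDBounds} one has $\stencilLeftCharacteristic, \stencilRightCharacteristic \leq 1$. A short symbolic computation of \eqref{eq:characteristicEquation} confirms $\stencilLeft = \stencilLeftCharacteristic = 1$ outside exceptional parameter values, so \Cref{prop:noDiracBoundary} applies with $\positiveVelocityIndex = 2$. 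I would then record the three coefficients $\coefficientCharEquationInFourier_{-1}(\timeShiftOperator)$, $\coefficientCharEquationInFourier_{0}(\timeShiftOperator)$, $\coefficientCharEquationInFourier_{1}(\timeShiftOperator)$ of the now-quadratic characteristic equation and invoke \Cref{lemma:Hersh} to single out the stable root $\stableRoot(\timeShiftOperator)\in\unitDisk$ (with its continuous extension to $\unitCircle$) and the unstable root $\unstableRoot(\timeShiftOperator)\in\neighborhoodInfinity$ for $\timeShiftOperator\in\neighborhoodInfinity$. Since $\numberVelocities = 3$, the kernel $\kernel(\matrixPolynomialBulk{\timeShiftOperator}(0))$ is two-dimensional, so the stable solution \eqref{eq:generalStableSolutionAllSchemes} carries two boundary-stitched vectors $\vectorial{\eigenvectorLetter}_0^1,\vectorial{\eigenvectorLetter}_0^2$, which by \eqref{eq:coefficientWithKreissLopDetFinal} play no role whenever $\kreissLopatinskiiScalarProduct(\timeShiftOperator)\neq 0$.

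Next I would enumerate, boundary condition by boundary condition and with the help of \texttt{SageMath}, the solutions of the coupled system \eqref{eq:toSolveBoundaryBulkD1Q2}, discarding those with $\timeShiftOperator\in\unitDisk$ (harmless temporally damped modes) and the fictitious $\timeShiftOperator = 0$. For each remaining candidate with $\timeShiftOperator\in\closedNeighborhoodInfinity$ I would test, using \Cref{prop:vonNeumannStabilityD1Q3LW} and the sign of $\courantNumber$, whether $\fourierShift = \stableRoot(\timeShiftOperator)$ (a genuine shared eigenvalue in the sense of \Cref{def:eigenvalue}) or $\fourierShift = \unstableRoot(\timeShiftOperator)$ (irrelevant, since it does not enter $\stableSubspace(\timeShiftOperator)$). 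This produces a table analogous to \Cref{tab:modesD1Q2}. I expect the only genuinely shared eigenvalues to sit at $\timeShiftOperator = 1$: one for bounce-back \eqref{eq:BBD1Q3LW} at the outflow $\courantNumber<0$, and one for extrapolation \eqref{eq:extrapolationD1Q3LW} at the inflow $\courantNumber>0$, while anti-bounce-back \eqref{eq:ABBD1Q3LW}, two-steps anti-bounce-back \eqref{eq:TwoABBD1Q3LW} and kinetic Dirichlet \eqref{eq:kineticDirichletD1Q3LW} should be free of critical shared eigenvalues (or share only harmless ones).

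A convenient structural simplification here is that the theorem restricts $\relaxationParameterLetter_2,\relaxationParameterLetter_3\in(0,2)$ \emph{strictly}, which excludes the non-dissipative boundary $\relaxationParameterLetter = 2$ responsible for the loss of continuity of $\vectorial{\eigenvectorLetter}_{\textnormal{s}}(\timeShiftOperator)$ in the \lbmScheme{1}{2} analysis. I would therefore expect every mode to fall in the \continuousExtensionMark{} family: writing $\vectorial{\eigenvectorLetter}_{\textnormal{s}}(\timeShiftOperator) = \transpose{(1,\eigenvectorLetter_{\textnormal{s},2}(\timeShiftOperator),\eigenvectorLetter_{\textnormal{s},3}(\timeShiftOperator))}$ and checking that the denominators of its two rational components do not vanish on $\unitCircle$ throughout the stability region, so that $\stableSubspace(\timeShiftOperator)$ extends continuously to $\unitCircle$. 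Granting this, the only possible source of instability is a vanishing $\kreissLopatinskiiScalarProduct$ at a shared eigenvalue, and the classification at the end of \Cref{sec:recurrenceSpace} yields SS \emph{immediately} for all conditions without a critical shared eigenvalue, through a uniform bound on $\coefficientStableSolution(\timeShiftOperator)$ exactly as in the $\kreissLopatinskiiScalarProduct$-based estimate \eqref{eq:tmp7}.

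Finally, for the two genuinely problematic cases I would compute the local expansion of $\kreissLopatinskiiScalarProduct(\timeShiftOperator)$ near $\timeShiftOperator = 1$. For bounce-back at the outflow I anticipate a simple zero with $\stableRoot(1)\in\unitDisk$, i.e.\ case \threeboxes{\continuousExtensionMark}{\eigenvalueMark}{\zeroKL} with a spatially localized pattern, hence MU-L; for extrapolation at the inflow (all orders $1\leq\orderExtrapolation\leq 4$) I anticipate a zero with $\stableRoot(1)\in\unitCircle$, i.e.\ \threeboxes{\continuousExtensionMark}{\eigenvalueMark}{\zeroKL} with an extended pattern, hence MU-E. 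In both cases the vanishing of $\kreissLopatinskiiScalarProduct$ produces a pole of $\coefficientStableSolution(\timeShiftOperator)$ independent of the datum, ruling out both SS and SSOO by \Cref{def:strongStability}. The main obstacle is the heavier linear algebra forced by $\numberVelocities = 3$: the eigenvector now has two nontrivial components, so I must track the continuous extension of \emph{both} $\eigenvectorLetter_{\textnormal{s},2}$ and $\eigenvectorLetter_{\textnormal{s},3}$ and confirm that in the stable cases neither destroys the uniform bound, which is where the symbolic computations and the inflow/outflow case distinctions become most delicate.
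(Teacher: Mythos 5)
Your proposal is correct and follows essentially the same route as the paper: verify the hypotheses of \Cref{prop:noDiracBoundary}, apply \Cref{lemma:Hersh} to the quadratic characteristic equation, enumerate the solutions of \eqref{eq:toSolveBoundaryBulkD1Q2} symbolically, sort them into stable/unstable roots, and conclude via the continuity of $\vectorial{\eigenvectorLetter}_{\textnormal{s}}(\timeShiftOperator)$ (guaranteed here by $\relaxationParameterLetter_2,\relaxationParameterLetter_3\in(0,2)$ strictly) together with local expansions of $\kreissLopatinskiiScalarProduct$ at $\timeShiftOperator=1$; your predicted classification of the shared eigenvalues and the resulting MU-L/MU-E/SS verdicts all match the paper's. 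The one place you misjudge the difficulty is the candidate-elimination step: the paper's proof spends most of its length not on tracking $\eigenvectorLetter_{\textnormal{s},2}$ and $\eigenvectorLetter_{\textnormal{s},3}$ (which is routine here) but on showing that the parameter-dependent candidates are \emph{not} shared eigenvalues. For anti-bounce-back this requires a Schur--Cohn root-location argument on $\phi_2(\timeShiftOperator)=\timeShiftOperator^2+(2\courantNumber^2-1)\relaxationParameterLetter_3\timeShiftOperator+\relaxationParameterLetter_3-1$ to place a pair of complex-conjugate $\timeShiftOperator$'s inside $\unitDisk$, and for extrapolation at the outflow it requires a monotonicity argument in $(0,1)$ and a convexity/concavity comparison of the curves $\relaxationParameterLetter_3^{\cup}$ and $\relaxationParameterLetter_3^{\cap}$ in the $(\relaxationParameterLetter_2,\relaxationParameterLetter_3)$-plane to show that whenever the candidate $\timeShiftOperator$ lies in $\neighborhoodInfinity$ the corresponding $\fourierShift$ is the unstable root, ruling out a Godunov--Ryabenkii instability; a ``short symbolic computation'' alone will not settle these cases.
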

These results of stability--instability are similar to those from \Cref{thm:stabilityBoundaryD1Q2} with $\relaxationParameterLetter_2 \in (0, 2)$, which concern a diffusive scheme---whereas the scheme is now a dispersive one.

\subsubsection{Proof of \Cref{thm:stabD1Q3BoundaryConditions}}

\paragraph{Description of the roots of the characteristic equation}

The characteristic equation \eqref{eq:characteristicEquation} has $\stencilLeftCharacteristic = \stencilRightCharacteristic = 1$ with 
\begin{align*}
	\coefficientCharEquationInFourier_{-1}(\timeShiftOperator) &= -\tfrac{1}{2} \, {\left({\left(\courantNumber - 1\right)} {\relaxationParameterLetter_2} + {\left(\courantNumber^{2} - 1\right)} {\relaxationParameterLetter_3} + 2\right)} {\timeShiftOperator}^{2} + \tfrac{1}{2} \, {\left({\left(\courantNumber - 1\right)} {\relaxationParameterLetter_2} - {\left(\courantNumber^{2} - {\left(\courantNumber^{2} - \courantNumber\right)} {\relaxationParameterLetter_2} + 1\right)} {\relaxationParameterLetter_3} + 2\right)} {\timeShiftOperator}, \\ 
	\coefficientCharEquationInFourier_{0}(\timeShiftOperator) &= {\left(\courantNumber^{2} {\relaxationParameterLetter_3} - 1\right)} {\timeShiftOperator}^{2} + {\timeShiftOperator}^{3} - {\left({\relaxationParameterLetter_2} - 1\right)} {\relaxationParameterLetter_3} + {\left({\left(\courantNumber^{2} - {\left(\courantNumber^{2} - 1\right)} {\relaxationParameterLetter_2} - 1\right)} {\relaxationParameterLetter_3} - {\relaxationParameterLetter_2} + 1\right)} {\timeShiftOperator} + {\relaxationParameterLetter_2} - 1\\
	\coefficientCharEquationInFourier_{1}(\timeShiftOperator) &= \tfrac{1}{2} \, {\left({\left(\courantNumber + 1\right)} {\relaxationParameterLetter_2} - {\left(\courantNumber^{2} - 1\right)} {\relaxationParameterLetter_3} - 2\right)} {\timeShiftOperator}^{2} - \tfrac{1}{2} \, {\left({\left(\courantNumber + 1\right)} {\relaxationParameterLetter_2} + {\left(\courantNumber^{2} - {\left(\courantNumber^{2} + \courantNumber\right)} {\relaxationParameterLetter_2} + 1\right)} {\relaxationParameterLetter_3} - 2\right)} {\timeShiftOperator}.
\end{align*}
The coefficients $\coefficientCharEquationInFourier_{-1}(\timeShiftOperator)$ and $\coefficientCharEquationInFourier_{1}(\timeShiftOperator)$ do not identically vanish for certain choices of the parameters (unlike the \lbmScheme{1}{2} scheme), but only for specific $\timeShiftOperator$.
The product $\productRootsDOneQThree$ of the roots, unlike for the \lbmScheme{1}{2} scheme, is not independent of $\timeShiftOperator$.

\begin{proposition}
	Let the conditions by \Cref{prop:vonNeumannStabilityD1Q3LW} be fulfilled.
	Then for $\timeShiftOperator\in\neighborhoodInfinity$, the characteristic equation \eqref{eq:characteristicEquation} for the \lbmScheme{1}{3} \strong{à la} Lax-Wendroff scheme has two roots (except at points where $\coefficientCharEquationInFourier_{-1}(\timeShiftOperator)$ or $\coefficientCharEquationInFourier_{1}(\timeShiftOperator)$ vanish), one stable root $\stableRoot(\timeShiftOperator)\in\unitDisk$, and one unstable root $\unstableRoot(\timeShiftOperator)\in\neighborhoodInfinity$.
	They can be extended to the unit circle, \idEst{} to $\timeShiftOperator\in\unitCircle$, and this extension is still denoted $\stableRoot(\timeShiftOperator)$ and $\unstableRoot(\timeShiftOperator)$, with
	\begin{equation*}
		\stableRoot(1) = 
		\begin{cases}
			\productRootsDOneQThree|_{\timeShiftOperator = 1}  = \frac{2\courantNumber-(\courantNumber-1)\relaxationParameterLetter_2}{2\courantNumber-(\courantNumber+1)\relaxationParameterLetter_2}, \qquad &\text{if}\quad \courantNumber<0, \\
			1 \qquad &\text{if}\quad \courantNumber>0.
		\end{cases}\qquad 
		\unstableRoot(1) = 
		\begin{cases}
			1, \qquad &\text{if}\quad \courantNumber<0, \\
			\productRootsDOneQThree|_{\timeShiftOperator = 1}  = \frac{2\courantNumber-(\courantNumber-1)\relaxationParameterLetter_2}{2\courantNumber-(\courantNumber+1)\relaxationParameterLetter_2}\qquad &\text{if}\quad \courantNumber>0.
		\end{cases}
	\end{equation*}
\end{proposition}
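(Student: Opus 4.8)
The plan is to reduce the statement to the scalar Hersh lemma and an explicit evaluation at $\timeShiftOperator = 1$. Since $\stencilLeftCharacteristic = \stencilRightCharacteristic = 1$, the characteristic equation \eqref{eq:characteristicEquation} is (generically) a genuine quadratic in $\fourierShift$, and \Cref{prop:vonNeumannStabilityD1Q3LW} guarantees von Neumann stability (\Cref{def:vonNeumannStability}) under the stated hypotheses. First I would invoke \Cref{lemma:Hersh} with $\stencilLeftCharacteristic = \stencilRightCharacteristic = 1$: this immediately yields, for every $\timeShiftOperator \in \neighborhoodInfinity$, exactly one root $\stableRoot(\timeShiftOperator) \in \unitDisk$ and one root $\unstableRoot(\timeShiftOperator) \in \neighborhoodInfinity$, with no root on $\unitCircle$, except at the finitely many isolated $\timeShiftOperator$ where $\coefficientCharEquationInFourier_{-1}$ or $\coefficientCharEquationInFourier_{1}$ vanishes (where a stable root may degenerate to $0$ or an unstable one to $\infty$, by the $0$/$\infty$ convention of that lemma).

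For the continuous extension to $\unitCircle$, I would use that both roots are given by the closed form $\fourierShift_{\pm}(\timeShiftOperator)$ with the principal square root, hence depend continuously (indeed holomorphically) on $\timeShiftOperator$ away from the isolated zeros of the discriminant $\coefficientCharEquationInFourier_0^2 - 4\coefficientCharEquationInFourier_1\coefficientCharEquationInFourier_{-1}$ and of $\coefficientCharEquationInFourier_{\pm 1}$. Because the stable root stays in $\unitDisk$ and the unstable one in $\neighborhoodInfinity$ throughout $\neighborhoodInfinity$ (they never meet on $\unitCircle$), each can be individually continued up to $\unitCircle$; the limits lie in $\closedUnitDisk$ and $\closedNeighborhoodInfinity$ respectively.

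The crux is then the evaluation at $\timeShiftOperator = 1$. Substituting $\timeShiftOperator = 1$ into the displayed expressions for $\coefficientCharEquationInFourier_{-1}, \coefficientCharEquationInFourier_0, \coefficientCharEquationInFourier_1$ and factoring out the common prefactor $-\tfrac12 \relaxationParameterLetter_3 \courantNumber$, the quadratic reduces to
\[
(2\courantNumber - (\courantNumber+1)\relaxationParameterLetter_2)\fourierShift^2 + 2\courantNumber(\relaxationParameterLetter_2 - 2)\fourierShift + (2\courantNumber - (\courantNumber-1)\relaxationParameterLetter_2) = 0.
\]
A direct check shows $\fourierShift = 1$ is a root, so by Vieta the second root equals the product $\productRootsDOneQThree|_{\timeShiftOperator = 1} = \frac{2\courantNumber-(\courantNumber-1)\relaxationParameterLetter_2}{2\courantNumber-(\courantNumber+1)\relaxationParameterLetter_2}$. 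Writing $N$ and $D$ for its numerator and denominator, one has $N - D = 2\relaxationParameterLetter_2$ and $N + D = 2\courantNumber(2-\relaxationParameterLetter_2)$, whence
\[
N^2 - D^2 = 4\relaxationParameterLetter_2\courantNumber(2-\relaxationParameterLetter_2),
\]
whose sign is that of $\courantNumber$ because $\relaxationParameterLetter_2 \in (0,2)$. Thus $|\productRootsDOneQThree|_{\timeShiftOperator=1}| < 1$ when $\courantNumber < 0$ and $> 1$ when $\courantNumber > 0$; in particular it never lies on $\unitCircle$, and it is distinct from $1$ since $N - D = 2\relaxationParameterLetter_2 \neq 0$. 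Matching with the continuity information of the previous step assigns the labels: the root of modulus strictly less than $1$ is the limit of $\stableRoot$ and the remaining one is $\unstableRoot$. Hence $\stableRoot(1) = \productRootsDOneQThree|_{\timeShiftOperator=1}$, $\unstableRoot(1) = 1$ for $\courantNumber < 0$, and $\stableRoot(1) = 1$, $\unstableRoot(1) = \productRootsDOneQThree|_{\timeShiftOperator=1}$ for $\courantNumber > 0$.

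The main obstacle I anticipate is the rigorous continuous extension to $\unitCircle$: one must argue that neither a branch point of the discriminant nor a zero of $\coefficientCharEquationInFourier_{\pm1}$ accumulates at $\timeShiftOperator = 1$, so that continuity is genuine there, and treat the finitely many degenerate $\timeShiftOperator$ (including those with $N=0$ or $D=0$, where $\productRootsDOneQThree|_{\timeShiftOperator=1}$ collapses to $0$ or blows up to $\infty$) through the extended-value convention of \Cref{lemma:Hersh}. Everything else is either a citation or a short symbolic computation, conveniently checked with the computer algebra system already used in the paper.
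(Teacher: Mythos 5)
Your proposal is correct, and its first half (counting and separating the roots via \Cref{lemma:Hersh}, then extending them continuously to $\unitCircle$) coincides with the paper's proof. Where you genuinely diverge is in the identification of $\stableRoot(1)$ and $\unstableRoot(1)$. The paper perturbs locally around $(\timeShiftOperator, \fourierShift) = (1,1)$: writing $\timeShiftOperator = 1 + \delta\timeShiftOperator$, $\fourierShift = 1 + \delta\fourierShift$, the characteristic equation gives $\delta\fourierShift = -\delta\timeShiftOperator/\courantNumber$ at leading order, so pushing $\timeShiftOperator$ radially into $\neighborhoodInfinity$ pushes the branch through $\fourierShift = 1$ into $\unitDisk$ precisely when $\courantNumber > 0$; this tags the branch through $1$ directly (and, as a bonus, the same expansion is what the paper reuses to read off group velocities). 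You instead evaluate both roots at $\timeShiftOperator = 1$ exactly — checking $\fourierShift = 1$ is a root and getting the other by Vieta — and then observe that $N^2 - D^2 = 4\relaxationParameterLetter_2\courantNumber(2-\relaxationParameterLetter_2)$ forces $|\productRootsDOneQThree|_{\timeShiftOperator=1}| \lessgtr 1$ according to $\sign(\courantNumber)$, so the assignment is dictated by the constraints $\stableRoot(1)\in\closedUnitDisk$ and $\unstableRoot(1)\in\closedNeighborhoodInfinity$ with no perturbation needed. Your route is self-contained and avoids justifying the direction of the perturbation; it is slightly less informative in that it does not produce the derivative $\differential{\stableRoot}/\differential{\timeShiftOperator}$ at $\timeShiftOperator=1$, but that is not needed for this statement. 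Your computations ($N-D = 2\relaxationParameterLetter_2$, $N+D = 2\courantNumber(2-\relaxationParameterLetter_2)$, and the degenerate cases $N=0$ or $D=0$ handled by the $0$/$\infty$ convention) all check out.
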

\begin{proof}
	The first part of the claim is the Hersh lemma.
	The second part is conducted by usual perturbation techniques: let $\timeShiftOperator = 1+\delta\timeShiftOperator$ and $\fourierShift = 1+\delta\fourierShift$. Into \eqref{eq:characteristicEquation}, this gives, at leading order $\delta\fourierShift = -1/\courantNumber\delta\timeShiftOperator$, hence the claim.
\end{proof}

\paragraph{Shared eigenvalues}

From the experience we have gained in \Cref{sec:D1Q2}, we avoid computing roots with $\timeShiftOperator\in \puncturedPlane$ but $\fourierShift=0$, and directly consider 
\begin{equation}\label{eq:eigenvalueProblemsExcludingKappaZero}
	\determinant(\timeShiftOperator\identityMatrix{\numberVelocities} - \schemeMatrixBulkFourier(\fourierShift)) = 0\qquad \text{and} \qquad \determinant\Bigl ( \timeShiftOperator\identityMatrix{\numberVelocities} - \sum_{\indexFreeOne = 0}^{\stencilBoundaryCondition}\schemeMatrixBoundaryByPower{0}{\indexFreeOne}\fourierShift^{\indexFreeOne}\Bigr ) = 0
\end{equation}
instead of \eqref{eq:toSolveBoundaryBulkD1Q2}.
The candidates are as follows.
\begin{align*}
	\text{For }\eqref{eq:BBD1Q3LW}\qquad (\timeShiftOperator, \fourierShift) =  (\relaxationParameterLetter_2-1,-1), \quad (\timeShiftOperator, \fourierShift) &= (1-\relaxationParameterLetter_3, 1),\quad (\timeShiftOperator, \fourierShift) = (1, \tfrac{2\courantNumber-(\courantNumber-1)\relaxationParameterLetter_2}{2\courantNumber-(\courantNumber+1)\relaxationParameterLetter_2}),\\
	(\timeShiftOperator, \fourierShift) &= (\tfrac{\relaxationParameterLetter_2 + (\courantNumber\relaxationParameterLetter_2-\courantNumber-1)\relaxationParameterLetter_3}{\relaxationParameterLetter_2 - (\courantNumber+1)\relaxationParameterLetter_3}, \tfrac{\relaxationParameterLetter_2 + (\courantNumber\relaxationParameterLetter_2-\courantNumber-1)\relaxationParameterLetter_3}{(\relaxationParameterLetter_2 - (\courantNumber+1) \relaxationParameterLetter_3) (1-\relaxationParameterLetter_2)} ).
\end{align*}
The first two couples are eigenvalues only if $\relaxationParameterLetter_2 = 2$ or $\relaxationParameterLetter_3 = 2$, which we excluded.
The third couple is an eigenvalue, regardless of the relaxation parameters, when $\courantNumber<0$, with $\fourierShift\in\unitDisk$.
For the fourth couple, we have that $|\fourierShift| = {|\timeShiftOperator|}/{|1-\relaxationParameterLetter_2|}$.
For $\timeShiftOperator\in\neighborhoodInfinity$, we have that $\fourierShift\in\neighborhoodInfinity$ as well for $\relaxationParameterLetter_2\in (0, 2)$, hence $\fourierShift = \unstableRoot$: no Godunov-Ryabenkii instability can happen.
Finally, let $\timeShiftOperator\in\unitCircle$, by a continuity argument, we deduce that $\fourierShift = \unstableRoot$, and this couple is not an eigenvalue.

Let us now discuss \eqref{eq:ABBD1Q3LW}: we first solve the second equation in \eqref{eq:eigenvalueProblemsExcludingKappaZero}, which is first-order in $\fourierShift$---in $\fourierShift$. We plug this result into the first equation of \eqref{eq:eigenvalueProblemsExcludingKappaZero} to yield the corresponding $\timeShiftOperator$.
We have
\begin{align*}
	(\timeShiftOperator, \fourierShift) = (1-\relaxationParameterLetter_2, 1), \qquad (\timeShiftOperator, \fourierShift) &=(\tfrac{\relaxationParameterLetter_2 + (\courantNumber\relaxationParameterLetter_2-\courantNumber-1)\relaxationParameterLetter_3}{\relaxationParameterLetter_2 - (\courantNumber+1)\relaxationParameterLetter_3}, \tfrac{\relaxationParameterLetter_2 + (\courantNumber\relaxationParameterLetter_2-\courantNumber-1)\relaxationParameterLetter_3}{(\relaxationParameterLetter_2 - (\courantNumber+1) \relaxationParameterLetter_3) (1-\relaxationParameterLetter_2)} ), \\
	\timeShiftOperator &= -\tfrac{1}{2}(2\courantNumber^2-1)\relaxationParameterLetter_3 \pm \tfrac{1}{2}\sqrt{(4\courantNumber^4 - 4\courantNumber^2+1)\relaxationParameterLetter_3^2 - 4\relaxationParameterLetter_3+4},
\end{align*}
where we do not even list the corresponding $\fourierShift$ for the last couple of complex conjugate $\timeShiftOperator$'s, since we want to study whether these $\timeShiftOperator$'s can be critical, \idEst{} in $\closedNeighborhoodInfinity$.
The monic polynomial whose roots are these $\timeShiftOperator$'s is 
\begin{equation*}
	\phi_2(\timeShiftOperator) = \timeShiftOperator^2 + (2\courantNumber^2 - 1)\relaxationParameterLetter_3 \timeShiftOperator + \relaxationParameterLetter_3 - 1.
\end{equation*}
We first use \cite[Theorem 4.3.1]{strikwerda2004finite} to study the position of these roots with respect to $\unitCircle$.
We have $\phi_2^{\star}(\timeShiftOperator) =  \timeShiftOperator^2 \overline{\phi_2(1/\overline{\timeShiftOperator})}= (\relaxationParameterLetter_3 - 1)\timeShiftOperator^2 + (2\courantNumber^2 - 1)\relaxationParameterLetter_3 \timeShiftOperator + 1$, and request the condition $|\phi_2(0)|<|\phi_2^{\star}(0)|$, equivalent to $\relaxationParameterLetter_3\in(0, 2)$, which we assumed from the very beginning.
Computing $\phi_1(\timeShiftOperator) =\timeShiftOperator^{-1} (\phi_2^{\star}(0)\phi_2(\timeShiftOperator)-\phi_2(0)\phi_2^{\star}(\timeShiftOperator))$, this polynomial has root $1-2\courantNumber^2$. We thus request that  $|1-2\courantNumber^2|<1$, which is equivalent to $|\courantNumber|<1$, which we also assumed from the very beginning.
We conclude using \cite[Theorem 4.3.1]{strikwerda2004finite} that the two $\timeShiftOperator$'s under discussion belong to $\unitDisk$, hence are not eigenvalues.

\begin{align*}
	\text{For }\eqref{eq:TwoABBD1Q3LW}\qquad (\timeShiftOperator, \fourierShift) =  (\relaxationParameterLetter_2-1,-1), \quad (\timeShiftOperator, \fourierShift) =  (1-\relaxationParameterLetter_2,1), \quad (\timeShiftOperator, \fourierShift) &= (1-\relaxationParameterLetter_3, 1),\\
	(\timeShiftOperator, \fourierShift) &= (\tfrac{\relaxationParameterLetter_2 + (\courantNumber\relaxationParameterLetter_2-\courantNumber-1)\relaxationParameterLetter_3}{\relaxationParameterLetter_2 - (\courantNumber+1)\relaxationParameterLetter_3}, \tfrac{\relaxationParameterLetter_2 + (\courantNumber\relaxationParameterLetter_2-\courantNumber-1)\relaxationParameterLetter_3}{(\relaxationParameterLetter_2 - (\courantNumber+1) \relaxationParameterLetter_3) (1-\relaxationParameterLetter_2)} ),
\end{align*}
where none are eigenvalues under the current assumptions.
\begin{align*}
	\text{For }\eqref{eq:extrapolationD1Q3LW}, \quad \orderExtrapolation = 1, 2, 3, 4\qquad(\timeShiftOperator, \fourierShift) &= (1, 1), \quad (\timeShiftOperator, \fourierShift) =  (1-\relaxationParameterLetter_2,1), \quad (\timeShiftOperator, \fourierShift) = (1-\relaxationParameterLetter_3, 1),\\
	(\timeShiftOperator, \fourierShift) &= (\tfrac{\relaxationParameterLetter_2 + (\courantNumber\relaxationParameterLetter_2-\courantNumber-1)\relaxationParameterLetter_3}{\relaxationParameterLetter_2 - (\courantNumber+1)\relaxationParameterLetter_3}, \tfrac{\relaxationParameterLetter_2 + (\courantNumber\relaxationParameterLetter_2-\courantNumber-1)\relaxationParameterLetter_3}{(\relaxationParameterLetter_2 - (\courantNumber+1) \relaxationParameterLetter_3) (1-\relaxationParameterLetter_2)} ), \\
	(\timeShiftOperator, \fourierShift) &= (\tfrac{\relaxationParameterLetter_2 - (\courantNumber\relaxationParameterLetter_2-\courantNumber+1)\relaxationParameterLetter_3}{\relaxationParameterLetter_2 + (\courantNumber-1)\relaxationParameterLetter_3}, -\tfrac{{\left(\courantNumber^{2} {\relaxationParameterLetter_2} - \courantNumber^{2} + \courantNumber\right)} {\relaxationParameterLetter_3}^{2} - {\relaxationParameterLetter_2}^{2} + {\left(\courantNumber {\relaxationParameterLetter_2}^{2} - {\left(2 \, \courantNumber - 1\right)} {\relaxationParameterLetter_2}\right)} {\relaxationParameterLetter_3}}{{\left(\courantNumber^{2} - {\left(\courantNumber^{2} - 1\right)} {\relaxationParameterLetter_2} - \courantNumber\right)} {\relaxationParameterLetter_3}^{2} + {\relaxationParameterLetter_2}^{2} - {\left({\left(\courantNumber + 1\right)} {\relaxationParameterLetter_2}^{2} - {\left(2 \, \courantNumber - 1\right)} {\relaxationParameterLetter_2}\right)} {\relaxationParameterLetter_3}} ).
\end{align*}
We already know that the first couple is an eigenvalue when $\courantNumber>0$.
We are left to analyze the last couple, for which we have 
\begin{equation*}
	\frac{|\fourierShift|}{|\timeShiftOperator|} = \frac{|\relaxationParameterLetter_2 + \courantNumber\relaxationParameterLetter_3|}{|\relaxationParameterLetter_2 +\courantNumber\relaxationParameterLetter_3- (\courantNumber+1)\relaxationParameterLetter_2 \relaxationParameterLetter_3|}.
\end{equation*}
When $\courantNumber\in (0, 1)$, we are going to see that $|\fourierShift|/|\timeShiftOperator|>1$, which allows concluding that $\fourierShift = \unstableRoot$, hence these are not eigenvalues.
To this end, the target inequality is 
\begin{equation}\label{eq:tmp11}
	\left |1 - \frac{(\courantNumber+1)\relaxationParameterLetter_2 \relaxationParameterLetter_3}{\relaxationParameterLetter_2 + \courantNumber\relaxationParameterLetter_3}\right |<1 \qquad \Longleftrightarrow \qquad 0 < \psi(\relaxationParameterLetter_2, \relaxationParameterLetter_3, \courantNumber)\definitionEquality\frac{(\courantNumber+1)\relaxationParameterLetter_2 \relaxationParameterLetter_3}{\relaxationParameterLetter_2 + \courantNumber\relaxationParameterLetter_3} < 2.
\end{equation}
The numerator in $\psi$ is always positive.
The denominator may change sign when $\courantNumber<0$. 
For $\courantNumber\in (0, 1)$, the lower bound in \eqref{eq:tmp11} is trivially fulfilled. 
For the upper one, we have 
\begin{equation*}
	\partial_{\relaxationParameterLetter_2}\psi(\relaxationParameterLetter_2, \relaxationParameterLetter_3, \courantNumber) = \frac{\courantNumber(\courantNumber+1)\relaxationParameterLetter_2 \relaxationParameterLetter_3^2}{(\relaxationParameterLetter_2 + \courantNumber\relaxationParameterLetter_3)^2}>0, \qquad \text{thus}\qquad \psi(\relaxationParameterLetter_2, \relaxationParameterLetter_3, \courantNumber)< \psi(2, \relaxationParameterLetter_3, \courantNumber) = \frac{2(\courantNumber+1) \relaxationParameterLetter_3}{2 + \courantNumber\relaxationParameterLetter_3}, 
\end{equation*}
for the considered parameters.
Then, we have
\begin{equation*}
	\partial_{\relaxationParameterLetter_3}\psi(2, \relaxationParameterLetter_3, \courantNumber)  = \frac{4(\courantNumber+1) }{(2 + \courantNumber\relaxationParameterLetter_3)^2}>0, \qquad \text{thus}\qquad \psi(\relaxationParameterLetter_2, \relaxationParameterLetter_3, \courantNumber)< \psi(2, \relaxationParameterLetter_3, \courantNumber) = \frac{2(\courantNumber+1) \relaxationParameterLetter_3}{2 + \courantNumber\relaxationParameterLetter_3}<\psi(2, 2, \courantNumber)  = 2,
\end{equation*}
and allows concluding for $\courantNumber\in(0, 1)$.
Whenever $\courantNumber\in (-1, 0)$, it is false that $|\fourierShift|/|\timeShiftOperator|-1$ has a well-determined sign.
In this case the strategy of the proof is different: we show that when $\timeShiftOperator\in\neighborhoodInfinity$, then also $|\fourierShift|/|\timeShiftOperator|>1$, which entails $\fourierShift\in\neighborhoodInfinity$.
This is done by noting (and eventually proving) that in the $(\relaxationParameterLetter_2, \relaxationParameterLetter_3)$-plane restricted to $(0, 2)^2$, the boundary of $\{(\relaxationParameterLetter_2, \relaxationParameterLetter_3)\,:\,\timeShiftOperator\in\neighborhoodInfinity\}$ is strictly convex between the points $(0, 0)$ and $(2, 2)$, whereas the boundary of $\{(\relaxationParameterLetter_2, \relaxationParameterLetter_3)\,:\,|\fourierShift|/|\timeShiftOperator|>1\}$  is strictly concave between the points $(0, 0)$ and $(2, 2)$. This would prove that $\{(\relaxationParameterLetter_2, \relaxationParameterLetter_3)\,:\,\timeShiftOperator\in\neighborhoodInfinity\}\subset \{(\relaxationParameterLetter_2, \relaxationParameterLetter_3)\,:\,|\fourierShift|/|\timeShiftOperator|>1\}$.
Let us find a parametrization of these boundaries, starting from that of $\{(\relaxationParameterLetter_2, \relaxationParameterLetter_3)\,:\,\timeShiftOperator\in\neighborhoodInfinity\}$:
\begin{equation*}
	\timeShiftOperator\in\unitCircle \qquad \Longleftrightarrow\qquad \left | \frac{\relaxationParameterLetter_2 - (\courantNumber\relaxationParameterLetter_2-\courantNumber+1)\relaxationParameterLetter_3}{\relaxationParameterLetter_2 + (\courantNumber-1)\relaxationParameterLetter_3} \right | = 1\qquad \Longleftrightarrow\qquad  \frac{\relaxationParameterLetter_2 - (\courantNumber\relaxationParameterLetter_2-\courantNumber+1)\relaxationParameterLetter_3}{\relaxationParameterLetter_2 + (\courantNumber-1)\relaxationParameterLetter_3}  = \pm 1,
\end{equation*}
since $\timeShiftOperator\in\reals$.
There is a solution for the minus sign, which yields
\begin{equation*}
	\relaxationParameterLetter_3 = \relaxationParameterLetter_3^{\cup}(\relaxationParameterLetter_2) \definitionEquality \frac{2\relaxationParameterLetter_2}{\courantNumber\relaxationParameterLetter_2 - 2\courantNumber + 2}, \qquad \text{hence also}\qquad (\relaxationParameterLetter_3^{\cup})''(\relaxationParameterLetter_2) = \frac{8\courantNumber(\courantNumber-1)}{(\courantNumber\relaxationParameterLetter_2 - 2\courantNumber + 2)^3}.
\end{equation*}
Considering that $\courantNumber(\courantNumber-1)>0$, we look for a upper bound for $\courantNumber\relaxationParameterLetter_2 - 2\courantNumber + 2$.
This function is decreasing in $\relaxationParameterLetter_2$, hence $\courantNumber\relaxationParameterLetter_2 - 2\courantNumber + 2\leq 2(1-\courantNumber)$, giving $(\relaxationParameterLetter_3^{\cup})''(\relaxationParameterLetter_2)\geq {-4\courantNumber}/{(1-\courantNumber)^2}>0$, and shows that the function $\relaxationParameterLetter_3^{\cup}$ is strictly convex. 
Moreover, $\relaxationParameterLetter_3^{\cup}(0)=0$ and $\relaxationParameterLetter_3^{\cup}(2)=2$.
For the boundary of $\{(\relaxationParameterLetter_2, \relaxationParameterLetter_3)\,:\,|\fourierShift|/|\timeShiftOperator|>1\}$, we have 
\begin{equation*}
	|\fourierShift|/|\timeShiftOperator|=1 \qquad \Longleftrightarrow\qquad \left |\frac{\relaxationParameterLetter_2 + \courantNumber\relaxationParameterLetter_3}{\relaxationParameterLetter_2 +\courantNumber\relaxationParameterLetter_3- (\courantNumber+1)\relaxationParameterLetter_2 \relaxationParameterLetter_3} \right | = 1\qquad \Longleftrightarrow\qquad  \frac{\relaxationParameterLetter_2 + \courantNumber\relaxationParameterLetter_3}{\relaxationParameterLetter_2 +\courantNumber\relaxationParameterLetter_3- (\courantNumber+1)\relaxationParameterLetter_2 \relaxationParameterLetter_3} = \pm 1,
\end{equation*}
which solution is 
\begin{equation*}
	\relaxationParameterLetter_3 = \relaxationParameterLetter_3^{\cap}(\relaxationParameterLetter_2) \definitionEquality \frac{2\relaxationParameterLetter_2}{(\courantNumber+1)\relaxationParameterLetter_2 - 2\courantNumber}, \qquad \text{hence also}\qquad (\relaxationParameterLetter_3^{\cap})''(\relaxationParameterLetter_2) = \frac{8\courantNumber(\courantNumber+1)}{((\courantNumber+1)\relaxationParameterLetter_2 - 2\courantNumber)^3}.
\end{equation*}
We have that $\courantNumber(\courantNumber+1)<0$.
Therefore, we look for an upper bound of $(\courantNumber+1)\relaxationParameterLetter_2 - 2\courantNumber$.
Since $(\courantNumber+1)\relaxationParameterLetter_2 - 2\courantNumber$ is increasing in $\relaxationParameterLetter_2$, we have $(\courantNumber+1)\relaxationParameterLetter_2 - 2\courantNumber\leq 2$.
This proves that $(\relaxationParameterLetter_3^{\cap})''(\relaxationParameterLetter_2) \leq 4\courantNumber(\courantNumber+1) < 0$, thus shows that $\relaxationParameterLetter_3^{\cap}$ is strictly concave.
Finally, $\relaxationParameterLetter_3^{\cap}(0) = 0$ and $\relaxationParameterLetter_3^{\cap}(2) = 2$.

Overall, this gives that when $\timeShiftOperator\in\neighborhoodInfinity$, then $\fourierShift\in\neighborhoodInfinity$, hence corresponds to $\unstableRoot$. By continuity of the roots, we deduce that these roots are not eigenvalues even when on $\unitCircle$.

\begin{align*}
	\text{For }\eqref{eq:kineticDirichletD1Q3LW} \qquad
	(\timeShiftOperator, \fourierShift) &= (\tfrac{\relaxationParameterLetter_2 + (\courantNumber\relaxationParameterLetter_2-\courantNumber-1)\relaxationParameterLetter_3}{\relaxationParameterLetter_2 - (\courantNumber+1)\relaxationParameterLetter_3}, \tfrac{\relaxationParameterLetter_2 + (\courantNumber\relaxationParameterLetter_2-\courantNumber-1)\relaxationParameterLetter_3}{(\relaxationParameterLetter_2 - (\courantNumber+1) \relaxationParameterLetter_3) (1-\relaxationParameterLetter_2)} ), \\
	(\timeShiftOperator, \fourierShift) &= (\tfrac{\relaxationParameterLetter_2 - (\courantNumber\relaxationParameterLetter_2-\courantNumber+1)\relaxationParameterLetter_3}{\relaxationParameterLetter_2 + (\courantNumber-1)\relaxationParameterLetter_3}, -\tfrac{{\left(\courantNumber^{2} {\relaxationParameterLetter_2} - \courantNumber^{2} + \courantNumber\right)} {\relaxationParameterLetter_3}^{2} - {\relaxationParameterLetter_2}^{2} + {\left(\courantNumber {\relaxationParameterLetter_2}^{2} - {\left(2 \, \courantNumber - 1\right)} {\relaxationParameterLetter_2}\right)} {\relaxationParameterLetter_3}}{{\left(\courantNumber^{2} - {\left(\courantNumber^{2} - 1\right)} {\relaxationParameterLetter_2} - \courantNumber\right)} {\relaxationParameterLetter_3}^{2} + {\relaxationParameterLetter_2}^{2} - {\left({\left(\courantNumber + 1\right)} {\relaxationParameterLetter_2}^{2} - {\left(2 \, \courantNumber - 1\right)} {\relaxationParameterLetter_2}\right)} {\relaxationParameterLetter_3}} ),
\end{align*}
thus we again conclude that there is no harmful eigenvalue in this case.
The overall picture is as resumed in \Cref{tab:modesD1Q3LW}.

\begin{table}\caption{\label{tab:modesD1Q3LW}Unstable candidate eigenvalues (shared between bulk and boundary scheme) for the \lbmScheme{1}{3} scheme \strong{à la} Lax-Wendroff.}
	\begin{center}
		\begin{tabular}{|c||c|c|}
			\cline{2-3}
			\multicolumn{1}{c||}{} & $\courantNumber<0$ (outflow) & $\courantNumber>0$ (inflow)\\
			\cline{2-3}
			\multicolumn{1}{c||}{} & $\relaxationParameterLetter_2\in(0, 2)$  & $\relaxationParameterLetter_2\in(0, 2)$ \\
			\hline
			\hline
			Bounce-back \eqref{eq:BBD1Q3LW} & $(1, \tfrac{2\courantNumber-(\courantNumber-1)\relaxationParameterLetter_2}{2\courantNumber-(\courantNumber+1)\relaxationParameterLetter_2})$  & none \\
			Anti-bounce-back \eqref{eq:ABBD1Q3LW} & none &none \\
			Two-steps anti-bounce-back \eqref{eq:TwoABBD1Q3LW} & none &  none \\
			Extrapolation $1\leq \orderExtrapolation \leq 4$ \eqref{eq:extrapolationD1Q3LW} & none &  $(1, 1)$ \\
			Kinetic Dirichlet \eqref{eq:kineticDirichletD1Q3LW} & none &  none \\
			\hline
		\end{tabular}
	\end{center}
\end{table}

\paragraph{Eigenvectors: conclusion of the proof}
We can readily apply \Cref{prop:noDiracBoundary}.
For the considered parameters, $\vectorial{\eigenvectorLetter}_{\textnormal{s}}(\timeShiftOperator)$ can be continuously extended to $\unitCircle$.
Let $(\timeShiftOperator, \fourierShift) = (1, \productRootsDOneQThree|_{\timeShiftOperator = 1})$, which must be studied for \eqref{eq:BBD1Q3LW} with $\courantNumber<0$.
	We see that $\vectorial{\eigenvectorLetter}_{\textnormal{s}}(1)$ can be computed:
	\begin{equation*}
		\vectorial{\eigenvectorLetter}_{\textnormal{s}}(1) = \transpose{\Bigl (1, \frac{\courantNumber\relaxationParameterLetter_2}{\relaxationParameterLetter_2 - 2}, \courantNumber^2 \Bigr )}.\qquad \text{Moreover, }\qquad 
		\kreissLopatinskiiDet(1) = 0,
	\end{equation*}
	thus \threeboxes{\continuousExtensionMark}{\eigenvalueMark}{\zeroKL}: the boundary condition is not strongly stable.

Let $(\timeShiftOperator, \fourierShift) = (1, 1)$, which must be studied for \eqref{eq:extrapolationD1Q3LW} with $\courantNumber>0$, computations are analogous to those of \Cref{sec:D1Q2}, and the boundary condition is not strongly stable.
This proves \Cref{thm:stabD1Q3BoundaryConditions}.

\subsection{Fourth-order \lbmScheme{1}{3} scheme}\label{sec:D1Q3FourthOrder}

We consider the scheme introduced in \cite{bellotti:hal-04358349}.
The setting is the one of \eqref{eq:descriptionD1Q3LW} except for the following choice of equilibrium and the fact of fixing the relaxation parameters to two:
\begin{equation*}
	\equilibriumVectorLetter_3 = \tfrac{1}{3}(1+2\courantNumber^2)\qquad \text{and}\qquad \relaxationParameterLetter_2=\relaxationParameterLetter_3 = 2.
\end{equation*}
This scheme is fourth-order accurate.

\begin{proposition}[Stability of the fourth-order \lbmScheme{1}{3} scheme for the Cauchy problem]\label{prop:stabD1Q3FourthOrder}
	The fourth-order \lbmScheme{1}{3} scheme is stable according to \emph{von Neumann} if and only if $|\courantNumber|\leq \tfrac{1}{2}$.
	Moreover, stability according to \Cref{def:stabLBM} holds if and only if $|\courantNumber|< \tfrac{1}{2}$.
	Finally, the scheme is never stable according to \Cref{def:stabFD}.
\end{proposition}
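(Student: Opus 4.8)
The plan is to work throughout with the amplification matrix $\schemeMatrixBulkFourier(\fourierShift)$ at $\fourierShift=e^{i\waveNumber}$ and its characteristic polynomial $p(\timeShiftOperator)\definitionEquality\determinant(\timeShiftOperator\identityMatrix{3}-\schemeMatrixBulkFourier(\fourierShift))$. First I would compute $\schemeMatrixBulkFourier(\fourierShift)=\momentMatrix\,\diagonalMatrix(1,\fourierShift^{-1},\fourierShift)\,\momentMatrix^{-1}\collisionMatrix$ with the data of this scheme. Writing $A\definitionEquality\tfrac12(\fourierShift+\fourierShift^{-1})$, $B\definitionEquality\tfrac12(\fourierShift^{-1}-\fourierShift)$ and $E\definitionEquality\tfrac23(1+2\courantNumber^2)$, a short computation gives $\trace(\schemeMatrixBulkFourier)=1-2A+(A-1)E+2B\courantNumber$, while \eqref{eq:traceAndDeterminant} shows (symmetric velocity set, $\prod_{\indexFreeOne=2}^{3}(1-\relaxationParameterLetter_{\indexFreeOne})=1$) that $\determinant(\schemeMatrixBulkFourier(\fourierShift))\equiv1$. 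The same bookkeeping shows that for $\fourierShift\in\unitCircle$ the sum of the principal $2\times2$ minors equals $\overline{\trace(\schemeMatrixBulkFourier)}$ (since $B$ is then purely imaginary), so that $p(\timeShiftOperator)=\timeShiftOperator^3-\trace\,\timeShiftOperator^2+\overline{\trace}\,\timeShiftOperator-1$. Two structural facts drop out: $p$ is anti-self-inversive, $\timeShiftOperator^3\,\overline{p}(1/\timeShiftOperator)=-p(\timeShiftOperator)$, so its roots are symmetric under $\lambda\mapsto 1/\overline{\lambda}$; and the product of its roots is $1$. Together these force the key reduction: the scheme is von Neumann stable (all roots in $\closedUnitDisk$) if and only if all three roots lie on $\unitCircle$ for every $\waveNumber$ — a root of modulus $<1$ would, by the symmetry, be paired with one of modulus $>1$, and three moduli $\le 1$ whose product is $|\determinant|=1$ must all equal $1$.

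Given this reduction, roots can only leave $\unitCircle$ by first colliding there: a double root of the self-inversive cubic necessarily sits on the circle, since an off-circle double root would drag in its mirror and overload the degree. So the von Neumann boundary in $\courantNumber$ is the locus where $p$ acquires a double root on $\unitCircle$ that then unfolds into a reciprocal pair, and I would locate it through the discriminant of $p$ in $\timeShiftOperator$, written as a function of $\cos\waveNumber$ and $\courantNumber$ via $\sin^2\waveNumber=1-\cos^2\waveNumber$ (the cumbersome symbolic step, naturally delegated to the computer algebra system). The threshold is pinned down cleanly: at $\courantNumber=\tfrac12$ one has $E=1$, hence $\trace=-\fourierShift$ and the exact factorization $p(\timeShiftOperator)=(\timeShiftOperator+\fourierShift)(\timeShiftOperator^2-\fourierShift^{-1})$, whose roots $-\fourierShift$ and $\pm\fourierShift^{-1/2}$ all lie on $\unitCircle$ and collide (double root $e^{-i\pi/3}$) precisely at $\waveNumber=\pm\tfrac{2\pi}{3}$. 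Starting from the trivially von Neumann stable case $\courantNumber=0$ (where $p(\timeShiftOperator)=(\timeShiftOperator-1)(\timeShiftOperator^2+(1-R)\timeShiftOperator+1)$ with $|1-R|\le2$) and increasing $|\courantNumber|$, the discriminant analysis shows the first parameter value at which such an on-circle double root unfolds into a reciprocal pair is $|\courantNumber|=\tfrac12$; for $|\courantNumber|<\tfrac12$ no root leaves the circle, for $|\courantNumber|>\tfrac12$ one does. Combined with the reduction this gives von Neumann stability $\iff|\courantNumber|\le\tfrac12$.

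For $L^2$ stability in the sense of \Cref{def:stabLBM} I invoke \Cref{prop:necConditions}: on top of von Neumann stability the eigenvalues on $\unitCircle$ must be semi-simple, uniformly. For $|\courantNumber|<\tfrac12$ the symbolic check shows that the only coincidence of circle eigenvalues is the structural one at $\waveNumber=0$, where $\schemeMatrixBulkFourier(1)=\collisionMatrix$ has spectrum $\{1,-1,-1\}$ with $\collisionMatrix+\identityMatrix{3}$ of rank one, so $\dimension{\kernel(\collisionMatrix+\identityMatrix{3})}=2$ and $-1$ is semi-simple and isolated from $1$; since $[-\pi,\pi]$ is compact and no other eigenvalues collide, the spectral projectors are uniformly bounded, \eqref{eq:L2PowerBoundedness} holds with a uniform constant, and \Cref{prop:stabLBM} gives $L^2$ stability. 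At $|\courantNumber|=\tfrac12$, however, the collision eigenvalue $e^{-i\pi/3}$ at $\waveNumber=\tfrac{2\pi}{3}$ is a genuine double eigenvalue of $\schemeMatrixBulkFourier(e^{2\pi i/3})$ with geometric multiplicity one (one checks $\rank(e^{-i\pi/3}\identityMatrix{3}-\schemeMatrixBulkFourier(e^{2\pi i/3}))=2$, i.e. its adjugate does not vanish), hence not semi-simple; by \Cref{prop:necConditions} the scheme is not $L^2$ stable there, and for $|\courantNumber|>\tfrac12$ von Neumann stability itself already fails. This yields $L^2$ stability $\iff|\courantNumber|<\tfrac12$.

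Finally, for \Cref{def:stabFD} I use \Cref{prop:stabFD}: $L^2$ stability of the induced Finite Difference scheme is equivalent to $p(\timeShiftOperator)=\determinant(\timeShiftOperator\identityMatrix{3}-\schemeMatrixBulkFourier(e^{i\waveNumber}))$ being a simple von Neumann polynomial for every $\waveNumber$. But at $\waveNumber=0$ one has $p(\timeShiftOperator)=\determinant(\timeShiftOperator\identityMatrix{3}-\collisionMatrix)=(\timeShiftOperator-1)(\timeShiftOperator+1)^2$, which carries the non-simple root $\timeShiftOperator=-1\in\unitCircle$. Hence $p$ fails to be a simple von Neumann polynomial at $\waveNumber=0$ for \emph{every} value of $\courantNumber$, and the scheme is never stable according to \Cref{def:stabFD}. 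This is precisely the mechanism behind the converse in \Cref{prop:stableFDDoncStableLBM}: for $|\courantNumber|<\tfrac12$ the eigenvalue $-1$ is semi-simple for $\schemeMatrixBulkFourier(1)$ yet becomes a defective eigenvalue of the companion matrix of $p$, so the raw \lbm{} scheme is $L^2$ stable while its characteristic-polynomial Finite Difference reformulation is not. The main obstacle is the global von Neumann threshold of the second paragraph — establishing that the discriminant keeps all three roots on $\unitCircle$ for every $\waveNumber$ exactly up to $|\courantNumber|=\tfrac12$ — which is where the symbolic computation is unavoidable.
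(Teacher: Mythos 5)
Most of your argument is sound and your route to the von Neumann threshold (the anti-self-inversive cubic with $\determinant(\schemeMatrixBulkFourier(\fourierShift))\equiv 1$, hence ``all roots on $\unitCircle$ or bust'', followed by a discriminant analysis) is a legitimate alternative to the paper's use of the Schur--Cohn-type criteria from Strikwerda; your factorization $p(\timeShiftOperator)=(\timeShiftOperator+\fourierShift)(\timeShiftOperator^{2}-\fourierShift^{-1})$ at $|\courantNumber|=\tfrac12$, the identification of the defective double root $e^{\mp i\pi/3}$ at $\waveNumber=\pm\tfrac{2\pi}{3}$, and the observation that $(\timeShiftOperator-1)(\timeShiftOperator+1)^{2}$ at $\waveNumber=0$ kills stability in the sense of \Cref{def:stabFD} for every $\courantNumber$ all match the paper's conclusions.

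There is, however, a genuine gap in the sufficiency direction for \Cref{def:stabLBM} when $|\courantNumber|<\tfrac12$. You argue: the double eigenvalue $-1$ of $\schemeMatrixBulkFourier(1)$ is semi-simple, no other circle eigenvalues collide, hence ``by compactness the spectral projectors are uniformly bounded'' and \eqref{eq:L2PowerBoundedness} follows. This is precisely the non-sequitur that \Cref{prop:necConditions} warns against: pointwise semi-simplicity gives power-boundedness of each $\schemeMatrixBulkFourier(e^{i\waveNumber})$ separately, but not a \emph{uniform} constant. The danger sits exactly at the collision $\waveNumber\to 0$: for a general analytic, non-normal family, two eigenvalue branches merging into a semi-simple double eigenvalue can do so with eigenvectors that become asymptotically parallel (Puiseux-type behavior), in which case the individual eigenprojections, and hence $\sup_{\indexTime}|\schemeMatrixBulkFourier(e^{i\waveNumber})^{\indexTime}|$, blow up as $\waveNumber\to 0$ even though every single matrix in the family is diagonalizable with spectrum in $\closedUnitDisk$. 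Semi-simplicity of the limit matrix does not rule this out. The paper closes this gap by proving that $\schemeMatrixBulk$ is \emph{geometrically regular} at $(\fourierShift,\timeShiftOperator)=(1,-1)$: it invokes the perturbation theorem of Hryniv--Lancaster for the analytic matrix function $(\timeShiftOperator,\fourierShift)\mapsto\timeShiftOperator\identityMatrix{3}-\schemeMatrixBulkFourier(\fourierShift)$, verifies the required non-orthogonality between the generating eigenvectors $\canonicalBasisVector{2},\canonicalBasisVector{3}$ and those of the adjoint, and thereby obtains \emph{analytic} eigenvalue branches $\timeShiftOperator_{2}(\fourierShift),\timeShiftOperator_{3}(\fourierShift)$ together with analytic, uniformly independent eigenvectors through the collision; only then does \cite[Proposition 8]{coulombel00616497} deliver the uniform bound. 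Your proof needs this (or an equivalent uniform-diagonalizability argument near $\waveNumber=0$) to be complete.
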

\begin{proof}
	We discuss different values of the Courant number $\courantNumber$.
	\begin{itemize}
		\item Let $|\courantNumber|< \tfrac{1}{2}$. We have shown in \cite{bellotti:hal-04358349}, using \cite[Theorem 4.3.2 and 4.3.8]{strikwerda2004finite} that the roots $\timeShiftOperator$ of \eqref{eq:characteristicEquation} belong to $\unitCircle$ for all $\fourierShift\in\unitCircle$ (the scheme is stable according to \emph{von Neumann}) and that they are simple, except for $\fourierShift = 1$.
		In this case, there is a semi-simple root $\timeShiftOperator = -1$ of algebraic multiplicity equal to two. Stability in the lattice Boltzmann case needs further investigation, see below.
		On the other hand, this entails that $\timeShiftOperator = -1$ for $\fourierShift = 1$ shall have algebraic multiplicity equal to two and geometric multiplicity equal to one---when seen as eigenvalue of the companion matrix associated with the characteristic polynomial. Therefore, the corresponding Finite Difference scheme is not stable.
		\item Let $|\courantNumber|=\tfrac{1}{2}$. Using \cite[Theorem 4.3.1 and 4.3.2]{strikwerda2004finite}, we obtain that the roots $\timeShiftOperator$ of \eqref{eq:characteristicEquation} belong to $\unitCircle$ for all $\fourierShift\in\unitCircle$ (the scheme is stable according to \emph{von Neumann}) and that they are simple, except for $\fourierShift = e^{i \waveNumber}$ with $\waveNumber \in [-\pi, \pi]$ solution of $|3e^{3i\, \sign(\courantNumber)\waveNumber} + 1| = 4$, hence $\waveNumber = 0, \pm \tfrac{2}{3}\pi$.
		Like in the previous item, for $\waveNumber = 0$, the double root $\timeShiftOperator = -1$ is semi-simple.
		This is not the case for $\waveNumber = \pm \tfrac{2}{3}\pi$ with the double root $\timeShiftOperator = \tfrac{1}{2}\mp\sign(\courantNumber)\tfrac{\sqrt{3}}{2}$, which has geometric multiplicity equal to one, \idEst{} $\dimension{\kernel((\tfrac{1}{2}\mp\sign(\courantNumber)\tfrac{\sqrt{3}}{2})\identityMatrix{3} - \schemeMatrixBulkFourier(e^{\pm i {2}/{3}\pi}))} = 1$.
		This violates \cite[Lemma 3]{coulombel00616497} showing that the lattice Boltzmann scheme is not stable, since its amplification matrix cannot be power bounded for this mode.
		\item Let $|\courantNumber|>\tfrac{1}{2}$. Using \cite[Theorem 4.3.7]{strikwerda2004finite}, one can see that some roots $\timeShiftOperator$ of \eqref{eq:characteristicEquation} belong to $\neighborhoodInfinity$ for some $\fourierShift\in\unitCircle$, so the scheme is unstable.
	\end{itemize}

	Let us now focus on $|\courantNumber|< \tfrac{1}{2}$.
	The rest of the proof is similar to that of \Cref{prop:stableFDDoncStableLBM}.
	We recall that for $\fourierShift\in \unitCircle\smallsetminus\{1\}$, we have that $\spectrum(\schemeMatrixBulkFourier(\fourierShift))\subset \unitCircle$ and made up of distinct eigenvalues.
	For $\fourierShift = 1$, we have $\spectrum(\schemeMatrixBulkFourier(\fourierShift)) = \{1, -1\}$ with the second eigenvalue being double.
	This double eigenvalue is semi-simple.
	We want to show that $\schemeMatrixBulk$ is ``geometrically regular'' according to \cite[Definition 3]{coulombel00616497}, which entails stability according to \Cref{def:stabLBM} by virtue of \cite[Proposition 8]{coulombel00616497}.

	Let us start by $\fourierShift = 1$.
	Eigenvalues are continuous in $\fourierShift$, hence there (locally) exist continuous functions $\timeShiftOperator_1(\fourierShift)$, $\timeShiftOperator_2(\fourierShift)$, and $\timeShiftOperator_3(\fourierShift)$ such that $\timeShiftOperator_1(\fourierShift)\to 1$ and $\timeShiftOperator_2(\fourierShift), \timeShiftOperator_3(\fourierShift)\to -1$ as $\fourierShift\to 1$.
	Moreover, $\timeShiftOperator_1(\fourierShift)$ is holomorphic by virtue of \cite[Theorem 1]{greenbaum2020first}, which applies since $\schemeMatrixBulkFourier(\fourierShift)$ is holomorphic on $\puncturedPlane$.
	We now show that $\timeShiftOperator_2(\fourierShift)$ and $\timeShiftOperator_3(\fourierShift)$---and the corresponding eigenvectors---are analytic in a complex neighborhood of $\fourierShift = 1$. 
	The proof relies on \cite[Theorem 3.6]{hryniv1999perturbation} (see also \cite[Theorem 9]{lancaster2003perturbation}), which can be applied since $(\timeShiftOperator, \fourierShift)\mapsto \timeShiftOperator\identityMatrix{3} - \schemeMatrixBulkFourier(\fourierShift)$ is analytic on $\complex\times \puncturedPlane$.
	For the considered (double) eigenvalue, the so-called ``generating eigenvectors'' of $\timeShiftOperator\identityMatrix{3} - \schemeMatrixBulkFourier(\fourierShift)$ (\confer{} \cite[Definition 3.3]{hryniv1999perturbation}) are $\canonicalBasisVector{2}$ and $\canonicalBasisVector{3}$, whereas those of $(\timeShiftOperator\identityMatrix{3} - \schemeMatrixBulkFourier(\fourierShift))^{*}$ are $\canonicalBasisVector{1}-\tfrac{3}{2\courantNumber^2 + 1}\canonicalBasisVector{3}$ and $\canonicalBasisVector{2}-\tfrac{3\courantNumber}{2\courantNumber^2 + 1}\canonicalBasisVector{3}$.
	Since for each generating eigenvector of $\timeShiftOperator\identityMatrix{3} - \schemeMatrixBulkFourier(\fourierShift)$, there exists a generating eigenvector of $(\timeShiftOperator\identityMatrix{3} - \schemeMatrixBulkFourier(\fourierShift))^{*}$ which is non-orthogonal to the former, all the assumptions of \cite[Theorem 3.6]{hryniv1999perturbation} are verified, and this result entails that there exists a complex neighborhood $\mathscr{W}_1$ of $\fourierShift = 1$ in which $\timeShiftOperator_2(\fourierShift)$ and $\timeShiftOperator_3(\fourierShift)$ and the associated eigenvectors $\vectorial{\eigenvectorLetter}_2(\fourierShift)$ and $\vectorial{\eigenvectorLetter}_3(\fourierShift)$, such that $\vectorial{\eigenvectorLetter}_2(1)=\canonicalBasisVector{2}$ and $\vectorial{\eigenvectorLetter}_3(1)=\canonicalBasisVector{3}$, are analytic.
	By continuity of  $\vectorial{\eigenvectorLetter}_2(\fourierShift)$ and $\vectorial{\eigenvectorLetter}_3(\fourierShift)$ and their values at $\fourierShift = 1$, we see that they are necessarily independent in the neighborhood $\mathscr{W}_1$ (if too large, take a new neighborhood ${\mathscr{W}}_2\subset \mathscr{W}_1$ around $\fourierShift = 1$).
	Moreover, with all this points, we obtain that $\schemeMatrixBulk$ is ``geometrically regular'' (for the moment, at least $\fourierShift = 1$ is not an issue) , since the function $(\timeShiftOperator, \fourierShift)\mapsto \vartheta(\timeShiftOperator, \fourierShift)= \timeShiftOperator - \timeShiftOperator_1(\fourierShift)$, which does the job as polynomials with complex coefficients split on $\complex$, is holomorphic in the neighborhood $\complex\times \mathscr{W}_2$ (one can take ${\mathscr{W}}_3\subset \mathscr{W}_2$ instead of ${\mathscr{W}}_2$ if needed), and $\vartheta(-1, 1) = -2\neq 0$.

	For the other $\fourierShift\in\unitCircle\smallsetminus\{ 1 \}$, we show that $\schemeMatrixBulk$ is ``geometrically regular'' by arguments analogous to those of the proof of \Cref{prop:stableFDDoncStableLBM}. 
\end{proof}

Since the boundary conditions \eqref{eq:BBD1Q3LW}, \eqref{eq:ABBD1Q3LW}, \eqref{eq:TwoABBD1Q3LW}, \eqref{eq:extrapolationD1Q3LW}, and \eqref{eq:kineticDirichletD1Q3LW} are purely dictated by the velocity set, which is the same here, we also consider them for this fourth-order scheme.

\begin{theorem}[Strong stability--instability of the boundary-value fourth-order \lbmScheme{1}{3} scheme]\label{thm:stabilityBoundaryD1Q3Fourth}
	Under the CFL condition $|\courantNumber|<\tfrac{1}{2}$, the stability of the boundary-value fourth-order \lbmScheme{1}{3} scheme is as follows.
	\begin{center}
		\begin{tabular}{|c||c|c|}
			\cline{2-3}
			\multicolumn{1}{c||}{} & $\courantNumber<0$ (outflow)& $\courantNumber>0$ (inflow)\\
			\hline
			\hline
			Bounce-back \eqref{eq:BBD1Q3LW} & MU-E & SS\\
			Anti-bounce-back \eqref{eq:ABBD1Q3LW} & MU-E & SS\\
			Two-steps anti-bounce-back \eqref{eq:TwoABBD1Q3LW} & SSOO & SSOO \\
			Extrapolation $\orderExtrapolation = 1$ \eqref{eq:extrapolationD1Q3LW} & SSOO & MU-E\\
			Extrapolation $2\leq \orderExtrapolation \leq 4$ \eqref{eq:extrapolationD1Q3LW} & MU-E & MU-E\\
			Kinetic Dirichlet \eqref{eq:kineticDirichletD1Q3LW} & SS & SS\\
			\hline
		\end{tabular}
	\end{center}
\end{theorem}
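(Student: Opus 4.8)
The proof follows the three-stage template of \Cref{thm:stabilityBoundaryD1Q2} and \Cref{thm:stabD1Q3BoundaryConditions}: since the velocity stencil gives $\stencilLeft = 1$ and one checks $\stencilLeftCharacteristic = 1$ (so $\coefficientCharEquationInFourier_{-1}(\timeShiftOperator)\not\equiv 0$), the stable subspace $\stableSubspace(\timeShiftOperator)$ is built from a single stable root as in \Cref{prop:noDiracBoundary}, and the analysis splits into (i) describing the roots of \eqref{eq:characteristicEquation}, (ii) locating shared eigenvalues, and (iii) studying the eigenvector and the Kreiss--Lopatinskii scalar product at each critical $\timeShiftOperator$. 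First I would substitute $\equilibriumVectorLetter_2 = \courantNumber$, $\equilibriumVectorLetter_3 = \tfrac{1}{3}(1+2\courantNumber^2)$ and $\relaxationParameterLetter_2 = \relaxationParameterLetter_3 = 2$ into the coefficients $\coefficientCharEquationInFourier_{-1}, \coefficientCharEquationInFourier_0, \coefficientCharEquationInFourier_1$ of the quadratic-in-$\fourierShift$ characteristic equation. Combined with the von Neumann stability $|\courantNumber| < \tfrac{1}{2}$ from \Cref{prop:stabD1Q3FourthOrder}, \Cref{lemma:Hersh} yields a unique stable root $\stableRoot(\timeShiftOperator)\in\unitDisk$ and unstable root $\unstableRoot(\timeShiftOperator)\in\neighborhoodInfinity$ for $\timeShiftOperator\in\neighborhoodInfinity$, which extend continuously to $\unitCircle$; I would record $\stableRoot(\pm 1)$ and note that, $\relaxationParameterLetter_2 = \relaxationParameterLetter_3 = 2$ being the non-dissipative choice, the double semi-simple eigenvalue $\timeShiftOperator = -1$ at $\fourierShift = 1$ identified in \Cref{prop:stabD1Q3FourthOrder} sits exactly on $\unitCircle$ and is where the subtle behavior concentrates.

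Second, I would solve the shared-eigenvalue system \eqref{eq:eigenvalueProblemsExcludingKappaZero} for each boundary condition with a computer algebra system. The crucial structural difference with the Lax--Wendroff scheme of \Cref{thm:stabD1Q3BoundaryConditions} is that there the candidate modes $(\timeShiftOperator, \fourierShift) = (\relaxationParameterLetter_2-1, -1)$ and $(1-\relaxationParameterLetter_3, 1)$ were discarded because they required $\relaxationParameterLetter_2 = 2$ or $\relaxationParameterLetter_3 = 2$; here, with $\relaxationParameterLetter_2 = \relaxationParameterLetter_3 = 2$, these specialize to $(1, -1)$ and $(-1, 1)$ and become genuine critical modes. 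For each candidate with $\timeShiftOperator\in\closedNeighborhoodInfinity$ I would check, using the boundary values of $\stableRoot(\timeShiftOperator)$, whether $\fourierShift = \stableRoot(\timeShiftOperator)$ (a true shared eigenvalue in the sense of \Cref{def:eigenvalue}) or $\fourierShift = \unstableRoot(\timeShiftOperator)$ (harmless), discarding also those with $\timeShiftOperator\in\unitDisk$. I expect bounce-back and anti-bounce-back to contribute outflow modes near $\timeShiftOperator = \pm 1$, extrapolation to contribute the inflow mode $(1, 1)$, two-steps anti-bounce-back to contribute modes only through the shared $\timeShiftOperator = \pm 1$ values, and kinetic Dirichlet to contribute none.

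Third, and decisively, comes the eigenvector analysis. Because $\relaxationParameterLetter_2 = \relaxationParameterLetter_3 = 2$, I anticipate---exactly as in the $\relaxationParameterLetter_2 = 2$ branch of the \lbmScheme{1}{2} proof, \confer{} \eqref{eq:eigenvectorD1Q2CloseMinusOne}---that the $\momentDiscrete_2$- and $\momentDiscrete_3$-components of $\vectorial{\eigenvectorLetter}_{\textnormal{s}}(\timeShiftOperator)$ acquire a first-order pole as $\timeShiftOperator\to\pm 1$, so that $\stableSubspace(\timeShiftOperator)$ does not extend continuously to $\unitCircle$ while its first component stays regular. For each surviving shared eigenvalue I would expand $\kreissLopatinskiiScalarProduct(\timeShiftOperator)$ via \eqref{eq:coefficientWithKreissLopDetFinal}--\eqref{eq:factorKL} and read off the classification at the end of \Cref{sec:recurrenceSpace}: a vanishing $\kreissLopatinskiiScalarProduct$ (cases \threeboxes{\notContinuousExtensionMark}{\eigenvalueMark}{\zeroKL} or \threeboxes{\continuousExtensionMark}{\eigenvalueMark}{\zeroKL}) gives instability on every component (MU-E); a finite nonzero $\kreissLopatinskiiScalarProduct$ against a poled eigenvector (case \threeboxes{\notContinuousExtensionMark}{\eigenvalueMark}{\nonZeroKL}) gives strong stability on $\momentDiscrete_1$ but instability on the numerical moments (SSOO); and the absence of any critical mode gives SS. The quantitative description of the outflow instabilities (their spatial extent and group velocity via \eqref{eq:groupVelocity}) would then be obtained by the residue computation \eqref{eq:residualApprox}, as in the \lbmScheme{1}{2} case.

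The main obstacle is the interplay at $\timeShiftOperator = \pm 1$ between the first-order poles of $\vectorial{\eigenvectorLetter}_{\textnormal{s}}(\timeShiftOperator)$ and the order of the zero or pole of $\kreissLopatinskiiScalarProduct(\timeShiftOperator)$, compounded by the double semi-simple eigenvalue $\timeShiftOperator = -1$ at $\fourierShift = 1$: distinguishing SSOO from MU-E for two-steps anti-bounce-back and for $\orderExtrapolation = 1$ extrapolation requires verifying that $\kreissLopatinskiiScalarProduct$ is finite and nonzero there (so only the non-observed moments blow up), whereas the higher-order extrapolations $\orderExtrapolation\geq 2$ produce a $\kreissLopatinskiiScalarProduct$ with a zero whose order matches $\orderExtrapolation$, degrading $\momentDiscrete_1$ as well. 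Getting these pole/zero orders exactly right---necessarily through the symbolic computations alluded to in \Cref{sec:strongCertainSchemes}---is the delicate step, but it is entirely parallel to the \lbmScheme{1}{2} computations and presents no new conceptual difficulty.
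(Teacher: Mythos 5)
Your outline reproduces the paper's three-stage template (roots of the characteristic equation, shared eigenvalues via \eqref{eq:eigenvalueProblemsExcludingKappaZero}, then eigenvector poles versus the order of vanishing of $\kreissLopatinskiiScalarProduct$), and the classification logic you invoke is the right one. Two points, however, would trip you up if you carried out the plan exactly as stated.

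First, your expectation that bounce-back and anti-bounce-back ``contribute outflow modes near $\timeShiftOperator=\pm1$'' misses the mode that actually decides the anti-bounce-back verdict. For \eqref{eq:ABBD1Q3LW} at an outflow, the modes at $\timeShiftOperator=\pm1$ are harmless (the relevant one is of type \threeboxes{\notContinuousExtensionMark}{\eigenvalueMark}{\infiniteKL}); the MU-E conclusion comes from a pair of complex-conjugate shared eigenvalues $\timeShiftOperator^{\star}_{\pm}=\tfrac{1}{3}(1-4\courantNumber^2)\pm i\tfrac{2}{3}\sqrt{2(2\courantNumber^2+1)(1-\courantNumber^2)}\in\unitCircle$, for which one must prove $|\fourierShift^{\star}_{\mp}|=1$ (a non-obvious computation, done in the paper by splitting the numerator and denominator into even and odd polynomial parts) and then run a perturbation argument in the modulus of $\timeShiftOperator$ to decide that $\fourierShift^{\star}_{\mp}=\stableRoot$ when $\courantNumber<0$ but $=\unstableRoot$ when $\courantNumber>0$. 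Only then does $\kreissLopatinskiiScalarProduct(\timeShiftOperator^{\star}_{\pm})=0$ yield MU-E at the outflow and SS at the inflow. Your CAS sweep would surface these candidates, but the sign/modulus analysis needed to classify them is a genuinely new step relative to the \lbmScheme{1}{2} case, not a routine repetition. (A smaller inaccuracy: the third component of $\vectorial{\eigenvectorLetter}_{\textnormal{s}}(\timeShiftOperator)$ has a pole only at $\timeShiftOperator=-1$, not at $\timeShiftOperator=+1$.)

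Second, to upgrade ``$\kreissLopatinskiiScalarProduct$ finite and nonzero, pole only in the non-observed components'' to an actual SSOO estimate, you need the resolvent bound $1/(1-|\stableRoot(\timeShiftOperator)|)\leq \tilde{K}/(|\timeShiftOperator|-1)$. The paper cannot simply cite \cite[Lemma 11.3.2]{strikwerda2004finite} here because the Finite Difference scheme generated by the characteristic polynomial is \emph{not} stable for this fourth-order scheme (\confer{} \Cref{prop:stabD1Q3FourthOrder}), precisely owing to the double root at $(\timeShiftOperator,\fourierShift)=(-1,1)$ where $\stableRoot(-1)=\unstableRoot(-1)$. An \emph{ad hoc} expansion of $1/(1-|\stableRoot(\timeShiftOperator)|^2)$ near $\timeShiftOperator=-1$, using the explicit parametrization of $\stableRoot$ by $\fourierShift_{+}$, is required to close the two-steps anti-bounce-back and $\orderExtrapolation=1$ extrapolation SSOO claims. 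Your proposal treats this as ``entirely parallel to the \lbmScheme{1}{2} computations,'' which understates where the argument must be patched.
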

The situation is totally analogous to the \lbmScheme{1}{2} when $\relaxationParameterLetter_2 = 2$, \confer{} \Cref{thm:stabilityBoundaryD1Q2}, except for \eqref{eq:TwoABBD1Q3LW} when $\courantNumber>0$.
Notice that \eqref{eq:kineticDirichletD1Q3LW} is---once again---strongly stable regardless of the sign of $\courantNumber$.

\subsubsection{Numerical simulations}\label{sec:numericalSimD1Q3Fourth}

\begin{figure}
	\begin{center}
		\includegraphics[width=1\textwidth]{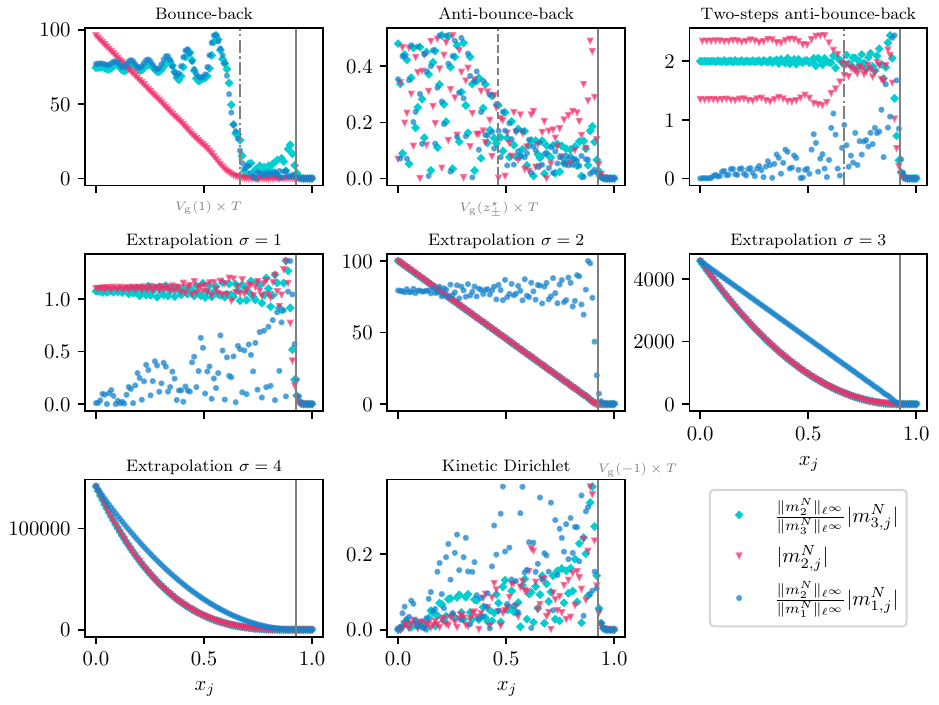}
	\end{center}\caption{\label{fig:D1Q3-4th-C--1_4-revision}Solution at final time $\finalTime$ for the fourth-order \lbmScheme{1}{3} scheme under $\courantNumber = -\tfrac{1}{4}$.
    The full grey line corresponds to $\groupVelocity(-1)\times\finalTime$, with $\groupVelocity(-1)$ is given by \eqref{eq:groupVelocityMinus1}.
	The dash-dotted grey line corresponds to $\groupVelocity(1)\times\finalTime$, with $\groupVelocity(1)$ is given by \eqref{eq:groupVelocityPlus1}.
	The dashed grey line corresponds to $\groupVelocity(\timeShiftOperator^{\star}_{\pm})\times \finalTime$, with $\groupVelocity(\timeShiftOperator^{\star}_{\pm})$ given by \eqref{eq:groupVelocityCompConj}.}
\end{figure}

\begin{figure}
	\begin{center}
		\includegraphics[width=1\textwidth]{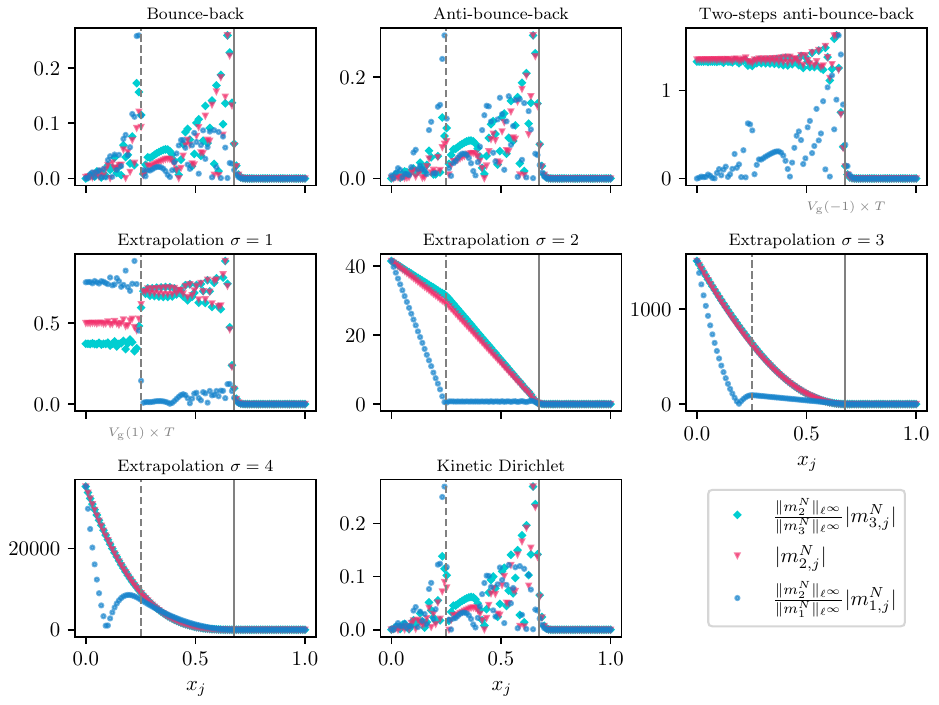}
	\end{center}\caption{\label{fig:D1Q3-4th-C-1_4-reivision}Solution at final time $\finalTime$ for the fourth-order \lbmScheme{1}{3} scheme under $\courantNumber = \tfrac{1}{4}$.
    The full grey line corresponds to $\groupVelocity(-1)\times\finalTime$, with $\groupVelocity(-1)$ is given by \eqref{eq:groupVelocityMinus1}, and the dashed grey line marks $\groupVelocity(1)\times\finalTime = \advectionVelocity\finalTime$.}
\end{figure}

We simulate using a domain with 100 points and show the solution at a time equal to one, with zero initial data and boundary source term $\boundarySourceTerm_{2, -1}^{\indexTime} = \delta_{0\indexTime}$. 
The results for $\courantNumber=- \tfrac{1}{4}$ are in \Cref{fig:D1Q3-4th-C--1_4-revision} and those for $\courantNumber=\tfrac{1}{4}$ in \Cref{fig:D1Q3-4th-C-1_4-reivision}.

\subsubsection{Proof of \Cref{thm:stabilityBoundaryD1Q3Fourth} and comments on the numerical simulations}

\paragraph{Description of the roots of the characteristic equation}

The characteristic equation \eqref{eq:characteristicEquation} features
\begin{align*}
	\coefficientCharEquationInFourier_{-1}(\timeShiftOperator) &=-\tfrac{1}{3} \, {\left(2 \, \courantNumber^{2} + 3 \, \courantNumber - 2\right)} {\timeShiftOperator}^{2} + \tfrac{1}{3} \, {\left(2 \, \courantNumber^{2} - 3 \, \courantNumber - 2\right)} {\timeShiftOperator}
	, \\ 
	\coefficientCharEquationInFourier_{0}(\timeShiftOperator) &= \tfrac{1}{3} \, {\left(4 \, \courantNumber^{2} - 1\right)} {\timeShiftOperator}^{2} + {\timeShiftOperator}^{3} - \tfrac{1}{3} \, {\left(4 \, \courantNumber^{2} - 1\right)} {\timeShiftOperator} - 1,	\\
	\coefficientCharEquationInFourier_{1}(\timeShiftOperator) &= -\tfrac{1}{3} \, {\left(2 \, \courantNumber^{2} - 3 \, \courantNumber - 2\right)} {\timeShiftOperator}^{2} + \tfrac{1}{3} \, {\left(2 \, \courantNumber^{2} + 3 \, \courantNumber - 2\right)} {\timeShiftOperator},
\end{align*}
where $\coefficientCharEquationInFourier_{-1}(\timeShiftOperator)$ and $\coefficientCharEquationInFourier_{1}(\timeShiftOperator)$ can vanish only for specific $\timeShiftOperator$'s.
These are 
\begin{align}
	\text{For}\quad \coefficientCharEquationInFourier_{-1}(\timeShiftOperator)=0, \qquad \timeShiftOperator&=\frac{(2\courantNumber+1)(\courantNumber-2)}{(2\courantNumber-1)(\courantNumber+2)}, \quad \text{with}\quad \fourierShift(\timeShiftOperator) = \frac{4\courantNumber^4-5\courantNumber^2-8}{(2\courantNumber+1)(2\courantNumber-1)(\courantNumber+2)(\courantNumber-2)}.\label{eq:critPointD1Q3FourthMinus1}\\
	\text{For}\quad \coefficientCharEquationInFourier_{1}(\timeShiftOperator)=0, \qquad \timeShiftOperator&=\frac{(2\courantNumber-1)(\courantNumber+2)}{(2\courantNumber+1)(\courantNumber-2)}, \quad \text{with}\quad \fourierShift(\timeShiftOperator) = \frac{(2\courantNumber+1)(2\courantNumber-1)(\courantNumber+2)(\courantNumber-2)}{4\courantNumber^4-5\courantNumber^2-8}.\label{eq:critPointD1Q3FourthPlus1}
\end{align}
For \eqref{eq:critPointD1Q3FourthMinus1}, we see that under the strict CFL condition $|\courantNumber|<\tfrac{1}{2}$, we have $\timeShiftOperator\in\neighborhoodInfinity$ as long as $\courantNumber\geq 0$. 
For the considered Courant numbers, one has that $\fourierShift(\timeShiftOperator)\in\neighborhoodInfinity$, which corresponds to an unstable root.
Conversely, for \eqref{eq:critPointD1Q3FourthPlus1}, we have that $\timeShiftOperator\in\neighborhoodInfinity$ as long as $\courantNumber\leq 0$. Then, $\fourierShift(\timeShiftOperator)\in\unitDisk$, which corresponds to a stable root.
However, we do not need to analyze these cases in the future, as we work under the strict CFL condition and only encounter points on $\unitCircle$. 
This allows---for instance---to simplify $\scalarFactorFromAdjugate(\timeShiftOperator)$ from both sides of \eqref{eq:coefficientWithKreissLopDet}.

\begin{proposition}
	Let the \strong{von Neumann} stability condition from \Cref{prop:stabD1Q3FourthOrder} be fulfilled.
	Then, for $\timeShiftOperator\in\neighborhoodInfinity$, the characteristic equation \eqref{eq:characteristicEquation} for the fourth-order \lbmScheme{1}{3} scheme has two roots (except at points where $\coefficientCharEquationInFourier_{-1}(\timeShiftOperator)$ or $\coefficientCharEquationInFourier_{1}(\timeShiftOperator)$ vanish), one stable root $\stableRoot(\timeShiftOperator)\in\unitDisk$, and one unstable root $\unstableRoot(\timeShiftOperator)\in\neighborhoodInfinity$.
	They can be extended to the unit circle, \idEst{} to $\timeShiftOperator\in\unitCircle$, and this extension is still denoted $\stableRoot(\timeShiftOperator)$ and $\unstableRoot(\timeShiftOperator)$, with
	\begin{equation*}
		\stableRoot(1) = 
		\begin{cases}
			-1, \qquad &\text{if}\quad \courantNumber<0, \\
			1 \qquad &\text{if}\quad \courantNumber>0.
		\end{cases}\qquad 
		\unstableRoot(1) = 
		\begin{cases}
			1, \qquad &\text{if}\quad \courantNumber<0, \\
			-1\qquad &\text{if}\quad \courantNumber>0,
		\end{cases}
		\qquad \text{and}\qquad \stableRoot(-1) = \unstableRoot(-1) = 1.
	\end{equation*}
\end{proposition}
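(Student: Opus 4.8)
The first assertion---that for $\timeShiftOperator\in\neighborhoodInfinity$ the characteristic equation \eqref{eq:characteristicEquation} has exactly one stable root $\stableRoot(\timeShiftOperator)\in\unitDisk$ and one unstable root $\unstableRoot(\timeShiftOperator)\in\neighborhoodInfinity$, together with their continuous extension to $\unitCircle$---follows directly from the Hersh lemma (\Cref{lemma:Hersh}) applied to \eqref{eq:characteristicEquation}, which here has $\stencilLeftCharacteristic=\stencilRightCharacteristic=1$. Von Neumann stability (\Cref{prop:stabD1Q3FourthOrder}) forbids any root on $\unitCircle$ while $\timeShiftOperator\in\neighborhoodInfinity$, so the two roots stay separated and depend continuously on $\timeShiftOperator$, hence extend to $\unitCircle$. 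Under the strict CFL condition the singular values of $\timeShiftOperator$ from \eqref{eq:critPointD1Q3FourthMinus1}--\eqref{eq:critPointD1Q3FourthPlus1} lie in $\neighborhoodInfinity$ and never coincide with $\timeShiftOperator=\pm 1$; indeed one checks directly that $\coefficientCharEquationInFourier_{1}(\pm 1)$ and $\coefficientCharEquationInFourier_{-1}(\pm 1)$ are nonzero, so \eqref{eq:characteristicEquation} remains genuinely quadratic in $\fourierShift$ at both points.

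For $\timeShiftOperator=1$, I would substitute into the coefficients to find $\coefficientCharEquationInFourier_{1}(1)=2\courantNumber$, $\coefficientCharEquationInFourier_{0}(1)=0$, and $\coefficientCharEquationInFourier_{-1}(1)=-2\courantNumber$, so \eqref{eq:characteristicEquation} reduces to $2\courantNumber(\fourierShift^2-1)=0$, yielding the two candidates $\fourierShift=1$ and $\fourierShift=-1$. To decide which is stable, I would follow the perturbation technique already used for the \lbmScheme{1}{3} \emph{à la} Lax-Wendroff: writing $P(\timeShiftOperator,\fourierShift)\definitionEquality\coefficientCharEquationInFourier_{1}(\timeShiftOperator)\fourierShift^2+\coefficientCharEquationInFourier_{0}(\timeShiftOperator)\fourierShift+\coefficientCharEquationInFourier_{-1}(\timeShiftOperator)$ and setting $\timeShiftOperator=1+\delta\timeShiftOperator$, $\fourierShift=1+\delta\fourierShift$, a first-order expansion with $\partial_{\fourierShift}P(1,1)=4\courantNumber$ and $\partial_{\timeShiftOperator}P(1,1)=4$ gives $\delta\fourierShift=-\tfrac{1}{\courantNumber}\delta\timeShiftOperator$. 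Taking $\delta\timeShiftOperator>0$ real so that $\timeShiftOperator$ enters $\neighborhoodInfinity$, the branch emanating from $\fourierShift=1$ moves into $\unitDisk$ when $\courantNumber>0$ and out of it when $\courantNumber<0$; the branch at $\fourierShift=-1$ inherits the complementary label from the one-stable--one-unstable count. This yields $\stableRoot(1)=1$, $\unstableRoot(1)=-1$ for $\courantNumber>0$, and $\stableRoot(1)=-1$, $\unstableRoot(1)=1$ for $\courantNumber<0$.

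For $\timeShiftOperator=-1$, evaluating the coefficients gives $\coefficientCharEquationInFourier_{1}(-1)=\coefficientCharEquationInFourier_{-1}(-1)=\tfrac{4}{3}(1-\courantNumber^2)$ and $\coefficientCharEquationInFourier_{0}(-1)=-\tfrac{8}{3}(1-\courantNumber^2)$. Since $|\courantNumber|<\tfrac12$ makes $1-\courantNumber^2\neq 0$, \eqref{eq:characteristicEquation} collapses to $(\fourierShift-1)^2=0$, a double root at $\fourierShift=1$. This coalescence is coherent with the fact that $\spectrum(\schemeMatrixBulkFourier(1))=\{1,-1\}$ carries the semi-simple double eigenvalue $\timeShiftOperator=-1$ (\confer{} \Cref{prop:stabD1Q3FourthOrder}), both features reflecting the singular point $(\timeShiftOperator,\fourierShift)=(-1,1)$ of the characteristic variety. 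By continuity of the two branches as $\timeShiftOperator\to-1$ within $\closedNeighborhoodInfinity$, I then conclude $\stableRoot(-1)=\unstableRoot(-1)=1$.

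The main obstacle is precisely this last coalescence: the stable/unstable labelling is meaningful only for $\timeShiftOperator\in\neighborhoodInfinity$, and at $\timeShiftOperator=-1$ it degenerates, so one cannot appeal to the Hersh count (which fixes only cardinalities away from the isolated singular values of $\timeShiftOperator$) but must instead argue by continuity of the root branches. Everything else reduces to a substitution and a single first-order perturbation, which I would entrust to the symbolic computations already mentioned in the paper.
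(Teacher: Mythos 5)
Your proof is correct and follows essentially the same route as the paper: the Hersh lemma for the existence, counting, and continuous extension of the stable/unstable roots, followed by a first-order perturbation $\timeShiftOperator = 1+\delta\timeShiftOperator$ to decide the labelling at $\timeShiftOperator = 1$ (the paper perturbs around $(\timeShiftOperator,\fourierShift)=(1,-1)$, obtaining $\delta\fourierShift = -\tfrac{1}{3\courantNumber}(2\courantNumber^2+1)\delta\timeShiftOperator$, whereas you perturb around $(1,1)$ and get $\delta\fourierShift = -\tfrac{1}{\courantNumber}\delta\timeShiftOperator$ — the two are equivalent since the complementary root inherits the other label), and a coalescence argument at $\timeShiftOperator=-1$ where the paper simply cites the double root from prior work. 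Your explicit evaluation of $\coefficientCharEquationInFourier_{\pm 1}(\pm 1)$ to rule out degeneration of the quadratic at these points is a welcome extra verification that the paper leaves implicit.
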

\begin{proof}
	The first part of the claim is the Hersh lemma.
	For the second part, we have 
	\begin{itemize}
		\item Let $(\timeShiftOperator, \fourierShift) = (1+\delta\timeShiftOperator, -1+\fourierShift)$. Into the characteristic equation, this yields $\delta\fourierShift = -\tfrac{1}{3\courantNumber}(2\courantNumber^2 + 1)\delta \timeShiftOperator$.
		Letting $\delta\timeShiftOperator>0$, this gives that $\delta\fourierShift<0$ for $\courantNumber>0$, which indicates that $\fourierShift(1) = -1 =\unstableRoot(1)$, whereas $\delta\fourierShift>0$ for $\courantNumber>0$, which indicates that $\fourierShift(1) = -1 =\stableRoot(1)$.
		\item The case $(\timeShiftOperator, \fourierShift) = (1, 1)$ is treated analogously and yields reversed results between $\stableRoot$ and $\unstableRoot$, \confer{} $\productRootsDOneQThree|_{\timeShiftOperator = 1}= -1$.
		\item For $(\timeShiftOperator, \fourierShift) = (-1, 1)$, the root is double, as already seen in \cite{bellotti:hal-04358349}.
	\end{itemize}
\end{proof}

\paragraph{Shared eigenvalues}

From \eqref{eq:eigenvalueProblemsExcludingKappaZero}, we have the following candidates
\begin{align*}
	\text{For }\eqref{eq:BBD1Q3LW}\qquad &(\timeShiftOperator, \fourierShift) =  (1, -1), \quad (\timeShiftOperator, \fourierShift) = (-1, 1), \\
	\text{For }\eqref{eq:TwoABBD1Q3LW}\qquad &(\timeShiftOperator, \fourierShift) =  (1, -1), \quad (\timeShiftOperator, \fourierShift) = (-1, 1),\\
	\text{For }\eqref{eq:extrapolationD1Q3LW}, \, \orderExtrapolation = 1, 2, 3, 4\qquad &(\timeShiftOperator, \fourierShift) =  (1, 1), \quad (\timeShiftOperator, \fourierShift) = (-1, 1),\\
	\text{For }\eqref{eq:kineticDirichletD1Q3LW}\qquad &(\timeShiftOperator, \fourierShift) = (-1, 1).
\end{align*}
To correctly deal with \eqref{eq:ABBD1Q3LW}, we first solve the second equation in \eqref{eq:eigenvalueProblemsExcludingKappaZero}, which is first-order in $\fourierShift$, in $\fourierShift$. 
We plug this result into the first equation to yield the corresponding $\timeShiftOperator$:
\begin{equation*}
	\text{For }\eqref{eq:ABBD1Q3LW}\qquad (\timeShiftOperator, \fourierShift) =  (-1, 1), \quad \timeShiftOperator
	= \tfrac{1}{3}(1-4\courantNumber^2) \pm i \tfrac{2}{3}\sqrt{2(2\courantNumber^2+1)(1-\courantNumber^2)} =: \timeShiftOperator^{\star}_{\pm}.
\end{equation*}
A straightforward computation shows that $\timeShiftOperator^{\star}_{\pm}\in\unitCircle$, hence it is useful to investigate the corresponding $\fourierShift$'s.
From the second equation in \eqref{eq:eigenvalueProblemsExcludingKappaZero}:
\begin{multline}\label{eq:definitionStarredK}
	\fourierShift(\timeShiftOperator^{\star}_{\mp}) =: \fourierShift^{\star}_{\mp} = \\
	-\frac{32  \courantNumber^{6} + 48  \courantNumber^{5} - 36  \courantNumber^{4} - 42  \courantNumber^{3} - 6  \courantNumber^{2} \pm {\left(16  \courantNumber^{4} + 24  \courantNumber^{3} - 14  \courantNumber^{2} - 15  \courantNumber - 2\right)} \sqrt{4  \courantNumber^{4} - 2  \courantNumber^{2} - 2} - 6  \courantNumber + 10}{32  \courantNumber^{6} - 48  \courantNumber^{5} - 36  \courantNumber^{4} + 42  \courantNumber^{3} - 6  \courantNumber^{2} \pm {\left(16  \courantNumber^{4} - 24  \courantNumber^{3} - 14  \courantNumber^{2} + 15  \courantNumber - 2\right)} \sqrt{4  \courantNumber^{4} - 2  \courantNumber^{2} - 2} + 6  \courantNumber + 10}.
\end{multline}
Call $\mathscr{R}(\courantNumber) \definitionEquality -4  \courantNumber^{4} + 2  \courantNumber^{2} + 2 = 2(2\courantNumber^2+1)(1-\courantNumber^2)$, $\fourierShift^{\star}_{\mp}$ reads 
\begin{equation*}
	\fourierShift^{\star}_{\mp} =-\frac{\mathscr{E}_{\Re}(\courantNumber) + \mathscr{O}_{\Re}(\courantNumber) \pm i (\mathscr{E}_{\Im}(\courantNumber) + \mathscr{O}_{\Im}(\courantNumber))\sqrt{\mathscr{R}(\courantNumber)}}{\mathscr{E}_{\Re}(\courantNumber) - \mathscr{O}_{\Re}(\courantNumber) \pm i (\mathscr{E}_{\Im}(\courantNumber) - \mathscr{O}_{\Im}(\courantNumber))\sqrt{\mathscr{R}(\courantNumber)}}, 
\end{equation*}
where $\mathscr{E}_{\Re}$ and $\mathscr{E}_{\Im}$ are even polynomials, and $\mathscr{O}_{\Re}$ and $\mathscr{O}_{\Im}$ are odd polynomials, given by 
\begin{align*}
	\mathscr{E}_{\Re} &= 2(8\courantNumber^2-5)(2\courantNumber^2 + 1)(\courantNumber^2 - 1), \qquad \mathscr{O}_{\Re} = 6(8\courantNumber^2+1)(\courantNumber^2 - 1)\courantNumber,\\
	\mathscr{O}_{\Im} &= 3(8\courantNumber^2-5)\courantNumber, \qquad \mathscr{E}_{\Im} = 2(8\courantNumber^2+1)(\courantNumber^2 - 1).
\end{align*}
A straightforward multiplication and division by the conjugate of the denominator gives, omitting the dependence on the Courant number $\courantNumber$
\begin{equation*}
	\fourierShift^{\star}_{\mp}  = - \frac{\mathscr{E}_{\Re}^2 - \mathscr{O}_{\Re}^2 + (\mathscr{E}_{\Im}^2 - \mathscr{O}_{\Im}^2)\mathscr{R} \pm 2 i (\mathscr{E}_{\Re}\mathscr{O}_{\Im}-\mathscr{E}_{\Im}\mathscr{O}_{\Re}) \sqrt{\mathscr{R}}}{(\mathscr{E}_{\Re}-  \mathscr{O}_{\Re})^2 + (\mathscr{E}_{\Im} - \mathscr{O}_{\Im})^2\mathscr{R}}.
\end{equation*}
This yields 
\begin{equation*}
	| \fourierShift^{\star}_{\mp} |^2 = \frac{(\mathscr{E}_{\Re}^2 - \mathscr{O}_{\Re}^2 + (\mathscr{E}_{\Im}^2 - \mathscr{O}_{\Im}^2)\mathscr{R})^2 + 4  (\mathscr{E}_{\Re}\mathscr{O}_{\Im}-\mathscr{E}_{\Im}\mathscr{O}_{\Re})^2 \mathscr{R}}{((\mathscr{E}_{\Re}-  \mathscr{O}_{\Re})^2 + (\mathscr{E}_{\Im} - \mathscr{O}_{\Im})^2\mathscr{R})^2}
	= \frac{2916(4\courantNumber^4-11\courantNumber^2-2)^2(\courantNumber+1)^2(\courantNumber-1)^2}{2916(4\courantNumber^4-11\courantNumber^2-2)^2(\courantNumber+1)^2(\courantNumber-1)^2} 
	= 1
\end{equation*}
and shows that $\fourierShift^{\star}_{\mp}\in\unitCircle$.

In order to understand if these values of $\fourierShift^{\star}_{\pm}$ correspond either to $\stableRoot$ or to $\unstableRoot$, we solve \eqref{eq:characteristicEquation} for $\timeShiftOperator^{\star}_{\pm}$.
In both cases, the additional $\fourierShift$ is equal to $-1$ (notice that this is not---by the way---a solution of the second equation in \eqref{eq:eigenvalueProblemsExcludingKappaZero}, relative to the boundary condition).
We therefore consider $(\timeShiftOperator, \fourierShift) = ((1+\delta \timeShiftOperator)\timeShiftOperator^{\star}_{\pm}, -1+\delta\fourierShift)$ with $\delta\timeShiftOperator>0$ playing the role of a perturbation of the modulus, $\delta\fourierShift\in\complex$, and insert this into \eqref{eq:characteristicEquation}.
At leading order, this gives
\begin{equation*}
	\delta\fourierShift = \frac{2(2\courantNumber^2+1)}{3\courantNumber}\delta\timeShiftOperator,
\end{equation*}
from which we deduce that $\delta\fourierShift\in\reals$ and $\delta\fourierShift>0$ when $\courantNumber>0$, hence the other root, the one equal to $\fourierShift = -1$, is indeed $\stableRoot$, thus $\fourierShift^{\star}_{\mp} $ is $\unstableRoot$. 
Conversely, for $\courantNumber<0$, we obtain $\delta\fourierShift<0$, hence the other root equal to $\fourierShift = -1$ is $\unstableRoot$, which gives that $\fourierShift^{\star}_{\mp}$ is $\stableRoot$.

The whole picture is in \Cref{tab:modesD1Q4Fourth}.

\begin{table}\caption{\label{tab:modesD1Q4Fourth}Unstable candidate eigenvalues (shared between bulk and boundary scheme) for the fourth-order \lbmScheme{1}{3} scheme.}
	\begin{center}
		\begin{tabular}{|c||c|c|}
			\cline{2-3}
			\multicolumn{1}{c||}{} & $\courantNumber<0$ (outflow) & $\courantNumber>0$ (inflow)\\
			\hline
			\hline
			Bounce-back \eqref{eq:BBD1Q3LW} & $(1, -1)$ and $(-1, 1)$ & $(-1, 1)$ \\
			Anti-bounce-back \eqref{eq:ABBD1Q3LW} & $(-1, 1)$ and $(\timeShiftOperator^{\star}_{\pm}, \fourierShift^{\star}_{\pm})$ by \eqref{eq:definitionStarredK}  & $(-1, 1)$ \\
			Two-steps anti-bounce-back \eqref{eq:TwoABBD1Q3LW} & $(1, -1)$ and $(-1, 1)$ & $(-1, 1)$ \\
			Extrapolation $1\leq \orderExtrapolation \leq 4$ \eqref{eq:extrapolationD1Q3LW} & $(-1, 1)$ & $(1, 1)$ and $(-1, 1)$ \\
			Kinetic Dirichlet \eqref{eq:kineticDirichletD1Q3LW} & $(-1, 1)$ & $(-1, 1)$ \\
			\hline
		\end{tabular}
	\end{center}
\end{table}

\paragraph{Eigenvectors: conclusion of the proof and description of the instabilities}

For $\timeShiftOperator\in\neighborhoodInfinity$, the root $\stableRoot(\timeShiftOperator)$ is simple. 
Its extension to $\timeShiftOperator\in\unitDisk$ is also simple, except when $\timeShiftOperator = -1$, where it equals $\unstableRoot$. 
Still, $\dimension{\kernel(\matrixPolynomialBulk{-1}(1))} = 2$.
\Cref{prop:noDiracBoundary} applies.

\subparagraph{Continuity of the extension of $\stableSubspace(\timeShiftOperator)$ to $\unitCircle$}

We compute an element of $\kernel(\matrixPolynomialBulk{\timeShiftOperator}(\fourierShift_{\pm}(\timeShiftOperator)))$, with the first component normalized to one.
The second and third components come under the form of fractions. For the second component, the denominator vanishes at $\timeShiftOperator = \pm 1$.
For the third component, it does so only at $\timeShiftOperator = -1$.
These values are points where a continuous extension of $\stableSubspace(\timeShiftOperator)$ to $\unitCircle$ may lack.
Let us study this in more detail.
\begin{itemize}
	\item Let $\courantNumber<0$. 
	Consider $\timeShiftOperator = -1$.
	Inserting $\timeShiftOperator = -1 -\delta\timeShiftOperator$ with $\delta\timeShiftOperator>0$ into $\fourierShift_{\pm}(\timeShiftOperator)$ and performing Taylor expansions, we obtain
	\begin{equation}\label{eq:groupVelocityMinus1Preliminary}
		\fourierShift_{\pm}(-1 -\delta\timeShiftOperator) = 1 - \frac{3\courantNumber \pm \sqrt{3}\sqrt{8-5\courantNumber^2}}{4(1-\courantNumber^2)}\delta\timeShiftOperator + \bigO{(\delta\timeShiftOperator)^2},
	\end{equation}
	where $3\courantNumber + \sqrt{3}\sqrt{8-5\courantNumber^2} >0$ whereas $3\courantNumber - \sqrt{3}\sqrt{8-5\courantNumber^2} <0$.
	Therefore, we locally parametrize $\stableRoot$ by $\fourierShift_+$.
	The first order term in \eqref{eq:groupVelocityMinus1Preliminary} can be readily employed in \eqref{eq:groupVelocity} to yield the group velocity of this mode:
	\begin{equation}\label{eq:groupVelocityMinus1}
		\groupVelocity(-1) = \latticeVelocity\tfrac{\sqrt{3}}{6}(-\sqrt{3}\courantNumber + \sqrt{8-5\courantNumber^2}) > -\latticeVelocity\courantNumber,
	\end{equation}
	which we already found in \cite{bellotti:hal-04358349}.
	For an eigenvector $\vectorial{\eigenvectorLetter}_{\textnormal{s}}(\timeShiftOperator) \in \kernel(\matrixPolynomialBulk{\timeShiftOperator}(\stableRoot(\timeShiftOperator)))$, we have 
	\begin{equation}\label{eq:eigenvectorD1Q34thMinus1}
		\vectorial{\eigenvectorLetter}_{\textnormal{s}}(\timeShiftOperator) = 
		\begin{pmatrix}
			1\\
			{\eigenvectorLetter}_{\textnormal{s}, 2}(\timeShiftOperator)\\
			{\eigenvectorLetter}_{\textnormal{s}, 3}(\timeShiftOperator)
		\end{pmatrix}, 
		\qquad \text{with}\qquad
		\begin{array}{l}
			{\eigenvectorLetter}_{\textnormal{s}, 2}(\timeShiftOperator) = \frac{4 \sqrt{3}  {\left(- \sqrt{3}   \courantNumber+  \sqrt{8 - 5  \courantNumber^{2} } \right)}{\left(1 -\courantNumber^2\right)}}{-3 \sqrt{3} \courantNumber\sqrt{8-5  \courantNumber^{2}}  - 3  \courantNumber^{2} + 12} \frac{1}{\timeShiftOperator+1}+\bigO{1}, \\
			{\eigenvectorLetter}_{\textnormal{s}, 3}(\timeShiftOperator) = \tfrac{4}{3}(\courantNumber^2-1) \frac{1}{\timeShiftOperator+1}+\bigO{1}.
		\end{array}
	\end{equation}

	For $\timeShiftOperator = 1$, the parametrization of the stable root is also given by $\fourierShift_+$.
	This is easily checked without any issue linked to multiplicity, contrarily to $\timeShiftOperator = -1$.
	We obtain the group velocity 
	\begin{equation}\label{eq:groupVelocityPlus1}
		\groupVelocity(1) = -\latticeVelocity \frac{3\courantNumber}{2\courantNumber^2 + 1}>0,
	\end{equation}
	which is interestingly different from $\latticeVelocity\courantNumber$, the ``physical'' velocity that equals the group velocity of $\unstableRoot$.
	The eigenvector $\vectorial{\eigenvectorLetter}_{\textnormal{s}}(\timeShiftOperator) = \transpose{(1, {\eigenvectorLetter}_{\textnormal{s}, 2}(\timeShiftOperator), {\eigenvectorLetter}_{\textnormal{s}, 3}(\timeShiftOperator))}$ has
	\begin{equation*}
			{\eigenvectorLetter}_{\textnormal{s}, 2}(\timeShiftOperator) =-\frac{2\courantNumber}{\timeShiftOperator-1}+\bigO{\timeShiftOperator-1}\qquad \text{and}\qquad
			{\eigenvectorLetter}_{\textnormal{s}, 3}(\timeShiftOperator) = \tfrac{1}{3}(1+2\courantNumber^2) +\bigO{\timeShiftOperator-1}.
	\end{equation*}
	\item Let $\courantNumber>0$. When considering $\timeShiftOperator = -1$, the same expansion as \eqref{eq:groupVelocityMinus1Preliminary} holds, thus also in this case $\stableRoot$ is locally parametrized by $\fourierShift_+$. The group velocity of the mode is also given by \eqref{eq:groupVelocityMinus1}, and the principal part of $\vectorial{\eigenvectorLetter}_{\textnormal{s}}(\timeShiftOperator)$ by \eqref{eq:eigenvectorD1Q34thMinus1} applies.
	
	For $\timeShiftOperator = 1$, once again $\stableRoot$ is parametrized by $\fourierShift_+$, and we obtain the group velocity $\groupVelocity(1) = \latticeVelocity\courantNumber>0$, which is different from \eqref{eq:groupVelocityPlus1} since it is the ``physical'' velocity of the flow.
	There is no blowup in $\eigenvectorLetter_{\textnormal{s}, 2}(\timeShiftOperator)$ thanks to a zero-pole cancellation in the inherent fraction: $\eigenvectorLetter_{\textnormal{s}, 2}(\timeShiftOperator) = \courantNumber + \bigO{\timeShiftOperator - 1}$.
\end{itemize}

\subparagraph{Kreiss-Lopatinskii determinants in the outflow case $\courantNumber<0$}
Now consider $\courantNumber<0$.
\begin{itemize}
	\item Consider \eqref{eq:BBD1Q3LW}. We have $\kreissLopatinskiiScalarProduct(\timeShiftOperator) = \bigO{(\timeShiftOperator + 1)^{-1}}$ with involved coefficients, hence \threeboxes{\notContinuousExtensionMark}{\eigenvalueMark}{\infiniteKL}.
	This is coherent with \Cref{fig:D1Q3-4th-C--1_4-revision} (this mode fades away).
	However, $\kreissLopatinskiiScalarProduct(1) = 0$ (\idEst{}, \threeboxes{\notContinuousExtensionMark}{\eigenvalueMark}{\zeroKL}), hence the scheme is neither SS, nor SSOO.
	\item Consider \eqref{eq:ABBD1Q3LW}. We have $\kreissLopatinskiiScalarProduct(\timeShiftOperator) = \frac{4(1-\courantNumber^2)}{3} (\timeShiftOperator + 1)^{-1} + \bigO{1}$, hence \threeboxes{\notContinuousExtensionMark}{\eigenvalueMark}{\infiniteKL}. 
	For the eigenvalue $(\timeShiftOperator_{\pm}^{\star}, \fourierShift_{\pm}^{\star})$, we obtain $\kreissLopatinskiiScalarProduct(\timeShiftOperator_{\pm}^{\star}) = 0$, hence \threeboxes{\continuousExtensionMark}{\eigenvalueMark}{\zeroKL} and the scheme is neither SS nor SSOO.
	The group velocity of this mode could be expressed in a closed form, which would however be quite involved.
	For $\courantNumber=-\tfrac{1}{4}$ considered in \Cref{sec:numericalSimD1Q3Fourth}, it is explicitly given by 
	\begin{equation}\label{eq:groupVelocityCompConj}
		\groupVelocity(\timeShiftOperator^{\star}_{\pm}) =  \latticeVelocity \tfrac{19}{41}.
	\end{equation}
	Notice that resonant boundary source terms for this mode are proportional to  
	\begin{equation*}
		\cos  ( \indexTime \,\textnormal{arg}(\timeShiftOperator^{\star}_{\pm}))=  \cos \Bigl ( \indexTime \arctan\Bigl ( \frac{2\sqrt{2(2\courantNumber^2+1)(1-\courantNumber^2)}}{1-4\courantNumber^2}\Bigr )\Bigr ).
	\end{equation*} 
	This whole description is perfectly coherent with \Cref{fig:D1Q3-4th-C--1_4-revision}.
	\item Consider \eqref{eq:TwoABBD1Q3LW}. 
	We have that $\kreissLopatinskiiScalarProduct(-1) = -1\neq 0$, so \threeboxes{\notContinuousExtensionMark}{\eigenvalueMark}{\nonZeroKL}. 
	We obtain $\kreissLopatinskiiScalarProduct(1) = 1 \neq 0$, hence \threeboxes{\notContinuousExtensionMark}{\eigenvalueMark}{\nonZeroKL}.
	This entails that the scheme is SSOO only: once arrived at something similar to \eqref{eq:tmp7}, we can invoke \cite[Lemma 11.3.2]{strikwerda2004finite}, except in the vicinity of $\timeShiftOperator = -1$, since here the corresponding Finite Difference scheme obtained by the characteristic polynomial fails to be stable. 
		However, we proceed as in the proof of \cite[Lemma 11.3.2]{strikwerda2004finite}: in this vicinity, $\fourierShift$ solution of \eqref{eq:characteristicEquation} is analytic in $\timeShiftOperator$ and $\stableRoot$ is simple (except at the limit point). 
	Moreover, since we have a parametrization of $\stableRoot$ given by $\fourierShift_+$, we can propose an \emph{ad hoc} analysis at this point:
	\begin{equation*}
		\frac{1}{1-|\stableRoot(\timeShiftOperator)|^2} = \frac{2(\courantNumber^2-1)}{3\courantNumber-\sqrt{3}\sqrt{8-5\courantNumber^2}}\frac{1}{\timeShiftOperator + 1} + \bigO{1},
	\end{equation*}
	which proves that there exists $\tilde{K}>0$ such that $\frac{1}{1-|\stableRoot(\timeShiftOperator)|^2} \leq \frac{\tilde{K}}{|\timeShiftOperator|-1}$ for all $\timeShiftOperator\in\neighborhoodInfinity$, which concludes the proof as for the \lbmScheme{1}{2} scheme.
	\item Consider \eqref{eq:extrapolationD1Q3LW} with $\orderExtrapolation = 1$.
	We obtain $\kreissLopatinskiiScalarProduct(-1) \neq 0$ (the precise expression is involved), \idEst{} \threeboxes{\notContinuousExtensionMark}{\eigenvalueMark}{\nonZeroKL}.
	This boundary condition is thus SSOO.
	\item Consider \eqref{eq:extrapolationD1Q3LW} for $\orderExtrapolation = 2, 3, 4$.
	We have $\kreissLopatinskiiScalarProduct(\timeShiftOperator) = \bigO{(\timeShiftOperator + 1)^{\sigma-1}}$, hence \threeboxes{\notContinuousExtensionMark}{\eigenvalueMark}{\zeroKL}, proving that these boundary conditions are neither SS, nor SSOO.
	\item Consider \eqref{eq:kineticDirichletD1Q3LW}.
	For the eigenvalue $(-1, 1)$, we have $\kreissLopatinskiiScalarProduct(\timeShiftOperator) = \bigO{(\timeShiftOperator + 1)^{-1}}$, so \threeboxes{\notContinuousExtensionMark}{\eigenvalueMark}{\infiniteKL}.
	The boundary condition is thus SS (and not only SSOO).
\end{itemize}

\subparagraph{Kreiss-Lopatinskii determinants in the outflow case $\courantNumber>0$}
We finish on $\courantNumber>0$.
\begin{itemize}
	\item Consider \eqref{eq:BBD1Q3LW} and \eqref{eq:ABBD1Q3LW}. 
	For both boundary conditions, we obtain $\kreissLopatinskiiScalarProduct(\timeShiftOperator) = \bigO{(\timeShiftOperator + 1)^{-1}}$ (\idEst{} \threeboxes{\notContinuousExtensionMark}{\eigenvalueMark}{\infiniteKL}), hence they are SS.
	\item Consider \eqref{eq:TwoABBD1Q3LW}.
	We have $\kreissLopatinskiiScalarProduct(-1) = \tfrac{4}{3}(1-\courantNumber^2) \neq 0$, so \threeboxes{\notContinuousExtensionMark}{\eigenvalueMark}{\nonZeroKL}.
	The boundary condition is thus only SSOO.
	Even if this boundary condition is used to enforce exact Dirichlet boundary conditions on $\momentDiscrete_1$, this fact is not problematic for smooth solutions, for one is interested in taking advantage of the fourth-order accuracy. Indeed, checkerboard modes $\propto (-1)^{\indexTime}$ are not excited, for they are far from being smooth.
	\item Consider \eqref{eq:extrapolationD1Q3LW} for $\orderExtrapolation = 1, 2, 3, 4$.
	We obtain
	\begin{equation*}
		\kreissLopatinskiiMatrix(\timeShiftOperator)\canonicalBasisVector{1} = \frac{2\courantNumber^2+3\courantNumber+1}{6\courantNumber^{\orderExtrapolation}}(\timeShiftOperator-1)^{\orderExtrapolation}(\canonicalBasisVector{1}+\canonicalBasisVector{2}+\canonicalBasisVector{3}) + \bigOVectorial{(\timeShiftOperator-1)^{\orderExtrapolation - 1}},
	\end{equation*}
	which entails that $\kreissLopatinskiiScalarProduct(\timeShiftOperator) = \bigO{(\timeShiftOperator-1)^{\orderExtrapolation}}$, so \threeboxes{\continuousExtensionMark}{\eigenvalueMark}{\zeroKL}, and the boundary condition is neither SS nor SSOO.
	In \Cref{fig:D1Q3-4th-C--1_4-revision}, we see a second mode: $\kreissLopatinskiiScalarProduct(\timeShiftOperator) = \bigO{(\timeShiftOperator+1)^{\orderExtrapolation-1}}$, hence \threeboxes{\notContinuousExtensionMark}{\eigenvalueMark}{\nonZeroKL} when $\orderExtrapolation = 1$ (and this mode is mastered on $\momentDiscrete_1$), and \threeboxes{\notContinuousExtensionMark}{\eigenvalueMark}{\zeroKL} when $\orderExtrapolation = 2, 3, 4$.
	\item Consider \eqref{eq:kineticDirichletD1Q3LW}. We have $\kreissLopatinskiiScalarProduct(\timeShiftOperator) = \bigO{(\timeShiftOperator + 1)^{-1}}$, so \threeboxes{\notContinuousExtensionMark}{\eigenvalueMark}{\infiniteKL}, and the boundary condition is SS.
\end{itemize}

\section{Conclusions and perspectives}\label{sec:conclusions}

We have introduced the notion of strong stability for \lbm{} schemes, which we have studied for three representative schemes endowed with boundary conditions from the literature.
The intrinsic characteristic nature of \lbm{} schemes causes a lack of continuous extension of the stable vector bundle to the unit circle when relaxation parameters equal two.
We conjecture that this is the only case where this can generally happen.
In this setting, the characteristic equation \eqref{eq:characteristicEquation} is the one of the leap-frog scheme (see \Cref{sec:D1Q2}) or has similar properties (\confer{}, \Cref{sec:D1Q3FourthOrder}), namely does not dissipate any harmonics.
This lack of continuous extension underpins radically different behaviors between different components of the numerical solution, some of which can be strongly stable while others are not.

Besides consistency and thus actual ``physical'' usefulness of boundary conditions---which has not been addressed in the paper---we can draw the following conclusions upon the three schemes we have considered.
First, the ``kinetic Dirichlet'' boundary condition, boiling down to setting incoming distribution functions to zero, was strongly stable regardless of the nature (inflow or outflow) of the boundary.
This is analogous to the so-called \strong{Goldberg-Tadmor} lemma, stipulating the same thing for Dirichlet boundary conditions for stable Finite Difference schemes.
Second, the anti-bounce back condition, usually employed to enforce inflow boundary conditions, is strongly stable when the boundary is an inflow. However, it is not strongly stable when used at outflows, which is probably one of the reasons why it is not usually employed in this case.
Third, the two-steps anti-bounce back condition, which can be used to enforce exact Dirichlet conditions on the conserved moment, is always at least strongly stable on the observed output (\idEst{}, on the conserved moment).

Several stimulating research perspectives are envisioned.
The main ones are discussed below.
First, introduce bulk source terms---in addition to the already considered boundary source terms---in the definition of strong stability for \lbm{} schemes.
Second, derive semigroup estimates \cite{coulombel2011semigroup, coulombel2015leray} of the numerical solution---to be measured, roughly speaking, in the $\ell_{\timeStep}^{\infty}\ell_{\spaceStep}^{2}$-norm rather than with respect to the $\ell_{\timeStep}^{2}\ell_{\spaceStep}^{2}$-topology as in \Cref{def:strongStability}.
This would allow considering non-zero initial data (although this has also been investigated within strong stability, see \cite{coulombel2015fully}) and hopefully attain convergence results in the spirit of \cite{coulombel2020neumann}.
Finally, it is interesting to consider multi-dimensional problems, first with a boundary only in one direction, \strong{e.g.} \cite{abarbanel1979stability, michelson1983stability} (called ``half-space'' problem in 2D) and later with boundaries in several directions \cite{benoit2023} (a ``quarter-space'' problem in 2D).
For the half-space problem, the GKS procedure can be adapted by considering a Fourier transform in the direction tangential to the boundary.
The quarter-space problem needs more subtle ways of proceeding.
In terms of richer target equations, a first step forward is to consider linear hyperbolic systems of equations. While adapting the present GKS-like framework is indeed possible, one must be aware that algebra to actually check the stability of given boundary conditions (find explicit \strong{von Neumann}--$\ell^2$ stability conditions, solve the characteristic equation, \emph{etc.}) may become involved. 
An interesting starting point would be to consider vectorial schemes, such as the vectorial \lbmScheme{1}{2} \cite{graille2014approximation}, for which the characteristic equation---upon using one common relaxation parameter---multiplicatively splits across the waves of the hyperbolic system, see \cite{bellotti2025perfectly}.
Secondly, adopting a GKS approach to study non-linear problems is unlikely to work, as this approach is intrinsically linear. 
Moreover, we are not aware of any existing research in this direction concerning Finite Difference schemes.
This setting definitely needs different tools to deal with the stability of boundary conditions, \strong{e.g.} stability structure \cite{junk2009weighted, junk2009convergence} or monotonicity \cite{aregba2025equilibrium}.

\section*{Acknowledgements}

The author thanks Jean-François Coulombel for patient advice on these topics and for his welcome in Toulouse (France) that provided new inputs for this research.
The author also acknowledges the help of Tommaso Tenna and Vincent Lescarret, who read and usefully commented the draft of this manuscript.
Useful advice by two anonymous referees is also acknowledged.

\section*{Data Availability Statement}

Codes are available at \href{https://github.com/thomasbellotti/GKS-LBM}{https://github.com/thomasbellotti/GKS-LBM}.

\bibliographystyle{apalike}
\bibliography{biblio}

\appendix

\section{Proof of \Cref{lemma:semiSimple}}\label{app:lemma:semiSimple}

Let us start by pointing out few facts and introduce useful notations.
	\begin{itemize}
		\item Since $\timeShiftOperator$ is an eigenvalue of $\matricial{A}$, it is a root both of the minimal and characteristic polynomial of $\matricial{A}$. We denote its multiplicity as root of the minimal polynomial by $m\geq 1$.
		\item The sizes of the $g$ Jordan blocks associated with $\timeShiftOperator$ are denoted $s_1, \dots, s_{g}$.
		We denote the algebraic multiplicity of $\timeShiftOperator$, which equals $\sum_{\indexFreeOne = 1}^{g} s_{\indexFreeOne}$, by $a\geq 1$. The geometric multiplicity, which equals the number of Jordan blocks associated with $\timeShiftOperator$, is indicated by $g\geq 1$.
		It is well known that $a\geq g$ and that $m$ equals the size of the largest Jordan block, \idEst{} $m = \max(s_1, \dots, s_{g})$.
		\begin{equation}\label{eq:minimalAndMultiplicitiesInequality}
			a - g = \sum_{\indexFreeOne = 1}^{g} (s_{\indexFreeOne} - 1) \geq (\max(s_1, \dots, s_{g})-1) = m - 1, \qquad \text{hence}\qquad m\leq a - g + 1.
		\end{equation}
	\end{itemize}
	Then, we prove each direction in the equivalence.
	\begin{itemize}
		\item[$\Longrightarrow$] Assume $\timeShiftOperator$ be semi-simple. Hence, $a=g$, so by \eqref{eq:minimalAndMultiplicitiesInequality}, we obtain $1\leq m\leq 1$, so $m=1$.
		\item[$\Longleftarrow$] Assume $m=1$, hence the size of the largest Jordan block is one. Since $a = \sum_{\indexFreeOne = 1}^{g} s_{\indexFreeOne} $, we obtain $a\leq g$. On the other hand, $a\geq g$, thus $a = g$.
	\end{itemize}

	\section{Proof of \Cref{prop:maximumStencilCharacteristics}}\label{app:prop:maximumStencilCharacteristics}
	Assume, without loss of generality, that $\matricial{K} \in \linearGroup{\numberVelocities}{\baseField}$. Otherwise, one can proceed using the density of $\linearGroup{\numberVelocities}{\baseField}$ in $\matrixSpace{\numberVelocities}{\baseField}$. However, in the latter case, bounds can be non-sharp.
	Using simple manipulations on the determinant, we obtain 
	\begin{multline*}
		L_{\timeShiftOperator}(\fourierShift) = \determinant(\matricial{K}) \determinant(\timeShiftOperator\momentMatrix^{-1}\matricial{K}^{-1}\momentMatrix - \diagonalMatrix(\fourierShift^{\delta_1}, \dots, \fourierShift^{\delta_{\numberVelocities}})) \\
		= (-1)^{\numberVelocities}\determinant(\matricial{K}) \determinant(\diagonalMatrix(\fourierShift^{\delta_1}, \dots, \fourierShift^{\delta_{\numberVelocities}}) - \timeShiftOperator\momentMatrix^{-1}\matricial{K}^{-1}\momentMatrix ) .
	\end{multline*}
	We obtain 
	\begin{equation*}
		\diagonalMatrix(\fourierShift^{\delta_1}, \dots, \fourierShift^{\delta_{\numberVelocities}}) - \timeShiftOperator\momentMatrix^{-1}\matricial{K}^{-1}\momentMatrix = \underbrace{\sum_{\substack{\indexFreeOne = \min_{\indexVelocity}\delta_{\indexVelocity}\\\indexFreeOne\neq 0}}^{\max_{\indexVelocity }\delta_{\indexVelocity}} \fourierShift^{\indexFreeOne}\sum_{\delta_{\indexFreeTwo} = \indexFreeOne}\canonicalBasisVector{\indexFreeTwo}\transpose{\canonicalBasisVector{\indexFreeTwo}}}_{\text{diagonal part}} + \underbrace{\sum_{\delta_{\indexFreeTwo} = 0}\canonicalBasisVector{\indexFreeTwo}\transpose{\canonicalBasisVector{\indexFreeTwo}} - \timeShiftOperator\momentMatrix^{-1}\matricial{K}^{-1}\momentMatrix}_{\text{indep of }\fourierShift\text{ + non-diagonal part}}.
	\end{equation*}
	We call $\tilde{\matricial{K}}\definitionEquality \sum_{\delta_{\indexFreeTwo} = 0}\canonicalBasisVector{\indexFreeTwo}\transpose{\canonicalBasisVector{\indexFreeTwo}} - \timeShiftOperator\momentMatrix^{-1}\matricial{K}^{-1}\momentMatrix$ (its dependence on $\timeShiftOperator$ can be totally forgotten here), and consider the minimal degree $\min\degree(L_{\timeShiftOperator})$.
	The maximal degree $\max\degree(L_{\timeShiftOperator})$ can be bounded in exactly the same fashion.
	What we want to prove is that the following property $P(A)$ be true, by induction on $A\leq 0$:
	\begin{multline*}
		P(A) \quad : \quad \text{For every } \numberVelocities \geq 2 \text{, every }\tilde{\matricial{K}}\in\matrixSpace{\numberVelocities}{\complex}, \text{ and every }\delta_1, \dots, \delta_{\numberVelocities} \in \relatives \text{ such that }\sum\nolimits_{\delta_{\indexVelocity}<0}\delta_{\indexVelocity} = A, \\
		\text{then}\quad 
		\min\degree\Bigl ( \determinant\Bigl (\sum_{\indexFreeOne\neq 0}\fourierShift^{\indexFreeOne}\sum_{\delta_{\indexFreeTwo} = \indexFreeOne}\canonicalBasisVector{\indexFreeTwo}\transpose{\canonicalBasisVector{\indexFreeTwo}} + \tilde{\matricial{K}}\Bigr ) \Bigr ) \geq A.
	\end{multline*}
	We conduct the proof by strong induction.
	\begin{itemize}
		\item Base case $P(0)$. In this case, $\delta_1, \dots, \delta_{\numberVelocities} \in \naturals$, so $\determinant (\sum_{\indexFreeOne\neq 0}\fourierShift^{\indexFreeOne}\sum_{\delta_{\indexFreeTwo} = \indexFreeOne}\canonicalBasisVector{\indexFreeTwo}\transpose{\canonicalBasisVector{\indexFreeTwo}} + \tilde{\matricial{K}} )$ is a classical polynomial, hence its minimal degree if seen as a Laurent polynomial is zero.
		\item Induction step. Let $A < 0$ and assume that $P(A+1), \dots, P(0)$ hold true.
		Let $\alpha \in \integerInterval{1}{\numberVelocities}$ be such that $\delta_{\alpha}<0$. Therefore $A = \delta_{\alpha} + \sum_{\indexVelocity\neq \alpha, \delta_{\indexVelocity}<0} \delta_{\indexVelocity}<\sum_{\indexVelocity\neq \alpha, \delta_{\indexVelocity}<0} \delta_{\indexVelocity}\leq 0$.
		Using the matrix-determinant lemma, we obtain 
		\begin{equation}\label{eq:tmpMatrixDeterminant}
			\determinant\Bigl (\sum_{\indexFreeOne\neq 0}\fourierShift^{\indexFreeOne}\sum_{\delta_{\indexFreeTwo} = \indexFreeOne}\canonicalBasisVector{\indexFreeTwo}\transpose{\canonicalBasisVector{\indexFreeTwo}} + \tilde{\matricial{K}}\Bigr ) = \determinant\Bigl (\sum_{\indexFreeOne\neq 0}\fourierShift^{\indexFreeOne}\sum_{\substack{\delta_{\indexFreeTwo} = \indexFreeOne\\ \indexFreeTwo \neq \alpha}}\canonicalBasisVector{\indexFreeTwo}\transpose{\canonicalBasisVector{\indexFreeTwo}} + \tilde{\matricial{K}}\Bigr ) + \fourierShift^{\delta_{\alpha}} \transpose{\canonicalBasisVector{\alpha}} \adjugate\Bigl (\sum_{\indexFreeOne\neq 0}\fourierShift^{\indexFreeOne}\sum_{\substack{\delta_{\indexFreeTwo} = \indexFreeOne\\ \indexFreeTwo \neq \alpha}}\canonicalBasisVector{\indexFreeTwo}\transpose{\canonicalBasisVector{\indexFreeTwo}} + \tilde{\matricial{K}}\Bigr ) \canonicalBasisVector{\alpha},
		\end{equation}
		where the smallest degree of the determinant term on the right-hand side is larger or equal to $\sum_{\indexVelocity\neq \alpha, \delta_{\indexVelocity}<0} \delta_{\indexVelocity} = A - \delta_{\alpha}\leq 0$ by induction assumption. 
		Let us check the one of the term featuring the adjugate:
		\begin{equation*}
			\transpose{\canonicalBasisVector{\alpha}} \adjugate\Bigl (\sum_{\indexFreeOne\neq 0}\fourierShift^{\indexFreeOne}\sum_{\substack{\delta_{\indexFreeTwo} = \indexFreeOne\\ \indexFreeTwo \neq \alpha}}\canonicalBasisVector{\indexFreeTwo}\transpose{\canonicalBasisVector{\indexFreeTwo}} + \tilde{\matricial{K}}\Bigr ) \canonicalBasisVector{\alpha} = \Bigl [ \sum_{\indexFreeOne\neq 0}\fourierShift^{\indexFreeOne}\sum_{\substack{\delta_{\indexFreeTwo} = \indexFreeOne\\ \indexFreeTwo \neq \alpha}}\canonicalBasisVector{\indexFreeTwo}\transpose{\canonicalBasisVector{\indexFreeTwo}} + \tilde{\matricial{K}}\Bigr ]_{\alpha\alpha},
		\end{equation*}
		where $[\,\cdot\,]_{\alpha\alpha}$ indicates the minor with respect to the $\alpha$-th row and column.
		This is a determinant of a matrix of size $\numberVelocities - 1$ where $\delta_1, \dots, \delta_{\alpha-1}, \delta_{\alpha+1}, \dots, \delta_{\numberVelocities} \in \relatives$ are used. Hence, using $P(\sum_{\indexVelocity\neq \alpha, \delta_{\indexVelocity}<0} \delta_{\indexVelocity})$, which is assumed to be true, we deduce that the smallest degree of $\transpose{\canonicalBasisVector{\alpha}} \adjugate (\sum_{\indexFreeOne\neq 0}\fourierShift^{\indexFreeOne}\sum_{\substack{\delta_{\indexFreeTwo} = \indexFreeOne\\ \indexFreeTwo \neq \alpha}}\canonicalBasisVector{\indexFreeTwo}\transpose{\canonicalBasisVector{\indexFreeTwo}} + \tilde{\matricial{K}} ) \canonicalBasisVector{\alpha}$ is larger or equal to $\sum_{\indexVelocity\neq \alpha, \delta_{\indexVelocity}<0} \delta_{\indexVelocity}$, so that finally, using \eqref{eq:tmpMatrixDeterminant}, $\determinant (\sum_{\indexFreeOne\neq 0}\fourierShift^{\indexFreeOne}\sum_{\delta_{\indexFreeTwo} = \indexFreeOne}\canonicalBasisVector{\indexFreeTwo}\transpose{\canonicalBasisVector{\indexFreeTwo}} + \tilde{\matricial{K}} )$ has smallest degree larger or equal to $A = \delta_{\alpha} + \sum_{\indexVelocity\neq \alpha, \delta_{\indexVelocity}<0} \delta_{\indexVelocity}$.

        This shows that $P(A+1), \dots, P(0)$ imply $P(A)$,
	\end{itemize}
    and concludes the proof.

\section{Proof of \Cref{prop:noDiracBoundary}}\label{app:prop:noDiracBoundary}

	We first observe that $\matrixPolynomialBulk{\timeShiftOperator}(0) = -\schemeMatrixBulkByPower_{-1} = -\momentMatrix\canonicalBasisVector{\positiveVelocityIndex}\transpose{\canonicalBasisVector{\positiveVelocityIndex}} \momentMatrix^{-1}\collisionMatrix$ is a rank-one matrix, thus by the rank-nullity theorem, we obtain that $\dimension{\kernel(\matrixPolynomialBulk{\timeShiftOperator}(0))} = \numberVelocities - 1$.
	Equation \eqref{eq:equalityCharEquations} gives that the algebraic multiplicity of $\fourierShift\equiv 0$ equals $\numberVelocities - 1$, as it entails $\determinant(\matrixPolynomialBulk{\timeShiftOperator}(\fourierShift)) = \fourierShift^{\numberVelocities-1}(\coefficientCharEquationInFourier_{-1}(\timeShiftOperator) + \coefficientCharEquationInFourier_0(\timeShiftOperator)\fourierShift + \dots + \coefficientCharEquationInFourier_{\stencilRightCharacteristic}(\timeShiftOperator)\fourierShift^{\stencilRightCharacteristic+1})$, where the parentheses-enclosed polynomial in $\fourierShift$ does not vanish at $\fourierShift = 0$, since $\coefficientCharEquationInFourier_{-1}(\timeShiftOperator)\not\equiv 0$.
	We thus have a full set of $\numberVelocities-1$ eigenvectors associated with $\fourierShift\equiv 0$, which are independent of $\timeShiftOperator$ because $\matrixPolynomialBulk{\timeShiftOperator}(0)$ does not depend on it.

	Space-stable solutions in $\stableSubspace(\timeShiftOperator)$ thus read as in \eqref{eq:generalStableSolutionAllSchemes}, see \cite[Theorem 8.3]{matrixpoly09}, which plugged into \eqref{eq:resolventBoundary} entails \eqref{eq:lopatiskiiSystem}.
	By the assumptions made here, we have $\laplaceTransformed{\vectorial{\boundarySourceTermMoments}}_0(\timeShiftOperator) = \momentMatrix\canonicalBasisVector{\positiveVelocityIndex}\laplaceTransformed{\boundarySourceTerm}_{\positiveVelocityIndex, -1}(\timeShiftOperator)$, where we observe that $\momentMatrix\canonicalBasisVector{\positiveVelocityIndex}$ is nothing but the $\positiveVelocityIndex$-th column of the moment matrix $\momentMatrix$.
	Multiplying $\kreissLopatinskiiMatrix(\timeShiftOperator) \transpose{(\coefficientStableSolution(\timeShiftOperator), \coefficientZero^{1}(\timeShiftOperator), \dots, \coefficientZero^{\numberVelocities-1}(\timeShiftOperator))} = \momentMatrix\canonicalBasisVector{\positiveVelocityIndex}\laplaceTransformed{\boundarySourceTerm}_{\positiveVelocityIndex, -1}(\timeShiftOperator)$ by $\adjugate(\kreissLopatinskiiMatrix(\timeShiftOperator))$, we obtain 
	\begin{equation*}
		\kreissLopatinskiiDet(\timeShiftOperator)
		\transpose{(\coefficientStableSolution(\timeShiftOperator), \coefficientZero^{1}(\timeShiftOperator), \dots, \coefficientZero^{\numberVelocities-1}(\timeShiftOperator))} = \adjugate(\kreissLopatinskiiMatrix(\timeShiftOperator)) \momentMatrix\canonicalBasisVector{\positiveVelocityIndex} \laplaceTransformed{\boundarySourceTerm}_{\positiveVelocityIndex, -1}(\timeShiftOperator).
	\end{equation*}
	Now let $i\in\integerInterval{1}{\numberVelocities}$.
	The matrix-determinant lemma gives 
	\begin{equation*}
		\transpose{\canonicalBasisVector{i}}\adjugate(\kreissLopatinskiiMatrix(\timeShiftOperator)) \momentMatrix\canonicalBasisVector{\positiveVelocityIndex}  = \determinant(\kreissLopatinskiiMatrix(\timeShiftOperator) + \momentMatrix\canonicalBasisVector{\positiveVelocityIndex} \transpose{\canonicalBasisVector{i}}) - \kreissLopatinskiiDet(\timeShiftOperator)\\
		=\determinant(\kreissLopatinskiiMatrix(\timeShiftOperator)\xleftarrow[]{i} \momentMatrix\canonicalBasisVector{\positiveVelocityIndex}),
	\end{equation*}
	where the second equality comes from the multilinearity of the determinant, and the notation $\matricial{A}\xleftarrow[]{i} \vectorial{v}$ represents the matrix $\matricial{A}$ where the $i$-th column has been replaced by the column vector $\vectorial{v}$.
	Interestingly, this last equality is analogous to the Cramer's rule.

	Let now $\indexFreeOne\in\integerInterval{1}{\numberVelocities-1}$. We obtain $\kreissLopatinskiiDet(\timeShiftOperator)
	\coefficientZero^{\indexFreeOne}(\timeShiftOperator) = \determinant(\kreissLopatinskiiMatrix(\timeShiftOperator)\xleftarrow[]{\indexFreeOne + 1} \momentMatrix\canonicalBasisVector{\positiveVelocityIndex}) \laplaceTransformed{\boundarySourceTerm}_{\positiveVelocityIndex, -1}(\timeShiftOperator)$. We want to show that $\determinant(\kreissLopatinskiiMatrix(\timeShiftOperator)\xleftarrow[]{\indexFreeOne + 1} \momentMatrix\canonicalBasisVector{\positiveVelocityIndex}) \equiv 0$.
	This is achieved by showing that the first column of $\kreissLopatinskiiMatrix(\timeShiftOperator)$ and $\kreissLopatinskiiMatrix(\timeShiftOperator)\xleftarrow[]{\indexFreeOne + 1} \momentMatrix\canonicalBasisVector{\positiveVelocityIndex}$, namely $(\timeShiftOperator\identityMatrix{\numberVelocities} - \sum_{\indexFreeTwo = 0}^{\stencilBoundaryCondition}\schemeMatrixBoundaryByPower{0}{\indexFreeTwo}\stableRoot(\timeShiftOperator)^{\indexFreeTwo})\vectorial{\eigenvectorLetter}_{\textnormal{s}}(\timeShiftOperator)$, is collinear to $\momentMatrix\canonicalBasisVector{\positiveVelocityIndex}$, the $(\indexFreeOne + 1)$-th column of $\kreissLopatinskiiMatrix(\timeShiftOperator)\xleftarrow[]{\indexFreeOne + 1} \momentMatrix\canonicalBasisVector{\positiveVelocityIndex}$. 
	Using the fact that $\vectorial{\eigenvectorLetter}_{\textnormal{s}}(\timeShiftOperator)\in\kernel (\matrixPolynomialBulk{\timeShiftOperator}(\stableRoot(\timeShiftOperator)))$, thus also $\vectorial{\eigenvectorLetter}_{\textnormal{s}}(\timeShiftOperator)\in\kernel (\timeShiftOperator\identityMatrix{\numberVelocities} - \schemeMatrixBulkFourier(\stableRoot(\timeShiftOperator)))$, we can subtract a term equal to the zero vector and obtain 
	\begin{equation*}
		\Bigl (\timeShiftOperator\identityMatrix{\numberVelocities} - \sum_{\indexFreeTwo = 0}^{\stencilBoundaryCondition}\schemeMatrixBoundaryByPower{0}{\indexFreeTwo}\stableRoot(\timeShiftOperator)^{\indexFreeTwo} \Bigr )\vectorial{\eigenvectorLetter}_{\textnormal{s}}(\timeShiftOperator) = \Bigl (  \sum_{\indexFreeTwo = -1}^{\max(\stencilRight, \stencilBoundaryCondition)} (\schemeMatrixBulkByPower_{\indexFreeTwo} - \schemeMatrixBoundaryByPower{0}{\indexFreeTwo}) \stableRoot(\timeShiftOperator)^{\indexFreeTwo} \Bigr ) \vectorial{\eigenvectorLetter}_{\textnormal{s}}(\timeShiftOperator),
	\end{equation*}
	where the fact that $\schemeMatrixBoundaryByPower{0}{-1}=\zeroMatrix{\numberVelocities\times \numberVelocities}$ and $\schemeMatrixBulkByPower_{\indexFreeTwo}=\zeroMatrix{\numberVelocities\times\numberVelocities}$ for $\indexFreeTwo>\stencilRight$ is understood.
	As we deal with kinetic boundary conditions, which only replace lacking information for the $\positiveVelocityIndex$-th distribution function, and discrete velocities are all distinct, from \eqref{eq:matrixByPowerBulk}--\eqref{eq:matrixByPowerBoundary}, discrepancies between bulk and boundary schemes impact only the $\positiveVelocityIndex$-th distribution function, so mathematically
	\begin{equation*}
		\forall\indexFreeTwo \in\integerInterval{-1}{\max(\stencilRight, \stencilBoundaryCondition)}\qquad  \schemeMatrixBulkByPower_{\indexFreeTwo} - \schemeMatrixBoundaryByPower{0}{\indexFreeTwo} = \momentMatrix\canonicalBasisVector{\positiveVelocityIndex}\transpose{\vectorial{v}_{\indexFreeTwo}}, 
	\end{equation*}
	where $\vectorial{v}_{\indexFreeTwo}\in\reals^{\numberVelocities}$ are suitable vectors which depend on the boundary condition, but whose precise expression is pointless here.
	This yields
	\begin{equation*}
		\Bigl (\timeShiftOperator\identityMatrix{\numberVelocities} - \sum_{\indexFreeTwo = 0}^{\stencilBoundaryCondition}\schemeMatrixBoundaryByPower{0}{\indexFreeTwo}\stableRoot(\timeShiftOperator)^{\indexFreeTwo} \Bigr )\vectorial{\eigenvectorLetter}_{\textnormal{s}}(\timeShiftOperator) =  \momentMatrix\canonicalBasisVector{\positiveVelocityIndex}\underbrace{\sum_{\indexFreeTwo = -1}^{\max(\stencilRight, \stencilBoundaryCondition)} \stableRoot(\timeShiftOperator)^{\indexFreeTwo} \transpose{\vectorial{v}_{\indexFreeTwo}} \vectorial{\eigenvectorLetter}_{\textnormal{s}}(\timeShiftOperator)}_{\text{scalar}},
	\end{equation*}
	and shows that $(\timeShiftOperator\identityMatrix{\numberVelocities} - \sum_{\indexFreeTwo = 0}^{\stencilBoundaryCondition}\schemeMatrixBoundaryByPower{0}{\indexFreeTwo}\stableRoot(\timeShiftOperator)^{\indexFreeTwo})\vectorial{\eigenvectorLetter}_{\textnormal{s}}(\timeShiftOperator)$ is collinear to $\momentMatrix\canonicalBasisVector{\positiveVelocityIndex}$. Therefore, $\determinant(\kreissLopatinskiiMatrix(\timeShiftOperator)\xleftarrow[]{\indexFreeOne + 1} \momentMatrix\canonicalBasisVector{\positiveVelocityIndex}) \equiv 0$.

	Concerning $\coefficientStableSolution(\timeShiftOperator)$ we have 
	\begin{equation*}
		\kreissLopatinskiiDet(\timeShiftOperator)\coefficientStableSolution(\timeShiftOperator) = 
		\determinant
		\begin{bmatrix}
			\momentMatrix\canonicalBasisVector{\positiveVelocityIndex}\, | \,(\timeShiftOperator\identityMatrix{\numberVelocities} - \schemeMatrixBoundaryByPower{0}{0})  \vectorial{\eigenvectorLetter}_{0}^1 \, | \, \cdots \, | \, (\timeShiftOperator\identityMatrix{\numberVelocities} - \schemeMatrixBoundaryByPower{0}{0})  \vectorial{\eigenvectorLetter}_{0}^{\numberVelocities - 1} 
		\end{bmatrix}
		\laplaceTransformed{\boundarySourceTerm}_{\positiveVelocityIndex, -1}(\timeShiftOperator).
	\end{equation*}
	Setting $\scalarFactorFromAdjugate(\timeShiftOperator)\definitionEquality \determinant
	\begin{bmatrix}
		\momentMatrix\canonicalBasisVector{\positiveVelocityIndex}\, | \,(\timeShiftOperator\identityMatrix{\numberVelocities} - \schemeMatrixBoundaryByPower{0}{0})  \vectorial{\eigenvectorLetter}_{0}^1 \, | \, \cdots \, | \, (\timeShiftOperator\identityMatrix{\numberVelocities} - \schemeMatrixBoundaryByPower{0}{0})  \vectorial{\eigenvectorLetter}_{0}^{\numberVelocities - 1} 
	\end{bmatrix}$, it is not self-evident that this quantity does not depend on boundary conditions, for $\schemeMatrixBoundaryByPower{0}{0}$ appears in it.
	Under the current assumptions, one has 
	\begin{equation*}
		\schemeMatrixBoundaryByPower{0}{0} = \momentMatrix \Bigl( \sum_{\dimensionlessDiscreteVelocityLetter_{\indexVelocity}= 0 }\canonicalBasisVector{\indexVelocity}\transpose{\canonicalBasisVector{\indexVelocity}} + \canonicalBasisVector{\positiveVelocityIndex} \sum_{\indexFreeOne = 1}^{\numberVelocities} \boundaryCoefficient_{\positiveVelocityIndex, \indexFreeOne, 1, 0} \transpose{\canonicalBasisVector{\indexFreeOne}}\Bigr ) \momentMatrix^{-1}\collisionMatrix, 
	\end{equation*}
	where the first summation sign involves at most one term.
	Hence, for $\indexFreeTwo\in\integerInterval{1}{\numberVelocities - 1}$
	\begin{equation*}
		\schemeMatrixBoundaryByPower{0}{0} \vectorial{\eigenvectorLetter}_{0}^{\indexFreeTwo} = \underbrace{\momentMatrix \sum_{\dimensionlessDiscreteVelocityLetter_{\indexVelocity}= 0 }\canonicalBasisVector{\indexVelocity}\transpose{\canonicalBasisVector{\indexVelocity}} \momentMatrix^{-1}\collisionMatrix \vectorial{\eigenvectorLetter}_{0}^{\indexFreeTwo}}_{\text{indep. of the boundary cond.}} + \momentMatrix \canonicalBasisVector{\positiveVelocityIndex} \underbrace{\sum_{\substack{\indexFreeOne = 1\\\indexFreeOne\neq\positiveVelocityIndex}}^{\numberVelocities} \boundaryCoefficient_{\positiveVelocityIndex, \indexFreeOne, 1, 0} \transpose{\canonicalBasisVector{\indexFreeOne}}\momentMatrix^{-1}\collisionMatrix \vectorial{\eigenvectorLetter}_{0}^{\indexFreeTwo}}_{\text{scalar}}, 
	\end{equation*}
	using the specific expression of $\matrixPolynomialBulk{\timeShiftOperator}(0)$ discussed at the beginning of the proof.
	As the second addendum in the previous expression is collinear to $\momentMatrix \canonicalBasisVector{\positiveVelocityIndex}$, repeated use of the multilinearity of the determinant gives 
	\begin{equation*}
		\scalarFactorFromAdjugate(\timeShiftOperator) = 
		\determinant
		\begin{bmatrix}
			\momentMatrix\canonicalBasisVector{\positiveVelocityIndex}\, | \,(\timeShiftOperator\identityMatrix{\numberVelocities} - \momentMatrix \sum_{\dimensionlessDiscreteVelocityLetter_{\indexVelocity}= 0 }\canonicalBasisVector{\indexVelocity}\transpose{\canonicalBasisVector{\indexVelocity}} \momentMatrix^{-1}\collisionMatrix )  \vectorial{\eigenvectorLetter}_{0}^1 \, | \, \cdots \, | \, (\timeShiftOperator\identityMatrix{\numberVelocities} - \momentMatrix \sum_{\dimensionlessDiscreteVelocityLetter_{\indexVelocity}= 0 }\canonicalBasisVector{\indexVelocity}\transpose{\canonicalBasisVector{\indexVelocity}} \momentMatrix^{-1}\collisionMatrix )  \vectorial{\eigenvectorLetter}_{0}^{\numberVelocities - 1} 
		\end{bmatrix},
	\end{equation*}
    thus a simplified expression for $\scalarFactorFromAdjugate(\timeShiftOperator)$, which by the way shows that this term deserves its name, as it depends only on the bulk scheme.

	Notice that $\momentMatrix \sum_{\dimensionlessDiscreteVelocityLetter_{\indexVelocity}= 0 }\canonicalBasisVector{\indexVelocity}\transpose{\canonicalBasisVector{\indexVelocity}} \momentMatrix^{-1}\collisionMatrix = \schemeMatrixBulkByPower_0 $.
	The matrix-determinant lemma entails that 
	\begin{align*}
		\determinant(\timeShiftOperator\identityMatrix{\numberVelocities}-\schemeMatrixBulkFourier(\fourierShift)) &= \overbrace{\determinant\Bigl (\timeShiftOperator\identityMatrix{\numberVelocities}-\sum_{\indexFreeOne=0}^{\stencilRight}\schemeMatrixBulkByPower_{\indexFreeOne}\fourierShift^{\indexFreeOne} \Bigr )}^{\text{polynomial in }\fourierShift}  - \transpose{\canonicalBasisVector{\positiveVelocityIndex}} \momentMatrix^{-1}\collisionMatrix\overbrace{\adjugate \Bigl (\timeShiftOperator\identityMatrix{\numberVelocities}-\sum_{\indexFreeOne=0}^{\stencilRight}\schemeMatrixBulkByPower_{\indexFreeOne}\fourierShift^{\indexFreeOne} \Bigr )}^{\text{polynomial in }\fourierShift}\momentMatrix \canonicalBasisVector{\positiveVelocityIndex}\fourierShift^{-1}\\
		&= - \transpose{\canonicalBasisVector{\positiveVelocityIndex}} \momentMatrix^{-1}\collisionMatrix\adjugate  (\timeShiftOperator\identityMatrix{\numberVelocities}-\schemeMatrixBulkByPower_0 )\momentMatrix \canonicalBasisVector{\positiveVelocityIndex}\fourierShift^{-1} + (\text{non-negative powers in }\fourierShift),
	\end{align*}
	which shows that $\coefficientCharEquationInFourier_{-1}(\timeShiftOperator) = - \transpose{\canonicalBasisVector{\positiveVelocityIndex}} \momentMatrix^{-1}\collisionMatrix\adjugate  (\timeShiftOperator\identityMatrix{\numberVelocities}-\schemeMatrixBulkByPower_0 )\momentMatrix \canonicalBasisVector{\positiveVelocityIndex}$.
	As by definition of the eigenvectors of $\fourierShift\equiv 0$, we have $\momentMatrix\canonicalBasisVector{\positiveVelocityIndex}\transpose{\canonicalBasisVector{\positiveVelocityIndex}}\momentMatrix^{-1}\collisionMatrix\vectorial{\eigenvectorLetter}_0^{\indexFreeOne} = \zeroMatrix{\numberVelocities}$ for $\indexFreeOne\in\integerInterval{1}{\numberVelocities-1}$, and $\momentMatrix\canonicalBasisVector{\positiveVelocityIndex}\neq \zeroMatrix{\numberVelocities}$ as $\momentMatrix\in\linearGroup{\numberVelocities}{\reals}$, we deduce that $\transpose{\canonicalBasisVector{\positiveVelocityIndex}}\momentMatrix^{-1}\collisionMatrix\vectorial{\eigenvectorLetter}_0^{\indexFreeOne}= 0$ (\idEst{} $\transpose{\collisionMatrix}\momentMatrix^{-\mathsf{T}}\canonicalBasisVector{\positiveVelocityIndex}$ and $\vectorial{\eigenvectorLetter}_0^{\indexFreeOne}$ are orthogonal).
	Introduce the linear form $\phi(\vectorial{x}) \definitionEquality  - \transpose{\canonicalBasisVector{\positiveVelocityIndex}} \momentMatrix^{-1}\collisionMatrix\adjugate  (\timeShiftOperator\identityMatrix{\numberVelocities}-\schemeMatrixBulkByPower_0 )\vectorial{x}$ over $\reals^{\numberVelocities}$.
	Let $\indexFreeOne\in\integerInterval{1}{\numberVelocities-1}$:
	\begin{equation*}
		\phi((\timeShiftOperator\identityMatrix{\numberVelocities}-\schemeMatrixBulkByPower_0)\vectorial{\eigenvectorLetter}_0^{\indexFreeOne}) = - \determinant(\timeShiftOperator\identityMatrix{\numberVelocities}-\schemeMatrixBulkByPower_0) \transpose{\canonicalBasisVector{\positiveVelocityIndex}} \momentMatrix^{-1}\collisionMatrix\vectorial{\eigenvectorLetter}_0^{\indexFreeOne} = 0.
	\end{equation*}
	This linear form $\phi$ hence vanishes on $\spanSpace{(\timeShiftOperator\identityMatrix{\numberVelocities}-\schemeMatrixBulkByPower_0)\vectorial{\eigenvectorLetter}_0^{\indexFreeOne}}_{\indexFreeOne = 1}^{\numberVelocities - 1}$, and can thus be written, up to a multiplicative factor $\tilde{c} = \tilde{c}(\vectorial{\eigenvectorLetter}_0^1, \dots, \vectorial{\eigenvectorLetter}_0^{\numberVelocities-1}) \neq 0$ depending---for instance---on the normalization of the eigenvectors
	\begin{equation*}
		\phi(\vectorial{x}) = \tilde{c} \times \determinant[\vectorial{x} \, | \, (\timeShiftOperator\identityMatrix{\numberVelocities}-\schemeMatrixBulkByPower_0)\vectorial{\eigenvectorLetter}_0^{1} \, | \, \cdots \, | \, (\timeShiftOperator\identityMatrix{\numberVelocities}-\schemeMatrixBulkByPower_0)\vectorial{\eigenvectorLetter}_0^{\numberVelocities - 1}].
	\end{equation*}
	We obtain $\coefficientCharEquationInFourier_{-1}(\timeShiftOperator) = \phi(\momentMatrix\canonicalBasisVector{\positiveVelocityIndex}) = \tilde{c}\times  \scalarFactorFromAdjugate(\timeShiftOperator)$, so the claim by setting $c = 1/\tilde{c}$.

	We can further develop the Kreiss-Lopatinskii determinant, noticing that 
	\begin{align*}
		\timeShiftOperator\identityMatrix{\numberVelocities} - \sum_{\indexFreeOne = 0}^{\stencilBoundaryCondition}\schemeMatrixBoundaryByPower{0}{\indexFreeOne}\stableRoot(\timeShiftOperator)^{\indexFreeOne} &= 
		\timeShiftOperator\identityMatrix{\numberVelocities} - \schemeMatrixBulkFourier(\stableRoot(\timeShiftOperator)) + \schemeMatrixBulkByPower_{-1}\stableRoot(\timeShiftOperator)^{-1} - \momentMatrix\canonicalBasisVector{\positiveVelocityIndex}\sum_{\indexFreeOne = 0}^{\stencilBoundaryCondition}\sum_{\indexFreeTwo = 1}^{\numberVelocities}\transpose{\canonicalBasisVector{\indexFreeTwo}}\boundaryCoefficient_{\positiveVelocityIndex, \indexFreeTwo, 1, \indexFreeOne}\stableRoot(\timeShiftOperator)^{\indexFreeOne} \momentMatrix^{-1}\collisionMatrix\\
		&=\timeShiftOperator\identityMatrix{\numberVelocities} - \schemeMatrixBulkFourier(\stableRoot(\timeShiftOperator)) + \momentMatrix\canonicalBasisVector{\positiveVelocityIndex} \Bigl (  \transpose{\canonicalBasisVector{\positiveVelocityIndex}}\stableRoot(\timeShiftOperator)^{-1} -\sum_{\indexFreeOne = 0}^{\stencilBoundaryCondition}\sum_{\indexFreeTwo = 1}^{\numberVelocities}\transpose{\canonicalBasisVector{\indexFreeTwo}}\boundaryCoefficient_{\positiveVelocityIndex, \indexFreeTwo, 1, \indexFreeOne}\stableRoot(\timeShiftOperator)^{\indexFreeOne} \Bigr ) \momentMatrix^{-1}\collisionMatrix
	\end{align*}
	and using the multilinearity of the determinant and the definition of stable eigenvector $\vectorial{\eigenvectorLetter}_{\textnormal{s}}(\timeShiftOperator)$:
	\begin{align*}
		\kreissLopatinskiiDet(\timeShiftOperator) &= 
		\determinant 
		\begin{bmatrix}
			(\timeShiftOperator\identityMatrix{\numberVelocities} - \sum_{\indexFreeOne = 0}^{\stencilBoundaryCondition}\schemeMatrixBoundaryByPower{0}{\indexFreeOne}\stableRoot(\timeShiftOperator)^{\indexFreeOne})\vectorial{\eigenvectorLetter}_{\textnormal{s}}(\timeShiftOperator) \, | \,(\timeShiftOperator\identityMatrix{\numberVelocities} - \schemeMatrixBoundaryByPower{0}{0})  \vectorial{\eigenvectorLetter}_{0}^1 \, | \, \cdots \, | \, (\timeShiftOperator\identityMatrix{\numberVelocities} - \schemeMatrixBoundaryByPower{0}{0})  \vectorial{\eigenvectorLetter}_{0}^{\numberVelocities - 1}
		\end{bmatrix}\\
		&=\Bigl (  \transpose{\canonicalBasisVector{\positiveVelocityIndex}}\stableRoot(\timeShiftOperator)^{-1} -\sum_{\indexFreeOne = 0}^{\stencilBoundaryCondition}\sum_{\indexFreeTwo = 1}^{\numberVelocities}\transpose{\canonicalBasisVector{\indexFreeTwo}}\boundaryCoefficient_{\positiveVelocityIndex, \indexFreeTwo, 1, \indexFreeOne}\stableRoot(\timeShiftOperator)^{\indexFreeOne} \Bigr ) \momentMatrix^{-1}\collisionMatrix\vectorial{\eigenvectorLetter}_{\textnormal{s}}(\timeShiftOperator)\scalarFactorFromAdjugate(\timeShiftOperator).
	\end{align*}

\section{Analysis of additional boundary conditions for the \lbmScheme{1}{2} scheme}\label{app:moreBCD1Q2}

Let us start considering $\relaxationParameterLetter_2 = \tfrac{3}{2}$ and $\courantNumber = -\tfrac{1}{2}$.
We obtain:
\begin{equation*}
	\text{For }\eqref{eq:extrapolatedEquilibrium},\, \orderExtrapolation = 1 \qquad (\timeShiftOperator, \fourierShift) =  (1, 1), \quad (\timeShiftOperator, \fourierShift) = \Bigl (\underbrace{\frac{3 + \sqrt{73}}{32}}_{|\cdot|\approx 0.361}, \underbrace{\frac{9 - \sqrt{73}}{4}}_{|\cdot|\approx 0.113}\Bigr ), \quad 
	(\timeShiftOperator, \fourierShift) = \Bigl (\underbrace{\frac{3 - \sqrt{73}}{32}}_{|\cdot|\approx 0.173}, \underbrace{\frac{9 + \sqrt{73}}{4}}_{|\cdot|\approx 0.407}\Bigr ).
\end{equation*}
The only potential eigenvalue is the first one.
However, $\fourierShift = \unstableRoot$, so it is not an eigenvalue.
\begin{multline*}
	\text{For }\eqref{eq:extrapolatedEquilibrium},\,\orderExtrapolation = 3 \qquad (\timeShiftOperator, \fourierShift) =  (1, 1), \quad (\timeShiftOperator, \fourierShift) \approx \Bigl (\underbrace{0.702\pm0.193 i}_{|\cdot|\approx 0.728}, \underbrace{-0.303\pm0.272i}_{|\cdot|\approx 0.113}\Bigr ), \\
	(\timeShiftOperator, \fourierShift) \approx \Bigl (\underbrace{-0.405\pm0.297 i}_{|\cdot|\approx 0.502}, \underbrace{0.701\pm1.298 i}_{|\cdot|\approx 1.475}\Bigr ), \quad (\timeShiftOperator, \fourierShift) \approx (1.301, 1.593),  \quad (\timeShiftOperator, \fourierShift) \approx (-0.270, 2.610)
\end{multline*}
The penultimate couple, even if $\timeShiftOperator\in\neighborhoodInfinity$, does not yield a \strong{Godunov-Ryabenkii} instability since its corresponding $\fourierShift = \unstableRoot$: not an eigenvalue.
\begin{equation*}
	\text{For }\eqref{eq:kinrod}\qquad (\timeShiftOperator, \fourierShift) =  (1, 1), \quad (\timeShiftOperator, \fourierShift) \approx (\underbrace{-0.586\pm0.567i}_{|\cdot|\approx 0.816}, \underbrace{-0.256\pm1.456i}_{|\cdot|\approx 1.479}), \quad  (\timeShiftOperator, \fourierShift) \approx (0.141,  -5.487).
\end{equation*}
Again, we see that we have strong stability.
\begin{align*}
	\text{For }\eqref{eq:godunovRyabenkii}\qquad (\timeShiftOperator, \fourierShift) =  (1/2, -1), \quad (\timeShiftOperator, \fourierShift) &= \Bigl ( \overbrace{\frac{23+3\sqrt{105}}{26}}^{|\cdot|\approx2.067}, \overbrace{\frac{29-3\sqrt{105}}{26}}^{|\cdot|\approx0.067}\Bigr ),\\
	(\timeShiftOperator, \fourierShift) &= \Bigl ( \underbrace{\frac{23-3\sqrt{105}}{26}}_{|\cdot|\approx0.297}, \underbrace{\frac{29+3\sqrt{105}}{26}}_{|\cdot|\approx2.298}\Bigr ).
\end{align*}
The second couple is an eigenvalue and yields a \strong{Godunov-Ryabenkii} instability.

Let us now consider $\relaxationParameterLetter_2 = 2$ and $\courantNumber = -\tfrac{1}{2}$.
\begin{equation*}
	\text{For }\eqref{eq:extrapolatedEquilibrium}, \,\orderExtrapolation = 1 \qquad (\timeShiftOperator, \fourierShift) =  (1, 1), \quad (\timeShiftOperator, \fourierShift) = \Bigl (\underbrace{\frac{1 + \sqrt{17}}{8}}_{|\cdot|\approx 0.640}, \underbrace{\frac{5 - \sqrt{17}}{2}}_{|\cdot|\approx 0.438}\Bigr ), \quad (\timeShiftOperator, \fourierShift) = \Bigl (\underbrace{\frac{1 - \sqrt{17}}{8}}_{|\cdot|\approx 0.390}, \underbrace{\frac{5 + \sqrt{17}}{2}}_{|\cdot|\approx 4.562}\Bigr ).
\end{equation*}
This boundary condition is strongly stable.
\begin{multline*}
	\text{For }\eqref{eq:extrapolatedEquilibrium}, \,\orderExtrapolation = 3 \qquad (\timeShiftOperator, \fourierShift) =  (1, 1), \quad (\timeShiftOperator, \fourierShift) \approx \Bigl (\underbrace{1.019\pm0.465 i}_{|\cdot|\approx 1.120}, \underbrace{-0.438\pm0.568i}_{|\cdot|\approx 0.717}\Bigr ), \\
	(\timeShiftOperator, \fourierShift) \approx \Bigl (\underbrace{-0.769\pm0.429 i}_{|\cdot|\approx 0.881}, \underbrace{0.736\pm1.407 i}_{|\cdot|\approx 1.587}\Bigr ), \quad (\timeShiftOperator, \fourierShift) \approx (1.330, 1.734),  \quad (\timeShiftOperator, \fourierShift) \approx (-0.579, 2.670)
\end{multline*}
The second couple is an eigenvalue yielding a catastrophic \strong{Godunov-Ryabenkii} instability.
\begin{equation*}
	\text{For }\eqref{eq:kinrod}\qquad (\timeShiftOperator, \fourierShift) =  (1, 1), \quad (\timeShiftOperator, \fourierShift) \approx (\underbrace{-0.820\pm0.546 i}_{|\cdot|\approx 0.985}, \underbrace{0.083\pm1.587 i}_{|\cdot|\approx 1.590}), \quad  (\timeShiftOperator, \fourierShift) \approx (0.515,  -3.166).
\end{equation*}
We have strong stability.
\begin{equation*}
	\text{For }\eqref{eq:godunovRyabenkii}\qquad (\timeShiftOperator, \fourierShift) =  (1, -1), \quad (\timeShiftOperator, \fourierShift) = \Bigl ( \underbrace{\frac{5+\sqrt{73}}{6}}_{|\cdot|\approx2.257}, \underbrace{\frac{7-\sqrt{73}}{6}}_{|\cdot|\approx0.257}\Bigr ), \quad (\timeShiftOperator, \fourierShift) = \Bigl ( \underbrace{\frac{5-\sqrt{73}}{6}}_{|\cdot|\approx0.591}, \underbrace{\frac{7+\sqrt{73}}{6}}_{|\cdot|\approx2.591}\Bigr ).
\end{equation*}
The first couple is an eigenvalue as well as the second one, which gives a \strong{Godunov-Ryabenkii} instability.

We now go to $\relaxationParameterLetter_2 = \tfrac{3}{2}$ and $\courantNumber = \tfrac{1}{2}$.
\begin{align*}
	\text{For }\eqref{eq:extrapolatedEquilibrium}, \,\orderExtrapolation = 1 \qquad (\timeShiftOperator, \fourierShift) =  (1, 1), \quad (\timeShiftOperator, \fourierShift) &= \Bigl (\overbrace{\frac{-7 + \sqrt{241}}{32}}^{|\cdot|\approx 0.266}, \overbrace{\frac{11 - \sqrt{241}}{12}}^{|\cdot|\approx 0.377}\Bigr ), \\
	(\timeShiftOperator, \fourierShift) &= \Bigl (\underbrace{\frac{-7 - \sqrt{241}}{32}}_{|\cdot|\approx 0.704}, \underbrace{\frac{11 + \sqrt{241}}{12}}_{|\cdot|\approx 2.210}\Bigr ).
\end{align*}
The first couple is an eigenvalue.
\begin{multline*}
	\text{For }\eqref{eq:extrapolatedEquilibrium}, \,\orderExtrapolation = 3 \qquad (\timeShiftOperator, \fourierShift) =  (1, 1), \quad (\timeShiftOperator, \fourierShift) \approx \Bigl (\underbrace{-0.575\pm0.199 i}_{|\cdot|\approx 0.609}, \underbrace{0.906\mp0.991 i}_{|\cdot|\approx 1.343}\Bigr ),\\
	(\timeShiftOperator, \fourierShift)\approx ( 1.395, 0.565), \quad (\timeShiftOperator, \fourierShift)\approx ( 0.459, -0.850), \quad (\timeShiftOperator, \fourierShift)\approx ( 0.860, 1.383), \quad (\timeShiftOperator, \fourierShift)\approx (-0.689, 2.091).
\end{multline*}
The first couple is an eigenvalue, as the third one, which triggers a \strong{Godunov-Ryabenkii} instability.
\begin{equation*}
	\text{For }\eqref{eq:kinrod}\qquad (\timeShiftOperator, \fourierShift) =  (1, 1), \quad (\timeShiftOperator, \fourierShift) \approx (\underbrace{-0.599\pm0.253i}_{|\cdot|\approx 0.650}, \underbrace{0.628\mp1.299i}_{|\cdot|\approx 1.442}), \quad  (\timeShiftOperator, \fourierShift) \approx (0.666,  -1.922).
\end{equation*}
The first couple is an eigenvalue. 
From \Cref{fig:D1Q2-s-3_2-C-1_2-revision}, we see that the boundary condition is not strongly stable.
\begin{equation*}
	\text{For }\eqref{eq:godunovRyabenkii}\qquad (\timeShiftOperator, \fourierShift) =  (\tfrac{1}{2}, -1), \quad (\timeShiftOperator, \fourierShift) = \Bigl ( \underbrace{\frac{5+\sqrt{249}}{14}}_{|\cdot|\approx1.484}, \underbrace{\frac{23-\sqrt{249}}{14}}_{|\cdot|\approx0.516}\Bigr ), \quad (\timeShiftOperator, \fourierShift) = \Bigl ( \underbrace{\frac{5-\sqrt{249}}{14}}_{|\cdot|\approx0.770}, \underbrace{\frac{23+\sqrt{249}}{14}}_{|\cdot|\approx2.770}\Bigr ).
\end{equation*}
The second couple triggers a \strong{Godunov-Ryabenkii} instability.

We now go to $\relaxationParameterLetter_2 = 2$ and $\courantNumber = \tfrac{1}{2}$.
\begin{align*}
	\text{For }\eqref{eq:extrapolatedEquilibrium}, \,\orderExtrapolation = 1 \qquad (\timeShiftOperator, \fourierShift) =  (1, 1), \quad (\timeShiftOperator, \fourierShift) &= \Bigl (\overbrace{\frac{-5 + \sqrt{73}}{8}}^{|\cdot|\approx 0.443}, \overbrace{\frac{7 - \sqrt{73}}{6}}^{|\cdot|\approx 0.257}\Bigr ), \\
	(\timeShiftOperator, \fourierShift) &= \Bigl (\underbrace{\frac{-5 - \sqrt{73}}{8}}_{|\cdot|\approx 1.693}, \underbrace{\frac{7 + \sqrt{73}}{6}}_{|\cdot|\approx 2.591}\Bigr ).
\end{align*}
Once again, the first couple is an eigenvalue: the boundary condition is not strongly stable.
\begin{multline*}
	\text{For }\eqref{eq:extrapolatedEquilibrium}, \, \orderExtrapolation = 3 \qquad (\timeShiftOperator, \fourierShift) =  (1, 1), \quad (\timeShiftOperator, \fourierShift) \approx \Bigl (\underbrace{-1.017\pm0.455 i}_{|\cdot|\approx 1.114}, \underbrace{0.849\mp1.070 i}_{|\cdot|\approx 1.366}\Bigr ),\\
	(\timeShiftOperator, \fourierShift)\approx ( 1.603, 0.420), \quad (\timeShiftOperator, \fourierShift)\approx ( 0.889, -0.792), \quad (\timeShiftOperator, \fourierShift)\approx ( 0.828, 1.450), \quad (\timeShiftOperator, \fourierShift)\approx (-1.537, 2.223).
\end{multline*}
A \strong{Godunov-Ryabenkii} instability is triggered by the third listed mode.
\begin{equation*}
	\text{For }\eqref{eq:kinrod}\qquad (\timeShiftOperator, \fourierShift) =  (1, 1), \quad (\timeShiftOperator, \fourierShift) \approx (\underbrace{-0.966\pm0.464 i}_{|\cdot|\approx 1.072}, \underbrace{0.672\mp1.066i}_{|\cdot|\approx 1.261}), \quad  (\timeShiftOperator, \fourierShift) \approx (1.306,  -1.678).
\end{equation*}
A mild instability is triggered by the first mode.
\begin{equation*}
	\text{For }\eqref{eq:godunovRyabenkii}\qquad (\timeShiftOperator, \fourierShift) =  (1, -1), \quad (\timeShiftOperator, \fourierShift) = \Bigl ( \underbrace{\frac{-1+\sqrt{17}}{2}}_{|\cdot|\approx1.562}, \underbrace{\frac{5-\sqrt{17}}{2}}_{|\cdot|\approx0.438}\Bigr ), \quad (\timeShiftOperator, \fourierShift) = \Bigl ( \underbrace{\frac{-1-\sqrt{17}}{2}}_{|\cdot|\approx2.562}, \underbrace{\frac{5+\sqrt{17}}{2}}_{|\cdot|\approx 4.562}\Bigr ).
\end{equation*}
The second mode yields a catastrophic \strong{Godunov-Ryabenkii} instability.

\end{document}